\numberwithin{equation}{section}
\theoremstyle{plain}
\newtheorem{thm}{Theorem}[section] 
\newtheorem{theorem}[thm]{Theorem}
\newtheorem{proposition}[thm]{Proposition}
\newtheorem{corollary}[thm]{Corollary}
\newtheorem{lemma}[thm]{Lemma}
\newtheorem{conjecture}[thm]{Conjecture}
\theoremstyle{definition}
\newtheorem{definition}[thm]{Definition}
\newtheorem{example}[thm]{Example}
\newtheorem{question}[thm]{Question}
\theoremstyle{remark}
\newtheorem{remark}[thm]{Remark}
\newcommand\OMIN{\mathop{\rm OMIN}}
\newcommand\OMAX{\mathop{\rm OMAX}}
\newcommand{\cl}[1]{\mathcal{#1}}
\newcommand{\bb}[1]{\mathbb{#1}}
\begin{document}

\title{Nuclearity Related Properties in Operator Systems}
\author{Ali S. Kavruk}
\date{July 13, 2011}
\thanks{...}
\begin{abstract} 

Some recent research on the tensor products of operator systems and ensuing nuclearity properties in this setting
raised many stability problems. In this paper we examine the preservation of these
nuclearity properties including
exactness, local liftability and the double commutant expectation property under basic
algebraic operations such as quotient, duality, coproducts and tensorial products. We
show that, in the finite dimensional case, exactness and the lifting property are dual pairs, that is,
an operator system $\cl S$ is exact if and only if the dual operator system $\cl S^d$ has the lifting property.
Moreover, the lifting property is preserved under quotients by null subspaces.

Again in the finite dimensional
case we prove that every operator system has the $k$-lifting property in the sense that whenever $\varphi : \cl S \rightarrow \cl A /I$
is a unital and completely positive map, where $\cl A$ is a C*-algebra and $I$ is an ideal, then $\varphi$ possess a unital $k$-positive lift on $\cl A$, for every $k$.
This property provides a novel proof of  a classical result of Smith and Ward on the preservation of matricial numerical ranges of an operator.

The Kirchberg conjecture naturally falls into this context. We show that the Kirchberg conjecture is equivalent to
the statement that the five dimensional universal operator system generated by two contraction ($\cl S_2$) has
the double commutant expectation property. In addition to this we give several equivalent statements to this conjecture regarding
the preservation of various nuclearity properties under basic algebraic operations.

We show that the Smith Ward problem is equivalent to the statement that every three dimensional operator
system has the lifting property (or exactness). If we suppose that both the Kirchberg conjecture and
the Smith Ward problem have an affirmative answer then this implies that every three dimensional
operator system is C*-nuclear.  We see that this property, even under most favorable conditions,
seems to be hard to verify.

\end{abstract}
\maketitle

The study of tensor products and therefore the behavior of objects under the tensorial 
operations is fundamental in operator theory. Exactness, local liftability, approximation 
property and weak expectation are some structural properties of C*-algebras
which are known to be deeply connected with the tensor product. The operator space
versions and non-selfadjoint analogues of these properties have been worked out in the
last decade (see \cite[Sec. 15,16,17]{pi} and \cite{blecher}). After being abstractly characterized by Choi and Effros, operator systems
played an important role in the understanding of tensor products of C*-algebras, nuclearity, injectivity, etc. (see \cite{La}, \cite{CE0}, \cite{CE2}, e.g.).
Some special tensor products of two operator systems are also used in quantum mechanics (\cite{SWT}, e.g.).
However a systematic study of tensor products on this category along with the characterization of nuclearity properties waited
till \cite{kptt} and \cite{kptt2} (see also \cite{HP}). This series of papers raised several questions;
namely, the stability of these properties under certain operations which is the main subject of
the present paper. More precisely we try to illuminate the behavior of the nuclearity properties
under basic algebraic constructions such as quotients, coproducts, duality, tensors etc.

$ $

We start with a brief introduction to operator systems together with their abstract characterization.
We also include some special C*-covers generated by an operator system and we continue with the
basic duality results in this category. In Section 2 we recall basic facts on the quotient theory of operator systems.
This especially allows us to utilize exactness in this category.

$ $

Section 3 includes a brief overview on the tensor products of operator systems. After
giving the axiomatic definition we recall basic facts on the minimal (min), the maximal (max), the (maximal) commuting (c),
enveloping left (el) and  enveloping right (er) tensor products. The set of  tensor products admits a natural partial
order and the primary tensor products we have considered exhibit the following relations:
$$
min \;\; \leq \;\; el\;\;  ,\;\;  er\;\; \leq \;\; c \;\; \leq \;\; max.
$$

Nuclearity forms the integral part of Section 4. Given two operator system tensor products $\alpha \leq \beta$, an operator system
$\cl S$ is said to be $(\alpha,\beta)${\it -nuclear} if 
$
\cl S \otimes_{\alpha} \cl T = \cl S \otimes_{\beta} \cl T
$
for every operator system $\cl T$. One of the main goals of \cite{kptt2} (see also \cite{HP}) is to characterize the
nuclearity properties among the primary tensor products above which forms the following equivalences:

(min,max)-nuclearity = completely positive factorization property (CPFP),

(min,el)-nuclearity = exactness,

(min,er)-nuclearity = (operator system) local lifting property (osLLP),

(el,c)-nuclearity = double commutant expectation property (DCEP),

(el,max)-nuclearity = weak expectation property (WEP).

\noindent We remark that WEP and DCEP coincides for C*-algebras. Also, again for C*-algebras, Kirchberg's 
local lifting property (LLP) and osLLP coincides. For finite dimensional operator systems we simply use the term ``lifting property''.

$ $

We consider Sections 1,2,3 and 4 as the basic part of the paper. Since many of the constructions
in later sections are applicable to the Kirchberg conjecture we put the related discussion in Section 5. Recall
that the Kirchberg conjecture is equivalent to an outstanding problem in von Neumann algebra theory, namely Connes' embedding problem, and it states
that every C*-algebra that has LLP has WEP.
Since these properties extend to general operator systems it is natural to approach this conjecture
from an operator system perspective. In \cite{kptt2} it was shown that the Kirchberg conjecture
has an affirmative answer if and only if every finite dimensional operator system with the lifting
property has DCEP. One of our main goals in Section 5 is to obtain an even simpler form of this. Let
$C^*(\mathbb{F}_n)$ represent the full C*-algebra of the free group $\mathbb{F}_n$ on $n$ generators (equipped with the
discrete topology). We define
$$
\cl S_n = span\{g_1,...,g_n,e,g_1^*,...,g_n^*\} \subset C^*(\mathbb{F}_n)
$$
where the $g_i$'s are the unitary generators of $C^*(\mathbb{F}_n)$. One can consider $\cl S_n$ as the universal
operator system generated by $n$ contractions as it is the unique operator system with the following property:
Whenever $y_1,...,y_n$ are contractive elements of an operator system $\cl T$ then there is a unique unital and completely 
positive (ucp) map $\varphi: \cl S_n \rightarrow \cl T$ satisfying $\varphi(g_i) = y_i$ for $i=1,...,n$. 
As pointed out in \cite{kptt2}, $\cl S_n$ has the lifting property
for every $n$. One of our main results in Section 5 is the operator system analogue of Kirchberg's WEP characterization (\cite{Ki2}, see also \cite[Thm. 15.5]{pi}):
A unital C*-algebra $\cl A$ has WEP is and only if $\cl A \otimes_{min} \cl S_2  = \cl A \otimes_{max} \cl S_2$. 
Turning back to the Kirchberg conjecture we obtain the following five dimensional operator system variant.

\begin{theorem}
 The following are equivalent:
\begin{enumerate}
 \item The Kirchberg conjecture has an affirmative answer.
 \item $\cl S_2$ has DCEP.
 \item $\cl S_2 \otimes_{min} \cl S_2 =  \cl S_2 \otimes_{c} \cl S_2$.
\end{enumerate}
\end{theorem}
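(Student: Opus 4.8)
The plan is to establish the cycle $(1)\Rightarrow(2)\Rightarrow(3)\Rightarrow(1)$. The tools I would use are: the characterization from \cite{kptt2} that the Kirchberg conjecture holds if and only if every finite dimensional operator system with the lifting property has DCEP; the operator system form of Kirchberg's WEP theorem recalled above, that a unital C*-algebra $\cl A$ has WEP if and only if $\cl A\otimes_{min}\cl S_2=\cl A\otimes_{max}\cl S_2$; the fact that $\cl S_2$ has the lifting property; the nuclearity dictionary together with injectivity of $\otimes_{min}$; the coincidence $\cl A\otimes_{c}\cl T=\cl A\otimes_{max}\cl T$ whenever $\cl A$ is a unital C*-algebra (the same fact that makes WEP and DCEP agree for C*-algebras); and the flip identification $\cl S\otimes_{el}\cl T\cong\cl T\otimes_{er}\cl S$.

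The implication $(1)\Rightarrow(2)$ is immediate: if the Kirchberg conjecture holds then by \cite{kptt2} every finite dimensional operator system with the lifting property has DCEP, and $\cl S_2$ is such a system. For $(2)\Rightarrow(3)$ I would first observe that the lifting property of $\cl S_2$ is exactly $(min,er)$-nuclearity, so $\cl S_2\otimes_{min}\cl S_2=\cl S_2\otimes_{er}\cl S_2$; since both factors coincide, the flip identifies $\cl S_2\otimes_{er}\cl S_2$ with $\cl S_2\otimes_{el}\cl S_2$, giving $\cl S_2\otimes_{min}\cl S_2=\cl S_2\otimes_{el}\cl S_2$ with no further hypothesis. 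Now DCEP is $(el,c)$-nuclearity, which applied to the single test system $\cl T=\cl S_2$ yields $\cl S_2\otimes_{el}\cl S_2=\cl S_2\otimes_{c}\cl S_2$. Concatenating the two equalities produces $(3)$; note that only one instance of the definition of DCEP is consumed here, and exactness of $\cl S_2$ is not needed.

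The substantive direction is $(3)\Rightarrow(1)$. Since the Kirchberg conjecture is equivalent to $C^*(\mathbb{F}_2)$ (equivalently $C^*(\mathbb{F}_\infty)$) having WEP, the WEP theorem recalled above, applied to $\cl A=C^*(\mathbb{F}_2)$, reduces the goal to proving $C^*(\mathbb{F}_2)\otimes_{min}\cl S_2=C^*(\mathbb{F}_2)\otimes_{max}\cl S_2$. As $C^*(\mathbb{F}_2)$ is a C*-algebra, the right-hand side equals $C^*(\mathbb{F}_2)\otimes_{c}\cl S_2$, so it suffices to upgrade the first tensor factor in $(3)$ from $\cl S_2$ to the C*-algebra it generates, i.e. to show that equality of the $min$ and $c$ structures on the generating subsystem $\cl S_2\otimes\cl S_2$ forces the same equality on all of $C^*(\mathbb{F}_2)\otimes\cl S_2$. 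The $min$ side poses no difficulty, being inherited along $\cl S_2\subseteq C^*(\mathbb{F}_2)$ by injectivity of $\otimes_{min}$. The content is on the $c$ side: a commuting pair realizing the $c$-structure on $C^*(\mathbb{F}_2)\otimes\cl S_2$ is governed by a commuting ucp pair on $\cl S_2\otimes\cl S_2$, the generators $g_1,g_2$ lying in $\cl S_2$ and generating $C^*(\mathbb{F}_2)$, and $(3)$ forces this pair to agree with a spatial $min$ pair; one then propagates the agreement from the generators to arbitrary words by a simultaneous unitary dilation of the contractions implementing the first factor, carried out so as to preserve commutation with the second factor.

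I expect this propagation to be the main obstacle. Because the commuting tensor product is not injective in general, there is no formal functoriality that pushes equality of cones on the finite dimensional system $\cl S_2\otimes\cl S_2$ up to $C^*(\mathbb{F}_2)\otimes\cl S_2$; the step genuinely uses that $\cl S_2$ sits inside its C*-envelope $C^*(\mathbb{F}_2)$ as a unitary generating set and that contractions commuting with a fixed family admit unitary dilations into the bicommutant. Isolating the statement that the $min$ and commuting operator system tensor norms on $C^*(\mathbb{F}_2)\otimes\cl S_2$ are detected on $\cl S_2\otimes\cl S_2$ as a separate lemma is where the real work lies; granting it, the remaining implications are bookkeeping with the nuclearity dictionary and the two cited theorems.
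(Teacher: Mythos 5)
Your implications $(1)\Rightarrow(2)$ and $(2)\Rightarrow(3)$ are correct and essentially identical to the paper's: the first is the quoted equivalence from \cite{kptt2} plus the lifting property of $\cl S_2$, and the second is the chain $\mathrm{min}=\mathrm{er}=\mathrm{el}=\mathrm{c}$ coming from $(\mathrm{min},\mathrm{er})$-nuclearity, the flip, and $(\mathrm{el},\mathrm{c})$-nuclearity. The problem is $(3)\Rightarrow(1)$, where your proposal does not contain a proof: you reduce to the lemma that equality of the min and c structures on $\cl S_2\otimes\cl S_2$ forces $C^*(\mathbb{F}_2)\otimes_{min}\cl S_2=C^*(\mathbb{F}_2)\otimes_{max}\cl S_2$, you acknowledge this is ``where the real work lies,'' and the dilation argument you sketch for it fails. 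Two concrete reasons. First, a ucp map $\phi\colon C^*(\mathbb{F}_2)\to B(H)$ is \emph{not} determined by its restriction to $\cl S_2$: its values on words of length two or more are unconstrained by $\phi(g_1),\phi(g_2)$, so knowing that restricted commuting pairs behave well on $\cl S_2\otimes\cl S_2$ controls nothing about $(\phi\cdot\psi)(u)$ for $u$ involving longer words; there is no agreement ``on the generators'' to propagate. Second, the proposed propagation device, a simultaneous unitary dilation of $\phi(g_1),\phi(g_2)$ preserving commutation with the range of $\psi$, is exactly what exists for one contraction and not for two: Sz.-Nagy gives a ucp map $\gamma_T\colon C^*(\mathbb{Z})\to B(H)$, $g^n\mapsto T^n$, whose range lies in $C^*(I,T)$ and hence commutes with everything commuting with $T$ (this is how the paper's Example proves $\cl S_1$ is C*-nuclear), whereas for two free generators the dilation-induced ucp extension takes values such as $P_H U_1U_2|_H$ on words, which lie outside $C^*(I,T_1,T_2)$ and need not commute with the commutant of $\{T_1,T_2\}$. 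Were such a commutant-preserving dilation available, the $\cl S_1$ argument would run verbatim for $\cl S_2$ and yield $\cl S_2\otimes_{c}\cl T\subset C^*(\mathbb{F}_2)\otimes_{max}\cl T$ unconditionally; no known dilation theorem (Sz.-Nagy, Ando, or Arveson's commutant lifting) provides this, and the divide between $\cl S_1$ (C*-nuclear) and $\cl S_2$ (DCEP equivalent to the Kirchberg conjecture) sits precisely at this point. Note also that your reduction runs backwards: the intermediate statement $C^*(\mathbb{F}_2)\otimes_{min}\cl S_2=C^*(\mathbb{F}_2)\otimes_{max}\cl S_2$ is most naturally deduced \emph{from} (1) via Theorem \ref{thm new WEP}, so it is not a stepping stone of lesser depth.

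What fills the hole in the paper is not dilation theory but the ``enough unitaries'' machinery, i.e.\ Arveson extension plus Choi's multiplicative domain (Lemma \ref{lem uniq ext}, Proposition \ref{pr enough unitary}, Corollary \ref{co enough unitary}), and it renders your detour through the WEP characterization unnecessary. Since $C^*(\mathbb{F}_2)\otimes_{max}C^*(\mathbb{F}_2)$ admits a representation in which the two factors commute, the operator system structure it induces on $\cl S_2\otimes\cl S_2$ lies between min and c; hypothesis (3) collapses this, so $\cl S_2\otimes_{min}\cl S_2\hookrightarrow C^*(\mathbb{F}_2)\otimes_{max}C^*(\mathbb{F}_2)$ is ucp. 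This ucp map sends the unitaries $u\otimes v$, $u,v\in\{e,g_1,g_2,g_1^*,g_2^*\}$, which generate $C^*(\mathbb{F}_2)\otimes_{min}C^*(\mathbb{F}_2)$, to unitaries; by Lemma \ref{lem uniq ext} it therefore extends to a unital $*$-homomorphism on $C^*(\mathbb{F}_2)\otimes_{min}C^*(\mathbb{F}_2)$, which, being the identity on a generating set, is the canonical map. Hence $C^*(\mathbb{F}_2)\otimes_{min}C^*(\mathbb{F}_2)=C^*(\mathbb{F}_2)\otimes_{max}C^*(\mathbb{F}_2)$, which is (1) directly. It is the multiplicative domain argument, not a dilation, that performs the passage ``from generators to arbitrary words,'' and this step (or an equally substantive substitute) is what your proposal is missing; indeed Theorem \ref{thm new WEP}, which you invoke as a black box, is itself proved with the same corollary.
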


$ $

When $E$ and $F$ are Banach spaces then the natural algebraic inclusion of the minimal
Banach space tensor product $E \Hat{\otimes} F $
into $B(E^*,F)$  is an isometry. Moreover, when $E$ is finite dimensional this inclusion is bijective.
A similar embedding and bijectivity are also true in the non-commutative setting, that is, the same inclusion is a
complete isometry if one uses the minimal operator space tensor product and
considers completely bounded maps. Since the dual of a finite
dimensional operator system is again an operator system we have a similar
representation of the minimal operator system tensor product. In Section 6 we give
several applications of this result. In particular we show that
exactness and the lifting property are dual pairs. We also show that the lifting property of a 
finite dimensional operator system is preserved
under quotient by a null subspace, in contrast to C*-algebra ideal quotients.

$ $

In Section 7 we adapt some of the results of Ozawa and Pisier in the operator space setting to operator systems. 
We primarily show that $\mathbb{B} = B(H)$ and
$\mathbb{K} = K(H)$, the ideal of compact operators,  where $H = l^2$, are universal objects for the verification of exactness and the lifting property. More precisely
we prove that an operator system $\cl S$ is exact if and only if the induced map
$$
(\cl S \hat{\otimes}_{min} \mathbb{B})/(\cl S \bar{\otimes} \mathbb{K}) = \cl S \hat{\otimes}_{min} (\mathbb{B}/\mathbb{K}) 
$$  
is a complete order isomorphism. (Here $ \hat{\otimes}_{min}$ represents the completed minimal tensor product
and $\bar{\otimes}$ is the closure of the algebraic tensor product.) Likewise a finite dimensional operator system $\cl S$ has the lifting property if and only if
every ucp map $\varphi: \cl S \rightarrow \mathbb{B}/\mathbb{K}$ has a ucp lift on $\mathbb{B}$.

$ $

The amalgamated sum of two operator systems over the unit introduced in \cite{KL} (or coproduct
of two operator systems in the language of \cite{TF}) seem to be another natural structure to seek
 the stability of several nuclearity properties.  In Section 8  we first describe the coproduct of two operator systems in terms of
operator system quotients and then we show that the lifting property is preserved under this operation.
The stability of the double commutant expectation property, with some additional assumptions, seems to
be a hard problem. We show that an affirmative answer to such a question is directly related to the Kirchberg Conjecture.
More precisely if $ \cl S = span \{ 1,z,z^* \}\subset C(\mathbb{T})$, where $z$ is the coordinate function on the unit circle $\mathbb{T}$,
then the Kirchberg conjecture is equivalent to the statement that the five dimensional operator system $\cl S \oplus_1 \cl S$,
the coproduct of $\cl S$ with itself, 
has the double commutant expectation property. (Note: Here $\cl S$ coincides with $\cl S_1$ and $\cl S \oplus_1 \cl S$ coincides with
$\cl S_2$.)

$ $

In \cite{blerina}, Xhabli introduces the $k$-minimal and $k$-maximal structure on an operator system $\cl S$. After
recalling the universal properties of these constructions we studied the nuclearity within this context.
In particular, we show that if an operator system is equipped with the $k$-minimal structure
it has exactness and, in the finite dimensional case, the $k$-maximal
structure automatically implies the lifting property. This allow us to show that
every finite dimensional operator system has the $k$-lifting property, that is, if $\varphi: \cl S \rightarrow \cl A / I$ is a
ucp map, where $\cl A$ is a C*-algebra and $I$ is an ideal in $\cl A$, then $\varphi$ has a unital $k$-positive lifting
on $\cl A$ (for every $k$).
$$
\xymatrix{
   &     &  \cl A \ar[d]^q \\
 \cl S \ar[rr]_-{ucp \; \varphi} 
\ar@{.>}[rru]^{\tilde{\varphi}} &     &   \cl A/ I}
$$

$ $

From the nuclearity point of view matrix algebras are the best understood objects: In addition to being nuclear,
for an arbitrary C*-algebra, completely positive factorization property through matrix algebras
is equivalent to nuclearity (see \cite{CE0} e.g.). 
However, the quotients of the matrix algebras by some special kernels, or certain operator
subsystems of the these algebras under duality raise several difficult problems. In Section 10 we first recall these
quotient and duality results given in \cite{pf}. We simplify some of the proofs and discuss the Kirchberg
conjecture in this setting. In fact we see that this conjecture is a quotient and duality problem in the category of operator systems. 
We also look at the triple Kirchberg conjecture (Conjecture \ref{con tripleKC}).
The property $\mathbb{S}_2$, which coincides with Lance's weak
expectation for C*-algebras, appear to be at the center of understanding of these conjectures.

$ $

The Smith Ward problem (SWP), which is a question regarding the preservation of matricial numerical range of an operator under
compact perturbation, goes back to 1980. In Section 11 we abstractly characterize this problem. More
precisely, we see that SWP is a general three dimensional operator system problem rather than
a proper compact perturbation of an operator in the Calkin algebra. The following is our main result in Section 11:
\begin{theorem}
The following are equivalent:
\begin{enumerate}
 \item SWP has an affirmative answer.
 \item Every three dimensional operator system has the lifting property.
 \item Every three dimensional operator system is exact.
\end{enumerate}
\end{theorem}
\noindent This version allows us to
combine this problem with the Kirchberg conjecture (KC). In fact, if we assume both
SWP and KC then this would imply that every three dimensional operator system is C*-nuclear.
On the other hand the latter condition implies SWP. This lower dimensional
operator system problem seems to be very hard. Even for an operator system of the form $\cl S = span\{1,z,z^*\} \subset C(X)$, where
$X$ is a compact subset of the unit disk $\{z:\; |z|\leq 1\}$ and $z$ is the coordinate function,
we don't know whether $\cl S$ is C*-nuclear.

$ $

\section{Preliminaries}

In this section we establish the terminology and state the definitions and
basic results that shall be used throughout the paper. By a \textit{$*$-vector
space} we mean a complex vector space $V$ together with a map $*:V\rightarrow V$
that is involutive (i.e. $(v^*)^* = v$ for all $v$ in $V$) and conjugate linear
(i.e. $(\alpha v + w)^* = \bar{\alpha} v^* + w^*$ for all scalar $\alpha$ and
$v,w \in V$). An element $v \in V$ is called \textit{hermitian} (or
\textit{selfadjoint}) if $v = v^*$. We let $V_h$ denote the set of all
hermitian elements of $V$. By $M_{n,k}(V)$ we mean $n\times k$ matrices
whose entries are elements of $V$, that is, $M_{n,k}(V) = \{(v_{ij})_{i,j}
: v_{ij} \in V \mbox{ for } i = 1,...,n \mbox{ and } j = 1,...,m\}$ and we use
the notation $M_n(V)$ for $M_{n,n}(V)$. Note that $M_n(V)$ is again a $*$-vector
space with $(v_{ij})^* = (v_{ji}^*)$. We let $M_{n,k}$ denote the $n\times k$
matrices with complex entries and set $M_n = M_{n,n}$. If $A = (a_{ij})$ is in
$M_{m,n}$ and $X = (v_{ij})$ is in $M_{n,k}(V)$ then the multiplication $AX$ 
is an element of $M_{m,k}(V)$ whose $ij^{th}$ entry is equal $\Sigma_{r}
a_{ir}v_{rj}$ for $ i = 1,...,m$ and $ j = 1,...,k$. We define a right
multiplication with appropriate size of matrices in a similar way. 

$ $

If $V$ is a $*$-vector space then by a \textit{matrix ordering} (or \textit{a
matricial order}) on $V$ we mean a collection $\{C_n\}_{n=1}^\infty$ where each
$C_n$ is a cone in $M_n(V)_h$ and the following axioms are satisfied:

\begin{enumerate}
 \item $C_n$ is strict, that is, $C_n \cap (-C_n) = \{0\}$ for every $n$.

\item $\{C_n\}$ is compatible, that is, $A^* C_n A \subseteq C_m$ for every $A$
in $M_{n,m}$ and for every $n,m$.
\end{enumerate}

\noindent The $*$-vector space $V$ together with the matricial order structure
$\{C_n\}$ is called a \textit{matrix ordered $*$-vector space}. An
element in $C_n$ is called a \textit{positive} element of $M_n(V)$. There is a
natural (partial) order structure on $M_n(V)_h$ given by $A \leq B$ if $ B - A
$ is in $C_n$. We finally remark that we often use the notation
$M_n(V)^+$ for $C_n$. Perhaps the most important examples of these spaces are
$*$-closed subspaces of a $B(H)$, bounded linear operators on a Hilbert space
$H$, together with the induced matricial positive cone structure. More
precisely, if $V$ is such a subspace then $M_n(V)$ is again a $*$-closed
subspace of $M_n(B(H))$ which can be identified with $B(H \oplus \cdots \oplus
H)$, bounded operators on direct sum of $n$ copies of $H$. By using this
identification we will set $C_n = M_n(V) \cap M_n(B(H))^+$, where $M_n(B(H))^+$
denotes the positive elements of $M_n(B(H))$. It is elementary to verify that
the collection $\{C_n\}$ forms a matrix ordering on the $*$-vector space $V$.

An element $e$ of a matrix ordered $*$-vector space $V$ is called an
\textit{order unit} if for every selfadjoint element $v$ of $V$ there is a
positive real number $\alpha$ such that $\alpha e + v \geq 0$. Note that $e$
must be a positive element. We say that $e$ is \textit{matrix order unit} if
the corresponding $n \times n$ matrix given by
$$
e_n  = \left(
\begin{array}{ccc}
 e & & 0    \\
  &\ddots &   \\
0 && e
\end{array}
\right)
$$
is an order unit in $M_n(V)$ for every $n$. We say that $e$ is
\textit{Archimedean matrix order unit} if it is a matrix order unit and
satisfies the following: For any $v$ in $V$ if $\epsilon e + v$ is positive for
every $\epsilon > 0$ then $v$ is positive. A matrix ordered $*$-vector space
$V$ (with cone structure $\{C_n\}$) and Archimedean matrix order unit $e$ is
called an (abstract) operator system. We often drop the term ``Archimedean
matrix order'' and simply use ``unit'' for $e$. We sometimes use the notation
$(V, \{C_n\},e)$ for an operator system however to avoid excessive syntax we
simply prefer to use $\cl S$ (or $\cl T$, $\cl R$). The positive elements of
$\cl S$, i.e. $C_1$, is denoted by $\cl S^+$ and for the upper levels we
use $M_n(\cl S)^+$ rather than $C_n$. Sometimes we use $e{_\cl S}$ for the unit.
A subspace $V$ of $B(H)$ (or in general a unital C*-algebra $\cl A$) that
contains the unit $I$ and is closed under $*$ (i.e. a unital selfadjoint
subspace) is called a \textit{concrete operator system}. Note that $V$ together
with the induced matrix order structure, i.e. $C_n = M_n(V) \cap M_n(B(H))^+$ for
every $n$, and $I$ forms an (abstract) operator system. In the next paragraph we
work on the morphisms of operator systems and see that abstract and concrete
operator systems are ``essentially'' same.

$ $

Let $\cl S$ and $\cl T$ be two operator systems and $\varphi: \cl S 
\rightarrow \cl T$ be a linear map. We say that $\varphi$ is \textit{unital} if
$\varphi(e_{\cl S}) = e_{\cl T}$. $\varphi$ is called\textit{ positive} if it
maps positive elements of $\cl S$ to positive elements of $\cl T$, that is,
$\varphi(\cl S^+) \subset \cl T^+$, and \textit{completely positive} if its
$n^{th}$-\textit{amplification} $\varphi^n : M_n(\cl S) \rightarrow M_n(\cl T)$
given by $(s_{ij}) \mapsto (\varphi(s_{ij}))$ is positive for every $n$, in
other words, $\varphi^n( M_n(\cl S)^+) \subset  M_n(\cl T)^+$ for all $n$. The
term unital and completely positive will abbreviated as \textit{ucp}. $\varphi$
will be called a \textit{complete order embedding} if it is injective ucp map
such that whenever $(\varphi(s_{ij}))$ is positive in $M_n(\cl T)$ then
$(s_{ij})$ is positive in $M_n(\cl S)$. Two operator system $\cl S$ and $\cl T$
are called \textit{unitally completely order isomorphic} if there is a bijective
map $\varphi : \cl S \rightarrow \cl T$ that is unital and a complete order
isomorphism. A subspace $\cl S_0$ of operator system $\cl S$ which is unital and
selfadjoint is again an operator system  together with the induced matrix order
structure. In this case we say that $\cl S_0$ is an \textit{operator subsystem}
of $\cl S$. Note that the inclusion $\cl S_0 \hookrightarrow
\cl S$ is a unital complete order embedding. $\cl O$ stands for the category
whose objects are the operator systems and morphisms are the ucp maps. We are
now ready to state the celebrated theorem of Choi and Effros (\cite{CE2}).

\begin{theorem}
Up to a unital complete order isomorphism all the abstract and concrete operator
systems coincide. That is, if $\cl S$ is an operator system then there is a
Hilbert space $H$ and a unital  $*$-linear map $\varphi: \cl S\rightarrow B(H)$
which is a complete order embedding.
\end{theorem}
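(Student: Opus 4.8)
The plan is to realise the abstract system $\cl S$ as a concrete one by building, in the spirit of the GNS construction, a large supply of ucp maps into matrix algebras and taking their direct sum. The first step is to extract a norm from the order-unit data. For self-adjoint $x \in M_n(\cl S)$ I would put $\|x\|_n = \inf\{r>0 : r e_n - x \in C_n \text{ and } r e_n + x \in C_n\}$ and extend to arbitrary $x$ by the $2\times 2$ trick, taking the infimum of the $r$ for which $\left(\begin{smallmatrix} re_n & x \\ x^* & re_n\end{smallmatrix}\right) \in C_{2n}$. The matrix-order-unit axiom makes these infima finite, so that each $M_n(\cl S)$ becomes a normed space, and --- this is where the Archimedean axiom enters decisively --- each cone $C_n$ is \emph{closed} in this norm. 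Closedness is precisely what will let the separation argument below run.

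Second I would record the correspondence between completely positive maps and positive functionals. A linear map $\Phi : \cl S \to M_n$ is the same datum as a functional $s$ on $M_n(\cl S)$, via $s((x_{ij})) = \sum_{i,j} \Phi(x_{ij})_{ij}$, equivalently $\Phi(a)_{ij} = s(a\otimes E_{ij})$, where $a\otimes E_{ij}$ denotes the matrix in $M_n(\cl S)$ with $a$ in the $(i,j)$ slot. Unwinding the compatibility axiom $A^*C_kA\subseteq C_m$ shows that $\Phi$ is completely positive exactly when $s$ is positive on the operator system $M_n(\cl S)$; moreover, evaluating $\Phi^{(n)}(x)$ against the maximally entangled vector $\xi = \sum_i e_i\otimes e_i$ gives the identity $\langle \Phi^{(n)}(x)\xi,\xi\rangle = s(x)$, which converts non-positivity of $s$ on a given element into non-positivity of $\Phi^{(n)}$.

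Third, and at the heart of the matter, comes the detection of non-positive elements. Fix self-adjoint $x \in M_n(\cl S)$ with $x \notin C_n$. As $C_n$ is a closed proper cone in the real space $M_n(\cl S)_h$, the Hahn--Banach separation theorem yields a self-adjoint functional $s$ on $M_n(\cl S)$ with $s \geq 0$ on $C_n$ and $s(x) < 0$; thus $s$ is positive and, by the second step, corresponds to a completely positive $\Phi : \cl S \to M_n$ with $\langle \Phi^{(n)}(x)\xi,\xi\rangle = s(x) < 0$, so $\Phi^{(n)}(x)\not\geq 0$. It remains to make $\Phi$ unital: writing $D = \Phi(e) \geq 0$ and assuming $D$ invertible (otherwise compress to its range), the conjugated map $\Psi(a) = D^{-1/2}\Phi(a)D^{-1/2}$ is ucp, and since conjugation by the invertible matrix $I_n\otimes D^{-1/2}$ preserves non-positivity we still have $\Psi^{(n)}(x)\not\geq 0$. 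Hence every non-positive self-adjoint element is seen by some ucp matrix state.

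Finally I would assemble everything. Letting $\{\varphi_\alpha\}$ run over all ucp maps $\cl S \to M_{k_\alpha}$ and setting $\varphi = \bigoplus_\alpha \varphi_\alpha : \cl S \to B\bigl(\bigoplus_\alpha \mathbb{C}^{k_\alpha}\bigr)$, the map $\varphi$ is unital and completely positive as a direct sum of such maps. It is a complete order embedding: if $\varphi^{(n)}((x_{ij})) \geq 0$ then $\varphi_\alpha^{(n)}((x_{ij}))\geq 0$ for every $\alpha$, whence $(x_{ij}) \in C_n$ by the third step, so $\varphi^{(n)}$ reflects positivity; injectivity is the special case $n=1$, where the states of $\cl S$ separate points because $e$ is an Archimedean order unit. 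I expect the main obstacle to be the joint content of the first two steps: the Archimedean axiom is indispensable precisely because it is what forces the cones $C_n$ to be closed (so that Hahn--Banach separation is available) and the states to separate points, while the abstract Choi-type correspondence between positive functionals on $M_n(\cl S)$ and completely positive maps into $M_n$ is the technical workhorse that turns the separating functional into a genuine ucp witness reflecting, and not merely preserving, positivity.
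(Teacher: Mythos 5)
The paper does not actually prove this theorem: it is quoted as the celebrated representation theorem of Choi and Effros with a citation to \cite{CE2}, so there is no internal proof to compare against. Your argument is, in substance, exactly the classical Choi--Effros proof — order-unit norms whose Archimedean axiom closes the cones, the correspondence between cp maps $\cl S \to M_n$ and positive functionals on $M_n(\cl S)$, Hahn--Banach separation producing a ucp matrix state that witnesses each non-positive self-adjoint element, and a direct sum over all ucp matrix states — and it is correct. Only three routine points deserve tightening: the direct sum lands in $B(H)$ because ucp maps are contractive for the order norms of your first step (uniform boundedness); in the final step you must first argue that $x$ is self-adjoint whenever $\varphi^{(n)}(x)\geq 0$ (via injectivity and $*$-linearity of $\varphi$) before the separation step applies; and the compression-to-range trick for non-invertible $D=\Phi(e)$ uses the fact that $-rD\leq \Phi(a)\leq rD$ forces $\mathop{\rm ran}\Phi(a)\subseteq \mathop{\rm ran} D$, so no positivity information is lost.
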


Of course, in the above theorem $B(H)$ can be replaced with a unital
C*-algebra. A subspace $X$ of a C*-algebra $\cl A$ together with the induced
matrix norm structure is called a \textit{concrete operator space.} We refer
the reader to \cite{Pa} for an introductory exposition of these objects along with
their abstract characterization due to Ruan. If $\cl S$ is an operator system
then a concrete representation of $\cl S$ into a $B(H)$ endows $\cl S$ with an
operator space structure. It follows that this structure is independent of the
particular representation and, moreover, it can be intrinsically given as
$$
\|(s_{ij}) \|_n = \inf \{\alpha > 0 :  \; \left(
\begin{array}{cc}
 \alpha e_n & (s_{ij}) \\
 (s_{ji}^*)   &   \alpha e_n
\end{array} \right)
\mbox{ is in } M_{2n} (\cl S)^+
 \}.
$$ 
This is known as the \textit{canonical operator space structure }of $\cl S$.
We also assume some familiarity with the \textit{injectivity} in the category of
operator systems. We refer to \cite[Chp. 15]{Pa} for an excellent survey, however
for an immediate use in the sequel we remark that every injective operator
system is completely order isomorphic to a C*-algebra \cite[Thm 15.2]{Pa}. We
also need the fact that if $\cl S$ is an operator system then its injective
envelope $I(\cl S)$ is ``rigid'' in the sense that the only ucp map $\varphi:
I(\cl S) \rightarrow I(\cl S)$ with the property that $\varphi(s) = s$ for every
$s$ in $\cl S$ is the identity \cite[Thm 15.7]{Pa}.

\subsection{Some Special C*-covers} $\mbox{ }$

$ $

\noindent A C*-cover $(\cl A,i)$ of an operator system $\cl S$ is a C*-algebra
$\cl A$ with
a unital complete order embedding $i:\cl S \hookrightarrow \cl A$ such that
$i(\cl S)$ generates $\cl A$ as a C*-algebra, that is, $\cl A$ is the smallest
C*-algebra containing $i(\cl S)$. We occasionally identify $\cl S$ with $i(\cl
S)$ and consider $\cl S$ as an operator subsystem of $\cl A$. Every operator
system $\cl S$ attains two
special C*-covers namely the universal and the enveloping C*-algebras denoted by
$C_u^*(\cl S)$ and $C_e^*(\cl S)$, respectively. The universal C*-algebra
satisfies the following universal ``maximality'' property: Every ucp map
$\varphi : \cl S \rightarrow \cl A$, where $\cl A$ is a C*-algebra extends
uniquely to a unital $*$-homomorphism $\pi: C_u^*(\cl S) \rightarrow \cl A$. If
$\varphi: \cl S \rightarrow \cl T$ is a ucp map then the unital $*$-homomorphism
$\pi : C_u^*(\cl S) \rightarrow C_u^*(\cl T) $ associated with $\varphi$, of
course, constructed by enlarging the range space by $C_u^*(\cl T)$ first. We
also remark that if $\cl S \subset \cl T$ then $C_u^*(\cl S) \subset C_u^*(\cl
T)$, in other words, the C*-algebra generated by $\cl S$ in $C_u^*(\cl T)$
coincides with the universal C*-algebra of $\cl S$. This special C*-cover is used
extensively in \cite{kptt}, \cite{kptt2} and \cite{kw}. As it connects
operator systems to C*-algebras it has fundamental role in the tensor theory of
operator systems and, in particular, in the present paper.

$ $

The enveloping C*-algebra $C_e^*(\cl S)$ of $\cl S$ is defined as the C*-algebra
generated by $\cl S$ in its injective envelope $I(\cl S)$. It has the following
universal ``minimality'' property: For any C*-cover $i: \cl S \hookrightarrow
\cl A$ there is a unique unital $*$-homomorphism $\pi: \cl A  \rightarrow
C_e^*(\cl S)$ such that $\pi(i(s)) = s$ for every $s$ in $\cl S$ (we assume $\cl
S \subset C_e^*(\cl S)$). The enveloping C*-algebra of an operator system is
\textit{rigid} in the sense that if $\varphi: C^*_e(\cl S) \rightarrow \cl T$ is a ucp
map such that $\varphi|_{\cl S}$ is a complete order embedding then
$\varphi$ is a complete order embedding. We refer to \cite{HA} for the proof of these results and
further properties of enveloping C*-algebras.

\subsection{Duality} $\mbox{ }$

$ $

\noindent Duality, especially on the finite dimensional operator systems,
is a strong tool in the study of the stability of various
nuclearity properties and in this subsection we review basic properties on this topic.
If $\cl S$ is an operator system  then the Banach dual $\cl S^d$ has a natural
matrix ordered $*$-vector space structure. For $f$ in $\cl S^d$, the involution is
given by $f^*(s) = \overline{f(s^*)}$. The matricial order structure is
described as follows:
$$
(f_{ij}) \in M_n(\cl S^d) \mbox{ is positive if the map } \cl S \ni
s \mapsto (f_{ij}(s)) \in M_n \mbox{ is completely positive}. 
$$
Throughout the paper $\cl S^d$ will always represent this
matrix ordered vector space. The bidual Banach space $\cl S^{dd}$ has also
a natural matricial order structure arising from the fact that it is the dual of $\cl S^d$. The following
is perhaps well known, see \cite{kptt}, e.g.:
\begin{theorem}
$\cl S^{dd}$ is an operator system with unit $\hat{e}$, the canonical image of $e$ in $\cl S^{dd}$. 
Moreover, the canonical embedding of $\cl S$ into $\cl S^{dd}$ is a complete order embedding.
\end{theorem}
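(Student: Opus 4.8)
The plan is to verify directly that $\cl S^{dd}$, equipped with the matrix order it inherits as the dual of $\cl S^d$, satisfies the Choi--Effros axioms for an operator system, and then to check that the canonical map $\iota:\cl S\to\cl S^{dd}$, $\iota(s)(f)=f(s)$, is a unital complete order embedding. First I would record that $\cl S^d$ is itself a matrix ordered $*$-vector space: the involution $f^*(s)=\overline{f(s^*)}$ is conjugate-linear and involutive, each cone $M_n(\cl S^d)^+$ (the completely positive maps $\cl S\to M_n$) is strict because a map $\phi$ with $\pm\phi$ both completely positive satisfies $\phi(e)=0$ and hence $\phi=0$ (using that $e$ is an order unit), and compatibility $A^*M_n(\cl S^d)^+A\subseteq M_m(\cl S^d)^+$ holds because $X\mapsto A^*XA$ is completely positive on $M_n$. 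Applying the same construction to $\cl S^d$ gives the matrix order on $\cl S^{dd}=(\cl S^d)^d$, whose cones $M_n(\cl S^{dd})^+$ are the completely positive maps $\cl S^d\to M_n$; strictness here follows because the positive functionals span $\cl S^d$, so an $\Omega$ with $\pm\Omega$ completely positive must vanish on all of $\cl S^d$.

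The substantive point is that $\hat e$ is an Archimedean matrix order unit. For the matrix order unit property, fix a self-adjoint $\Omega=(\omega_{ij})\in M_n(\cl S^{dd})$; I must produce $\alpha>0$ with $\alpha\hat e_n+\Omega$ completely positive, i.e. $(\alpha\hat e_n+\Omega)^{(k)}(F)\ge0$ for every completely positive $\phi\colon\cl S\to M_k$ (viewed as $F\in M_k(\cl S^d)^+$) and every $k$. Testing against a vector $\eta\in\mathbb C^k\otimes\mathbb C^n$ and regrouping, one finds $\langle\Omega^{(k)}(F)\eta,\eta\rangle=\langle\Omega,\psi\rangle$, where $\psi(s)=W^{*}\phi(s)W$ is a compression of $\phi$ (with $W\in M_{k,n}$ the matrix representing $\eta$), hence again a completely positive map $\cl S\to M_n$, while $\langle(\hat e_n)^{(k)}(F)\eta,\eta\rangle=\tr\psi(e)$. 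Thus it suffices to prove the single estimate
$$
|\langle\Omega,\psi\rangle|\;\le\;\|\Omega\|\,\tr\!\big(\psi(e)\big)\qquad\text{for all }\psi\in M_n(\cl S^d)^+,
$$
with $\|\Omega\|$ the norm of $\Omega$ as a bounded functional on $M_n(\cl S^d)$. This in turn follows from two standard facts: a completely positive $\psi$ attains its cb-norm at the unit, $\|\psi\|_{\mathrm{cb}}=\|\psi(e)\|$, and for a positive matrix the trace dominates the operator norm, so $\|\psi\|_{M_n(\cl S^d)}=\|\psi(e)\|\le\tr\psi(e)$. Choosing $\alpha=\|\Omega\|$ then makes $\alpha\hat e_n+\Omega$ completely positive. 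The Archimedean property is easier: if $\epsilon\hat e_n+\Omega\ge0$ for all $\epsilon>0$ then, evaluating on a fixed $F$ and letting $\epsilon\to0$, closedness of the positive cone of $M_{kn}$ gives $\Omega^{(k)}(F)\ge0$, so $\Omega\ge0$. By the Choi--Effros characterization $\cl S^{dd}$ is then an operator system with unit $\hat e$.

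It remains to show $\iota$ is a unital complete order embedding. Unitality $\iota(e)=\hat e$ and injectivity (since $\cl S^d$ separates the points of $\cl S$) are immediate. For complete positivity, take $S=(s_{ij})\in M_n(\cl S)^+$; then $\iota_n(S)$ corresponds to the map $\Psi_S\colon\cl S^d\to M_n$, $f\mapsto(f(s_{ij}))$, and for any completely positive $\phi\colon\cl S\to M_k$ the canonical shuffle $M_k(M_n)\cong M_n(M_k)$ carries $\Psi_S^{(k)}(\phi)$ to $\phi^{(n)}(S)\ge0$; since the shuffle is a unitary conjugation it preserves positivity, so $\Psi_S$ is completely positive. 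For the reverse (order embedding) direction, suppose $S\in M_n(\cl S)_h$ with $\Psi_S$ completely positive. Applying $\Psi_S^{(n)}$ to an arbitrary completely positive $\phi\colon\cl S\to M_n$ and using the same shuffle gives $\phi^{(n)}(S)\ge0$, hence $\sum_{ij}\phi(s_{ij})_{ij}\ge0$. Under the standard bijection between positive linear functionals on $M_n(\cl S)$ and completely positive maps $\cl S\to M_n$, this says $L(S)\ge0$ for every positive functional $L$ on the operator system $M_n(\cl S)$; since states detect positivity in an operator system, $S\ge0$. Thus $\iota$ reflects positivity at every level.

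The main obstacle I anticipate is the matrix order unit step: the crude bound $\|\Omega^{(k)}(F)\|\le\|\Omega\|\,\|\phi(e)\|$ is useless because $\phi(e)$ may be badly degenerate, so no single $\alpha$ would work. The fix---recognizing $\langle\Omega^{(k)}(F)\eta,\eta\rangle$ as a pairing against the compressed completely positive map $\psi$ and controlling it by $\tr\psi(e)$ rather than by an operator norm---is where the structure of completely positive maps (norm attained at the unit, and the range of $\phi$ sitting over the support of $\phi(e)$) is really used. Everything else reduces to standard facts about operator systems and the canonical shuffle.
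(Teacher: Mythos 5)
Your proof is correct, and there is an important contextual point: the paper itself contains \emph{no} proof of this theorem --- it is stated as ``perhaps well known'' with a citation to \cite{kptt} --- so your argument has to stand on its own, and it does. The crux, as you identify, is the matrix order unit property for $\hat e$, and your device for it is sound: testing $(\alpha\hat e_n+\Omega)^{(k)}(F)$ against a vector $\eta\in\mathbb C^k\otimes\mathbb C^n$ and recognizing the pairing as $\alpha\tr\psi(e)+\langle\Omega,\psi\rangle$ with $\psi=W^*\phi(\cdot)W\in M_n(\cl S^d)^+$ converts a positivity test at an arbitrary level $k$ into a pairing against the \emph{fixed} cone $M_n(\cl S^d)^+$, and the two standard facts ($\|\psi\|_{cb}=\|\psi(e)\|$ for completely positive maps, and $\|A\|\le\tr A$ for $A\ge 0$ in $M_n$) then give a bound uniform in $k$. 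The Archimedean step (closedness of $M_{kn}^+$), and the complete order embedding via the canonical shuffle together with the bijection between $CP(\cl S,M_n)$ and positive functionals on $M_n(\cl S)$, are likewise correct. This intrinsic verification is a genuinely different route from the one usually taken in the literature the paper points to, where one embeds $\cl S$ into a C*-cover $\cl A$, uses that $\cl A^{dd}=\cl A^{**}$ is a von Neumann algebra, and transports the operator system structure to $\cl S^{dd}\subseteq\cl A^{**}$ after checking the two matrix orderings agree; your approach avoids all bidual C*-machinery at the cost of the explicit trace estimate, which is a fair trade and arguably more self-contained.

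Two details are left implicit and should be said aloud, though neither is a real gap. First, you invoke ``the norm of $\Omega$ as a bounded functional on $M_n(\cl S^d)$'' without fixing the norm on $M_n(\cl S^d)$ or checking boundedness; with the norm coming from $M_n(\cl S^d)\cong CB(\cl S,M_n)$ this is fine, because each entry $\omega_{ij}$ lies in the Banach dual of $\cl S^d$ and the entry maps $\psi\mapsto\psi_{ij}$ are contractive from the cb-norm to the dual norm, so $\|\Omega\|\le\sum_{ij}\|\omega_{ij}\|<\infty$ (alternatively, just take $\alpha=\sum_{ij}\|\omega_{ij}\|$ and bypass the issue). Second, the strictness of the cones of $\cl S^{dd}$ uses that positive functionals span $\cl S^d$; this is true but requires the Jordan-type decomposition obtained by extending a self-adjoint bounded functional on $\cl S$ to a containing C*-algebra, symmetrizing, decomposing there, and restricting --- worth a sentence, since it is the only place a C*-cover enters your argument at all.
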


A state $f$ on $\cl S$ is said to be {\it faithful} if $s\geq 0$ and $f(s) = 0$ implies that
$s = 0$, in other words, $f$ maps non-zero positive elements to positive scalars.
When $\cl S$ is a finite dimensional operator system then it possesses a faithful state
which is an Archimedean matrix order unit for the dual structure \cite[Sec. 4]{CE2}:

\begin{theorem}[Choi-Effros] Suppose $\cl S$ is a finite dimensional operator system.
Then there are faithful states on $\cl S$ and each faithful state is an Archimedean
order unit for the matrix ordered space $\cl S^d$. 
\end{theorem}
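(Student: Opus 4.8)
The plan is to treat the two assertions separately, after first recording the elementary but crucial fact that in finite dimensions the cone $\cl S^+$ is norm closed. Indeed, if $s_k \to s$ with $s_k \geq 0$, then for each $\epsilon > 0$ the element $\epsilon e$ is an interior point of $\cl S^+$ (as $e$ is an order unit), so a whole norm-ball around $\epsilon e$ lies in $\cl S^+$; hence $\epsilon e + s - s_k \geq 0$ once $k$ is large, and adding $s_k \geq 0$ gives $\epsilon e + s \geq 0$, whereupon the Archimedean axiom yields $s \geq 0$. With closedness in hand, existence of faithful states is pure finite-dimensional convex geometry: $\cl S^+$ is a closed, pointed (by strictness), generating (by the order unit axiom) cone in $\cl S_h$, so its dual cone in $(\cl S^d)_h$ has nonempty interior, and the interior consists exactly of the functionals that are strictly positive on $\cl S^+ \setminus \{0\}$. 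Any such functional $f$ has $f(e) > 0$, and rescaling to $f(e) = 1$ produces a faithful state.

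For the second assertion I would first isolate the scalar ($n=1$) case as a lemma valid for every finite-dimensional operator system $\cl T$ with faithful state $g$: that $g$ is an Archimedean order unit for $(\cl T^d)_h$. The order-unit part uses faithfulness through compactness: the set $K = \{t \in \cl T^+ : \|t\| = 1\}$ is compact (a closed cone met with the unit sphere), and $g$ is continuous and strictly positive on it, so $m := \min_K g > 0$; then for any self-adjoint $h \in \cl T^d$ one has $\alpha g(t) + h(t) \geq \alpha m - \|h\| \geq 0$ on $K$ whenever $\alpha \geq \|h\|/m$, and homogeneity extends this to all of $\cl T^+$, giving $\alpha g + h \geq 0$. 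The Archimedean part is immediate and needs only positivity of $g$: if $\epsilon g + h$ is a positive functional for every $\epsilon > 0$, then $h(t) \geq -\epsilon g(t)$ for all $t \in \cl T^+$, and letting $\epsilon \to 0$ gives $h \geq 0$.

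The matrix levels I would then reduce to this lemma. Fix $n$ and use the canonical identification of $M_n(\cl S^d)$ with linear maps $\cl S \to M_n$, under which $M_n(\cl S^d)^+$ is the cone of completely positive maps, together with the standard correspondence sending a map $\Phi$ to the functional $L_\Phi((s_{ij})) = \sum_{i,j} \langle \Phi(s_{ij}) e_j, e_i \rangle$ on $M_n(\cl S)$. This yields a linear isomorphism $\Theta : M_n(\cl S^d) \to (M_n(\cl S))^d$ carrying self-adjoints to self-adjoints and the positive cone onto the positive functionals, and it sends the diagonal element $f_n = \diag(f, \ldots, f)$ to the functional $\hat f((s_{ij})) = \sum_i f(s_{ii})$. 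Because $\Theta$ is an order isomorphism fixing these data, $f_n$ is an Archimedean order unit of $M_n(\cl S^d)$ exactly when $\hat f$ is one of $(M_n(\cl S))^d$. Now $M_n(\cl S)$ is again a finite-dimensional operator system with unit $e_n$, and $\frac{1}{n}\hat f$ is a state on it; moreover it is faithful, since if $(s_{ij}) \geq 0$ with $\sum_i f(s_{ii}) = 0$, then each compression $s_{ii} \geq 0$ with $f(s_{ii}) = 0$ forces $s_{ii} = 0$ by faithfulness of $f$, and a positive operator matrix with zero diagonal is itself $0$. Applying the lemma to $M_n(\cl S)$ with $\hat f$ and transporting back through $\Theta$ shows that $f_n$ is an Archimedean order unit of $M_n(\cl S^d)$ for every $n$, which is the claim.

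I expect the only real obstacle to be the bookkeeping in this last reduction: correctly invoking the correspondence between completely positive maps and positive functionals so that the cones and the distinguished unit match under $\Theta$, and verifying that $\hat f$ is genuinely faithful on $M_n(\cl S)$, which is precisely where the operator-system fact that a positive matrix with vanishing diagonal must be zero enters. The scalar lemma and the existence of faithful states are routine once closedness of the cone is recorded.
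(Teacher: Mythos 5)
Your proof is correct, but there is nothing in the paper to compare it against: the paper states this theorem as a quotation of Choi--Effros, citing Section 4 of \cite{CE2}, and gives no argument of its own. Your reconstruction is a sound, self-contained proof, and its four ingredients all check out: (i) norm-closedness of $\cl S^+$ in finite dimensions (an order unit is an interior point of the cone, and the Archimedean axiom converts ``$\epsilon e + s \geq 0$ for all $\epsilon$'' into $s \geq 0$); (ii) existence of faithful states as interior points of the dual cone of a closed, pointed cone in a finite-dimensional space; (iii) the compactness argument on $K=\{t\in\cl T^+ : \|t\|=1\}$, which correctly isolates where faithfulness is used (the order-unit half) versus where mere positivity suffices (the Archimedean half); and (iv) the reduction of the $n$-th matrix level of $\cl S^d$ to the ground level of $(M_n(\cl S))^d$ via the correspondence between matrices of functionals, completely positive maps $\cl S \to M_n$, and positive linear functionals on $M_n(\cl S)$. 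It is worth noting that this last identification, $M_n(\cl R^d)=(M_n(\cl R))^d$ with the cone and involution bookkeeping you describe, is exactly the one the paper itself invokes (citing \cite[Thm. 6.1]{Pa}) in its proof of the duality $(\cl S \otimes_{min} \cl T)^d = \cl S^d \otimes_{max} \cl T^d$, so your reduction uses the paper's own toolkit rather than anything exotic. The one step you flag as delicate --- that $\hat f$ is faithful on $M_n(\cl S)$ --- is handled correctly: compressions of a positive matrix are positive by the compatibility axiom, and a positive matrix over an operator system with vanishing diagonal vanishes, as one sees in a concrete representation; this is indeed the only place genuine operator-system structure (beyond ordered vector space theory) enters the matrix-level argument.
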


\noindent Consequently, the dual of a finite dimensional operator system is again an operator
system when we fix a faithful state. Also note that when we pass to the second dual, $\hat{e} \in \cl S^{dd}$
is a faithful state on $\cl S^d$. The following will be useful in later sections:

\begin{lemma}\label{lem dualofkpos.map}
Let $\cl S$ and $\cl T$ be two operator systems and $\varphi: \cl S \rightarrow
\cl T$ be a linear map. Then $\varphi$ is $k$-positive if and only if
$\varphi^d : \cl T^d \rightarrow \cl S^d$ is $k$-positive.
\end{lemma}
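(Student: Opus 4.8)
The plan is to unwind both sides into a single statement about compositions with completely positive maps into $M_k$, and then to obtain one implication for free from the other by passing to the bidual.

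First I would record what each side means. By the very definition of the dual matrix ordering, an element $(f_{ij})$ of $M_k(\cl T^d)$ is positive exactly when the associated map $\psi=(f_{ij}):\cl T\to M_k$, $t\mapsto(f_{ij}(t))$, is completely positive; and $(\varphi^d(f_{ij}))=(f_{ij}\circ\varphi)$ is positive in $M_k(\cl S^d)$ exactly when $s\mapsto (f_{ij}(\varphi(s)))=(\psi\circ\varphi)(s)$ is completely positive. Hence $\varphi^d$ is $k$-positive if and only if $\psi\circ\varphi:\cl S\to M_k$ is completely positive for every completely positive $\psi:\cl T\to M_k$. Thus the lemma is reduced to showing that $\varphi$ is $k$-positive precisely when it has this last property.

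For the forward implication, suppose $\varphi$ is $k$-positive and let $\psi:\cl T\to M_k$ be completely positive. Writing $(\psi\circ\varphi)^{(k)}=\psi^{(k)}\circ\varphi^{(k)}$ for the $k$-th amplifications, the map $\varphi^{(k)}:M_k(\cl S)\to M_k(\cl T)$ is positive because $\varphi$ is $k$-positive, and $\psi^{(k)}:M_k(\cl T)\to M_k(M_k)$ is positive because $\psi$ is completely positive; their composite is therefore positive, so $\psi\circ\varphi$ is a $k$-positive map into $M_k$. The main ingredient is now the classical fact that a $k$-positive map into $M_k$ is automatically completely positive (see \cite[Ch. 6]{Pa}); applying it gives that $\psi\circ\varphi$ is completely positive, as required. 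I expect this step --- the passage from $k$-positivity to complete positivity, which works only because the target is $M_k$ with the matching level $k$ --- to be the real content of the argument.

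For the converse I would avoid repeating the analysis and instead apply the forward implication to the map $\varphi^d:\cl T^d\to\cl S^d$ itself. If $\varphi^d$ is $k$-positive, then by the implication just proved its dual $\varphi^{dd}:\cl S^{dd}\to\cl T^{dd}$ is $k$-positive. A direct check on functionals shows $\varphi^{dd}$ restricts to $\varphi$ under the canonical embeddings, i.e. $\varphi^{dd}\circ\iota_{\cl S}=\iota_{\cl T}\circ\varphi$. Since these embeddings are complete order embeddings (the bidual is an operator system into which $\cl S$, resp. $\cl T$, embeds completely order isomorphically), a positive element $(s_{ij})\in M_k(\cl S)^+$ is carried into $M_k(\cl S^{dd})^+$, pushed by the $k$-positive map $\varphi^{dd}$ into $M_k(\cl T^{dd})^+$, and there agrees with the image of $(\varphi(s_{ij}))$; reflecting positivity back through the complete order embedding $\iota_{\cl T}$ shows $(\varphi(s_{ij}))\in M_k(\cl T)^+$, so $\varphi$ is $k$-positive. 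I would also note that the forward implication, as well as the fact about $k$-positive maps into $M_k$, are statements about matrix orderings only and require no order unit, so applying them to the duals $\cl S^d,\cl T^d$ (which need not themselves be operator systems) is legitimate.
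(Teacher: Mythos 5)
Your proof is correct and follows essentially the same route as the paper's: the forward direction unwinds the dual matrix ordering and invokes Choi's theorem that a $k$-positive map into $M_k$ is completely positive, while the converse applies the forward implication to $\varphi^d$ itself and restricts $\varphi^{dd}$ back to $\varphi$ through the complete order embeddings of $\cl S$ and $\cl T$ into their biduals. Your closing remark that these steps use only the matrix orderings and need no order unit is a worthwhile precision the paper leaves implicit, since $\cl S^d$ and $\cl T^d$ need not themselves be operator systems.
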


\begin{proof} First suppose that $\varphi$ is $k$-positive. Let $(g_{ij})$ be in $M_k(\cl T^d)^+$. We need to show that
$(\varphi^d(g_{ij}))$ is in $M_k(\cl S^d)^+$, that is, the map 
$$
\cl S \ni s \mapsto (\varphi^d(g_{ij}(s)))  = (g_{ij}(\varphi(s))) \in M_k
$$
is completely positive. By using a result of Choi, see \cite[Thm. 6.1]{Pa} e.g., it is
enough to show that this map is $k$-positive. So let  $(s_{lm})$ be positive
in $M_k(\cl S)$. Since $\varphi$ is $k$-positive we have that $(\varphi(s_{lm}))$
is positive in $M_n(\cl T)$. Now using the definition of positivity of
$(g_{ij})$ we have that $\big( g_{ij}(\varphi(s_{lm})) \big)$ is positive in
$M_k(M_k)$. Conversely, suppose that $\varphi^d$ is $k$-positive. By using
the above argument, we have that $\varphi^{dd}: \cl S^{dd} \rightarrow \cl T^{dd}$ is $k$-positive.
Since $\cl S \subset \cl S^{dd}$ and $\cl T \subset \cl T^{dd}$ completely order isomorphically
we have that $\varphi = \varphi^{dd}|_{\cl S}$ is $k$-positive.
\end{proof}

$ $

\section{Operator System Quotients}\label{sec quotients}

In this section we recall some basic results about operator system quotients
introduced in  \cite[Sec. 3, 4]{kptt2}. This quotient theory is also studied
and used extensively in \cite{pf} and some of them are included in the sequel.
We exhibit some relations between the quotient theory and  duality
for finite dimensional operator systems. We establish some universal
objects, namely the coproducts of operator systems, by using the quotient
theory in a later section. 

$ $

A subspace $J$ of an operator system $\cl S$ is called a \textit{kernel} if it
is the kernel of some ucp map defined from $\cl S$ into an operator system $\cl
T$. Note that a kernel $J$ has to be a $*$-closed, non-unital subspace of $\cl
S$, however, these properties,  in general, do not characterize a kernel. The
following is Proposition 3.2 of \cite{kptt2}.

\begin{proposition}
Let $J$ be a subspace of $\cl S$. Then the following are equivalent:
\begin{enumerate}
 \item $J$ is a kernel,
 \item $J$ is the kernel of a cp map defined from $\cl S$ into an operator system
$\cl T$,
  \item $J$ is the kernel of a positive map defined from $\cl S$ into an operator
system $\cl T$,
 \item there is a collection of states $f_\alpha$ such that $J =
\cap_{\alpha}ker(f_{\alpha})$.
\end{enumerate}
\end{proposition}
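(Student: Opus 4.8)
The proposition is a chain of four equivalences about when a subspace $J \subseteq \cl S$ is a kernel. Since ucp maps are in particular cp maps, and cp maps are in particular positive maps, the implications $(1)\Rightarrow(2)\Rightarrow(3)$ are immediate from the definitions. The substantive content lies in closing the loop, and I would organize the argument around the scheme $(3)\Rightarrow(4)\Rightarrow(1)$.

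For $(3)\Rightarrow(4)$, suppose $J = \ker\varphi$ for a positive map $\varphi:\cl S \to \cl T$. The plan is to detect positivity in $\cl T$ by testing against states. Fix a faithful concrete representation $\cl T \hookrightarrow B(H)$; for each unit vector $\xi \in H$ the vector functional $t \mapsto \langle t\xi,\xi\rangle$ restricts to a state on $\cl T$, and composing with $\varphi$ gives a state $f_\xi = \langle \varphi(\cdot)\xi,\xi\rangle$ on $\cl S$ (unitality and positivity of $\varphi$ guarantee $f_\xi$ is a state). I would then argue that $\ker\varphi = \bigcap_\xi \ker f_\xi$: the inclusion $\subseteq$ is clear, and for $\supseteq$, if $s \in \cl S$ lies in every $\ker f_\xi$ then $\langle \varphi(s)\xi,\xi\rangle = 0$ for all $\xi$, which forces $\varphi(s) = 0$ since a bounded operator (here the selfadjoint real and imaginary parts of $\varphi(s)$ separately) vanishing on all vector states is zero. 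This yields the required family of states $\{f_\alpha\}$.

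For $(4)\Rightarrow(1)$, suppose $J = \bigcap_\alpha \ker f_\alpha$ for some collection of states $f_\alpha$ on $\cl S$. The idea is to assemble these states into a single ucp map whose kernel is exactly $J$. I would form the map
\[
\Phi : \cl S \longrightarrow \prod_\alpha \mathbb{C}, \qquad \Phi(s) = (f_\alpha(s))_\alpha,
\]
taking values in a suitable operator system built from copies of $\mathbb{C}$ (for instance an $\ell^\infty$-type direct product of the scalars, or $C(X)$ over the appropriate index space). Each $f_\alpha$ is ucp as a state, so $\Phi$ is unital and completely positive into this commutative C*-algebra, and by construction $\ker\Phi = \bigcap_\alpha \ker f_\alpha = J$. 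Hence $J$ is the kernel of a ucp map, which is exactly condition $(1)$.

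The step I expect to require the most care is $(4)\Rightarrow(1)$: one must ensure the target of $\Phi$ is genuinely an operator system (a unital C*-algebra with its canonical order unit) and that the product/direct-sum construction keeps $\Phi$ completely positive and unital simultaneously, rather than merely positive. Using a commutative C*-algebra as the codomain is what makes complete positivity automatic, since positive maps into commutative C*-algebras are automatically completely positive; this is the key simplification that lets the single-state data in $(4)$ upgrade all the way back to the ucp requirement in $(1)$.
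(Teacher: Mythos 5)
The paper itself gives no proof of this proposition; it is quoted from \cite{kptt2}, so your argument has to stand on its own. Your overall cycle $(1)\Rightarrow(2)\Rightarrow(3)\Rightarrow(4)\Rightarrow(1)$ is the right structure, and your $(4)\Rightarrow(1)$ step is correct and complete in its essentials: the map $\Phi(s) = (f_\alpha(s))_\alpha$ lands in the commutative unital C*-algebra $\prod_\alpha \mathbb{C}$ (with supremum norm) because states are contractive, it is unital since each $f_\alpha(e)=1$, it is completely positive because positivity in $M_n$ of that product is checked coordinatewise and positive functionals on operator systems are automatically completely positive, and its kernel is exactly $\bigcap_\alpha \ker f_\alpha = J$.

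The gap is in $(3)\Rightarrow(4)$. You write that ``unitality and positivity of $\varphi$ guarantee $f_\xi$ is a state,'' but in condition (3) the map $\varphi$ is only assumed to be positive --- unitality is not available, and the entire point of including (3) is that it is the weakest hypothesis in the chain. As written, $f_\xi = \langle \varphi(\cdot)\xi,\xi\rangle$ is merely a positive linear functional: $f_\xi(e) = \langle \varphi(e)\xi,\xi\rangle$ need not equal $1$ and may even vanish, so your family need not consist of states and condition (4) is not established. The repair is short but must be made explicitly. For a positive linear functional $f$ on an operator system one has $\|f\| = f(e)$; hence either $f_\xi(e) > 0$, in which case $g_\xi = f_\xi / f_\xi(e)$ is a state with $\ker g_\xi = \ker f_\xi$, or $f_\xi(e) = 0$, in which case $f_\xi \equiv 0$ and that functional may be discarded without affecting the intersection. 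With the family $\{g_\xi : f_\xi \neq 0\}$ your reverse inclusion then goes through: if $g_\xi(s) = 0$ for all such $\xi$, then $\langle \varphi(s)\xi,\xi\rangle = 0$ for \emph{every} unit vector $\xi$ (trivially so when $f_\xi \equiv 0$), hence $\varphi(s) = 0$ and $s \in J$. One residual degenerate case surfaces here: if $\varphi(e) = 0$ then the same order argument gives $\varphi \equiv 0$, so $J = \cl S$, and no family of states can have intersection $\cl S$; this pathology is implicitly excluded because a kernel is by definition a non-unital subspace, but your proof should acknowledge it rather than pass over it silently.
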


The algebraic quotient $\cl S/J$ has a natural involution given by $(s+J)^* =
s^*+J$. To define the matricial order structure we first consider the following
cones:
$$
D_n = \{(s_{ij}+J)_{i,j=1}^n:\;\;(s_{ij}) \in M_n(\cl S)^+\}.
$$
It is elementary to show that $\{D_n\}_{n=1}^\infty$ forms a strict, compatible
order structure. Moreover, $e+J$ is a matrix order unit. However, it fails to be
Archimedean, that is, if $(s+J) + \epsilon (e+J)$ is in $D_1$ for every
$\epsilon > 0$, then $s+J$ may not be in $D_1$. To solve this problem we
use the Archimedeanization process introduced in \cite{pt}. More precisely, we
enlarge the cones in such a way that they still form a strict compatible
matricial order structure and $e+J$ is an Archimedean matrix order unit.
Consider
$$
C_n = \{(s_{ij}+J)_{i,j=1}^n:\;\;(s_{ij}) +\epsilon (e+J)_n \in D_n \mbox{ for
every } \epsilon > 0 \}.
$$
The $*$-vector space $\cl S/J$ together with the
matricial order structure $\{C_n\}_{n=1}^\infty$ and unit $e+J$ form an operator
system. We refer to \cite[Sec. 3]{kptt2} for the proof of this result.
The operator system $\cl S / J$ is called the \textit{quotient operator
system}.  A kernel $J$  is called \textit{proximinal} if $D_1 = C_1$ and
\textit{completely proximinal} if $D_n = C_n$ for every $n$. We remark that the proximinality in this context is different
than the norm-proximinality in the Banach or operator space quotients.

$ $

One of the fundamental properties of an operator system quotient $\cl S / J$ is
its relation with morphisms. If $\varphi: \cl S \rightarrow \cl T$ is a ucp
map with $ J \subseteq ker(\varphi) $ then the associated map
$\bar{\varphi} : \cl S / J \rightarrow T$ is again a ucp map. Conversely,
if $\psi: \cl S  / J \rightarrow \cl T$ is a ucp map then there exists a
unique ucp map $\phi : \cl S \rightarrow \cl T$ with, necessarily, $J \subseteq
ker(\phi)$ such that $\phi = q \circ \psi$ where $q$ is the quotient map from
$\cl S$ onto $\cl S / J$. We also remark that if one considers completely
positive maps and drop the condition on the unitality then both of these
universal properties still hold.

$ $

\noindent \textbf{Remark:} If one starts with a $*$-closed, non-unital subspace
$J$ of an operator system
$\cl S$ then, on the algebraic quotient $\cl S / J$ the involution is still
well-defined. We can still define $D_n$ in similar fashion and it is
elementary to show that $\{D_n\}$ is a compatible matricial cone structure. It
is possible that $\{D_n\}$ is strict as well. However, in order to obtain the
Archimedeanization property of $e+J$ we again need to enlarge the cones and define
$\{C_n\}$ in a similar way. Now it can be shown that $C_1$ is strict, that is,
$C_1 \cap (-C_1) = \{0\}$, if and only if $J$ is a kernel. Consequently starting
with a kernel is essential in the operator system quotient. (See \cite[Sec.
3]{kptt2} for an extended discussion on this topic).

$ $

\begin{remark}
Let $\cl A$ be a unital C*-algebra and $I$ be an ideal in $\cl A$. (It is easy
to see that $I$ is a kernel, in fact it is the kernel of the quotient map $\cl
A \rightarrow \cl A/I$). Then the C*-algebraic quotient of $\cl A$ by $I$ is
unitally completely order isomorphic to the operator system quotient $\cl A/I$.
Moreover, $I$ is proximinal.
\end{remark}

Proximinality is a useful tool and we want to consider some special cases in
which the kernels are automatically proximinal. The first part of the following
is essentially \cite[Lemma 4.3.]{kptt2}.

\begin{lemma}\label{npnn}
Let $y$ be a selfadjoint element of an operator system $\cl S$ which is neither positive nor
negative. Then $span\{y\}$ is a kernel in $\cl S$. Moreover, $span\{y\}$ is
proximinal.
\end{lemma}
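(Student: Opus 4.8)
The plan is to prove two assertions about the subspace $span\{y\}$, where $y = y^*$ is neither positive nor negative: first that it is a kernel, and second that it is proximinal.

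For the first assertion, by the Proposition characterizing kernels it suffices to produce a state $f$ (or a collection of states) whose kernels intersect to $span\{y\}$; equivalently, since $span\{y\}$ is one-dimensional, I would look for a single state $f$ with $f(y)=0$ together with enough states to cut the rest of $\cl S$ down. More directly, I would show $span\{y\}$ is the kernel of a \emph{positive} map, which by part (3) of the Proposition is enough. The key point is that because $y$ is selfadjoint but lies outside both $\cl S^+$ and $-\cl S^+$, a Hahn--Banach separation argument produces a state $f$ with $f(y)=0$: the positive elements form a closed cone not containing $y$ (nor $-y$), so one separates $y$ from the cone by a selfadjoint functional, and after normalizing against the order unit $e$ one obtains a state annihilating $y$. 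Since $span\{y\}$ is one-dimensional, $span\{y\} = \ker f$ when restricted appropriately, and this exhibits it as the kernel of a positive (indeed state) map. This part is essentially the content of \cite[Lemma 4.3]{kptt2}, so I would cite it and focus attention on proximinality.

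The substantive new claim is proximinality, i.e.\ that $D_1 = C_1$ for $J = span\{y\}$. The inclusion $D_1 \subseteq C_1$ is automatic from the definitions, so the work is to show $C_1 \subseteq D_1$. Take a coset $s + J$ lying in $C_1$; by definition, for every $\epsilon > 0$ there exists $\lambda_\epsilon \in \bb{R}$ with $s + \epsilon e + \lambda_\epsilon y \geq 0$ in $\cl S$. The goal is to produce a single $\lambda_0$ with $s + \lambda_0 y \geq 0$, which would place $s + J$ in $D_1$. The natural strategy is to extract a convergent subsequence of the scalars $\lambda_\epsilon$ as $\epsilon \to 0$ and pass to the limit, using that $\cl S^+$ is closed (Archimedean) so that the limiting inequality $s + \lambda_0 y \geq 0$ survives.

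The main obstacle, and the crux of the argument, is establishing that the net $\{\lambda_\epsilon\}$ remains \emph{bounded} as $\epsilon \to 0$; without boundedness one cannot extract a convergent subsequence. Here is where the hypothesis that $y$ is neither positive nor negative is indispensable. I would apply a state $f$ with $f(y)=0$ (as found above) to the inequality $s+\epsilon e + \lambda_\epsilon y \geq 0$, yielding $f(s) + \epsilon \geq 0$, which controls nothing about $\lambda_\epsilon$ directly; so instead I would apply states $g$ with $g(y) > 0$ and $h$ with $h(y) < 0$ --- both of which exist precisely because $y \notin \cl S^+$ and $y \notin -\cl S^+$, again by separation. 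Applying $g$ gives $g(s) + \epsilon + \lambda_\epsilon\, g(y) \geq 0$, hence $\lambda_\epsilon \geq -(g(s)+\epsilon)/g(y)$, a lower bound; applying $h$ gives an upper bound $\lambda_\epsilon \leq (h(s)+\epsilon)/(-h(y))$. For $\epsilon$ small these bounds are uniform, so $\{\lambda_\epsilon\}$ lies in a compact interval of $\bb{R}$. Extracting $\lambda_{\epsilon_n} \to \lambda_0$ and using closedness of $C_1$ (the Archimedean property) then gives $s + \lambda_0 y \geq 0$, completing the proof that $s+J \in D_1$ and hence proximinality.
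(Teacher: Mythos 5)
Your proximinality argument is correct, but it takes a genuinely different route from the paper's. The paper reduces to the C*-algebra case: it first proves proximinality of $span\{y\}$ inside a unital C*-algebra $\cl A$, obtaining the crucial bound on the scalars $\alpha_\epsilon$ algebraically, by writing the Jordan decomposition $y = y_1 - y_2$ (with $y_1y_2 = 0$) and multiplying the inequality $x + \alpha y + e \geq 0$ on both sides by $y_2$ (resp.\ $y_1$), which gives $y_2 x y_2 + y_2^2 \geq \alpha y_2^3$ and hence an upper (resp.\ lower) bound on $\alpha$; it then concludes via a decreasing family of nonempty compact sets $X_\epsilon$, and finally transfers the result to an arbitrary operator system $\cl S$ by embedding $\cl S$ in a C*-algebra $\cl A$ and noting that a positive representative $a \in \cl A$ of the coset automatically lies in $\cl S$, since $a - s \in span\{y\} \subseteq \cl S$. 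You instead stay entirely inside $\cl S$: the two-sided bound on $\lambda_\epsilon$ comes from evaluating the inequality at two states $g$, $h$ with $g(y) > 0 > h(y)$, whose existence follows from Hahn--Banach separation of $y$ and of $-y$ from the norm-closed cone $\cl S^+$ (closedness being supplied by the Archimedean property, and positivity plus $\|f\|=f(e)$ letting you normalize to states); a convergent subsequence and closedness of the cone then finish the proof. Your version is more elementary and self-contained --- it needs no C*-cover, no multiplicative structure, and no transfer step --- and it makes transparent exactly where the hypothesis ``neither positive nor negative'' is used: it is precisely what produces the two states giving the upper and the lower bound. What the paper's route buys is an intermediate statement about C*-algebras proved by purely algebraic means; as a proof of the lemma as stated, yours is, if anything, cleaner. (Both arguments dispose of the reduction to selfadjoint representatives and real scalars with the same one-line remark, and both delegate the kernel assertion to \cite[Lemma 4.3]{kptt2}, exactly as the paper does.)

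One caveat on your parenthetical sketch of the kernel part: separating $y$ from $\cl S^+$ produces a state with $f(y) < 0$, not $f(y) = 0$ (for the latter one would take a suitable convex combination of the two states $g$ and $h$), and the kernel of a single state has codimension one, so it can equal the one-dimensional space $span\{y\}$ only when $\dim \cl S = 2$; in general one needs a whole family of states vanishing on $y$ whose kernels intersect exactly in $span\{y\}$. Since you, like the paper, cite \cite[Lemma 4.3]{kptt2} for this part, nothing is lost, but the sketch as written would not by itself constitute a proof of the kernel assertion.
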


\begin{proof}
The first part of the proof can be found in  \cite[Lemma 4.3]{kptt2}. To prove
the second part we first consider the case where $y$ is such an element in a
unital C*-algebra $\cl A$. Let $J = span\{y\}$ and let $x+J \geq 0$  in $\cl A /
J$. Clearly we may assume that $x = x^*$. We have that for each $\epsilon >
0$ there is an element in $J$, say $\alpha_{\epsilon} y$ such that $x+
\alpha_{\epsilon} y + \epsilon e$ is positive in $\cl A$. Note that
$\alpha_{\epsilon}$ must be a real number. Let $X_{\epsilon} = \{\alpha :  x +
\alpha y + \epsilon e \in \cl A^+ \}$ then $X_\epsilon$ is a non-empty subset
of $\mathbb{R}$ such that for any $0<\epsilon_1 \leq \epsilon_2$ we have
$X_{\epsilon_1} \subseteq X_{\epsilon_2}$. Moreover since $\cl A^+$ is closed
in $\cl A$, each of $X_\epsilon$ is closed. We will show that $X_1$ is
bounded. Let $y = y_1 - y_2$ be the Jordan decomposition of $y$, that is, $y_1$
and $y_2$ are positives such that $y_1y_2 = 0$. Let $\alpha$ be in $X_1$. Now
multiplying both side of $ x + \alpha y_1 - \alpha y_2 + e \geq 0$ by $y_2$
from right and left we get $y_2x y_2 + y_2^2 \geq \alpha y_2^3 $. Since $y_2$ is
non-zero this inequality puts an upper bound on $\alpha$. Similarly multiplying
both side by $y_1$ we obtain a lower bound for $\alpha$. Consequently
$\{X_\epsilon\}_{0<\epsilon \leq 1}$ is a decreasing net of compact sets in
$\mathbb{R}$ and hence have a non-empty intersection. Let $\alpha_0$ be an
element belongs to the intersection. Since $x + \alpha_0 y +\epsilon \geq 0$
for every $\epsilon>0$ we have that $x+ \alpha_0 y \geq 0$. This proves the
particular case we assumed. Now suppose $y$ is such an element in $\cl S$. Let
$\cl A$ be a C*-algebra containing $\cl S$ as an operator subsystem. We have
that $J = span\{y\}$ is a proximinal kernel in $\cl A$. Let $q$ be the quotient
map from $\cl A$ onto $\cl A / J$ and let $q_0$ be the restriction of $q$ on
$\cl S$. Clearly $q_0$ is ucp with kernel $J$. So $\bar{q_0}: \cl S / J
\rightarrow \cl A / J$ is ucp. Now let $s + J$ be positive in $\cl S/J$. So it
is positive in $\cl A / J$. By the above part there is an element $a$ in $\cl
A^+$ such that $a + J = s + J$. Since $J$ is contained $\cl S$ clearly $a$ must
be an element of $\cl S$. So the proof is done. 
\end{proof}

A finite dimensional $*$-closed subspace $J$ of an operator system $\cl S$ which
contains no positive other than $0$ is called a \textit{null subspace}.
Supposing $y$ is a self-adjoint element of $\cl S$ which is neither positive
nor negative then $span\{y\}$ is a one dimensional null subspace, e.g. Another
important example of null subspaces are kernels of faithful states on finite
dimensional operator systems.

\begin{proposition}\label{prop nullsubspace}
Suppose $J$ is a null subspace of $\cl S$. Then $J$ is a completely proximinal
kernel. If $\cl S$ is finite dimensional, say $\dim(\cl S) = n$, then $J$ is
contained in an $n-1$
dimensional null subspace.
\end{proposition}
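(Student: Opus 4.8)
The plan is to reduce everything to a single ground-level proximinality statement for null subspaces and then bootstrap. Embed $\cl S$ as a concrete operator system in a unital C*-algebra $\cl A$, so that $\cl S^+ = \cl S \cap \cl A^+$. Given $s+J \in C_1$, by definition for each $\epsilon>0$ there is $j\in J$ with $s+\epsilon e + j \in \cl A^+$; I would study the sets $X_\epsilon = \{ j \in J : s + \epsilon e + j \in \cl A^+\}$, which are nonempty, closed in the finite dimensional space $J$, and nested ($X_{\epsilon_1}\subseteq X_{\epsilon_2}$ for $\epsilon_1\le\epsilon_2$, since adding $(\epsilon_2-\epsilon_1)e$ preserves positivity).

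The step I expect to be the real work is showing that $X_1$ is bounded, hence compact. Rather than imitating the Jordan-decomposition argument of Lemma \ref{npnn}, I would argue by normalization: if some $j_k \in X_1$ had $\|j_k\|\to\infty$, I would pass to a subsequence with $j_k/\|j_k\| \to u$, $\|u\|=1$, in the finite dimensional $J$; dividing $s+e+j_k \ge 0$ by $\|j_k\|$ and letting $k\to\infty$ would force $u \in \cl A^+ \cap J = \cl S^+ \cap J$, which is $\{0\}$ by nullity, contradicting $\|u\|=1$. With $X_1$ compact, the nested nonempty compacts $\{X_\epsilon\}_{0<\epsilon\le 1}$ have a common point $j_0$, so $s+\epsilon e + j_0 \ge 0$ for all $\epsilon>0$, and the Archimedean property yields $s+j_0\ge 0$, i.e. $s+J\in D_1$. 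This proves $D_1 = C_1$, that is, every null subspace is proximinal.

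Two consequences follow. First, for the kernel property I would invoke the remark that $C_1$ is strict exactly when $J$ is a kernel; strictness is now immediate, since if $s+J \in C_1\cap(-C_1) = D_1\cap(-D_1)$ one gets $j_1,j_2\in J$ with $s+j_1\ge 0$ and $-s+j_2\ge 0$, whence $j_1+j_2 \in J\cap\cl S^+ = \{0\}$, so $s+j_1 \in \cl S^+\cap(-\cl S^+)=\{0\}$ and $s \in J$. Second, for complete proximinality I would apply the proximinality just established to the operator system $M_n(\cl S)$ and the subspace $M_n(J)$: this $M_n(J)$ is $*$-closed, finite dimensional, and again null, because a positive $(j_{ij})\in M_n(\cl S)^+$ has diagonal compressions $j_{ii}\in J\cap\cl S^+=\{0\}$ and a positive matrix with vanishing diagonal is $0$. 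Under the identification $M_n(\cl S)/M_n(J)\cong M_n(\cl S/J)$ the ground-level cones of the left-hand quotient are precisely $D_n$ and $C_n$ of $\cl S/J$, so proximinality of $M_n(J)$ is exactly the equality $D_n=C_n$; letting $n$ vary gives complete proximinality.

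For the final assertion, since $J$ is a kernel, $\cl S/J$ is a finite dimensional operator system and hence carries a faithful state $\bar f$ by the Choi--Effros theorem. I would pull it back along the quotient map $q:\cl S\to\cl S/J$ to get a state $f=\bar f\circ q$ with $J\subseteq\ker f$, and check that $f$ is faithful: if $s\ge 0$ and $f(s)=0$ then $q(s)\ge 0$ and $\bar f(q(s))=0$, so faithfulness of $\bar f$ gives $s\in J$, and nullity of $J$ forces $s=0$. As recalled before the statement, the kernel of a faithful state on an $n$-dimensional operator system is an $(n-1)$-dimensional null subspace, so $J' = \ker f \supseteq J$ is the required null subspace.
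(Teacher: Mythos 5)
Your proof is correct, but its core takes a genuinely different route from the paper's. The paper proves proximinality by induction on $\dim J$: the base case is Lemma \ref{npnn} (a one-dimensional null subspace, where boundedness of the relevant sets $X_\epsilon$ is forced by multiplying through by the parts of a Jordan decomposition), and the induction step passes to $\cl S/J_0$ and factors the quotient map through it; in that approach the kernel property comes for free, since $J$ is exhibited as the kernel of a composition of ucp quotient maps. You instead handle an arbitrary finite-dimensional null subspace in one stroke: your sets $X_\epsilon \subseteq J$ are nonempty, closed and nested, and boundedness of $X_1$ follows from a normalization argument that uses only that $J$ is finite dimensional, that $\cl A^+$ is norm closed, and that $J \cap \cl S^+ = \{0\}$. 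This is arguably cleaner—it eliminates both the induction and the Jordan-decomposition trick, and it isolates exactly which hypotheses drive the result. The price is that the kernel property must then be established separately, which you do by proving strictness of $C_1 = D_1$ and invoking the recalled equivalence from Section 2 (strictness of $C_1$ if and only if $J$ is a kernel); this is legitimate, though it leans on a statement the paper only quotes from \cite{kptt2} rather than proves. For complete proximinality and the final assertion your argument coincides with the paper's, except that you supply a detail the paper leaves implicit: $M_n(J)$ is null in $M_n(\cl S)$ because the diagonal compressions of a positive matrix over $J$ lie in $J \cap \cl S^+ = \{0\}$, and a positive operator matrix with vanishing diagonal is zero.
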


\begin{proof} We first show that $J$ is a proximinal kernel. We will argue by
induction on the dimension of $J$. When $J$ is one dimensional Lemma \ref{npnn}
does the job. Suppose every $k$ dimensional null subspace of the operator system
 $\cl S$ is a proximinal kernel and let $J$ be an $k+1$ dimensional null
subspace. It is elementary to see that $J = span \{y_1,...,y_k,y_{k+1}\}$ where
each of $y_i$ is selfadjoint. Let $J_0 = span\{y_1,...,y_k\}$ which is a null
subspace and consequently a proximinal kernel by the induction assumption. We
claim that $y_{k+1} + J_0$ is a selfadjoint element in $\cl S /  J_0$ which is
neither positive nor negative. Clearly it is selfadjoint. Suppose it is
positive, so there is a positive element $x$ in $\cl S$ such that $x + J_0 =
y_{k+1} + J_0 $. This clearly forces $x$ to be in $J$ so it is necessarily $0$
and thus $y_{k+1}$ is in $J_0$ which is a contradiction. Similarly $y_{k+1}$
cannot be negative. Again by using Lemma \ref{npnn} $span\{y_{k+1} + J_0\}$ is a
proximinal kernel in $\cl S/ J_0$. Now consider the sequence of the quotients
maps
$$
\cl S \xrightarrow{\hspace{0.5cm}q_0\hspace{0.5cm}} \cl S / J_0 
\xrightarrow{\hspace{0.5cm}q_1\hspace{0.5cm}} (\cl S / J_0)/span\{y_{k+1}+J_0\}.
$$
Clearly the kernel of $q_1 \circ q_0$ is $J$ and since the first and the second
quotients are proximinal it is easy to show that that $J$ is proximinal. To see
that $J$ is a completely proximinality  we can simply consider the
identification
$$
M_n(\cl S / J) = M_n(\cl S) /M_n(J).
$$
Note that $M_n(J)$ is still a null subspace on $M_n(\cl S)$.

Now we will show that if $\dim(\cl S) = n$ then $J$ is contained in an $n-1$
dimensional null subspace. Let $w$ be a faithful state on $\cl S / J$.
Clearly kernel of $w$ is a null subspace and so proximinal by the above part.
Now;
$$
\cl S \xrightarrow{\hspace{0.3cm}q\hspace{0.3cm}} \cl S / J 
\xrightarrow{\hspace{0.3cm}w\hspace{0.3cm}} \mathbb{C}
$$
is a sequence of ucp maps with $n-1$ dimensional kernel in $\cl S$ which
contains $J$. It is null subspace since a  non-zero positive will map a non-zero
positive by $q$ first and a non-zero positive real number by $w$.
\end{proof}

As we pointed out earlier, the kernel of a faithful state on a finite dimensional
operator system is a null subspace. This led us to construct a very special basis
for the operator system as well as its dual.

\begin{lemma}\label{lem special basis}
Suppose $\cl S$ is an $n$ dimensional operator system and $\delta$ a
faithful state on $\cl S$. Then the kernel of $\delta$, which is an $n\!-\!1$
dimensional null subspace, can be written as a linear
combination of self-adjoint elements $\{s_2,...,s_n\}$. Consequently we have
$$
\cl S = span\{e\! = \! s_1,s_2,...,s_n\}.
$$
Moreover if $\cl S^d = span \{ \delta \! = \! \delta_1, \delta_2,....,\delta_n
\}$ written in the dual basis form (i.e. $\delta_i(s_j) = \delta_{ij}$) then
$\delta_{2},...,\delta_n$ are self-adjoint elements of the dual operator system
such that their span is a null subspace.
\end{lemma}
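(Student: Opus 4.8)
The plan is to treat the two assertions separately: first the statement about $\cl S$ and its self-adjoint basis, then the dual statement about $\cl S^d$.

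For the first part I would begin by recording that $\ker(\delta)$ is an $(n\!-\!1)$-dimensional $*$-closed subspace. It has dimension $n-1$ because $\delta$ is a nonzero functional, and it is $*$-closed because $\delta$ is a state, so $\delta(s^*) = \overline{\delta(s)}$ and hence $s \in \ker(\delta)$ forces $s^* \in \ker(\delta)$. A $*$-closed subspace is always spanned by its self-adjoint elements, since every $s$ splits as $s = \tfrac{1}{2}(s+s^*) + i\,\tfrac{1}{2i}(s-s^*)$ with both summands self-adjoint and in the subspace; this produces a self-adjoint basis $\{s_2,\dots,s_n\}$ of $\ker(\delta)$. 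Since $\delta(e) = 1$, the unit lies outside $\ker(\delta)$, so adjoining $s_1 = e$ gives a basis $\{s_1,\dots,s_n\}$ of $\cl S$. Faithfulness of $\delta$ then yields at once that $\ker(\delta)$ is a null subspace: if $s\geq 0$ and $\delta(s)=0$ then $s=0$.

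For the second part the decisive observation is that choosing the primal basis to be self-adjoint forces the dual basis to be self-adjoint. Using the involution $f^*(s) = \overline{f(s^*)}$ on $\cl S^d$ together with $s_j^* = s_j$, I would compute $\delta_i^*(s_j) = \overline{\delta_i(s_j^*)} = \overline{\delta_i(s_j)} = \overline{\delta_{ij}} = \delta_{ij}$, the last equality holding because $\delta_{ij}$ is real. Since $\{s_j\}$ is a basis this gives $\delta_i^* = \delta_i$ for every $i$, so in particular $\delta_2,\dots,\delta_n$ are self-adjoint. (The same computation, with $\delta(s_j) = \delta_{1j}$, confirms $\delta_1 = \delta$, matching the asserted notation.) It then remains to identify $span\{\delta_2,\dots,\delta_n\}$ as a null subspace of $\cl S^d$, and the key is to recognize it as the kernel of a faithful state. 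Evaluating at the unit, $\hat{e}(\delta_i) = \delta_i(e) = \delta_i(s_1) = \delta_{i1}$, so $\hat{e}(\sum_i c_i\delta_i) = c_1$ and hence $span\{\delta_2,\dots,\delta_n\} = \ker(\hat{e})$. As recorded just before the statement, $\hat{e}$ is a faithful state on the finite-dimensional operator system $\cl S^d$, so its kernel is a null subspace by exactly the faithfulness argument used in the first part.

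I do not anticipate a genuine obstacle here: the entire content lies in two structural recognitions — that a $*$-closed subspace has a self-adjoint basis, and that the spans appearing on both sides are kernels of faithful states — after which the verifications reduce to one-line computations. The only point requiring care is the bookkeeping that transfers self-adjointness from the $s_j$ to the $\delta_i$, which is precisely the place where the self-adjoint choice of the primal basis is used.
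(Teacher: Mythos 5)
Your proposal is correct and follows essentially the same route as the paper: a self-adjoint basis of $\ker(\delta)$ obtained from the hermitian decomposition (the paper builds it iteratively from $s+s^*$ and $i(s-s^*)$, which is the same idea), the same involution computation showing $\delta_i^* = \delta_i$, and the same final step identifying $span\{\delta_2,\dots,\delta_n\}$ as $\ker(\hat{e})$ with $\hat{e}$ a faithful state on $\cl S^d$. Your added bookkeeping (verifying $\ker(\hat e) = span\{\delta_2,\dots,\delta_n\}$ by evaluating $\hat e(\delta_i) = \delta_{i1}$) is a detail the paper leaves implicit, but it is the same argument.
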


\begin{proof} It is elementary to see that the kernel of $\delta$ can be written
as a linear combination of selfadjoints. In fact we can start with a selfadjoint
element $s_2$. If $s$ is an element in the kernel which is not in the span of
$s_2$ then one of $s+s^*$ or $(s-s^*)i$ does not belong to span of $s_2$. So
this way we obtain $s_3$. We can apply this procedure successively and form such
a basis. Clearly if we set $s_1 = e$ then we obtain a basis for $\cl S$. To see
that $\delta_i$ is self-adjoint consider an element $\Sigma \alpha_{j} s_j$.
Then
$$
\delta_i^*(\Sigma \alpha_{j} s_j) = \overline{ \delta_i(\Sigma (\alpha_{j}
s_j)^*) } = \overline{ \delta_i(\Sigma \overline{\alpha_{j}} s_j) } = \alpha_i
$$
coincides with $\delta_i(\Sigma \alpha_{j} s_j) $. Finally since
$\hat{e}$, the canonical image of $e$ in the bidual operator system, is a
faithful state on the dual operator system $\cl S^d$ its kernel, namely the
linear span of $\{\delta_2,...,\delta_n\}$, is a null subspace. This finishes
the proof.
\end{proof}

Let $J_n$ be the subspace of $M_n$ containing all diagonal matrices with 0
trace. Then $J_n$ is an $n-1$ dimensional null subspace of $M_n$ and
consequently a kernel. Note that it is contained in the subspace
which includes all the matrices with 0 trace, an $n^2-1$ dimensional null
subspace of $M_n$. In \cite{pf} it has been explicitly shown that $J_n$ is a
kernel. We will turn back to this in later sections. Another interesting example
is the following: Consider $J = span\{ g_1,...,g_n,g_1^*,...,g_n^* \} \subset
C^*(\mathbb{F}_n)$. Then $J$ is a null subspace and hence a kernel in $C^*(\mathbb{F}_n)$.

A surjective completely positive map $\varphi : \cl S \rightarrow \cl T$ is
called a \textit{quotient map} if the induced map $\bar{\varphi} : \cl S /
ker(\varphi) \rightarrow \cl T$, which is bijective and completely positive, is
a complete order isomorphism. Note that if $\varphi$ is unital the induced
map is also unital. We also remark that compositions of quotient maps are again
quotient maps. We frequently use the following property of a quotient map: If
$(t_{ij})$ is positive in $M_k(\cl T)$ then for every $\epsilon> 0 $ there is a
positive element $(s_{ij}^\epsilon)$ in $M_k(\cl S)$ such that
$(\varphi(s_{ij}^\epsilon)) = (t_{ij}) + \epsilon e_n$.

\begin{proposition}\label{qde}
Let $\varphi : \cl S \rightarrow \cl T$ be a quotient map. Then the dual map
$\varphi^d : \cl T^d \rightarrow \cl S^d$ is a complete order embedding.
\end{proposition}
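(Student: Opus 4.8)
The plan is to verify, directly from the definition, the three defining features of a complete order embedding for $\varphi^d$: that it is completely positive, that it is injective, and that it reflects positivity at every matrix level. The first is immediate. Since $\varphi$ is completely positive, it is $k$-positive for every $k$, so Lemma \ref{lem dualofkpos.map} gives that $\varphi^d$ is $k$-positive for every $k$, i.e.\ completely positive. Injectivity is equally quick and uses only that a quotient map is by definition surjective: if $g \in \cl T^d$ satisfies $\varphi^d(g) = g\circ\varphi = 0$, then $g$ vanishes on $\ran(\varphi) = \cl T$, hence $g = 0$.

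The substance is the order-reflection. Fix $k$ and suppose $(\varphi^d(g_{ij}))$ is positive in $M_k(\cl S^d)$; I must show $(g_{ij})$ is positive in $M_k(\cl T^d)$. Unwinding the definition of the dual matricial order, positivity of $(\varphi^d(g_{ij}))$ says exactly that the map
$$
\cl S \ni s \mapsto \big(\varphi^d(g_{ij})(s)\big) = \big(g_{ij}(\varphi(s))\big) \in M_k
$$
is completely positive, whereas the conclusion I want is that the map $\Psi : \cl T \to M_k$, $\Psi(t) = (g_{ij}(t))$, is completely positive. In other words, I know that $\Psi\circ\varphi$ is completely positive and I must descend this to $\Psi$ itself.

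Here the quotient-map hypothesis enters. Since $ker(\varphi) \subseteq ker(\Psi\circ\varphi)$, the universal property of the operator system quotient (in the completely positive, non-unital form remarked on above) shows that the induced map on $\cl S/ker(\varphi)$ through which $\Psi\circ\varphi$ factors is completely positive; as $q$ is surjective this induced map is necessarily $\Psi\circ\bar\varphi$, where $\bar\varphi : \cl S/ker(\varphi) \to \cl T$ is the induced bijection and $\varphi = \bar\varphi\circ q$. Because $\varphi$ is a \emph{quotient map}, $\bar\varphi$ is a complete order isomorphism, so its inverse $\bar\varphi^{-1} : \cl T \to \cl S/ker(\varphi)$ is completely positive. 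Composing, $\Psi = (\Psi\circ\bar\varphi)\circ\bar\varphi^{-1}$ is a composition of completely positive maps, hence completely positive, which is precisely the positivity of $(g_{ij})$ in $M_k(\cl T^d)$. I expect the only delicate point to be the bookkeeping in this last step: correctly matching positivity of a matrix over the dual with complete positivity of the associated map into $M_k$, and then invoking that a quotient map admits a completely positive inverse on the quotient — this is exactly where the full strength of the quotient-map definition, as opposed to mere surjective complete positivity, is needed.
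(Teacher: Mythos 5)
Your proof is correct, but it takes a genuinely different route from the paper's. The paper argues element-wise: given a positive $(t_{lm}) \in M_k(\cl T)$, it invokes the approximate lifting property of quotient maps stated just before the proposition --- for each $\epsilon > 0$ there is a positive $(s_{lm}^\epsilon) \in M_k(\cl S)$ with $(\varphi(s_{lm}^\epsilon)) = (t_{lm}) + \epsilon e_k$ --- evaluates the $g_{ij}$ on these lifts, uses the assumed positivity of $(\varphi^d(g_{ij}))$ to conclude that each $(g_{ij}(t_{lm}^\epsilon))$ is positive, and then lets $\epsilon \to 0$, using that the positive cone of $M_{kn}$ is closed. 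You instead argue structurally: you factor the completely positive map $\Psi \circ \varphi$ through $\cl S/\ker(\varphi)$ via the completely positive, non-unital form of the universal property of the quotient, and then compose with $\bar\varphi^{-1}$, which is completely positive precisely because $\varphi$ is a quotient map. Your route avoids any limiting argument and makes transparent that the proposition is formal once one has the universal property and the fact that $\bar\varphi$ is a complete order isomorphism; in effect it reduces the general statement to the canonical quotient $q : \cl S \to \cl S/J$. The paper's route is more self-contained at the level of elements (it needs only the approximate lifting property, not the factorization machinery), at the cost of the $\epsilon$-perturbation and the limit; note, though, that the closedness of cones the paper exploits is exactly what underlies the cp universal property you cite, so the two arguments rest on the same Archimedeanization of the quotient cones. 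Your explicit treatment of injectivity (from surjectivity of $\varphi$) handles a point the paper leaves tacit.
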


 \begin{proof} We already have that the dual map is completely positive. Suppose
$(g_{ij})$ in $M_n(\cl T^d)$ such that $(\varphi^d(g_{ij}))$ is positive in
$M_n(\cl S^d)$. We will show that $(g_{ij})$ is positive, that is, if $(t_{lm})$
is positive in $M_k(\cl T)$ then $(g_{ij}(t_{lm}))$ is positive (in $M_k\otimes
M_n$). Fix $\epsilon>0$ and let $(t_{lm}^\epsilon) = (t_{lm}) + \epsilon e_k$.
We know that there is positive element $(s_{lm}^\epsilon)$ in $M_k(\cl S)$ such
that $(\varphi(s_{lm}^\epsilon)) = (t_{lm}^\epsilon)$. Note that
$(g_{ij}(t_{lm}^\epsilon))_{i,j,l,m} = (\varphi^d(g_{ij})(s_{lm}^\epsilon))$.
Now using the fact that $(\varphi^d(g_{ij}))$ is positive we get
$(g_{ij}(t_{lm}^\epsilon))_{i,j,l,m}$ is positive. Since $\epsilon$ is
arbitrary and $(t_{lm}^\epsilon) \rightarrow (t_{lm})$ as $\epsilon 
\rightarrow 0$ we have that  $(g_{ij}(t_{lm}))$ is positive. So the proof is
done.
\end{proof}

\begin{proposition}\label{prop dual of quotient}
Let $J$ be a null subspace of a finite dimensional operator system $\cl S$.
Then $(\cl S / J)^d$ is an operator subsystem of $\cl S^d$ with a proper
selection of faithful states. (More precisely if $\delta$ is a faithful state
on $\cl S$ with $J \subset ker(\delta)$ then the induced state
$\overline{\delta}$ on $\cl S/J$ is faithful and satisfies $q^d(\bar{\delta}) = \delta$ where
$q$ is the quotient map from $\cl S$ onto $\cl S/J$).
\end{proposition}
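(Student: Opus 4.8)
The plan is to realize the asserted embedding concretely as the dual of the quotient map $q:\cl S\to\cl S/J$ and then to match up the Archimedean order units on the two dual systems. First I would observe that, since $J$ is a null subspace, Proposition \ref{prop nullsubspace} guarantees that $J$ is a (completely) proximinal kernel, so that $\cl S/J$ is a genuine operator system and $q$ is a ucp surjection. Because $\ker q=J$, the induced map $\bar q:\cl S/J\to\cl S/J$ is the identity and hence trivially a complete order isomorphism; thus $q$ is a quotient map in the sense of Section \ref{sec quotients}. Proposition \ref{qde} then applies directly and gives that the dual map $q^d:(\cl S/J)^d\to\cl S^d$, $g\mapsto g\circ q$, is a complete order embedding at the level of the matricial cones. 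Since $q$ is surjective, $q^d$ is injective, and a routine computation ($q^d(g)=g\circ q$ annihilates $J$, while any functional killing $J$ factors through $q$) identifies its image with the annihilator $J^{\perp}=\{f\in\cl S^d:\ f|_J=0\}$, which is also $*$-closed because $q$ is $*$-linear.

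It remains to choose the faithful states, i.e. the Archimedean matrix order units on the duals, so that $q^d$ becomes unital. I would first exhibit a compatible pair: pick any faithful state $\bar\delta$ on the finite dimensional operator system $\cl S/J$ (these exist by the Choi--Effros theorem) and set $\delta:=q^d(\bar\delta)=\bar\delta\circ q$. Then $\delta$ is a state on $\cl S$ with $J\subseteq\ker\delta$, and it is faithful: if $s\geq0$ and $\delta(s)=0$, then $s+J\geq0$ and $\bar\delta(s+J)=0$, so faithfulness of $\bar\delta$ forces $s+J=0$, i.e. $s\in J$, whence $s=0$ because $J$ is null. Conversely, starting from any faithful state $\delta$ on $\cl S$ with $J\subseteq\ker\delta$, the induced state $\bar\delta$ on $\cl S/J$ (which is ucp by the universal property of the quotient) is faithful: if $s+J\geq0$ with $\bar\delta(s+J)=0$, then proximinality of $J$ supplies a positive $x\in\cl S$ with $x+J=s+J$, and $\delta(x)=\bar\delta(s+J)=0$ gives $x=0$, hence $s+J=0$. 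In either case the defining computation $q^d(\bar\delta)(s)=\bar\delta(s+J)=\delta(s)$ yields $q^d(\bar\delta)=\delta$.

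Finally I would assemble the pieces. Fixing $\bar\delta$ as the unit of $(\cl S/J)^d$ and $\delta$ as the unit of $\cl S^d$, the map $q^d$ is unital by $q^d(\bar\delta)=\delta$ and is a complete order embedding by Proposition \ref{qde}; being in addition injective and $*$-preserving, it is a unital complete order isomorphism onto the unital selfadjoint subspace $J^{\perp}$. Hence $(\cl S/J)^d$ is unitally completely order isomorphic to the operator subsystem $J^{\perp}$ of $\cl S^d$, as claimed. The only genuinely delicate point is the bookkeeping with units: the matricial cones of the dual systems are intrinsic and independent of any choice of unit, so Proposition \ref{qde} already matches them, and the real content is that a single faithful state can be chosen simultaneously on $\cl S$ and on $\cl S/J$ so that $q^d$ respects the two Archimedean order units. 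It is exactly in securing this compatibility, and in verifying that faithfulness descends to the quotient, that proximinality of the null subspace $J$ is used.
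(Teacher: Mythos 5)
Your proof is correct, and its skeleton is exactly the paper's: Proposition \ref{qde} gives that $q^d:(\cl S/J)^d\rightarrow \cl S^d$ is a complete order embedding of the matricial cones, and the remaining work is choosing compatible faithful states so that $q^d$ becomes unital. The one genuine difference lies in which direction of the state correspondence gets proved. The paper's proof only verifies the ``ascent'' direction: it takes a faithful state $\delta_0$ on $\cl S/J$ and checks that $\delta_0\circ q$ is faithful on $\cl S$ (using that $J$ is null, so no nonzero positive element of $\cl S$ dies in the quotient). You prove that too, but you additionally prove the ``descent'' direction --- that any faithful state $\delta$ on $\cl S$ with $J\subseteq \ker(\delta)$ induces a faithful state $\bar{\delta}$ on $\cl S/J$ --- which is what the parenthetical of the statement literally asserts, and you correctly identify proximinality of $J$ (Proposition \ref{prop nullsubspace}) as the ingredient that makes it work: a positive coset on which $\bar{\delta}$ vanishes is represented by a positive element of $\cl S$, which faithfulness of $\delta$ then forces to be $0$. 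So on the point the statement actually advertises, your write-up is more complete than the paper's own proof. Your identification of the image of $q^d$ with the annihilator $J^{\perp}=\{f\in \cl S^d: f|_J=0\}$ is a correct extra observation, though it is not needed for the claim.
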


\begin{proof}
Proposition \ref{qde} ensures that $q^d : (\cl S/J)^d\rightarrow \cl S^d$ is a
complete order embedding. So we deal with the proper selection of the faithful
states. In fact let $\delta_0$ be a faithful state on $\cl S / J$. Then we claim
that $\delta_0 \circ q$ is a faithful state on $\cl S$. Clearly it is a state
and if $s$ is non-zero positive then $\varphi(s)$ is non-zero positive in $\cl
S/J$ and $\delta_0(q(s))$ is a positive number.  Finally declaring  $\delta_0
\circ q$ as the unit of $\cl S^d$, we obtain that $q^d$ is unital as
$q^d(\delta_0) = \delta_0 \circ q$.
\end{proof}

We remark that in order to obtain ``unitality'' in the above proposition
starting with a null subspace is important. In fact if $J$ is a kernel and
$\delta_1$ and $\delta_2$ are faithful states on $\cl S/J$ and $\cl S$,
respectively, then $q^d(\delta_1) = \delta_2$ requires that $J$ is in the kernel
of $\delta_2$ and consequently it has to be a null subspace.

The converse of the above result is also true which is referred 
as the First Isomorphism Theorem in \cite{pf}. For completeness of the paper we include the proof.

\begin{theorem}[Farenick, Paulsen]\label{thm dual of inc}
Let $\cl S$ be a finite dimensional operator system and $\cl S_0$ be an operator subsystem of $\cl S$.
Then the adjoint $i^{d} : \cl S^d \rightarrow \cl S_0^d$ of the inclusion $\cl S_0 \hookrightarrow \cl S$ is a quotient
map. By proper selection of faithful states we may also assume that it is unital.  
\end{theorem}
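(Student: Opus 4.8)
The plan is to identify $i^d$ with the restriction map $f \mapsto f|_{\cl S_0}$ and to verify the three defining features of a quotient map in turn: complete positivity, surjectivity, and the assertion that the induced bijection $\overline{i^d}\colon \cl S^d/\ker(i^d) \to \cl S_0^d$ is a complete order isomorphism. The first two are immediate. For complete positivity, recall from the duality section that $(f_{ij}) \in M_n(\cl S^d)^+$ exactly when the map $\Phi\colon s \mapsto (f_{ij}(s))$ is completely positive on $\cl S$; restricting such a $\Phi$ to the operator subsystem $\cl S_0$ keeps it completely positive, since $M_k(\cl S_0)^+ \subseteq M_k(\cl S)^+$ under the inclusion, and this restriction is precisely $s \mapsto (i^d(f_{ij})(s))$, so $(i^d(f_{ij})) \in M_n(\cl S_0^d)^+$. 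Surjectivity of $i^d$ is just Hahn--Banach extension of functionals, trivial in finite dimensions.

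For the unitality claim I would first fix a faithful state $\delta$ on $\cl S$, which exists by the Choi--Effros theorem, and take it as the Archimedean order unit of $\cl S^d$. Its restriction $\delta_0 = \delta|_{\cl S_0}$ is again a faithful state (a nonzero positive element of $\cl S_0$ is a nonzero positive element of $\cl S$, so $\delta$ does not kill it), hence it serves as the unit of $\cl S_0^d$. With these choices $i^d(\delta) = \delta|_{\cl S_0} = \delta_0$, so $i^d$ is unital.

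The only substantial point is that $\overline{i^d}$ is a complete order isomorphism; since it is already a bijective completely positive map, it suffices to show $\overline{i^d}^{-1}$ is completely positive, i.e.\ to lift positive elements. So I would take $(g_{ij}) \in M_n(\cl S_0^d)^+$, which by the duality description means the map $\Psi\colon \cl S_0 \to M_n$, $s \mapsto (g_{ij}(s))$, is completely positive. Because $M_n = B(\mathbb{C}^n)$ is injective, Arveson's extension theorem produces a completely positive $\widetilde\Psi\colon \cl S \to M_n$ extending $\Psi$; writing $\widetilde\Psi(s) = (\widetilde g_{ij}(s))$ defines $(\widetilde g_{ij}) \in M_n(\cl S^d)^+$ with $i^d(\widetilde g_{ij}) = \widetilde g_{ij}|_{\cl S_0} = g_{ij}$. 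Hence the unique preimage $\overline{i^d}^{-1}((g_{ij}))$ equals $(\widetilde g_{ij}) + M_n(\ker i^d)$, which is positive in $M_n(\cl S^d/\ker i^d)$. Combined with the complete positivity of $\overline{i^d}$, this shows $\overline{i^d}$ is a complete order isomorphism, so $i^d$ is a quotient map.

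I expect the extension step to be the conceptual heart, though it is painless here precisely because the target is a matrix algebra: injectivity hands us an \emph{exact} (not merely $\epsilon$-approximate) lift of positive elements, so no Archimedeanization subtlety arises. The only things left to double-check are the standard identification of matrix positivity in $\cl S^d$ with complete positivity of the associated map into $M_n$, and that restriction of a completely positive map to an operator subsystem remains completely positive; both are routine consequences of the definitions recalled above.
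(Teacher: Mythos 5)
Your proposal is correct and follows essentially the same route as the paper's proof: the heart of both arguments is Arveson's extension theorem applied to the cp map $\cl S_0 \to M_n$ associated with a positive element of $M_n(\cl S_0^d)$, which yields an exact positive preimage in $M_n(\cl S^d)$, and both handle unitality by restricting a faithful state of $\cl S$ to $\cl S_0$. The only cosmetic difference is that you verify complete positivity of $i^d$ directly from the definition of the dual matrix ordering, while the paper invokes the general fact that adjoints of cp maps are cp.
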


\begin{proof}
Since the inclusion is a cp map its adjoint is again a cp map. It is also elementary to see that $i^d$ is surjective.
Thus, we will only prove that if $(i^d(f_{ij}))$ is positive in $M_n(\cl S_0^d)$ then there is positive $(g_{ij})$
in $M_n(\cl S)$ such that $i^d(f_{ij}) = i^d(g_{ij})$ for every $i,j$. Now, $(i^d(f_{ij}))$ is positive in $M_n(\cl S_0^d)$
means that the linear map
$$
\cl S_0 \ni s \mapsto (i^d(f_{ij})(s)) = (f_{ij}(s)) \in M_n
$$
is a cp map. By Arveson's extension theorem (\cite{Ar1}), this map has a cp extension from $\cl S$ into $M_n$, which we identify
with $(g_{ij})$. Now, clearly $(g_{ij})$ is positive in $M_n(\cl S^d)$ and $i^d(f_{ij}) = i^d(g_{ij})$ for every $i,j$. We will
continue with the unitality problem. In fact it is elementary to show that if $f$ is a faithful state on $\cl S$ then $f$ still
has the same property when it is restricted to $\cl S_0$. Thus $i^d(f)$ is again a faithful state.
\end{proof}

\begin{remark}
In the above theorem we see that adjoint of the inclusion map is a unital quotient map. The kernel of
this map is a null subspace. In fact if $f$ is positive in $\cl S^d$ and $i^d(f) = 0$ together imply that $f$
is a positive linear functional on $\cl S$ such that $f|_{\cl S_0}$ is 0. Since, we have that $\|f\| = \|f(e)\|$,
necessarily $f = 0$.
\end{remark}

$ $

\section{Tensor Products of Operator Systems}\label{sec tensor}

In this section we recall the axiomatic definition of tensor products in the
category of operator systems and review properties of several tensor
products established in \cite{kptt}. Suppose $\cl S$ and $\cl T$ are two
operator systems. A matricial cone structure $\tau = \{C_n\}$ on $\cl S \otimes
\cl T$  where $C_n \subset M_n(\cl S \otimes \cl T)_h$,  is said to be an
\textit{operator system structure} if \begin{enumerate}

\item $(\cl S \otimes \cl T,\{C_n\},e_{\cl S}\otimes e_{\cl T})$ is an operator
system,

\item for any $(s_{ij}) \in M_n(\cl S)^+$ and $(t_{rs}) \in M_k(\cl T)^+$,
$(s_{ij} \otimes t_{rs})$ is in $C_{nk}$ for all $n,k$,

\item if $\phi: \cl S \rightarrow M_n$ and $\psi: \cl T \rightarrow M_k$ are ucp
maps then $\phi\otimes\psi:\cl S \otimes \cl T \rightarrow M_{nk}$ is a ucp map
for every $n$ and $k$.
\end{enumerate}
The resulting operator system is denoted by $\cl S \otimes_{\tau} \cl T$. A
mapping $\tau:\cl O \times \cl O\rightarrow \cl O$ is said to be an
\textit{operator system tensor product} (or simply a \textit{tensor product})
provided $\tau$ maps each pair $(\cl S,\cl T)$ to an operator system structure
on $\cl S \otimes \cl T$, denoted by $\cl S \otimes_{\tau} \cl T$. A tensor
product $\tau$ is said to be \textit{functorial} if for every operator systems
$\cl S_1, \cl S_2, \cl T_1$ and $\cl T_2$ and every ucp maps $\phi: \cl
S_1\rightarrow \cl S_2$ and $\psi: \cl T_1\rightarrow \cl T_2$ the associated
map $\phi \otimes \psi: \cl S_1 \otimes_{\tau} \cl T_1 \rightarrow \cl S_2
\otimes_{\tau} \cl T_2$ is ucp. A tensor product $\tau$ is called
\textit{symmetric} if $\cl S \otimes_{\tau} \cl T = \cl T \otimes_{\tau} \cl S $
and \textit{associative} if $(\cl S \otimes_{\tau} \cl T)\otimes_{\tau} \cl R =
\cl S \otimes_{\tau} (\cl T \otimes_{\tau} \cl R) $ for every $\cl S, \cl T$ and
$\cl R$.

$ $

There is a natural partial order on the operator system tensor products: If
$\tau_1$ and $\tau_2$ are two tensor products then we say that $\tau_1 \leq
\tau_2$ if for every operator systems $\cl S$ and $\cl T$ the identity $id: \cl
S \otimes_{\tau_2} \cl T \rightarrow \cl S \otimes_{\tau_1} \cl T$ is completely
positive. In other words $\tau_1$ is smaller with respect to $\tau_2$ if the
cones it generates are larger. (Recall that larger matricial cones generate
smaller canonical operator space structure.) The partial order on operator
system tensor products forms a lattice as pointed out in \cite[Sec. 7]{kptt}
and raises fundamental nuclearity properties as we shall discuss in the next
section.

$ $

In the remaining of this section we discuss several important tensor products,
namely the minimal (min), maximal (max), maximal commuting (c), enveloping left
(el) and enveloping right (er) tensor products. With respect to the partial
order relation given in the previous paragraph we have the following schema
\cite{kptt} :
$$
min \;\; \leq \;\; el\;\;  ,\;\;  er\;\; \leq \;\; c \;\; \leq \;\; max.
$$

\subsection{Minimal Tensor Product}$\mbox{ }$

Let $\cl S$ and $\cl T$ be two operator systems. We define the matricial cone
structure on the tensor product $\cl S \otimes \cl T$ as follows:
\begin{align*} 
C_n^{min}(\cl S,\cl T) = \{(u_{ij}) &\in M_n(\cl
S\otimes \cl T) : ((\phi\otimes\psi)(u_{ij}))_{ij} \in
M_{nkm}^+ \\
& \mbox{ for every ucp maps } \phi : \cl S \to M_k  \mbox{ and }  \psi :  \cl
T \to M_m \mbox{ for all $k,m$.}\}.
\end{align*} 
The resulting cone structure $\{C_n^{min}\}$ satisfies the axioms (1), (2) and
(3) and the resulting operator system is denoted by $\cl S \otimes_{min} \cl
T$. If $\tau$ is another operator system structure on $\cl S \otimes \cl T$ then
we have that $min \leq \tau$. In other words $\{C_n^{min}\}$ forms the
largest cone structure. The minimal tensor product, of course when considered
as a map $min : \cl O \times \cl O \rightarrow \cl O$, is symmetric and
associative. It is functorial and injective in the sense that if $\cl S_1
\subset \cl S_2$ and $\cl T_1 \subset \cl T_2$ then $\cl S_1 \otimes_{min} \cl
T_1 \subset \cl S_2 \otimes_{min} \cl T_2$ completely order isomorphically. It 
coincides with the the C*-algebraic minimal tensor products when restricted to
C*-algebras (except for completion). It is also spatial in the sense that if
$\cl S \subset B(H)$ and $\cl T \subset B(K)$ then the concrete operator system
structure on $\cl S \otimes \cl T$ arising from the inclusion $B(H\otimes K)$
coincides with their minimal tensor product. All of these result can be directly
found in \cite[Sec. 4]{kptt}.

\subsection{Maximal Tensor Product}\label{subsec max} $\mbox{ }$

The construction of the maximal tensor product of two operator systems $\cl S$
and $\cl T$ involves two steps. We first define
$$
D_n^{max}(\cl S,\cl T) = \{A^* (P\otimes Q) A :
P\in M_k(\cl S)^+, Q\in M_m(\cl T)^+, A \in M_{km,n}, \\ k,m\in \bb{N}\}.
$$
The matricial order structure $\{ D_n^{max}\}$ is strict and compatible (for the
definitions see \cite[Chp. 13]{Pa} e.g.), moreover, $e_{\cl S} \otimes e_{\cl
T}$ is a matrix order unit. However it fails to be an Archimedean order unit. So
the construction requires another step, namely the completion of the cones which
is known as the Archimedeanization process (see \cite{pt} e.g) as follows:
$$
C_n^{max}(\cl S,\cl T) = \{ P \in M_n (\cl S \otimes \cl T) : r(e_1 \otimes
e_2)_n +P \in D_n^{max}(\cl S, \cl T) \text{ $\forall$ $r>0$} \}.
$$ 
Now the matrix order structure $\{ C_n^{max}\}$ satisfies all the axioms and the
resulting operator system is denoted by $\cl S \otimes_{max} \cl T$. If $\tau$
is another operator system structure on $\cl S \otimes \cl T$ then
we have that $\tau \leq max $, that is,  $\{C_n^{max}\}$ is the smallest cone
structure. max, as min, has all properties symmetry, associativity and
functoriality. It coincides with the C*-algebraic maximal tensor product when
restricted to unital C*-algebras (again, except for completion). As it is well
known from C*-algebras, it does not have  the injectivity property that
$min$ possesses. However it is projective as discussed in \cite{Han}. Another important aspect of
the maximal tensor product is the following duality property given by Lance in \cite{La2}: A linear map $f : \cl S \otimes_{max} \cl T \rightarrow \mathbb{C}$
is positive if and only if the corresponding map $\varphi_f : \cl S \rightarrow \cl T^d$ is completely
positive. Here $\varphi_f(s)$ is the linear functional on $T$ given by $\varphi_f(s)(t) = f(s \otimes t)$.
(See also \cite[Lem. 5.7 and Thm. 5.8]{kptt}.) Consequently we obtain the following representation 
of the maximal tensor product:
$$
(\cl S \otimes_{max} \cl T)^{d,+} = CP(\cl S, T^d).
$$
The following property of the maximal tensor product will be useful:
\begin{proposition}\label{prop strong funt.max}
Let $\cl S_i$ and $\cl T_i$ be operator systems and $\varphi_{i}:\cl S_i \rightarrow \cl T_i$ be completely
positive maps for $i=1,2$. Then the associated map 
$\varphi_{1}\otimes \varphi_2  :\cl S_1\otimes_{max} \cl S_2 \rightarrow \cl T_1 \otimes_{max} \cl T_2$
is cp.
\end{proposition}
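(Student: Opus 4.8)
The plan is to verify the defining positivity condition for the maximal tensor product directly, first on the non-Archimedean cones $D_n^{max}$ and then promoting the conclusion to $C_n^{max}$ through the Archimedeanization step. Throughout I write $\Phi = \varphi_1 \otimes \varphi_2$ and let $\Phi^n$ denote its $n$th amplification.

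First I would check that $\Phi^n$ carries $D_n^{max}(\cl S_1, \cl S_2)$ into $D_n^{max}(\cl T_1, \cl T_2)$. A generator of the former has the form $A^*(P\otimes Q)A$ with $P\in M_k(\cl S_1)^+$, $Q \in M_m(\cl S_2)^+$ and $A \in M_{km,n}$ a scalar matrix. Since $A$ has scalar entries and $\Phi$ is linear, the amplification commutes with congruence, so $\Phi^n(A^*(P\otimes Q)A) = A^* \Phi^{km}(P\otimes Q) A$. Under the identification $M_k(\cl S_1)\otimes M_m(\cl S_2) \cong M_{km}(\cl S_1 \otimes \cl S_2)$ one computes entrywise that $\Phi^{km}(P\otimes Q) = \varphi_1^k(P)\otimes \varphi_2^m(Q)$, and complete positivity of the $\varphi_i$ gives $\varphi_1^k(P)\in M_k(\cl T_1)^+$ and $\varphi_2^m(Q) \in M_m(\cl T_2)^+$. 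Taking the identity matrix as the congruence in the definition of $D^{max}$ shows $\varphi_1^k(P)\otimes \varphi_2^m(Q) \in D_{km}^{max}(\cl T_1, \cl T_2)$, and congruence by $A$ keeps it in $D_n^{max}(\cl T_1, \cl T_2)$.

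Next I would address the Archimedeanization, where the only genuine subtlety appears: the $\varphi_i$ are not assumed unital, so $p_i := \varphi_i(e_{\cl S_i})$ is merely a positive element of $\cl T_i$, not the target unit. Take $u \in C_n^{max}(\cl S_1, \cl S_2)$, so $s(e_{\cl S_1}\otimes e_{\cl S_2})_n + u \in D_n^{max}(\cl S_1, \cl S_2)$ for every $s>0$. Applying the first step, $s(p_1\otimes p_2)_n + \Phi^n(u) \in D_n^{max}(\cl T_1, \cl T_2)$ for every $s>0$. Because $e_{\cl T_i}$ is an order unit there are constants $c_i>0$ with $p_i \le c_i e_{\cl T_i}$, and the telescoping identity $c_1 c_2(e_{\cl T_1}\otimes e_{\cl T_2}) - p_1\otimes p_2 = c_1 e_{\cl T_1}\otimes(c_2 e_{\cl T_2}-p_2) + (c_1 e_{\cl T_1}-p_1)\otimes p_2$ together with property (2) of an operator system structure yields $p_1\otimes p_2 \le c_1 c_2 (e_{\cl T_1}\otimes e_{\cl T_2})$ in $\cl T_1 \otimes_{max}\cl T_2$. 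Given $r>0$, set $s = r/(c_1 c_2)$ and split $r(e_{\cl T_1}\otimes e_{\cl T_2})_n + \Phi^n(u)$ as $[\,r(e_{\cl T_1}\otimes e_{\cl T_2})_n - s(p_1\otimes p_2)_n\,] + [\,s(p_1\otimes p_2)_n + \Phi^n(u)\,]$; the first bracket is positive by the domination and the second lies in $D_n^{max}(\cl T_1,\cl T_2) \subseteq C_n^{max}(\cl T_1,\cl T_2)$, so the sum lies in $C_n^{max}(\cl T_1, \cl T_2)$ for every $r>0$. The Archimedean property of the order unit of $\cl T_1 \otimes_{max}\cl T_2$ then forces $\Phi^n(u) \in C_n^{max}(\cl T_1, \cl T_2)$, which is exactly complete positivity of $\Phi$.

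The main obstacle I anticipate is precisely the loss of unitality: had the $\varphi_i$ been ucp, the statement would be the functoriality of $max$ already recorded, but because $\varphi_i(e_{\cl S_i})$ need not equal the target unit one must interpose the order-unit domination $p_1 \otimes p_2 \le c_1 c_2\, e_{\cl T_1}\otimes e_{\cl T_2}$ to convert the scaled-unit estimates coming from the source Archimedeanization into genuine scaled-unit estimates in the target. Everything else is routine bookkeeping about how amplification interacts with scalar congruences and with elementary tensors of positive elements.
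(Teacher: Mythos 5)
Your proof is correct and follows essentially the same route as the paper's: first check that the amplifications carry $D_n^{max}(\cl S_1,\cl S_2)$ into $D_n^{max}(\cl T_1,\cl T_2)$, then handle the Archimedeanization by dominating $\varphi_1(e_{\cl S_1})\otimes\varphi_2(e_{\cl S_2})$ by a scalar multiple of $e_{\cl T_1}\otimes e_{\cl T_2}$ modulo elements of $D_n^{max}$. In fact your telescoping identity does this step more carefully than the paper, whose displayed equation ``completing'' $\varphi_1(e_1)$ and $\varphi_2(e_2)$ to multiples of the units omits the cross terms $\varphi_1(e_1)\otimes y + x\otimes\varphi_2(e_2)$, an inaccuracy your decomposition avoids.
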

\begin{proof}
It is elementary to show that $(\varphi_{1}\otimes \varphi_2)^n(D_n^{max}(\cl S_1,\cl S_2) ) \subset D_n^{max}(\cl T_1,\cl T_2) $.
So suppose $u$ is in $C_n^{max}(\cl S_1,\cl S_2)$. For any 
$r>0$,  $r (e_1 \otimes e_2)_n + u \in D_n^{max}(\cl S_1,\cl S_2)  $. This means that, for every 
$r>0$, $r(\varphi_1(e_1) \otimes \varphi_2(e_2))_n +(\varphi_1\otimes \varphi_2)^{n}( u) $ is in $  D_n^{max}(\cl T_1,\cl T_2) $. Now, we
can complete the positive elements $\varphi_1(e_1)$ and $\varphi_2(e_2)$ to a multiple of the units, that is, we can
find positive elements $x \in \cl S_2$ and $y \in \cl T_2$ such that $\varphi_1(e_1)+x$ and $\varphi_2(e_2) + y$ are multiple of
the units. Since $r (x\otimes y)_n$ belongs to   $  D_n^{max}(\cl T_1,\cl T_2) $ we have that sum of these terms
$$
r (x\otimes y)_n + r(\varphi_1(e_1) \otimes \varphi_2(e_2))_n +(\varphi_1\otimes \varphi_2)^{n}( u)  = r k  (e_1 \otimes e_2)_n + (\varphi_1\otimes \varphi_2)^{n}( u) 
$$
is in   $  D_n^{max}(\cl T_1,\cl T_2) $ for every $r>0$. Thus, $ (\varphi_1\otimes \varphi_2)^{n}( u) \in C_n^{max}(\cl T_1,\cl T_2)$.
\end{proof}

\subsection{Maximal Commuting Tensor Product} $\mbox{ }$

Another important tensor product we want to discuss is the maximal commuting (or commuting)
tensor product which is denoted by c. It agrees with the C*-algebraic maximal tensor
products on the category of unital C*-algebras however it is different then max
for general operator systems. The matrix order structure is defined by using the
ucp maps  with commuting ranges. More precisely, if $\cl S$ and $\cl T$ are two
operator systems then $C^{com}_n$ consist of all $(u_{ij}) \in M_n(\cl S\otimes
\cl T)$ with the property that for any Hilbert space $H$,  any ucp $\phi:\cl S
\rightarrow B(H)$ and $\psi:\cl T \rightarrow B(H)$ with commuting ranges
$$
(\phi\cdot \psi)^{(n)} (u_{ij}) \geq 0
$$
where $\phi \cdot \psi: \cl S \otimes \cl T \rightarrow B(H)$ is the map defined
by $\phi \cdot \psi(s\otimes t) = \phi(s)\psi(t)$. The matricial cone
structure $\{C^{com}_n\}$ satisfies the axioms (1), (2) and (3), and the
resulting operator system is denoted by $\cl S \otimes_{c} \cl T $. The
commuting tensor product $c$ is functorial and symmetric however we don't know
whether is it associative  or not.  Before listing the main results concerning the
tensor product c we underline the following fact: If $\tau$ is an operator
system structure on $\cl S \otimes \cl T$ such that $\cl S \otimes_{\tau} \cl T$
attains a representation in a $B(H)$ with ``$\cl S$'' and ``$\cl T$'' portions
are commuting then $\tau \leq c$. This directly follows from the definition of c
and justifies the name ``maximal commuting''. The following are Theorems 6.4 and
6.7 from \cite{kptt}.
\begin{theorem} \label{thm c=max}
If $\cl A$ is a unital $C^*$-algebra and $\cl S$ is an operator
system, then $$
\cl A \otimes_{c} \cl S = \cl A \otimes_{\max} \cl S.
$$
\end{theorem}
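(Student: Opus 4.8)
The schema $min\le el,er\le c\le max$ recorded above already gives $c\le max$, so the identity $\cl A\otimes_{max}\cl S\to\cl A\otimes_{c}\cl S$ is ucp; equivalently $C_n^{max}(\cl A,\cl S)\subseteq C_n^{com}(\cl A,\cl S)$ for every $n$. Hence the only thing to prove is the reverse inclusion $C_n^{com}(\cl A,\cl S)\subseteq C_n^{max}(\cl A,\cl S)$, i.e. that the identity $\cl A\otimes_{c}\cl S\to\cl A\otimes_{max}\cl S$ is completely positive. I would fix a faithful unital complete order embedding $\iota:\cl A\otimes_{max}\cl S\hookrightarrow B(H)$ furnished by the Choi--Effros theorem, so that an element $u\in M_n(\cl A\otimes\cl S)$ lies in $C_n^{max}$ precisely when $\iota^{(n)}(u)\ge 0$.

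The crux is to represent $\iota$, up to a compression, by a commuting pair. Precisely, I would establish that there exist a Hilbert space $K\supseteq H$ with isometry $V:H\to K$, a unital $*$-representation $\pi:\cl A\to B(K)$, and a ucp map $\rho:\cl S\to B(K)$ whose range lies in the commutant $\pi(\cl A)'$, such that $\iota(a\otimes s)=V^{*}\pi(a)\rho(s)V$ for all $a\in\cl A$ and $s\in\cl S$. Granting this, the theorem follows at once: since $\pi$ is ucp, $\rho$ is ucp, and their ranges commute, $(\pi,\rho)$ is precisely a commuting pair of the kind used to define the $c$-cone, so every $u\in C_n^{com}(\cl A,\cl S)$ satisfies $(\pi\cdot\rho)^{(n)}(u)\ge 0$. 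Compressing by $V$ preserves positivity, giving $\iota^{(n)}(u)=(V^{*})^{(n)}(\pi\cdot\rho)^{(n)}(u)\,V^{(n)}\ge 0$, whence $u\in C_n^{max}(\cl A,\cl S)$, as desired.

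Constructing this commuting dilation is the step I expect to be the main obstacle, and it is exactly where the hypothesis that $\cl A$ is a C*-algebra is used. I would start from the restriction $\phi=\iota(\cdot\otimes e):\cl A\to B(H)$, a ucp map on the C*-algebra $\cl A$, and apply Stinespring to obtain $\pi:\cl A\to B(K)$ and $V$ with $\phi=V^{*}\pi(\cdot)V$ and $K=\overline{\pi(\cl A)VH}$. The difficulty is then to dilate $\psi=\iota(e\otimes\cdot)$ to a ucp map $\rho$ into $\pi(\cl A)'$ that reproduces the mixed values $\iota(a\otimes s)$, and not merely the product $\phi(a)\psi(s)$: the naive prescription $\rho(s)\pi(a)Vh=\pi(a)V\psi(s)h$ recovers only $V^{*}\pi(a)\rho(s)V=\phi(a)\psi(s)$ and is therefore insufficient, since in general $\iota(a\otimes s)\neq\phi(a)\psi(s)$ (as the compression examples show). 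The extra input needed is the complete positivity of $\iota$ with respect to the maximal cone: evaluating $\iota$ on the generators $A^{*}(P\otimes Q)A$ of $D_n^{max}(\cl A,\cl S)$ supplies exactly the Kadison--Schwarz type positivity estimates that make a suitably corrected $\rho$ well defined, contractive and completely positive, that force its range into $\pi(\cl A)'$, and that match $\iota(a\otimes s)=V^{*}\pi(a)\rho(s)V$. Verifying these estimates --- equivalently, that the $\cl A$-slice of any representation of $\cl A\otimes_{max}\cl S$ dilates to a $*$-representation commuting with the $\cl S$-slice --- is the technical core; once it is in place the dilation claim, and with it the equality $\cl A\otimes_{c}\cl S=\cl A\otimes_{max}\cl S$, follows immediately.
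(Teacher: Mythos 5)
First, a point of reference: the paper itself does not prove this theorem --- it is one of the two results quoted without proof from \cite{kptt} (Theorems 6.4 and 6.7 there) --- so your proposal can only be measured against the proof in that reference, and your overall architecture is indeed the same as the one used there. The reduction is correct as far as it goes: $c \leq \max$ gives $C_n^{max} \subseteq C_n^{com}$ for free, and after fixing a unital complete order embedding $\iota \colon \cl A \otimes_{max} \cl S \hookrightarrow B(H)$, it suffices to produce $K \supseteq H$, a unital $*$-representation $\pi\colon \cl A \to B(K)$ and a ucp map $\rho\colon \cl S \to \pi(\cl A)'$ with $\iota(a\otimes s) = P_H\,\pi(a)\rho(s)|_H$; then every $u \in C_n^{com}$ has $\iota^{(n)}(u) = (P_H)^{(n)}(\pi\cdot\rho)^{(n)}(u)|_{H^n} \geq 0$, hence lies in $C_n^{max}$.

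The genuine gap is the dilation itself, which you assert but never construct; saying that positivity of $\iota$ on the generators $A^*(P\otimes Q)A$ ``supplies exactly the Kadison--Schwarz type positivity estimates that make a suitably corrected $\rho$ well defined'' is a placeholder, not an argument, and Kadison--Schwarz is in fact not the tool that does the job. What closes the hole is Arveson's Radon--Nikodym theorem for completely positive maps (equivalently his commutant lifting theorem, \cite[Thm.~12.7]{Pa}): for $0 \leq s \leq e$ the map $a \mapsto \iota(a\otimes s)$ is completely positive and dominated in the cp order by $\varphi := \iota(\cdot \otimes e)$ --- both facts read off from membership of $(a_{ij})\otimes s$ and $(a_{ij})\otimes(e-s)$ in $D_n^{max}(\cl A,\cl S)$ --- hence equals $a \mapsto V^*\pi(a)T_sV$ for a unique $T_s \in \pi(\cl A)'$ with $0 \leq T_s \leq I$, where $(\pi,V,K)$ is the minimal Stinespring dilation of $\varphi$; one sets $\rho(s) := T_s$ and extends linearly, using the uniqueness clause. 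Even this is not the end: to prove that $\rho$ is \emph{completely} positive you must test $(\rho(s_{ij}))$ against vectors $\xi_i = \sum_k \pi(a_{ik})Vh_{ik}$, which reduces the question to positivity in $\cl A \otimes_{max} \cl S$ of matrices of the form $\Delta^*\bigl(1_{\cl A}\otimes Q\bigr)\Delta$ with $\Delta$ a matrix over $\cl A$ (not a scalar matrix) and $Q \in M_m(\cl S)^+$. So you additionally need the lemma that, because $\cl A$ is a C*-algebra, the cone $D_n^{max}(\cl A,\cl S)$ absorbs conjugation by matrices over $\cl A$ --- equivalently, that it coincides with $\{B^*(1_{\cl A}\otimes Q)B : B \in M_{m,n}(\cl A),\ Q \in M_m(\cl S)^+\}$. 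That absorption property is the second, equally essential, place where the C*-hypothesis enters, and neither it nor the Radon--Nikodym step appears in your sketch; without them, what you have is an accurate road map rather than a proof.
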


\begin{theorem}\label{thm rep of c}
Let $\cl S$ and $\cl T$ are operator systems. Then $\cl S \otimes_{c} \cl T
\subset C^*_u(\cl S)  \otimes_{max} C^*_u(\cl T)$.
\end{theorem}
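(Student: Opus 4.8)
The plan is to show that the canonical map $\iota : \cl S \otimes \cl T \to C_u^*(\cl S) \otimes_{max} C_u^*(\cl T)$, induced by the two unital complete order embeddings $\cl S \hookrightarrow C_u^*(\cl S)$ and $\cl T \hookrightarrow C_u^*(\cl T)$, is a unital complete order embedding once the domain is equipped with the commuting structure $c$. Since a complete order embedding is equivalent to two opposite cone inclusions, I would treat the two directions separately: that $\iota$ is ucp, and that it reflects positivity.

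For the ucp direction I would simply invoke functoriality. Because $c$ is functorial and the two coordinate inclusions are ucp, the map $\iota : \cl S \otimes_c \cl T \to C_u^*(\cl S) \otimes_c C_u^*(\cl T)$ is ucp. Now Theorem \ref{thm c=max}, applied with the unital C*-algebra $C_u^*(\cl S)$ and the operator system $C_u^*(\cl T)$, identifies the target with $C_u^*(\cl S) \otimes_{max} C_u^*(\cl T)$; moreover, on a pair of unital C*-algebras this max operator system tensor product is exactly the C*-algebraic maximal tensor product. This yields at once that $\iota$ is ucp into $C_u^*(\cl S) \otimes_{max} C_u^*(\cl T)$, and unitality is clear since both coordinate maps are unital.

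The substantive direction is the reflection of positivity: given $u \in M_n(\cl S \otimes \cl T)_h$ with $\iota^{(n)}(u) \geq 0$ in $C_u^*(\cl S) \otimes_{max} C_u^*(\cl T)$, I must show $u \in C^{com}_n$. Fix an arbitrary commuting ucp pair $\phi : \cl S \to B(H)$ and $\psi : \cl T \to B(H)$. By the universal maximality property of $C_u^*$, these extend to unital $*$-homomorphisms $\pi_\phi : C_u^*(\cl S) \to B(H)$ and $\pi_\psi : C_u^*(\cl T) \to B(H)$. The key step, which I expect to be the main obstacle, is to promote the commutation of the ranges of $\phi$ and $\psi$ to commutation of the ranges of $\pi_\phi$ and $\pi_\psi$. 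Here I would use that the commutant $\psi(\cl T)'$ is a C*-algebra containing the self-adjoint set $\phi(\cl S)$, hence it contains the C*-algebra generated by $\phi(\cl S)$, which is precisely $\pi_\phi(C_u^*(\cl S))$; reversing the roles, $\psi(\cl T)$ then lies in $\pi_\phi(C_u^*(\cl S))'$, so this commutant contains $\pi_\psi(C_u^*(\cl T))$, and the two ranges commute.

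With commuting ranges established, the universal property of the C*-algebraic maximal tensor product furnishes a unital $*$-homomorphism $\pi_\phi \cdot \pi_\psi : C_u^*(\cl S) \otimes_{max} C_u^*(\cl T) \to B(H)$ that restricts to $\phi \cdot \psi$ on $\cl S \otimes \cl T$. Applying its $n$-th amplification to the positive element $\iota^{(n)}(u)$ and using that $*$-homomorphisms preserve positivity, I obtain $(\phi \cdot \psi)^{(n)}(u) \geq 0$. Since $(\phi,\psi)$ was an arbitrary commuting ucp pair, the definition of the commuting structure gives $u \in C^{com}_n$. Combined with the ucp statement, this shows $\iota$ is a unital complete order embedding, which is exactly the asserted inclusion $\cl S \otimes_c \cl T \subset C_u^*(\cl S) \otimes_{max} C_u^*(\cl T)$.
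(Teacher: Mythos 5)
Your proof is correct and takes essentially the same route as the argument this theorem rests on: the paper itself states the result without proof, citing Theorems 6.4 and 6.7 of \cite{kptt}, and your two steps (functoriality of c combined with Theorem \ref{thm c=max} for the ucp direction, and reflection of positivity via extending a commuting ucp pair to $*$-homomorphisms on the universal C*-algebras) are precisely the argument given there. In particular, your key step --- that $\psi(\cl T)'$ is a norm-closed unital $*$-subalgebra containing $\phi(\cl S)$, hence contains $\pi_\phi(C^*_u(\cl S))$, and symmetrically for the other factor --- is exactly the crux of the original proof.
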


In fact the following improvement of this  theorem will be more useful in later
sections.

\begin{proposition} \label{prop rep of c2}
Let $\cl S$ and $\cl T$ be operator systems. Then $\cl S \otimes_{c} \cl T
\subset C^*_u(\cl S)  \otimes_{max} \cl T$.
\end{proposition}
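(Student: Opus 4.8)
The plan is to realize the asserted inclusion as a complete order embedding by combining functoriality of $c$ with the fact that $c$ and $max$ coincide on C*-algebras (Theorem \ref{thm c=max}). Write $\iota : \cl S \otimes \cl T \to C^*_u(\cl S) \otimes \cl T$ for the natural inclusion induced by the embedding $\cl S \hookrightarrow C^*_u(\cl S)$. The goal is to show that $\iota$ is a complete order embedding once the domain carries the $c$ structure and the target the $max$ structure.

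First I would establish that $\iota$ is completely positive. Since $\cl S \hookrightarrow C^*_u(\cl S)$ is a unital complete order (hence ucp) embedding, functoriality of the commuting tensor product shows that $\cl S \otimes_c \cl T \to C^*_u(\cl S) \otimes_c \cl T$ is ucp. Because $C^*_u(\cl S)$ is a C*-algebra, Theorem \ref{thm c=max} identifies $C^*_u(\cl S) \otimes_c \cl T$ with $C^*_u(\cl S) \otimes_{max} \cl T$, so $\iota$ is ucp into the maximal tensor product.

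The substantive direction is showing that $\iota$ reflects positivity. Suppose $u \in M_n(\cl S \otimes \cl T)$ has $\iota^{(n)}(u)$ positive in $M_n(C^*_u(\cl S) \otimes_{max} \cl T)$; I must verify that $u \in C^{com}_n(\cl S,\cl T)$. Fix a Hilbert space $H$ and commuting ucp maps $\phi : \cl S \to B(H)$ and $\psi : \cl T \to B(H)$. By the universal property of $C^*_u(\cl S)$, extend $\phi$ to a unital $*$-homomorphism $\pi : C^*_u(\cl S) \to B(H)$. The key point, and the step I expect to require the most care, is that $\pi$ and $\psi$ again have commuting ranges: the set $\psi(\cl T)$ is self-adjoint and commutes with $\pi(\cl S) = \phi(\cl S)$, hence lies in the commutant of $\pi(\cl S)$, which is a C*-algebra, and therefore commutes with the entire C*-algebra $\pi(C^*_u(\cl S))$ generated by $\pi(\cl S)$.

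With commuting ranges secured, the product map $\pi \cdot \psi$ is, by the very definition of the commuting cone (and $c = max$ on C*-algebras), completely positive on $C^*_u(\cl S) \otimes_{max} \cl T$. Applying $(\pi \cdot \psi)^{(n)}$ to the positive element $\iota^{(n)}(u)$ produces a positive operator; but on elementary tensors $(\pi \cdot \psi)(\iota(s \otimes t)) = \pi(s)\psi(t) = \phi(s)\psi(t) = (\phi \cdot \psi)(s \otimes t)$, so $(\phi \cdot \psi)^{(n)}(u) \geq 0$. Since $\phi$ and $\psi$ were arbitrary commuting ucp maps, $u$ lies in $C^{com}_n$, which completes the argument.
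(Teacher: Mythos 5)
Your proof is correct, but it proceeds by a genuinely different route than the paper's. The paper's proof is a short factoring argument: it composes the ucp map $\cl S \otimes_{c} \cl T \to C^*_u(\cl S) \otimes_{max} \cl T$ with the ucp map $id \otimes i : C^*_u(\cl S) \otimes_{max} \cl T \to C^*_u(\cl S) \otimes_{max} C^*_u(\cl T)$, invokes Theorem \ref{thm rep of c} to see that the composition is a complete order embedding, and then uses the elementary fact that if a composition of ucp maps is a complete order embedding then so is the first map. You instead give a self-contained proof from the definition of the commuting cone: you take a positive image in $M_n(C^*_u(\cl S) \otimes_{max} \cl T)$, and for an arbitrary commuting pair $(\phi,\psi)$ you extend $\phi$ to a $*$-homomorphism $\pi$ on $C^*_u(\cl S)$, check that $\pi$ and $\psi$ still commute, and apply $\pi \cdot \psi$ (ucp on $C^*_u(\cl S)\otimes_{c}\cl T = C^*_u(\cl S)\otimes_{max}\cl T$ by Theorem \ref{thm c=max}) to recover positivity of $(\phi\cdot\psi)^{(n)}(u)$. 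What your approach buys is independence from Theorem \ref{thm rep of c}: in effect you reprove the one-sided version of that theorem directly, and the same argument run on both tensor factors would reprove Theorem \ref{thm rep of c} itself; what the paper's approach buys is brevity, since that theorem is already available as a citation. One small repair to your write-up: in the commutation step you say $\psi(\cl T)$ lies in the commutant of $\pi(\cl S)$, ``which is a C*-algebra,'' and conclude it commutes with $\pi(C^*_u(\cl S))$; as stated this is a non sequitur. The correct bookkeeping is either that the commutant of the self-adjoint set $\psi(\cl T)$ is a C*-algebra containing $\pi(\cl S)$, hence containing the C*-algebra $\pi(C^*_u(\cl S))$ generated by $\pi(\cl S)$, or equivalently that the commutant of a self-adjoint set coincides with the commutant of the C*-algebra it generates, so $\psi(\cl T) \subseteq \pi(\cl S)' = \pi(C^*_u(\cl S))'$. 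With that one-line fix your argument is complete.
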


\begin{proof}
By using the functoriality of c we have that the following maps
$$
\cl S \otimes_{c} \cl T \xrightarrow{i \otimes id } C^*_u(\cl S)  \otimes_{max}
\cl T \xrightarrow{id \otimes i} C^*_u(\cl S)  \otimes_{max} C^*_u(\cl T),
$$
where \textit{id} is the identity and $i$ is the inclusion, are ucp. Theorem
\ref{thm rep of c} ensures that the composition is a complete order embedding
so the first map, which is unital, has the same property. (Here we use the fact that if the composition of
two ucp maps is a complete order embedding then the first map has the same property.)
\end{proof}

Following result is direct consequence of  (\cite[Cor. 6.5]{kptt2}) which characterizes the ucp map defined by the commuting tensor product of two
operator systems:
\begin{proposition}\label{prop ucp on c}
Let $\cl S$ and $\cl T$ be two operator systems and let $\varphi: \cl S \otimes_{c} \cl T \rightarrow B(H)$ be a ucp map.
Then there is Hilbert space $K$ containing $H$ as a Hilbert subspace and ucp maps $\phi: \cl S \rightarrow B(K)$ and
 $\psi: \cl T \rightarrow B(K)$ with commuting ranges such that $\varphi = P_H \phi \cdot \psi |_H $. Conversely, every
such map is ucp.
\end{proposition}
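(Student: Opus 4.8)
The converse direction is immediate from the definition of the commuting tensor product: if $\phi:\cl S \to B(K)$ and $\psi: \cl T \to B(K)$ are ucp with commuting ranges, then by the very definition of $C_n^{com}$ the map $\phi\cdot\psi$ is ucp on $\cl S \otimes_{c} \cl T$, and since $(\phi\cdot\psi)(e_{\cl S}\otimes e_{\cl T}) = \phi(e_{\cl S})\psi(e_{\cl T}) = I_K$, its compression $P_H(\phi\cdot\psi)|_H$ to the subspace $H \subseteq K$ is again unital and completely positive. So the substance of the statement is the forward direction, and the plan is to lift the problem to a genuine C*-algebra tensor product where Stinespring's dilation theorem is available.

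For the forward direction, I would first use Theorem \ref{thm rep of c} to regard $\cl S \otimes_{c} \cl T$ as an operator subsystem of the C*-algebra $C^*_u(\cl S) \otimes_{max} C^*_u(\cl T)$ through a unital complete order embedding. Since $B(H)$ is injective, Arveson's extension theorem then yields a ucp extension $\Phi : C^*_u(\cl S) \otimes_{max} C^*_u(\cl T) \to B(H)$ of $\varphi$. The gain is that $\Phi$ is now a ucp map defined on an honest unital C*-algebra, so classical dilation theory applies directly.

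Next I would apply Stinespring's theorem to $\Phi$: there is a Hilbert space $K$ containing $H$ as a Hilbert subspace and a unital $*$-homomorphism $\pi : C^*_u(\cl S) \otimes_{max} C^*_u(\cl T) \to B(K)$ with $\Phi(x) = P_H \pi(x)|_H$ for all $x$. The key structural fact is then the universal property of the maximal C*-tensor product: a unital $*$-representation $\pi$ of $\cl A \otimes_{max} \cl B$ is the same data as a pair of unital $*$-homomorphisms $\pi_1: C^*_u(\cl S) \to B(K)$ and $\pi_2: C^*_u(\cl T) \to B(K)$ with commuting ranges satisfying $\pi(a\otimes b) = \pi_1(a)\pi_2(b)$. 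Setting $\phi = \pi_1|_{\cl S}$ and $\psi = \pi_2|_{\cl T}$ produces ucp maps with commuting ranges, and on elementary tensors
\[
\varphi(s\otimes t) = \Phi(s\otimes t) = P_H\,\pi_1(s)\pi_2(t)\,|_H = P_H\,(\phi\cdot\psi)(s\otimes t)\,|_H,
\]
whence $\varphi = P_H(\phi\cdot\psi)|_H$ by linearity.

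The only genuine obstacle is extracting the commuting pair $\pi_1,\pi_2$ from $\pi$, but this is exactly the defining universal property of $\otimes_{max}$ for C*-algebras (the two factors embed as commuting sub-C*-algebras), so no additional work is required there. One could instead embed via Proposition \ref{prop rep of c2} into $C^*_u(\cl S)\otimes_{max}\cl T$ and invoke Theorem \ref{thm c=max}, but passing to the full C*-algebraic tensor product $C^*_u(\cl S)\otimes_{max}C^*_u(\cl T)$ keeps both the appeal to Stinespring and the use of the universal property cleanest.
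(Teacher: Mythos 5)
Your proof is correct. Note that the paper itself offers no argument for this proposition: it simply cites \cite[Cor. 6.5]{kptt2}, so there is no in-paper proof to compare against line by line. What you have written is precisely the standard dilation-theoretic argument that underlies the cited result: the converse direction is immediate from the definition of the cones $C_n^{com}$ together with the fact that compression by the inclusion isometry $H \hookrightarrow K$ preserves unitality and complete positivity, and the forward direction correctly chains the embedding $\cl S \otimes_c \cl T \subseteq C^*_u(\cl S) \otimes_{max} C^*_u(\cl T)$ of Theorem \ref{thm rep of c}, Arveson's extension theorem, Stinespring's dilation, and the universal property of the maximal C*-tensor product (commuting pairs of representations), then restricts $\pi_1, \pi_2$ to $\cl S$ and $\cl T$. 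The only point worth tightening is that Stinespring's theorem requires an honest (complete) C*-algebra, so one should extend $\Phi$ to, or dilate on, the completion $C^*_u(\cl S) \hat{\otimes}_{max} C^*_u(\cl T)$; since the operator system maximal tensor product of unital C*-algebras agrees with the C*-algebraic one up to completion, this is routine and does not affect the argument. Your alternative suggestion at the end (embedding into $C^*_u(\cl S) \otimes_{max} \cl T$ via Proposition \ref{prop rep of c2}) would actually be slightly less convenient, since the second tensor factor there is not a C*-algebra and you would need an extra extension step for $\psi$; the symmetric embedding you chose is the cleanest route.
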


\subsection{Some Asymmetric Tensor Products} $\mbox{ }$

In this subsection we discuss the enveloping left (el) and enveloping right 
(er) tensor products. Given operator systems $\cl S$ and $\cl T$ we define 
$$
\cl S \otimes_{el} \cl T : \subseteq I(\cl S) \otimes_{max} \cl T \mbox{ and }
\cl S \otimes_{er} \cl T : \subseteq \cl S \otimes_{max} I(\cl T)
$$  
where $I(\cdot)$ is the injective envelope of an operator system. Both
\textit{el} and \textit{er} are functorial tensor products. We don't
know whether these tensor products are associative. They
are not symmetric but asymmetric in the sense that 
$$ \cl S \otimes_{el} \cl T
=\cl T \otimes_{er} \cl S \;\;\; \mbox{ via the map } \;\;\; s \otimes t \mapsto
t \otimes s.
$$ 

el and er have the following one sided injectivity property \cite[Thm.
7.5]{kptt}

\begin{theorem}\label{thm el is li}
The tensor product \textit{el} is the maximal left injective functorial tensor
product, that is, for any $\cl S \subset \cl S_1$ and $\cl T$ we have
$$
\cl S \otimes_{el} \cl T \subseteq \cl S_1 \otimes_{el} \cl T
$$
and it is the maximal functorial tensor product with this property.
\end{theorem}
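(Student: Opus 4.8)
The plan is to prove the two assertions separately: first that $el$ is left injective, i.e.\ $\cl S \otimes_{el} \cl T \subseteq \cl S_1 \otimes_{el} \cl T$ whenever $\cl S \subseteq \cl S_1$, and then that every left injective functorial tensor product $\tau$ satisfies $\tau \leq el$.

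For left injectivity the key device is a conditional expectation at the level of injective envelopes. Since $I(\cl S_1)$ is injective and contains $\cl S$, minimality of the injective envelope lets me realize $I(\cl S)$ as an operator subsystem of $I(\cl S_1)$, producing a unital complete order embedding $\iota : I(\cl S) \hookrightarrow I(\cl S_1)$ that restricts to the inclusion $\cl S \hookrightarrow \cl S_1$. Because $I(\cl S)$ is itself injective, the identity on $I(\cl S)$ extends along $\iota$ to a ucp map $E : I(\cl S_1) \to I(\cl S)$ with $E \circ \iota = \id_{I(\cl S)}$. Tensoring with $\id_{\cl T}$ and using that $max$ is functorial, both $\iota \otimes \id : I(\cl S) \otimes_{max} \cl T \to I(\cl S_1) \otimes_{max} \cl T$ and $E \otimes \id$ in the reverse direction are ucp, and their composite is the identity. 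A ucp map with a ucp left inverse is automatically a complete order embedding: positivity is preserved going up and reflected by applying the left inverse. Hence $I(\cl S) \otimes_{max} \cl T$ sits as an operator subsystem of $I(\cl S_1) \otimes_{max} \cl T$. Left injectivity then follows by restriction, since by definition $\cl S \otimes_{el} \cl T$ and $\cl S_1 \otimes_{el} \cl T$ carry the subsystem structures from $I(\cl S) \otimes_{max} \cl T$ and $I(\cl S_1) \otimes_{max} \cl T$ respectively: as $\iota \otimes \id$ carries $\cl S \otimes \cl T$ into $\cl S \otimes \cl T \subseteq \cl S_1 \otimes \cl T$, the matrix order induced on $\cl S \otimes \cl T$ from $\cl S \otimes_{el} \cl T$ agrees at every level with the one induced from $\cl S_1 \otimes_{el} \cl T$.

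For maximality, let $\tau$ be any functorial, left injective tensor product; I must show the identity $\cl S \otimes_{el} \cl T \to \cl S \otimes_{\tau} \cl T$ is completely positive for all $\cl S, \cl T$. Take $u \in M_n(\cl S \otimes \cl T)$ positive in $\cl S \otimes_{el} \cl T$, which by definition means $u$ is positive in $M_n(I(\cl S) \otimes_{max} \cl T)$. Since $\tau \leq max$ holds for every functorial tensor product, the identity $I(\cl S) \otimes_{max} \cl T \to I(\cl S) \otimes_{\tau} \cl T$ is cp, so $u$ is positive in $M_n(I(\cl S) \otimes_{\tau} \cl T)$. Applying left injectivity of $\tau$ to the inclusion $\cl S \subseteq I(\cl S)$, the space $\cl S \otimes_{\tau} \cl T$ is a subsystem of $I(\cl S) \otimes_{\tau} \cl T$, so the positivity of $u$ is reflected back to $M_n(\cl S \otimes_{\tau} \cl T)$. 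Thus $u$ is $\tau$-positive, which is precisely $\tau \leq el$.

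I expect the substantive step to be the first one: correctly setting up $I(\cl S) \subseteq I(\cl S_1)$ together with the expectation $E$, and recognizing that the ``ucp map with ucp left inverse'' principle upgrades $\iota \otimes \id$ to a complete order embedding. Once $max$-functoriality supplies this embedding, both the left injectivity of $el$ and, through the companion fact that $\tau \leq max$ for every functorial $\tau$, the maximality statement follow by formal manipulations of cones and restrictions.
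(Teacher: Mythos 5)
Your proof is correct. The paper itself does not prove this theorem — it quotes it from \cite[Thm. 7.5]{kptt} — but your argument (realizing $I(\cl S)$ inside $I(\cl S_1)$ compatibly with $\cl S \subseteq \cl S_1$, obtaining a ucp expectation $E$ back onto $I(\cl S)$, pushing both maps through $\otimes_{max}$ by functoriality to get the complete order embedding, and, for maximality, combining $\tau \leq max$ with left injectivity of $\tau$ applied to $\cl S \subseteq I(\cl S)$) is precisely the standard argument of the cited source.
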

Likewise, er is the maximal right injective tensor product. It directly follows from the definition that
if $\cl S$ is an injective operator 
system then $\cl S \otimes_{el} \cl T = \cl S \otimes_{max} \cl T$ for every operator system $\cl T$. Now for an 
arbitrary operator system $\cl S$ this allows
us to conclude that the tensor product el is independent of the the injective object that we represent $\cl S$, that is,
if $\cl S \hookrightarrow \cl S_1$ where $\cl S_1$ is injective then for any operator system $\cl T$, the tensor
product on $\cl S \otimes \cl T$ arising from the inclusion $\cl S_1 \otimes_{max} \cl T$ coincides with el. To see this we only need to use
the left injectivity of el:
$$
\cl S \otimes_{el} \cl T \hookrightarrow \cl S_1 \otimes_{el} \cl T = \cl S_1 \otimes_{max} \cl T.
$$
A similar property for the tensor product er holds. el and
er, in general, are not comparable however they both lie between min and
c.

$ $

\section{Characterization of Various Nuclearities}\label{sec nuclearity}

In the previous section we have reviewed the tensor products in the category of
operator systems. In this section we will overview the behavior of the
operator systems under tensor products. More precisely, we will see several
characterizations of the operator systems that fix a pair of tensor products.

$ $

Given two tensor products $\tau_1 \leq \tau_2 $, an operator systems $\cl S$ is
said to be\textit{ $(\tau_1,\tau_2)$-nuclear} provided $ \cl S \otimes_{\tau_1}
\cl T = \cl S \otimes_{\tau_2} \cl T $ for every operator system $\cl T$. 
We remark that the \textit{place} of the operator system $\cl S$ is important as
not all the tensor products are symmetric.

\subsection{Completely Positive Factorization Property (CPFP)} $\mbox{ }$

We want to start with a discussion on the characterization of
(min,max)-nuclearity given in \cite{HP}. An operator system $\cl S$ is said to
have \textit{CPFP} if there is net  of ucp maps $$\cl \phi_{\alpha}: \cl
S \rightarrow M_{n_{\alpha}}  \mbox{ and } \cl \psi_{\alpha}: M_{n_{\alpha}}
\rightarrow \cl S $$ such that the identity $id:\cl S \rightarrow \cl S$
approximated by $\psi_{\alpha}\circ \phi_{\alpha}$ in point-norm topology, that
is, for any $s \in \cl S$, $\psi_\alpha \circ \phi_\alpha (s) \rightarrow s$.
The following is Corollary 3.2 of \cite{HP}.

\begin{theorem}
The following are equivalent for an operator system $\cl S$:
\begin{enumerate}
 \item $\cl S$ is (min,max)-nuclear, that is,  $
\cl S \otimes_{min} \cl T = \cl S \otimes_{max} \cl T 
$ for all $\cl T$.
 \item $\cl S$ has CPFP.
\end{enumerate}
\end{theorem}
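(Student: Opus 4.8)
The plan is to prove the two implications separately. Note first that since $\min \leq \max$, the identity $\id : \cl S \otimes_{max} \cl T \to \cl S \otimes_{min} \cl T$ is always ucp, so in each direction the real task is to control the reverse comparison.

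For $(2) \Rightarrow (1)$ I would exploit that matrix algebras are automatically $(\min,\max)$-nuclear, i.e. $M_n \otimes_{min} \cl T = M_n \otimes_{max} \cl T$ for every operator system $\cl T$ and every $n$ (finite-dimensional C*-algebras are nuclear). Given a CPFP net $\phi_\alpha : \cl S \to M_{n_\alpha}$, $\psi_\alpha : M_{n_\alpha} \to \cl S$, functoriality of $\min$ and $\max$ yields ucp maps
$$
\cl S \otimes_{min} \cl T \xrightarrow{\phi_\alpha \otimes \id} M_{n_\alpha} \otimes_{min} \cl T = M_{n_\alpha} \otimes_{max} \cl T \xrightarrow{\psi_\alpha \otimes \id} \cl S \otimes_{max} \cl T,
$$
whose composite is $(\psi_\alpha \circ \phi_\alpha) \otimes \id$, a ucp map from $\cl S \otimes_{min} \cl T$ to $\cl S \otimes_{max} \cl T$. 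Now fix $u \in M_k(\cl S \otimes_{min} \cl T)^+$. Each $((\psi_\alpha \circ \phi_\alpha) \otimes \id)^{(k)}(u)$ is positive in $M_k(\cl S \otimes_{max} \cl T)$, and since the $\max$-norm on $\cl S \otimes \cl T$ is dominated by the projective tensor norm, the point-norm convergence $\psi_\alpha \circ \phi_\alpha \to \id$ forces these images to converge in $\max$-norm to $u$. As the maximal cone is Archimedean-closed, $u \in M_k(\cl S \otimes_{max} \cl T)^+$, giving $\cl S \otimes_{min} \cl T = \cl S \otimes_{max} \cl T$.

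For $(1) \Rightarrow (2)$, the substantive direction, my approach is through the canonical ``identity element'' and the explicit description of the maximal cone. Treat first the finite-dimensional case: choosing a basis $\{s_i\}$ of $\cl S$ with dual basis $\{\delta_i\} \subset \cl S^d$, the element $u_{\id} = \sum_i s_i \otimes \delta_i \in \cl S \otimes \cl S^d$ is the canonical element representing $\id_{\cl S}$ under $\cl S \otimes \cl S^d \cong \cl L(\cl S)$, and it is readily checked to lie in $C^{min}(\cl S, \cl S^d)$: for ucp $\phi : \cl S \to M_k$ and $\psi : \cl S^d \to M_m$ the slice $(\phi \otimes \psi)(u_{\id})$ is the Choi matrix of a completely positive map, using Lemma \ref{lem dualofkpos.map} to pass between $\psi$ and its adjoint. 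Nuclearity then places $u_{\id}$ in $C^{max}(\cl S, \cl S^d)$, so by the definition of $D^{max}$ together with the Archimedeanization step, $u_{\id}$ is approximated by sums $A^*(P \otimes Q)A$ with $P \in M_p(\cl S)^+$ and $Q \in M_q(\cl S^d)^+$. Reading each such summand back through the dualities of Section 1 turns it into a completely positive factorization $\cl S \to M_N \to \cl S$, and the fact that these approximate $u_{\id}$ translates precisely into $\psi \circ \phi$ approximating $\id_{\cl S}$. A standard perturbation of the order unit upgrades the resulting cp maps to ucp maps, producing the CPFP net.

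The main obstacle is the passage from this finite-dimensional heart to a general operator system: when $\cl S$ is infinite-dimensional there is no honest identity element in the algebraic tensor product $\cl S \otimes \cl S^d$. I would instead fix a finite set $F \subset \cl S$ and $\epsilon > 0$, represent $\cl S \subseteq B(H)$, compress by a finite-rank projection to obtain a ucp map $\phi : \cl S \to M_N$, and run the $C^{max}$-decomposition argument on the relevant finite-dimensional data to manufacture a matching ucp map $\psi : M_N \to \cl S$ with $\|\psi\phi(s) - s\| < \epsilon$ for all $s \in F$. Organizing this finite-rank reduction together with the unital normalization, so that the local approximations assemble into a genuine net, is the delicate part of the argument.
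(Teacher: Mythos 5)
First, a note on provenance: the paper does not prove this theorem at all; it quotes it as Corollary 3.2 of \cite{HP}. So your proposal has to be judged against the standard argument rather than against an internal proof. Your direction $(2)\Rightarrow(1)$ is correct and complete as written: functoriality of min and max, the equality $M_n\otimes_{min}\cl T=M_n\otimes_{max}\cl T$, the sub-cross estimate $\|s\otimes t\|_{max}\leq\|s\|\,\|t\|$ (so point-norm convergence of $\psi_\alpha\circ\phi_\alpha$ gives max-norm convergence on finite sums of elementary tensors), and norm-closedness of the cones via the Archimedean property are exactly the needed ingredients. Your finite-dimensional core of $(1)\Rightarrow(2)$ is also sound: by Proposition \ref{mindual}, $u_{id}=\sum_i s_i\otimes\delta_i$ is positive in $\cl S\otimes_{min}\cl S^d$ since $\widehat{u_{id}}=id_{\cl S^d}$ is cp; nuclearity applied to the test system $\cl T=\cl S^d$ puts $u_{id}+\epsilon(e\otimes\delta)$ in $D_1^{max}(\cl S,\cl S^d)$; each $A^*(P\otimes Q)A$ corresponds to a map $\cl S\to\cl S$ factoring completely positively through a matrix algebra (the first factor $s\mapsto(q_{kl}(s))$ is cp by the very definition of $M_q(\cl S^d)^+$); and the support-projection rescaling makes the factors unital.

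The genuine gap is the infinite-dimensional reduction, which you yourself flag as "the delicate part" but then sketch incorrectly. Your plan, represent $\cl S\subseteq B(H)$, compress by a finite-rank projection to get $\phi:\cl S\to M_N$, and then "run the $C^{max}$-decomposition on the relevant finite-dimensional data", never identifies the operator system $\cl T$ against which the hypothesis $\cl S\otimes_{min}\cl T=\cl S\otimes_{max}\cl T$ is to be applied; it cannot be $\cl S^d$, because the dual of an infinite-dimensional operator system is not an operator system (there is no Archimedean order unit; the Choi--Effros theorem used for duals is strictly finite-dimensional). Moreover, for a compression $\phi$ there is in general no reason that a cp map $\psi:M_N\to\cl S$ landing back inside $\cl S$ (rather than $B(H)$) with $\psi\phi\approx id$ on $F$ exists: producing such a map is precisely the content of the theorem, and your sketch supplies no mechanism for it. The correct reduction dualizes the finite-dimensional subsystem rather than compressing $\cl S$: given finite $F\subseteq\cl S$ and $\epsilon>0$, set $\cl S_0=span(F\cup F^*\cup\{e\})$ and apply nuclearity of $\cl S$ with $\cl T=\cl S_0^d$. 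By Proposition \ref{mindual} (applied to the finite-dimensional system $\cl S_0^d$), the inclusion $\iota:\cl S_0\hookrightarrow\cl S$ corresponds to a positive element $u_\iota\in\cl S_0^d\otimes_{min}\cl S=\cl S\otimes_{min}\cl S_0^d$; nuclearity places $u_\iota+\epsilon(e\otimes\delta)$ in $D_1^{max}$, and your finite-dimensional decomposition argument, verbatim, factors $\iota+\epsilon\,\delta(\cdot)e$ as $\psi_\epsilon\circ\phi_\epsilon$ with $\phi_\epsilon:\cl S_0\to M_q$ and $\psi_\epsilon:M_q\to\cl S$ completely positive. Since $M_q$ is injective, Arveson's extension theorem extends $\phi_\epsilon$ to a cp map on all of $\cl S$, and your unitalization then yields ucp maps $\cl S\to M_{q'}\to\cl S$ whose composite is within $\epsilon$ of the identity on $F$; the net over pairs $(F,\epsilon)$ gives CPFP. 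With this replacement your proof closes; without it, the finite-rank-compression route does not go through.
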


The characterization in this theorem extends the characterization of
nuclear unital C*-algebras. Recall that a unital C*-algebra $\cl A$ is said to
be nuclear if $\cl A \otimes_{min} \cl B = \cl A \otimes_{max} \cl B$ for every
C*-algebra $\cl B$. By using Proposition \ref{prop rep of c2}, it is elementary
to show that $\cl A$ is nuclear if and only if it is (min,max)-nuclear operator
system. Consequently the above result extends a well known result of Choi
and Effros \cite{CE0}. We also remark that in \cite{Ki1} and  \cite{kw} an
operator system is defined as nuclear if it satisfies CPFP. Consequently the
classical term ``nuclearity'' coincides with the (min,max)-nuclearity.

\subsection{Operator System Local Lifting Property (osLLP)}\label{sec osLLP}$\;$

Another aspect we want to discuss is the operator system local lifting property
(osLLP) and we will see that it is equivalent to (min,er)-nuclearity. An
operator system $\cl S$ is said to have osLLP if for every unital C*-algebra $\cl
A$ and ideal $I$ in $\cl A$ and for every ucp map $\varphi: \cl S \rightarrow
\cl A / \cl I$ the following holds: For every finite dimensional operator
subsystem $\cl S_0 $ of $\cl S$, the restriction of $\varphi$ on $\cl S_0$, say
$\varphi_0$, lifts to a completely positive map on $\cl A$ so that the following diagram
commutes.
$$
\xymatrix{
   &     &  \cl A \ar[d]^q \\
\hspace{2cm}\cl S_0 \subset \cl S \ar[rr]_-{ucp \; \varphi} 
\ar@{.>}[rru]^{\tilde{\varphi}_0} &     &   \cl A/ \cl I} \hspace{4cm}
$$
Of course, $\cl S$ may possess osLLP without a global lifting. We also remark
that the completely positive local liftings can also be chosen to be ucp  in the
definition of osLLP (see the discussion in \cite[Sec. 8]{kptt2}). The LLP
definition for a C*-algebra given in \cite{Ki2} is the same. So it follows that
a unital C*-algebra has LLP (in the sense of Kirchberg) if and only if it has
osLLP. The following result is from \cite{Ki2}.
\begin{theorem}[Kirchberg] The following are equivalent for a C*-algebra $\cl
A$:
\begin{enumerate}
 \item $\cl A$ has LLP
 \item $\cl A \otimes_{min} B(H) = \cl A \otimes_{max} B(H)$ for every Hilbert
space $H$.
\end{enumerate}
\end{theorem}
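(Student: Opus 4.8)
The plan is to recast the statement entirely in the language of operator system tensor products, using the equivalences \emph{LLP} $=$ \emph{osLLP} $=$ (min,er)-nuclearity for a unital C*-algebra $\cl A$ that have already been recorded (\cite{kptt2} and the preceding discussion). The theorem then reduces to the assertion that $\cl A$ is (min,er)-nuclear if and only if $\cl A \otimes_{min} B(H) = \cl A \otimes_{max} B(H)$ for every Hilbert space $H$; phrased this way it becomes a reproof of Kirchberg's result inside the operator system framework. Two structural facts drive everything. First, since $B(H)$ is injective, the symmetry $\cl A \otimes_{er} B(H) = B(H) \otimes_{el} \cl A$ combined with the identity $B(H) \otimes_{el} \cl A = B(H) \otimes_{max} \cl A$ (valid because $B(H)$ is injective) and the symmetry of $max$ yields $\cl A \otimes_{er} B(H) = \cl A \otimes_{max} B(H)$. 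Second, one always has $min \leq er \leq max$, so a coincidence of the two extreme tensor products forces the intermediate $er$ to agree with both.

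For the forward direction, if $\cl A$ has LLP then it is (min,er)-nuclear, so in particular $\cl A \otimes_{min} B(H) = \cl A \otimes_{er} B(H)$; invoking the injectivity identity $\cl A \otimes_{er} B(H) = \cl A \otimes_{max} B(H)$ gives $\cl A \otimes_{min} B(H) = \cl A \otimes_{max} B(H)$ at once. This half is pure bookkeeping once the two structural facts are in hand.

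For the converse, assume $\cl A \otimes_{min} B(H) = \cl A \otimes_{max} B(H)$ for every $H$. Sandwiching between $min$ and $max$ gives $\cl A \otimes_{min} B(H) = \cl A \otimes_{er} B(H)$ for every $H$, and the real work is to promote this from the family $\{B(H)\}$ to an arbitrary operator system $\cl T$. Given $\cl T$, I would fix a unital complete order embedding $\cl T \hookrightarrow B(K)$ (Choi--Effros) and use that $min$ is injective in each variable and that $er$ is right-injective (Theorem \ref{thm el is li} for $el$, transported through the $el$--$er$ symmetry). Concretely, both $\cl A \otimes_{min} \cl T \hookrightarrow \cl A \otimes_{min} B(K)$ and $\cl A \otimes_{er} \cl T \hookrightarrow \cl A \otimes_{er} B(K)$ are complete order embeddings, so at each matrix level the positive cone of $\cl A \otimes_{min} \cl T$ (resp. $\cl A \otimes_{er} \cl T$) is the intersection of $M_n(\cl A \otimes \cl T)$ with the positive cone of $\cl A \otimes_{min} B(K)$ (resp. $\cl A \otimes_{er} B(K)$). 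Since $\cl A \otimes_{min} B(K) = \cl A \otimes_{er} B(K)$ as operator systems, these ambient cones coincide, hence so do their intersections, giving $\cl A \otimes_{min} \cl T = \cl A \otimes_{er} \cl T$. As $\cl T$ was arbitrary, $\cl A$ is (min,er)-nuclear, that is, it has osLLP, hence LLP.

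The step I expect to be the main obstacle is precisely this last reduction from $B(H)$ to general $\cl T$: it is the statement that (min,er)-nuclearity may be tested on the single family of injective objects $\{B(H)\}$. It is here that the \emph{one-sided} injectivity of $er$ (rather than any symmetric property) is indispensable, and it must be combined carefully with the matricial order embedding, level by level, so that the equality of ambient cones really transfers to the subsystems $\cl A \otimes \cl T$.
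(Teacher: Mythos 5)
The paper itself contains no proof of this statement: it is quoted directly from \cite{Ki2}, and immediately afterwards the paper quotes the operator-system variant from \cite{kptt2}, whose items (1) and (2) already contain the statement verbatim once one notes that LLP and osLLP coincide for unital C*-algebras. So the comparison here is between your attempt and the arguments in the cited sources. The portion of your argument that is genuinely yours is correct: the equivalence between (min,er)-nuclearity of $\cl A$ and the identity $\cl A \otimes_{min} B(H) = \cl A \otimes_{max} B(H)$ for all $H$ is proved exactly as you say, using $\cl A \otimes_{er} B(H) = B(H) \otimes_{el} \cl A = B(H) \otimes_{max} \cl A = \cl A \otimes_{max} B(H)$ for injective $B(H)$, the sandwich $min \leq er \leq max$, and the promotion from $B(K)$ to arbitrary $\cl T$ via injectivity of min together with right injectivity of er (Theorem \ref{thm el is li} transported through the el--er symmetry). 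This is the standard argument for the equivalence of items (2) and (3) of the quoted operator-system theorem.

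The gap is that the bridge between the lifting property and tensor products in your proof is entirely the citation ``osLLP $=$ (min,er)-nuclearity,'' and that citation is circular in this context. Precisely because you show, by elementary injectivity arguments, that conditions (2) and (3) of the quoted theorem are equivalent, the equivalence you cite (items (1)$\Leftrightarrow$(3)) is formally interchangeable with the statement you must prove (items (1)$\Leftrightarrow$(2)): each follows from the other by your easy argument. So your proof assumes something equivalent to its conclusion. All of the substantive content of Kirchberg's theorem --- the passage between ucp maps into quotients $\cl A / I$ and positivity in tensor products, which in \cite{Ki2} and in \cite{kptt2} requires presenting the algebra as a quotient of a full free-group C*-algebra, lifting finite-dimensional subsystems, and invoking the deep identity $C^*(\mathbb{F}_\infty) \otimes_{min} B(H) = C^*(\mathbb{F}_\infty) \otimes_{max} B(H)$ --- is concealed inside the cited equivalence, and nothing in your proposal engages liftings at all. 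Read charitably, your argument is a correct but redundant derivation inside the paper's recorded framework (one could simply specialize items (1) and (2) of the quoted operator-system theorem to C*-algebras); read as the claimed ``reproof of Kirchberg's result,'' it does not stand. To repair it you would need an independent proof that osLLP implies (min,er)-nuclearity (or implies the $B(H)$ identity) that does not presuppose the other equivalence --- and that is exactly where the real work of the theorem lies.
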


\noindent Here is the operator system variant given in \cite{kptt2}:

\begin{theorem} The following are equivalent for an operator system $\cl S$:
\begin{enumerate}
 \item $\cl S$ has osLLP.
 \item $\cl S \otimes_{min} B(H) = \cl S \otimes_{max} B(H)$ for every Hilbert
space $H$.
 \item $\cl S$ is (min,er)-nuclear, that is, $\cl S \otimes_{min} \cl T = \cl S
\otimes_{er} \cl T$ for every $\cl T$.
\end{enumerate}
\end{theorem}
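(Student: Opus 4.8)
The plan is to treat the bookkeeping equivalence $(2)\Leftrightarrow(3)$ separately from the genuinely hard equivalence $(1)\Leftrightarrow(2)$, which is the operator system incarnation of Kirchberg's theorem. For $(3)\Rightarrow(2)$ I would simply exploit the injectivity of $B(H)$: by the asymmetry $\cl S\otimes_{er}B(H)=B(H)\otimes_{el}\cl S$ together with the remark following Theorem \ref{thm el is li} (tensoring an injective system on the left by $el$ gives $max$), one gets $\cl S\otimes_{er}B(H)=\cl S\otimes_{max}B(H)$. Specializing $(min,er)$-nuclearity to $\cl T=B(H)$ then yields $\cl S\otimes_{min}B(H)=\cl S\otimes_{er}B(H)=\cl S\otimes_{max}B(H)$, which is $(2)$.

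For $(2)\Rightarrow(3)$ I would run a level-by-level cone chase off an embedding $\cl T\hookrightarrow B(H)$. Fix $n$ and let $u\in M_n(\cl S\otimes\cl T)$ be positive in $\cl S\otimes_{min}\cl T$. Since $min$ is injective, $u$ remains positive in $\cl S\otimes_{min}B(H)$, hence by $(2)$ positive in $\cl S\otimes_{max}B(H)=\cl S\otimes_{er}B(H)$. Because $er$ is right-injective, the inclusion $\cl S\otimes_{er}\cl T\hookrightarrow\cl S\otimes_{er}B(H)$ is a complete order embedding, so $u$ is already positive in $\cl S\otimes_{er}\cl T$. As $min\leq er$ supplies the reverse containment of cones for free, the two cones coincide at every level and $\cl S\otimes_{min}\cl T=\cl S\otimes_{er}\cl T$; since $\cl T$ was arbitrary this is $(3)$.

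The remaining equivalence $(1)\Leftrightarrow(2)$ is Kirchberg's theorem transported to operator systems, and here I would first reduce to the scalar level. Using that $M_n(\cl S\otimes_{\tau}B(H))=\cl S\otimes_{\tau}B(H^{(n)})$ for $\tau\in\{min,max\}$ (nuclearity of $M_n$ and associativity of these tensor products) together with $B(H^{(n)})\cong B(K)$, proving $(2)$ for every $H$ is equivalent to showing, for every $H$, that every state on $\cl S\otimes_{max}B(H)$ is automatically $min$-continuous; by the bipolar theorem this is exactly the missing inclusion $C^{min}\subseteq C^{max}$, the inclusion $C^{max}\subseteq C^{min}$ being automatic from $min\leq max$. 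By Theorem \ref{thm c=max} we may replace $\cl S\otimes_{max}B(H)$ by $\cl S\otimes_{c}B(H)$, and then Proposition \ref{prop ucp on c} represents such a state as $f(s\otimes t)=\langle\phi(s)\psi(t)\xi,\xi\rangle$ for ucp maps $\phi:\cl S\to B(K)$ and $\psi:B(H)\to B(K)$ with commuting ranges and a unit vector $\xi$.

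The main obstacle is precisely the passage from this commuting representation to a spatial one: to conclude $min$-continuity of $f$ one must approximate it, in the point-weak-$*$ topology, by functionals of the form $\alpha\otimes\beta$ with ucp $\alpha:\cl S\to M_k$ and $\beta:B(H)\to M_m$, which are exactly the functionals defining the $min$ cone. This is where osLLP of $\cl S$ enters, through local liftings of $\phi$ on finite dimensional subsystems that factor through matrix algebras, combined with the injectivity (weak expectation) of $B(H)$; this approximation is the technical heart and is the content of \cite{Ki2}. A convenient way to import Kirchberg's C*-algebra result verbatim is to pass through the universal C*-algebra: since $\cl S\otimes_{c}B(H)\subseteq C^*_u(\cl S)\otimes_{max}B(H)$ is a complete order embedding (Proposition \ref{prop rep of c2}) and $min$ is injective, the equality $(2)$ for $\cl S$ is squeezed between the corresponding equality for $C^*_u(\cl S)$. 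Thus, once one establishes that $\cl S$ has osLLP if and only if $C^*_u(\cl S)$ has LLP --- the easy direction being that a cp lift of $\pi|_{\cl S_0}$ for the $*$-homomorphism $\pi:C^*_u(\cl S)\to\cl A/I$ induced by $\varphi$ is itself a lift of $\varphi|_{\cl S_0}$ --- the operator system statement follows from Kirchberg's theorem recalled above. I expect the only delicate point in this bridge to be the ascent from $\cl S$ to $C^*_u(\cl S)$, with the genuine analytic difficulty remaining concentrated in the spatial approximation of \cite{Ki2}.
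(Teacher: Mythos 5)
Your handling of $(2)\Leftrightarrow(3)$ is correct and complete: $\cl S\otimes_{er}B(H)=\cl S\otimes_{max}B(H)$ because $B(H)$ is injective, and the right-injectivity cone chase gives $(2)\Rightarrow(3)$. (For context: the paper itself offers no proof of this theorem, citing \cite{kptt2}, so the real comparison is with the argument there.) The problems are concentrated in $(1)\Leftrightarrow(2)$, where your proposal has two genuine gaps.

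First, the implication $(2)\Rightarrow(1)$ is never addressed at all. Your state-representation discussion and your $C^*_u$ bridge are both aimed at \emph{using} osLLP to prove the tensor identity; nothing in the proposal produces local ucp lifts \emph{from} $B(H)$-nuclearity, and this direction is not a formality. The argument in \cite{kptt2} (a version of which appears in the paper as the proof of Remark \ref{rem strong lp}) embeds $\cl S_0^d$ into $\mathbb{B}=B(H)$ for a finite dimensional subsystem $\cl S_0\subseteq\cl S$, uses the hypothesis $\mathbb{B}\otimes_{min}\cl S=\mathbb{B}\otimes_{max}\cl S$, the projectivity of the maximal tensor product with respect to C*-ideal quotients, the proximinality of $\cl S_0^d\otimes I$ in $\cl S_0^d\otimes_{min}\cl A$, and the correspondence between positive elements of $\cl S_0^d\otimes_{min}\cl A$ and cp maps $\cl S_0\rightarrow\cl A$ to manufacture the lift. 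Without some such argument the cycle of implications never closes.

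Second, in your route for $(1)\Rightarrow(2)$ the direction of the bridge you actually need is osLLP of $\cl S$ $\Rightarrow$ LLP of $C^*_u(\cl S)$ (the ascent), and that is precisely the direction you do not prove; the direction you call easy (LLP of $C^*_u(\cl S)$ $\Rightarrow$ osLLP of $\cl S$) is not used in this implication. The paper's Theorem \ref{thm lift univ} establishes the ascent only for \emph{finite dimensional} $\cl S$, and its proof — lift $\pi|_{\cl S}$ globally, then extend to a $*$-homomorphism — collapses in infinite dimensions: osLLP only provides lifts on finite dimensional subsystems of $\cl S$, these cannot be fed into the universal property of $C^*_u(\cl S)$, and the finite dimensional subsystems of $C^*_u(\cl S)$ one must lift contain products of elements of $\cl S$. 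The general ascent is an Ozawa-type theorem (cf. \cite{Oz3}) of essentially the same depth as the result being proved, so it cannot be dismissed as a delicate point; and your fallback, that the approximation of states on $\cl S\otimes_c B(H)$ is ``the content of \cite{Ki2}'', is inaccurate, since \cite{Ki2} proves the C*-algebra statement, not the operator system approximation. Note finally that even granting the full bridge, your scheme would still only yield $(1)\Rightarrow(2)$: the squeeze through $C^*_u(\cl S)\otimes_{max}B(H)$ transfers the tensor identity downward, and pushing it upward from $\cl S$ to $C^*_u(\cl S)$ would need something like the ``enough unitaries'' hypothesis, which $\cl S$ need not satisfy in $C^*_u(\cl S)$.
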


\noindent It is not hard to show that in the above theorem ``every Hilbert
space'' can be replaced by $l^2(\mathbb{N})$. If we denote $\mathbb{B} =
B(l^2(\mathbb{N}))$, the above equivalent conditions, in some similar context,
is also called $\mathbb{B}$-nuclearity. (See \cite{blecher}, e.g.)
Consequently for operator systems osLLP, $\mathbb{B}$-nuclearity and
(min,er)-nuclearity are all equivalent.

\begin{remark}
The definition of LLP of a C*-algebra in \cite[Chp. 16 ]{pi} is different, it
requires completely contractive liftings from finite dimensional operator
subspaces. However, as it can be seen in \cite[Thm. 16.2]{pi}, all the
approaches coincide for C*-algebras. 
\end{remark}

\noindent \textbf{Note:} When we work with the finite dimensional operator
systems we remove the extra word ``local'', we even remove ``os'' and simply say
``lifting property''.

$ $

It seems to be important to remark that in the definition of osLLP one can can replace ucp maps
by cp maps.
\begin{remark}\label{rem strong lp}
The following are equivalent for an operator system $\cl S$:
\begin{enumerate}
 \item $\cl S$ has osLLP.
 \item For every unital C*-algebra $\cl A$ and ideal $I$ and for every cp map $\varphi:\cl S \rightarrow \cl A / I$, the restriction of $\varphi$ 
on any finite dimensional operator subsystem $\cl S$ has a cp lift on $\cl A$.
\end{enumerate}
\end{remark}

\begin{proof}
(2) implies (1) is clear. Conversely suppose (1) holds. This implies that 
$\cl S \otimes_{min} B(H) = \cl S \otimes_{max} B(H)$. Let  $\varphi:\cl S \rightarrow \cl A / I$
be a cp map and $\cl S_0$ is finite dimensional operator subsystem of $\cl S$.
Now if we represent $\cl S_0^d$ in to a $B(H)$ (and set $\mathbb{B} = B(H)$)
we have that
$$
\cl S_0^d \otimes_{min} \cl S \subset \mathbb{B} \otimes_{min} \cl S =  \mathbb{B} \otimes_{max} \cl S \xrightarrow{id \otimes \varphi}  \mathbb{B} \otimes_{max} \cl A/I,
$$
is cp map where we use the injectivity of minimal tensor product and Proposition \ref{prop strong funt.max}.
By using first remark in Chp. 17 \cite{pi} and \cite[Cor. 5.16]{kptt2}, we have that
$$
 \mathbb{B} \otimes_{max} \cl A / I = \frac{ \mathbb{B} \otimes_{max} \cl A}{ \mathbb{B} \otimes_{max} I}\rightarrow \frac{ \mathbb{B} 
\otimes_{min} \cl A}{ \mathbb{B} \otimes_{min} I}
 \supset  \frac{\cl S_0^d \otimes_{min} \cl A}{\cl S_0^d \otimes I}.
$$
Since the inclusion $i: \cl S_0 \rightarrow \cl S$ is cp, this corresponds to a positive element $u_{i}$ in $\cl S_0^d \otimes_{min} \cl S$.
(See \cite[Lem. 8.4]{kptt2}.) Thus, under the composition of the above maps, the image $v$ of $u_{i}$ is still positive in $ (\cl S_0^d \otimes_{min} \cl A)/(\cl S_0^d \otimes I)$.
Since this quotient is proximinal (see \cite[Cor. 5.15]{kptt2}), there is a positive element  $w$ in $\cl S_0^d \otimes_{min} \cl A$ giving $v$ under the
quotient map. Now, again by using  \cite[Lem. 8.4.]{kptt2}, $w$ corresponds to a cp map $\tilde{\varphi} : \cl S_0 \rightarrow \cl A$. It is easy to verify that $\tilde{\varphi}$
is a lift of $\varphi$ when restricted to $\cl S_0$.
\end{proof}

\subsection{Weak Expectation Property (WEP)} $\;$

\noindent If $\cl A$ is a unital C*-algebra then the bidual C*-algebra $\cl
A^{**}$ is unitally completely order isomorphic to the bidual operator system
$\cl A^{dd}$. This allows one to extend the notion of WEP, which is introduced
and shown to be a fundamental nuclearity property by Lance in \cite{La}, to the
category of operator systems. We say that an operator system $\cl S$ has
\textit{WEP}  if the canonical inclusion $i:\cl S \hookrightarrow \cl S^{dd}$
extends to a ucp map on the injective envelope $I(\cl S)$.
$$
\xymatrix{
 \cl S \ar@{^{(}->}[rr]^{i} \ar@{_{(}->}[d] &     & \cl S^{dd}  \\
 I(\cl S) \ar@{.>}[rru] &     &
}
$$

In \cite{kptt2} it was shown that  WEP implies (el,max)-nuclearity and the
difficult converse is shown in \cite{Han}. Consequently we have that
\begin{theorem}
An operator system has WEP if and only if it is (el,max)-nuclear.
\end{theorem}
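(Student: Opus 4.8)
The plan is to treat the two implications separately, as they have genuinely different flavors. Throughout, recall that $el \le max$ always holds, so $id:\cl S\otimes_{max}\cl T\to\cl S\otimes_{el}\cl T$ is automatically completely positive; the entire content of $(el,max)$-nuclearity is the reverse assertion that $id:\cl S\otimes_{el}\cl T\to\cl S\otimes_{max}\cl T$ is completely positive for every $\cl T$. I would first reduce complete positivity to ordinary positivity: using the identifications $M_n(\cl S\otimes_{max}\cl T)=\cl S\otimes_{max}M_n(\cl T)$ and $M_n(\cl S\otimes_{el}\cl T)=\cl S\otimes_{el}M_n(\cl T)$ (the latter since $el$ is inherited from $I(\cl S)\otimes_{max}(\cdot)$ and matrix amplification commutes with max), it suffices to prove for every $\cl T$ that a positive $u\in\cl S\otimes_{el}\cl T$ is positive in $\cl S\otimes_{max}\cl T$. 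By Lance's duality this amounts to checking $f(u)\ge 0$ for every $\varphi\in CP(\cl S,\cl T^d)$, where $f(s\otimes t)=\varphi(s)(t)$.

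For the forward direction (WEP $\Rightarrow$ $(el,max)$-nuclear), fix such a $\varphi$. The key is to lift it to a completely positive map on $I(\cl S)$. First, symmetry of max together with Lance's duality shows that the transpose $\psi:\cl T\to\cl S^d$, $\psi(t)(s)=\varphi(s)(t)$, is completely positive; then Lemma \ref{lem dualofkpos.map} yields that $\psi^d:\cl S^{dd}\to\cl T^d$ is completely positive, and one checks $\psi^d|_{\cl S}=\varphi$. Now I invoke WEP, which furnishes a ucp map $\Phi:I(\cl S)\to\cl S^{dd}$ with $\Phi|_{\cl S}=i$. The composite $\Psi=\psi^d\circ\Phi:I(\cl S)\to\cl T^d$ is completely positive and extends $\varphi$. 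Reading $\Psi$ back through Lance's duality, it corresponds to a positive functional $G$ on $I(\cl S)\otimes_{max}\cl T$. Since $u$ is positive in $\cl S\otimes_{el}\cl T$, it is positive in the ambient $I(\cl S)\otimes_{max}\cl T$, so $G(u)\ge 0$; but $G(u)$ is exactly the pairing $f(u)$ attached to $\varphi$. Hence $f(u)\ge 0$, as required.

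For the converse ($(el,max)$-nuclear $\Rightarrow$ WEP), I would exploit the hypothesis at the single test system $\cl T=\cl S^d$. The canonical inclusion $i:\cl S\hookrightarrow\cl S^{dd}$ is completely positive, so by Lance's duality it corresponds to a state $F$ on $\cl S\otimes_{max}\cl S^d$. The nuclearity hypothesis identifies $\cl S\otimes_{max}\cl S^d=\cl S\otimes_{el}\cl S^d$, which by definition is an operator subsystem of $I(\cl S)\otimes_{max}\cl S^d$; I extend $F$ to a state $\tilde F$ on the latter via the Krein--Hahn--Banach extension of states, valid because of the Archimedean order unit. Running Lance's duality in reverse, $\tilde F\in CP(I(\cl S),\cl S^{dd})$ is a completely positive map $\Phi:I(\cl S)\to\cl S^{dd}$; it extends $i$ because $\tilde F$ extends $F$, and it is unital since $\Phi(e)=i(e)=\hat e$. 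This $\Phi$ is precisely the weak expectation.

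The step I expect to be the genuine obstacle is making the converse work beyond finite dimensions. The test-system argument presumes that $\cl S^d$ is itself an operator system, i.e. carries an Archimedean matrix order unit, which is guaranteed only in the finite-dimensional case (Choi--Effros). For general $\cl S$ one cannot simply insert $\cl T=\cl S^d$, and the weak expectation must instead be assembled from finite-dimensional pieces together with weak-$*$ limits; this is the delicate part carried out in \cite{Han}. By contrast the forward implication is uniform in $\cl S$, the only mild subtlety being the routine verification that the transpose and double-dual manipulations are compatible with the matrix-level reduction noted above.
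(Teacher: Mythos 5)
You should know at the outset that the paper contains no proof of this theorem: it records the implication WEP $\Rightarrow$ (el,max)-nuclearity as a result of \cite{kptt2}, calls the converse "difficult," and cites \cite{Han} for it. So your proposal cannot be measured against an in-paper argument; what it does is actually supply proofs where the paper only cites. Your forward direction is correct and is, in essence, the argument underlying the result in \cite{kptt2}: the reduction of matrix levels by replacing $\cl T$ with $M_n(\cl T)$ is legitimate (since $M_n(\cl R)=M_n\otimes_{max}\cl R$, max is symmetric and associative, and el is induced from $I(\cl S)\otimes_{max}(\cdot)$); positivity in $\cl S\otimes_{max}\cl T$ is detected by states thanks to the Archimedean property; Lance duality converts a state $f$ into $\varphi_f\in CP(\cl S,\cl T^d)$; the transpose/double-dual trick shows $\varphi_f$ factors through $\cl S^{dd}$ on $\cl S$, so WEP yields a cp extension $I(\cl S)\to\cl T^d$, which read back through Lance duality gives a positive functional on $I(\cl S)\otimes_{max}\cl T$ agreeing with $f$ on $\cl S\otimes\cl T$ and hence nonnegative at $u$. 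One cosmetic point: you invoke Lemma \ref{lem dualofkpos.map} for $\psi:\cl T\to\cl S^d$, whose codomain is not an operator system when $\cl S$ is infinite dimensional, so the lemma as stated does not literally apply; however, the needed fact ($\psi$ cp implies $\psi^d$ cp) follows by the same composition argument for matrix-ordered spaces, namely that $t\mapsto\bigl(F_{ij}(\psi(t))\bigr)$ is a composition of cp maps, so nothing is lost.

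For the converse, your finite-dimensional argument is correct: with $\cl T=\cl S^d$, the state corresponding to $i:\cl S\to\cl S^{dd}$ under Lance duality extends from $\cl S\otimes_{max}\cl S^d=\cl S\otimes_{el}\cl S^d\subseteq I(\cl S)\otimes_{max}\cl S^d$ to a state of the larger system, and the extension corresponds to a ucp map $I(\cl S)\to\cl S^{dd}$ restricting to $i$, i.e.\ a weak expectation. But, as you say yourself, this does not prove the theorem as stated, which concerns arbitrary operator systems: $\cl S^d$ carries an Archimedean order unit only in the finite-dimensional case, and the general converse is precisely Han's theorem, which you leave as a citation exactly as the paper does. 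In sum: no error, strictly more content than the paper's own treatment (which is citation in both directions), and an accurate identification of the one genuinely hard step that neither you nor the paper proves.
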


\subsection{Double Commutant Expectation Property (DCEP)} $\;$

\noindent Another nuclearity property we want to discuss is DCEP which
coincides with WEP for unital C*-algebras however is different than WEP for
general operator systems. An operator system $\cl S$ is said to have
\textit{DCEP} if every representation $i:\cl S \hookrightarrow B(H)$ extends
to a ucp map from $I(\cl S)$ into $\cl S''$, the double commutant of $\cl S$ in
$B(H)$.
$$\hspace{2cm}
\xymatrix{
 \cl S \ar@{^{(}->}[rr] \ar@{_{(}->}[d] &     &B(H) \supseteq  S''
\hspace{3cm}\\
 I(\cl S) \ar@{.>}[rru] &     &
}
$$

In fact, by using Arveson's commutant lifting theorem \cite{Ar1} (or \cite[Thm.
12.7]{Pa}), it can be directly shown that a unital C*-algebra has WEP if and
only if it has DCEP. Many fundamental results and conjectures concerning WEP in
C*-algebras reduces to DCEP in operator systems. The following is a
direct consequence of Theorem 7.1 and 7.6 in  \cite{kptt2}:
\begin{theorem}
The following are equivalent for an operator system $\cl S$:
\begin{enumerate}
 \item  $\cl S$ is (el,c)-nuclear, that is, $\cl S \otimes_{el} \cl T = \cl S
\otimes_{c} \cl T$ for every $\cl T$.
 \item $\cl S$ has DCEP.
 \item $\cl S \otimes_{min} C^*(\mathbb{F}_\infty) = \cl S \otimes_{max} C^*(\mathbb{F}_\infty)$.
 \item For any $\cl S \subset \cl A$ and $\cl B$, where $\cl A$ and $\cl B$ are
unital C*-algebras, the inclusion $\cl S \otimes_{max} \cl B \hookrightarrow \cl
A \otimes_{max} \cl B $ is a complete order embedding.
\end{enumerate}
\end{theorem}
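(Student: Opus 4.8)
The plan is to prove the equivalences through the cycle $(2)\Rightarrow(1)$, $(1)\Rightarrow(2)$, $(2)\Leftrightarrow(4)$, $(1)\Rightarrow(3)$, $(3)\Rightarrow(2)$, treating $(1)\Leftrightarrow(2)$ as the core. Two reductions are used throughout: that $c$ coincides with $\max$ whenever a factor is a C*-algebra (Theorem \ref{thm c=max}), and the left injectivity of $el$ (Theorem \ref{thm el is li}), so that $\cl S\otimes_{el}\cl T\subseteq I(\cl S)\otimes_{el}\cl T=I(\cl S)\otimes_{max}\cl T$, the last equality by injectivity of $I(\cl S)$. For $(1)\Rightarrow(2)$ I would build the double commutant expectation by a multiplicative-domain argument. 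Fix $\cl S\subseteq B(H)$ and set $\cl R=\cl S'$, a C*-algebra. The multiplication $m\colon\cl S\otimes_{c}\cl R\to B(H)$, $s\otimes r\mapsto sr$, is ucp since $\cl S$ and $\cl R$ commute. By $(1)$ we have $\cl S\otimes_c\cl R=\cl S\otimes_{el}\cl R$, which embeds into $I(\cl S)\otimes_{max}\cl R$, so injectivity of $B(H)$ extends $m$ to a ucp map $\tilde m\colon I(\cl S)\otimes_{max}\cl R\to B(H)$. Put $\Phi(x)=\tilde m(x\otimes 1)$. Since $\tilde m$ restricts to the identity $*$-homomorphism on the C*-subalgebra $1\otimes\cl R$, Choi's multiplicative domain theorem gives $\tilde m((x\otimes1)(1\otimes r))=\Phi(x)r$ and $\tilde m((1\otimes r)(x\otimes1))=r\Phi(x)$; as both equal $\tilde m(x\otimes r)$, $\Phi(x)$ commutes with $\cl R=\cl S'$, so $\Phi\colon I(\cl S)\to\cl S''$ is a ucp extension of the inclusion, which is DCEP.

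For $(2)\Rightarrow(1)$ I would show $c\le el$ (the reverse always holds) by testing $el$-positive elements against commuting representations, using the characterization of $c$ in Proposition \ref{prop ucp on c}. Given commuting ucp maps $\phi\colon\cl S\to B(K)$ and $\psi\colon\cl T\to B(K)$, the range $\phi(\cl S)$ lies in the von Neumann algebra $\psi(\cl T)'$, hence so does $\phi(\cl S)''$; DCEP (via Arveson's commutant lifting theorem) then yields a ucp extension $\Phi\colon I(\cl S)\to\phi(\cl S)''\subseteq\psi(\cl T)'$ of $\phi$. Now $\Phi$ and $\psi$ have commuting ranges, so $\Phi\cdot\psi\colon I(\cl S)\otimes_c\cl T\to B(K)$ is ucp; applied to an $el$-positive $u$, which is positive in $I(\cl S)\otimes_{max}\cl T=I(\cl S)\otimes_c\cl T$, it gives $(\phi\cdot\psi)^{(n)}(u)=(\Phi\cdot\psi)^{(n)}(u)\ge0$. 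Since $\phi,\psi$ were arbitrary, $u$ is $c$-positive, establishing $(1)\Leftrightarrow(2)$.

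For $(2)\Leftrightarrow(4)$, assume DCEP and let $\cl S\subseteq\cl A$ with $\cl B$ a C*-algebra. Injectivity of $I(\cl S)$ extends the inclusion $\cl S\hookrightarrow I(\cl S)$ to a ucp map $\rho\colon\cl A\to I(\cl S)$, so for any commuting pair $(\phi,\psi)$ on $(\cl S,\cl B)$ the composite $\Phi\circ\rho\colon\cl A\to\phi(\cl S)''\subseteq\psi(\cl B)'$ (with $\Phi$ a DCEP extension of $\phi$) gives a commuting pair $(\Phi\circ\rho,\psi)$ on $(\cl A,\cl B)$ agreeing with $\phi$ on $\cl S$. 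As $\cl A\otimes_{max}\cl B=\cl A\otimes_c\cl B$, positivity of $u\in M_n(\cl S\otimes\cl B)$ in $\cl A\otimes_{max}\cl B$ forces $(\phi\cdot\psi)^{(n)}(u)\ge0$, so $u$ is positive in $\cl S\otimes_c\cl B=\cl S\otimes_{max}\cl B$; this is $(4)$. Conversely, applying $(4)$ with $\cl A=I(\cl S)$ and $\cl B=\cl S'$ shows that $\cl S\otimes_{max}\cl S'$ and $\cl S\otimes_{el}\cl S'$ coincide, since both embed via the same map into $I(\cl S)\otimes_{max}\cl S'$ (using Theorem \ref{thm el is li}); this is exactly the input for the multiplicative-domain argument of $(1)\Rightarrow(2)$, yielding DCEP.

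Finally, $(1)\Rightarrow(3)$ is a short computation: $\cl S\otimes_{min}C^*(\mathbb F_\infty)=C^*(\mathbb F_\infty)\otimes_{er}\cl S=\cl S\otimes_{el}C^*(\mathbb F_\infty)$, where the first equality is the (min,er)-nuclearity of $C^*(\mathbb F_\infty)$ (it has LLP, hence osLLP) and the second is the $el$–$er$ symmetry; then $(1)$ and Theorem \ref{thm c=max} give $\cl S\otimes_{el}C^*(\mathbb F_\infty)=\cl S\otimes_{c}C^*(\mathbb F_\infty)=\cl S\otimes_{max}C^*(\mathbb F_\infty)$. The remaining implication $(3)\Rightarrow(2)$ is the Kirchberg-type direction and is where I expect the real difficulty: I would realize an arbitrary C*-algebra $\cl B$ as a quotient $C^*(\mathbb F)/I$, then use projectivity of $\max$ and injectivity of $\min$ to transport the single hypothesis into the relative embedding statement $(4)$, the delicate points being the reduction from a general free group back to $\mathbb F_\infty$ and the proximinality needed to push $\min$–$\max$ equality through the quotient (compare the quotient computations in Remark \ref{rem strong lp}). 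This step rests on Kirchberg's theorem and is the main obstacle.
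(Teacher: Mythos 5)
First, on the comparison itself: the paper does not prove this theorem — it records it as a direct consequence of Theorems 7.1 and 7.6 of \cite{kptt2} — so your attempt has to be measured against those proofs. The parts you do establish, namely $(1)\Leftrightarrow(2)$, $(2)\Leftrightarrow(4)$ and $(1)\Rightarrow(3)$, are correct in substance (modulo the point in the next paragraph) and use exactly the standard multiplication-map/multiplicative-domain technique. The genuine gap is that you never prove an implication leading back out of $(3)$, so the four statements are not shown equivalent. Moreover, the route you sketch for $(3)\Rightarrow(2)$ would fail as stated: writing $\cl B=C^*(\mathbb{F})/I$ and invoking projectivity of max, an element $u\in M_n(\cl S\otimes\cl B)$ that is positive in $M_n(\cl A\otimes_{max}\cl B)$ only acquires (approximate) positive lifts in $M_n(\cl A\otimes_{max}C^*(\mathbb{F}))$, not in $M_n(\cl S\otimes C^*(\mathbb{F}))$; since your hypothesis concerns only tensors with $\cl S$, it never comes into play, and you would need $\min=\max$ on $\cl A\otimes C^*(\mathbb{F})$, which you do not have. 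The missing idea is that $(3)\Rightarrow(2)$ is proved by rerunning your own $(1)\Rightarrow(2)$ argument with the commutant presented as a quotient of a free group C*-algebra: fix a representation $\cl S\subseteq B(H)$, choose a surjective $*$-homomorphism $q:C^*(\mathbb{F}_J)\rightarrow\cl S'$ (the von Neumann algebra $\cl S'$ is generated by its unitaries), observe that $s\otimes c\mapsto s\,q(c)$ is ucp on $\cl S\otimes_{c}C^*(\mathbb{F}_J)=\cl S\otimes_{max}C^*(\mathbb{F}_J)$ by Theorem \ref{thm c=max}, use $(3)$ to replace max by min, embed $\cl S\otimes_{min}C^*(\mathbb{F}_J)$ into the C*-algebra $I(\cl S)\hat{\otimes}_{min}C^*(\mathbb{F}_J)$ by injectivity of min, extend by Arveson's theorem, and run your multiplicative-domain argument: $\Phi(x)=\tilde{\sigma}(x\otimes 1)$ lands in $q(C^*(\mathbb{F}_J))'=\cl S''$. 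The reduction from $\mathbb{F}_\infty$ to arbitrary $\mathbb{F}_J$, which you flagged as delicate, is soft: a finite sum in $\cl S\otimes C^*(\mathbb{F}_J)$ lies in $\cl S\otimes C^*(\mathbb{F}_{J_0})$ for a countable $J_0$, and $C^*(\mathbb{F}_{J_0})$ embeds both in $C^*(\mathbb{F}_J)$ and in $C^*(\mathbb{F}_\infty)$ with ucp inverses by \cite[Prop. 8.8]{pi}, so positivity transfers by injectivity of min and functoriality of max. Note that, contrary to your closing remark, no deep theorem of Kirchberg is needed in this direction; Kirchberg's theorem (LLP of $C^*(\mathbb{F}_\infty)$) enters only in $(1)\Rightarrow(3)$, where you have already — legitimately — cited it.

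A smaller, repairable flaw: in $(2)\Rightarrow(1)$ and $(2)\Rightarrow(4)$ you apply DCEP to an arbitrary ucp map $\phi:\cl S\rightarrow B(K)$, whereas DCEP as defined concerns complete order embeddings $\cl S\hookrightarrow B(H)$; and Arveson's commutant lifting theorem cannot be cited in its place, since $\cl S$ is not a C*-algebra and $\phi$ has no Stinespring dilation. The statement you want is true, but needs the standard repair: replace the commuting pair $(\phi,\psi)$ by $(\phi\oplus\iota_0,\;\psi\oplus f(\cdot)1)$, where $\iota_0:\cl S\hookrightarrow B(H_0)$ is any faithful representation and $f$ is a state on $\cl T$; the first map is now an embedding whose range commutes with that of the second, DCEP applies to it, and compressing $((\phi\oplus\iota_0)\cdot(\psi\oplus f(\cdot)1))^{(n)}(u)\geq 0$ back to $K$ gives $(\phi\cdot\psi)^{(n)}(u)\geq 0$. (Equivalently: since $P_K$ and every $R\oplus 0$ with $R\in\phi(\cl S)'$ commute with $(\phi\oplus\iota_0)(\cl S)$, the compression to $K$ of the DCEP extension is a ucp map $I(\cl S)\rightarrow\phi(\cl S)''$ extending $\phi$.) With this repair and the completed $(3)\Rightarrow(2)$, your architecture does yield the theorem.
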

\noindent Here $C^*(\mathbb{F}_\infty)$ is the full C*-algebra of the free group on countably infinite generators $\mathbb{F}_{\infty}$.
Note that (3) is Kirchberg's WEP characterization in \cite{Ki2} and (4) is
Lance's seminuclearity in \cite{La} for unital C*-algebras.

\subsection{Exactness}\label{subsec exact}$\;$

\noindent The importance of exactness and its connection to the tensor theory of
C*-algebras ensued by Kirchberg \cite{Ki1}, \cite{Ki2}. Exactness is really a
categorical term and requires a correct notion of quotient theory. The operator
system quotients established in \cite{kptt2}, which we reviewed in Section
\ref{sec quotients}, is used to extend the exactness to operator systems. Before
starting the definition we recall a couple of results from \cite{kptt2}: Let $\cl
S$ be an operator system, $\cl A$ be a unital C*-algebra and $I$ be an ideal in
$\cl A$. Then $\cl S \bar{\otimes} I $ is a kernel in $ \cl S 
\hat{\otimes}_{min} \cl A$ where  $\hat{\otimes}_{min}$ represents the completed
minimal tensor product and $\bar{\otimes}$ denotes the closure of the algebraic
tensor product in the larger space. By using the functoriality of the minimal
tensor product it is easy to see that the map
$$
 \cl S \hat{\otimes}_{min} \cl A \xrightarrow{id\otimes q} \cl S
\hat{\otimes}_{min} (\cl A / I),
$$
where $id$ is the identity on $\cl S$ and $q$ is the quotient map from $\cl A$
onto $\cl A / I$, is ucp and its kernel contains $\cl S \bar{\otimes} I $.
Consequently the induced map
$$
 (\cl S \hat{\otimes}_{min} \cl A) /  (\cl S \bar{\otimes} I ) \longrightarrow
\cl S \hat{\otimes}_{min} (\cl A / I)
$$
is still unital and completely positive. An operator system is said to be
\textit{exact} if this induced map is a bijective and a complete order
isomorphism for every C*-algebra $\cl A$ and ideal $I$ in $\cl A$. In other
words we have the equality
$$
 (\cl S \hat{\otimes}_{min} \cl A) /  (\cl S \bar{\otimes} I ) = \cl S
\hat{\otimes}_{min} (\cl A / I).
$$
We remark that the induced map may fail to be surjective or injective, moreover
even if it has these properties it may fail to be a complete order isomorphism.

\begin{remark}\label{rem ex for fd} If $\cl S$ is finite dimensional then we have that $\cl S \otimes_{min} \cl A = \cl S \hat{\otimes}_{min} \cl A$
and $\cl S \bar{\otimes} I = \cl S \otimes I $. Moreover the induced map
$$
(\cl S \otimes_{min} \cl A) /  (\cl S \otimes I ) \longrightarrow
\cl S \otimes_{min} (\cl A / I)
$$
 is always bijective. Thus, for this case, exactness is equivalent to the statement that the induced map is a complete order isomorphism.
\end{remark}

\begin{proof}
Let $\cl S = span\{s_1,...,s_k\}$. Suppose that $\{u_n\} $  is a Cauchy sequence in the algebraic
tensor product $ \cl S \otimes_{min} \cl A$ with limit $u$ in $\cl S \hat{\otimes}_{min} \cl A$. We will show that $u$ belongs to $ \cl S \otimes_{min} \cl A$. Clearly
we can write $u_n = s_1 \otimes a_1^n + \cdots s_k \otimes a_k^n$. We will prove that $\{a_i^n\}_n$ is Cauchy in $\cl A$ for
every $i = 1,...,k$. Let $\delta_{i} : \cl S \rightarrow \mathbb{C} $ be the linear map defined by $\delta_i(s_j) = \delta_{ij}$.
Since each of $\delta_i$ is completely bounded we have that $\delta \otimes id : \cl S \otimes_{min} \cl A \rightarrow \cl A$ given by $s \otimes a \mapsto \delta(s)a$
is a completely bounded map, in particular it is continuous. (Here we use the fact that minimal tensor product of two operator system
is same as the operator space minimal tensor product. This is easy to see as both of them are spatial. We also use the fact that every
linear map defined from a finite dimensional operator space is completely bounded.) Clearly $\{a_i^n\}_n$ is the image of $\{u_n\}$ under this map
and consequently it is Cauchy. Let $a_i$ be the limit of these sequences in $\cl A$ for $i = 1,...,k$. Now it is elementary to show that
$u = s_1 \otimes a_1 + \cdots  s_k \otimes a_k$. This directly follows from the triangle inequality and the cross norm property of the minimal
tensor product, i.e., $\|s \otimes a\| = \|s\|\|a\|$).

The proof of the fact that $\cl S \bar{\otimes} I = \cl S \otimes I $ is similar to this so we skip it. It is elementary to see
that the image of the induced map
$$
(\cl S \otimes_{min} \cl A) /  (\cl S \otimes I ) \longrightarrow
\cl S \otimes_{min} (\cl A / I)
$$
covers the algebraic quotient which is same as its completion for this case. Thus, it is onto. Finally we need to show that
it is injective. More precisely, we need to show that the map $\cl S \otimes_{min}\cl A \rightarrow \cl S \otimes \cl A / I$
has kernel $\cl S \otimes I$. Suppose the image of $\Sigma s_i \otimes a_i$ is $0$, that is, $\Sigma s_i \otimes \dot{a_i}$ is $0$ in
$ \cl S \otimes \cl A / I$. Since $\{s_1,...,s_n\}$ is a linearly independent set we have that each of $\dot{a_1},...,\dot{a_k}$ is $0$.
Thus $a_1,...,a_k$ belongs to $I$. This finishes the proof.
\end{proof}

\noindent \textbf{Note:} The term exactness in this paper coincides with
1-exactness in \cite{kptt2}.

$ $

\noindent A unital C*-algebra is exact (in the sense of Kirchberg) if and only
if it is an exact operator system which follows from the fact that the unital C*-algebra ideal quotient coincides
with the operator system kernel quotient. The following is Theorem 5.7 of \cite{kptt2}:

\begin{theorem}\label{thm exact=(min,el)}
An operator system is exact if and only if it is (min,el)-nuclear.
\end{theorem}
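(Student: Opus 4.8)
The plan is to prove both implications by reducing, in each direction, to a single inclusion of matrix cones, since $min \leq el$ already makes the identity $\cl S \otimes_{el} \cl T \to \cl S \otimes_{min} \cl T$ ucp for every $\cl T$. Thus the whole content is to decide when a $min$-positive element of $M_n(\cl S \otimes \cl T)$ is automatically $el$-positive. The two structures are realized in incompatible ambient spaces, and the bridge between them is the injective envelope: writing $\cl B = I(\cl S)$, which is completely order isomorphic to a unital C*-algebra, left injectivity of $el$ together with the identity $\cl B \otimes_{el} \cl T = \cl B \otimes_{max} \cl T$ (Theorem \ref{thm el is li} and the discussion following it) shows that the $el$-structure on $\cl S \otimes \cl T$ is exactly the one induced from $\cl B \otimes_{max} \cl T$, while injectivity of $min$ identifies the $min$-structure with the one induced from $\cl B \otimes_{min} \cl T$. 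So the theorem amounts to controlling, on the subsystem $\cl S \otimes \cl T$, the gap between the maximal and minimal structures on $\cl B \otimes \cl T$.

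For the implication ``$(min,el)$-nuclear $\Rightarrow$ exact'' I would specialise to $\cl T = \cl A/I$. Since $\cl B$ and $\cl A$ are C*-algebras, projectivity of the maximal tensor product gives $\cl B \otimes_{max} (\cl A/I) = (\cl B \otimes_{max} \cl A)/\cl J$ for the ideal $\cl J$ generated by $\cl B \otimes I$, a genuine C*-ideal quotient and hence proximinal, and Theorem \ref{thm c=max} lets me pass between $max$ and $c$ wherever a commuting-range argument is convenient. Feeding the hypothesis $\cl S \otimes_{min}(\cl A/I) = \cl S \otimes_{el}(\cl A/I)$ into this picture upgrades the minimal structure on the quotient to the enveloping one; comparing with the canonical ucp map $(\cl S \hat{\otimes}_{min} \cl A)/(\cl S \bar{\otimes} I) \to \cl S \hat{\otimes}_{min}(\cl A/I)$ from the definition of exactness then forces that map to be a complete order isomorphism, which is exactness. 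In the finite dimensional case Remark \ref{rem ex for fd} removes all completion and surjectivity bookkeeping, so that only the order isomorphism has to be checked.

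The reverse implication ``exact $\Rightarrow$ $(min,el)$-nuclear'' is the substantial one, and I would split it into two moves. First, a reduction: an arbitrary $\cl T$ is presented as an operator system quotient $\cl T = \cl C/J$ of a unital C*-algebra $\cl C$ by a kernel $J$, and functoriality of both $min$ and $el$, together with the quotient theory of Section \ref{sec quotients} and Proposition \ref{prop rep of c2}, is used to pull the comparison $\cl S \otimes_{min} \cl T = \cl S \otimes_{el} \cl T$ back to the situation in which the second tensor factor is a C*-algebra and its ideal quotient. Second, on that C*-quotient case I would run the computation of the previous paragraph backwards: given a $min$-positive element, exactness converts its $min$-positivity through the ideal quotient into $max$-positivity inside $\cl B \otimes_{max}(\cl A/I)$, which is precisely $el$-positivity.

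The hard part will be exactly this reconciliation of the two ambient structures. Exactness is a statement about the \emph{minimal} tensor product of $\cl S$ with $\cl A$ and with $\cl A/I$, whereas the $el$-structure lives inside the \emph{maximal} tensor product of the injective C*-envelope $\cl B = I(\cl S)$ with $\cl T$; there is no reason for $\cl B \otimes_{min} \cl A$ and $\cl B \otimes_{max} \cl A$ to agree, and the whole point is that passing to the quotient by the ideal $I$ must annihilate this discrepancy precisely on the subsystem $\cl S \otimes (\cl A/I)$. Making this collapse rigorous — keeping track of proximinality of the operator system quotients so that positive elements genuinely lift, and ensuring that the reduction $\cl T = \cl C/J$ is compatible with both tensor structures — is where the real work, and all essential use of the exactness hypothesis, is concentrated.
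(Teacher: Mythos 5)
First, a point of reference: the paper does not actually prove this theorem — it quotes it as Theorem 5.7 of \cite{kptt2} — so your proposal has to be measured against what a complete argument requires rather than against a proof in the text.

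The fatal gaps are in the direction ``exact $\Rightarrow$ $(min,el)$-nuclear''. Your reduction presents an arbitrary $\cl T$ as an operator system quotient $\cl T = \cl C/J$ of a unital C*-algebra by a kernel. That presentation is neither proved nor cited (it is not a known elementary fact), and even if it were granted it does not connect to the hypothesis: exactness is, by definition, a statement about quotients by \emph{ideals}, and nothing lets you lift $min$-positivity across a quotient by a mere kernel $J$. The reduction that actually works is different: since $I(\cl S)$ is a C*-algebra, $I(\cl S) \otimes_{max} \cl T = I(\cl S) \otimes_{c} \cl T$ (Theorem \ref{thm c=max}), so $el$-positivity of $u \in M_n(\cl S \otimes \cl T)$ is tested against ucp pairs $\phi: I(\cl S) \to B(H)$, $\psi: \cl T \to B(H)$ with commuting ranges; putting $\cl B = C^*(\psi(\cl T) \cup \{1\})$, so that $\phi$ maps into $\cl B'$, and factoring $\phi \cdot \psi$ through $\cl B' \otimes_{max} \cl B$, one reduces to the case where the right-hand factor is a unital C*-algebra $\cl B$. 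Second, and more seriously, even in that case your ``run the computation backwards'' step has no engine. Writing $\cl B = C/I$, exactness lets you lift a $min$-positive element of $\cl S \otimes \cl B$, up to $\epsilon$ and modulo $\cl S \bar{\otimes} I$, to a $min$-positive element of $\cl S \hat{\otimes}_{min} C$; but to push this down to $max$-positivity in $I(\cl S) \otimes_{max} \cl B$ you must know that $min$-positivity in $I(\cl S) \otimes C$ implies $max$-positivity there, i.e. $I(\cl S) \otimes_{min} C = I(\cl S) \otimes_{max} C$. For general $C$ this is exactly the min/max discrepancy you yourself flag as irreconcilable, and the proof only closes because one is free to \emph{choose} the cover $C = C^*(\mathbb{F}_X)$, a full free group C*-algebra: then Kirchberg's theorem (LLP of $C^*(\mathbb{F}_X)$ against the WEP of the injective C*-algebra $I(\cl S)$, \cite{Ki2}) supplies precisely this identity, after which functoriality of $max$ pushes the lifted positive element down to $I(\cl S) \otimes_{max} (C/I)$. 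No ingredient playing this role appears in your proposal; hoping that passing to the quotient ``annihilates the discrepancy'' is a restatement of the problem, not a mechanism.

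The converse direction is closer to correct but still under-argued at its one essential point. After identifying $\cl S \otimes_{min} (\cl A/I) = \cl S \otimes_{el} (\cl A/I) \subseteq I(\cl S) \otimes_{max} (\cl A/I) = (I(\cl S) \hat{\otimes}_{max} \cl A)/(I(\cl S) \bar{\otimes} I)$ and lifting positive elements by proximinality of the C*-ideal quotient, the lift lives in $I(\cl S) \hat{\otimes}_{max} \cl A$, not in $\cl S \otimes \cl A$, so ``comparing \dots forces the map to be a complete order isomorphism'' is not yet an argument. One must either construct a ucp inverse of the canonical map $(\cl S \hat{\otimes}_{min} \cl A)/(\cl S \bar{\otimes} I) \to \cl S \hat{\otimes}_{min} (\cl A/I)$ or invoke an embedding-of-quotients result such as Corollary 5.6 of \cite{kptt2}, which places $(\cl S \hat{\otimes}_{min} \cl A)/(\cl S \bar{\otimes} I)$ completely order isomorphically inside the corresponding quotient over $I(\cl S)$; and one must separately verify injectivity (the kernel of $id \otimes q$ could a priori be larger than $\cl S \bar{\otimes} I$) and surjectivity of the canonical map, neither of which is automatic.
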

In Theorem \ref{exactdualLP} we will see that  exactness and the lifting property
are dual pairs. We want to finish this subsection with the following stability property:

\begin{proposition}\label{prop exac pass ss}
Exactness passes to operator subsystems. That is, if $\cl S$ is exact then every operator subsystem of $\cl S$
is exact. Conversely, if every finite dimensional operator subsystem of $\cl S$ is exact then $\cl S$ is exact.
\end{proposition}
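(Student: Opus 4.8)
The plan is to establish both directions using the definition of exactness via the induced quotient map
$(\cl S \hat{\otimes}_{min} \cl A)/(\cl S \bar{\otimes} I) \to \cl S \hat{\otimes}_{min} (\cl A/I)$
being a complete order isomorphism for every C*-algebra $\cl A$ and ideal $I$. For the forward direction, suppose $\cl S$ is exact and let $\cl S_0 \subset \cl S$ be an operator subsystem. The key tool is the injectivity of the minimal tensor product (Section \ref{sec tensor}): the inclusion $\cl S_0 \hookrightarrow \cl S$ induces complete order embeddings $\cl S_0 \hat{\otimes}_{min} \cl A \hookrightarrow \cl S \hat{\otimes}_{min} \cl A$ and $\cl S_0 \hat{\otimes}_{min}(\cl A/I) \hookrightarrow \cl S \hat{\otimes}_{min}(\cl A/I)$. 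First I would check that these embeddings are compatible with the kernels, i.e. that $\cl S_0 \bar{\otimes} I = (\cl S_0 \hat{\otimes}_{min}\cl A) \cap (\cl S \bar{\otimes} I)$, so that the induced map at the $\cl S_0$ level is the restriction of the one at the $\cl S$ level. One then has a commuting diagram of the two induced maps, and since the outer map (for $\cl S$) is a complete order isomorphism and the vertical maps are complete order embeddings, a diagram chase forces the inner map (for $\cl S_0$) to be a complete order isomorphism as well. The main subtlety here is that exactness requires surjectivity and injectivity of the induced map in addition to the order-isomorphism property, so I would verify each of these three features descends to the subsystem.

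For the converse, suppose every finite dimensional operator subsystem of $\cl S$ is exact. By Theorem \ref{thm exact=(min,el)} exactness is equivalent to (min,el)-nuclearity, so I would work instead with the equivalent statement $\cl S \otimes_{min} \cl T = \cl S \otimes_{el} \cl T$ for every operator system $\cl T$. The natural strategy is an approximation argument: given a positive element $u$ in $\cl S \otimes_{min} \cl T$ (at any matrix level), $u$ involves only finitely many elements of $\cl S$, hence lies in $\cl S_0 \otimes \cl T$ for some finite dimensional operator subsystem $\cl S_0 \subset \cl S$. By hypothesis $\cl S_0$ is exact, hence (min,el)-nuclear, so $u$ is positive in $\cl S_0 \otimes_{el} \cl T$. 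The crucial step is then to transfer this back up to $\cl S \otimes_{el} \cl T$ using the left injectivity of el (Theorem \ref{thm el is li}), which gives the complete order embedding $\cl S_0 \otimes_{el} \cl T \subseteq \cl S \otimes_{el} \cl T$; this shows $u$ is positive at the level of $\cl S$.

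The hard part will be the direction from finite dimensional subsystems to $\cl S$, specifically handling the Archimedeanization and closure issues when $\cl S$ is infinite dimensional. A general positive element of $\cl S \otimes_{min} \cl T$ need not lie exactly in a finite dimensional piece, so I expect to need an $\epsilon$-argument: use that $u + \epsilon (e_{\cl S} \otimes e_{\cl T})$ can be captured by a finite dimensional subsystem containing the (finitely many) $\cl S$-components together with $e_{\cl S}$, conclude positivity there via the el structure, embed up via left injectivity, and finally let $\epsilon \to 0$ invoking the Archimedean property of $\cl S \otimes_{el} \cl T$. I would take care that the finite dimensional subsystem is always chosen to be unital and selfadjoint (so that it genuinely is an operator subsystem) and that the left injectivity embedding is compatible across the net of subsystems. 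The forward direction, by contrast, should be essentially formal once the kernel-compatibility identity is verified.
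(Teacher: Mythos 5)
Your forward direction contains a genuine gap. The diagram chase you describe requires both vertical maps to be complete order embeddings, but injectivity of the minimal tensor product only gives this for the right-hand one, $\cl S_0\hat{\otimes}_{min}(\cl A/I)\hookrightarrow \cl S\hat{\otimes}_{min}(\cl A/I)$. The left-hand map
$j\colon (\cl S_0\hat{\otimes}_{min}\cl A)/(\cl S_0\bar{\otimes}I)\to(\cl S\hat{\otimes}_{min}\cl A)/(\cl S\bar{\otimes}I)$
is a priori merely ucp: positivity of $j(\dot{u})$ means $u$ can be corrected, up to $\epsilon$, by elements of $\cl S\bar{\otimes}I$, whereas positivity of $\dot{u}$ in the small quotient requires corrections lying in $\cl S_0\bar{\otimes}I$, and there is no conditional expectation or other mechanism to push such corrections down into $\cl S_0\hat{\otimes}_{min}\cl A$. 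In fact, granting your kernel-compatibility identity and the exactness of $\cl S$, the statement ``$j$ is a complete order embedding'' is \emph{equivalent} to the statement that the induced map $(\cl S_0\hat{\otimes}_{min}\cl A)/(\cl S_0\bar{\otimes}I)\to\cl S_0\hat{\otimes}_{min}(\cl A/I)$ is a complete order isomorphism, i.e., to what you are trying to prove; the chase is circular. The kernel compatibility $(\cl S_0\hat{\otimes}_{min}\cl A)\cap(\cl S\bar{\otimes}I)=\cl S_0\bar{\otimes}I$ is itself of the same nature: it is a Fubini/slice-map type identity that amounts to the injectivity part of exactness for $\cl S_0$, so it cannot simply be ``checked'' as a preliminary step.

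Your converse direction, by contrast, is correct and is essentially the paper's own argument: reduce to (min,el)-nuclearity via Theorem \ref{thm exact=(min,el)}, capture a positive matrix-level element inside $M_n(\cl S_0\otimes\cl T)$ for a finite dimensional operator subsystem $\cl S_0$, and transfer positivity back up using the left injectivity of el (Theorem \ref{thm el is li}). The $\epsilon$-perturbation and Archimedean limit you anticipate are unnecessary, since the operator system tensor products here are defined on the algebraic tensor product: an element of $M_n(\cl S\otimes_{min}\cl T)$ already has finitely many $\cl S$-components, and its positivity in $M_n(\cl S_0\otimes_{min}\cl T)$ is immediate because $\cl S_0\otimes_{min}\cl T\subseteq\cl S\otimes_{min}\cl T$ is a complete order embedding. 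The repair for the forward direction lies inside your own toolkit: run it through the same characterization, namely $\cl S_0\otimes_{min}\cl T\subseteq\cl S\otimes_{min}\cl T=\cl S\otimes_{el}\cl T\supseteq\cl S_0\otimes_{el}\cl T$ by injectivity of min and left injectivity of el, so that the two structures on $\cl S_0\otimes\cl T$ coincide. This is exactly the paper's two-line argument, and it avoids quotients entirely.
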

\begin{proof}
We will use the nuclearity characterization of exactness, i.e., (min,el)-nuclearity. First suppose $\cl S$ is exact
and $\cl S_0$ is an operator subsystem of $\cl S$. By using the injectivity of min and left injectivity of el we have
that
$$
\cl S_0 \otimes_{min} \cl T \subseteq \cl S \otimes_{min} \cl T \mbox{ and } \cl
S_0 \otimes_{el} \cl T \subseteq \cl S \otimes_{el} \cl T
$$
for every operator system $\cl T$. Since the tensors on the right hand side
coincide it follows that $\cl S_0$ is (min,el)-nuclear, equivalently it is exact. 

To prove the second part suppose that $\cl S$ is not exact. This means that there is an operator system $\cl T$
such that the identity
$$
\cl S \otimes_{min} \cl T  \rightarrow  \cl S \otimes_{el} \cl T
$$
is not a cp map, that is, there is an positive element $U$ in $M_n(\cl S\otimes_{min} \cl T)$
which is not positive in $M_n(\cl S\otimes_{el} \cl T)$. Clearly $\cl S$ has a finite dimensional
operator subsystem $\cl S_0$ such that $U$ belongs to $M_n(\cl S_0\otimes \cl T)$. Now again using the fact
that
$$
\cl S_0 \otimes_{min} \cl T \subseteq \cl S \otimes_{min} \cl T \mbox{ and } \cl
S_0 \otimes_{el} \cl T \subseteq \cl S \otimes_{el} \cl T
$$
we see that $U$ is positive in $M_n(\cl S_0\otimes_{min} \cl T)$ but not positive in $M_n(\cl S_0\otimes_{el} \cl T)$.
This means that $\cl S_0$ is not exact. This finishes the proof.
\end{proof}

\subsection{Final Remarks on Nuclearity} $\;$

\noindent Unlike  C*-algebras a finite dimensional operator system may not posses a certain
type of nuclearity. For example $M_2\oplus M_2$ has a five dimensional operator subsystem
which does not have the lifting property (See Corollary \ref{cor M4hasnonlp}, e.g.). Exactness and the local lifting
property of three dimensional operator systems are directly related to the Smith Ward problem which
is currently still open (Sec. \ref{Sec MNR}). Similarly we will see that the Kirchberg Conjecture is a problem about
nuclearity properties of five dimensional operator systems.

$ $

\noindent The following schema summarizes the nuclearity characterizations  that we have
discussed in this section:

$$
\xymatrix{
min  \ar@{-}@/^1pc/[rr]^{exactness}   \ar@{-}@/^6pc/[rrrrrrrr]^{CPFP} 
\ar@{-}@/_2pc/[rrrr]_{osLLP}    \ar@{-}@/_4pc/[rrrrrr]_{C^*-nuclearity}
&  \leq  &
el  \ar@{-}@/^2pc/[rrrrrr]^{WEP}     \ar@{-}@/_2pc/[rrrr]_{DCEP} 
&  ,   & er &  \leq     &   c & \leq & max
}
$$

\begin{proposition}
 The following are equivalent for an operator system $\cl S$:
\begin{enumerate}
 \item $\cl S$ is (min,c)-nuclear, that is, $\cl S \otimes_{min} \cl T = \cl S
\otimes_{c} \cl T$ for every operator system $\cl T$.
 \item $\cl S$ is C*-nuclear, that is, $\cl S \otimes_{min} \cl A = \cl
S \otimes_{max} \cl A$ for every unital C*-algebra $\cl A$.
\end{enumerate}
\end{proposition}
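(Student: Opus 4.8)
**Proposal for proving the equivalence of (min,c)-nuclearity and C*-nuclearity.**

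The plan is to prove the two implications separately, relying on the representation results established earlier in the excerpt, especially Proposition \ref{prop rep of c2} and Theorem \ref{thm c=max}. The direction $(1) \Rightarrow (2)$ should be nearly immediate. Suppose $\cl S$ is (min,c)-nuclear and let $\cl A$ be any unital C*-algebra. Then $\cl S \otimes_{min} \cl A = \cl S \otimes_{c} \cl A$ by hypothesis. But Theorem \ref{thm c=max} tells us that $\cl A \otimes_{c} \cl S = \cl A \otimes_{max} \cl S$ whenever $\cl A$ is a unital C*-algebra, and since $c$ is symmetric this gives $\cl S \otimes_{c} \cl A = \cl S \otimes_{max} \cl A$. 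Chaining these equalities yields $\cl S \otimes_{min} \cl A = \cl S \otimes_{max} \cl A$ for every unital C*-algebra $\cl A$, which is exactly C*-nuclearity.

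The reverse direction $(2) \Rightarrow (1)$ is where the real content lies, because we must pass from a statement about C*-algebras $\cl A$ to a statement about arbitrary operator systems $\cl T$. First I would fix an arbitrary operator system $\cl T$ and note that, since $min \leq c$ always holds, the identity map $\cl S \otimes_{c} \cl T \to \cl S \otimes_{min} \cl T$ is automatically ucp; the task is to show the reverse map is also ucp, i.e. that the two cone structures coincide. The key idea is to replace $\cl T$ by its universal C*-algebra $C^*_u(\cl T)$. By Proposition \ref{prop rep of c2} (applied with the roles of the two factors suitably arranged, using symmetry of $c$), we have a complete order embedding $\cl S \otimes_{c} \cl T \subset \cl S \otimes_{max} C^*_u(\cl T)$, since $C^*_u(\cl T)$ is a unital C*-algebra and $c$ agrees with $max$ against such an algebra by Theorem \ref{thm c=max}.

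The strategy then is to run the corresponding inclusion on the minimal side and compare. Using the injectivity (spatiality) of the minimal tensor product together with the inclusion $\cl T \hookrightarrow C^*_u(\cl T)$, one gets $\cl S \otimes_{min} \cl T \subset \cl S \otimes_{min} C^*_u(\cl T)$ as a complete order embedding. Now C*-nuclearity of $\cl S$ applied to the C*-algebra $\cl A = C^*_u(\cl T)$ gives $\cl S \otimes_{min} C^*_u(\cl T) = \cl S \otimes_{max} C^*_u(\cl T)$. I would then assemble the commuting diagram of complete order embeddings: a positive element of $M_n(\cl S \otimes_{min} \cl T)$ sits inside $M_n(\cl S \otimes_{min} C^*_u(\cl T)) = M_n(\cl S \otimes_{max} C^*_u(\cl T))$, and one must check that it actually lands back in the subsystem $\cl S \otimes_{c} \cl T$ rather than merely in the larger max tensor product, which is guaranteed because the element lives algebraically in $\cl S \otimes \cl T$ and $\cl S \otimes_{c} \cl T$ is precisely the induced subsystem. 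Concluding that the element is positive in $M_n(\cl S \otimes_{c} \cl T)$ gives the desired reverse inequality, hence equality.

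I expect the main obstacle to be the bookkeeping in this last step: verifying that the various complete order embeddings are genuinely \emph{compatible}, so that positivity detected in the ambient $\cl S \otimes_{max} C^*_u(\cl T)$ correctly restricts to positivity in the operator subsystem $\cl S \otimes_{c} \cl T$. This requires being careful that the embedding of $\cl S \otimes_{c} \cl T$ from Proposition \ref{prop rep of c2} and the minimal-side inclusion are the ``same'' embedding at the level of the algebraic tensor product, so that one is comparing cone structures on a single underlying space rather than on formally distinct spaces. Once that identification is in place, the argument reduces to the chain of equalities and the fact (used repeatedly in the excerpt) that if a composition of ucp maps is a complete order embedding then so is the first map.
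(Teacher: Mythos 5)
Your proposal is correct and follows essentially the same route as the paper: the implication $(1)\Rightarrow(2)$ via Theorem \ref{thm c=max}, and $(2)\Rightarrow(1)$ by embedding $\cl S \otimes_{min} \cl T \subseteq \cl S \otimes_{min} C^*_u(\cl T)$ (injectivity of min) and $\cl S \otimes_{c} \cl T \subseteq \cl S \otimes_{max} C^*_u(\cl T)$ (Proposition \ref{prop rep of c2}), then applying C*-nuclearity to $C^*_u(\cl T)$ so that the two ambient tensor products coincide. The compatibility point you flag is real but immediate, since both embeddings are the identity on elementary tensors, and the paper's proof leaves it implicit.
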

\begin{proof}
Suppose (1). By using Theorem \ref{thm c=max} we have that
$
\cl S \otimes_{min} \cl A = \cl S \otimes_{c} \cl A = \cl S \otimes_{max} \cl A.
$
Hence we obtain (2). Conversely suppose (2). By the injectivity of
the minimal tensor product and by Proposition \ref{prop rep of c2} we have the
inclusions
$$
\cl S \otimes_{min} \cl T \subseteq \cl S \otimes_{min} C^*_u(\cl T) \mbox{ and
} \cl S \otimes_{c} \cl T \subseteq \cl S \otimes_{max} C^*_u(\cl T).
$$
Since the tensor products on the right hand side coincides (1) follows.
\end{proof}

\noindent \textbf{Remarks:}
\begin{enumerate}
 \item We use the term C*-nuclearity rather than (min,c)-nuclearity.

\item The above table for unital C*-algebras summarizes the classical
discussion for C*-algebras. Recall that in this case c and max coincides and
consequently WEP and DCEP are the same properties. Also osLLP and LLP are the
same. It is also important to remark that if we start with a unital C*-algebra $\cl A$ then
(min,el)-nuclearity, for example, can be verified with unital C*-algebras. That is, $\cl A \otimes_{min} \cl T = \cl A \otimes_{el} \cl T$
for every operator system $\cl T$ if and only if $\cl A \otimes_{min} \cl B = \cl A \otimes_{el} \cl B$ for every
unital C*-algebra $\cl B$. We left the verification of this to the reader. In addition to this, as we pointed 
out before, $\cl A$ is exact (in the sense of Kirchberg) if and only if it is an exact operator system. Similar
properties hold for other nuclearity properties WEP, CPFP  and LLP. Thus, we obtain the following schema:
$$\hspace{4cm}
\xymatrix{
\;\;\;\;\;\;\;\; min \;\;\; \leq \ar@{-}@/^1pc/[r]^{\;\;\;\;\;exactness}  
\ar@{-}@/^4pc/[rrr]^{nuclearity = CPFP} 
\ar@{-}@/_2pc/[rr]_{LLP}  
  &
el    \ar@{-}@/_2pc/[rr]_{WEP} 
   & er \ar@{-}@/^1pc/[r]^{\;\;\;\shortparallel} & \leq \;\;\;  max
\hspace{1cm}&  & &
}
$$
For this case (er,max)-nuclearity of a C*-algebra coincides with the
nuclearity by Lance \cite{La}, (see also \cite[Prop. 7.7]{kptt}). By this
simple schema it is rather easy to see that nuclearity is equivalent to 
exactness and WEP, e.g. Also suppose that $\cl A$ and $\cl B$ are unital
C*-algebras such that $\cl A$ has WEP and $\cl B$ has LLP. Now by using the
fact that LLP is equivalent to (min,er)-nuclearity we have that $\cl A
\otimes_{min} \cl B  = \cl A \otimes_{el} \cl B $. (Note: $\cl B$ is on the
right hand side.) Again by using the fact that WEP is same as
(el,c$=$max)-nuclearity we have $ \cl A \otimes_{el} \cl B =  \cl A \otimes_{max} \cl B$. Thus
we obtain a well known result of Kirchberg:  $ \cl A \otimes_{min} \cl B =  \cl A \otimes_{max} \cl B$.
\end{enumerate}

$ $

We close this section with the following observation about finite dimensional operator systems. 
Roughly speaking it states that the finite dimensional operator systems, except a small portion, 
namely the C*-algebras, are never (c,max)-nuclear. So in this case, (min,c)-nuclearity (i.e. C*-nuclearity)
is the highest nuclearity that that one should expect. (Of course, among the tensor products
min $\leq$ el, er $\leq$ c $\leq$ max.)

\begin{proposition}
The following are equivalent for a finite dimensional operator system $\cl S$:
\begin{enumerate}
 \item $\cl S$ is (c,max)-nuclear.
 \item $\cl S$ is unitally completely order isomorphic to a C*-algebra.
 \item $\cl S \otimes_c \cl S^d = \cl S \otimes_{max} \cl S^d$.
\end{enumerate}
\end{proposition}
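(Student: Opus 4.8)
The plan is to run the cycle $(2)\Rightarrow(1)\Rightarrow(3)\Rightarrow(2)$, with essentially all the content in the last implication. For $(2)\Rightarrow(1)$, if $\cl S$ is unitally completely order isomorphic to a unital C*-algebra $\cl A$, then functoriality of both $c$ and $max$ transports $\cl S\otimes_c\cl T$ and $\cl S\otimes_{max}\cl T$ to $\cl A\otimes_c\cl T$ and $\cl A\otimes_{max}\cl T$ for every operator system $\cl T$, and these agree by Theorem~\ref{thm c=max}; hence $\cl S$ is $(c,max)$-nuclear. The implication $(1)\Rightarrow(3)$ is immediate upon taking $\cl T=\cl S^d$.

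For $(3)\Rightarrow(2)$ the strategy is to manufacture a unital completely positive idempotent from $C^*_u(\cl S)$ onto $\cl S$ and then invoke the Choi--Effros theorem (\cite{CE2}), which asserts that the range of such a projection is completely order isomorphic to a C*-algebra. The starting point is the identity map $\id_{\cl S}\in CP(\cl S,\cl S)$. Since $\cl S$ is finite dimensional, the canonical complete order embedding $\cl S\hookrightarrow\cl S^{dd}$ is onto, so $\cl S^{dd}=\cl S$, and Lance's duality $(\cl S\otimes_{max}\cl S^d)^{d,+}=CP(\cl S,\cl S^{dd})=CP(\cl S,\cl S)$ (\cite{La2}) identifies $\id_{\cl S}$ with the evaluation functional $F_0(s\otimes f)=f(s)$. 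Thus $F_0$ is a positive functional on $\cl S\otimes_{max}\cl S^d$, and since hypothesis $(3)$ forces the level-one positive cones of $\cl S\otimes_{max}\cl S^d$ and $\cl S\otimes_c\cl S^d$ to coincide, $F_0$ is positive on $\cl S\otimes_c\cl S^d$ as well.

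Next I would invoke Proposition~\ref{prop rep of c2}, which exhibits $\cl S\otimes_c\cl S^d$ as an operator subsystem of $C^*_u(\cl S)\otimes_{max}\cl S^d$ containing its order unit $e_{\cl S}\otimes e_{\cl S^d}$. Since a positive functional on an order-unit subsystem extends to a positive functional on the ambient system (the order-unit Krein extension), $F_0$ extends to a positive functional $\tilde F_0$ on $C^*_u(\cl S)\otimes_{max}\cl S^d$. Applying Lance's duality again, now with first factor the unital C*-algebra $C^*_u(\cl S)$, gives $(C^*_u(\cl S)\otimes_{max}\cl S^d)^{d,+}=CP(C^*_u(\cl S),\cl S^{dd})=CP(C^*_u(\cl S),\cl S)$, so $\tilde F_0$ corresponds to a completely positive map $\tilde\Phi:C^*_u(\cl S)\to\cl S$; because $\tilde F_0$ extends $F_0$ one reads off $\tilde\Phi|_{\cl S}=\id_{\cl S}$. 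It then remains only to observe that $\tilde\Phi$ is unital ($\tilde\Phi(e)=e$) and, having range inside $\cl S$ where it restricts to the identity, satisfies $\tilde\Phi\circ\tilde\Phi=\tilde\Phi$; regarded as a ucp map $C^*_u(\cl S)\to C^*_u(\cl S)$ it is thus a ucp idempotent onto $\cl S$, and Choi--Effros yields $(2)$.

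The genuine difficulty is concentrated in the passage from the nuclearity equality $(3)$ to the existence of this projection. The key point is that $(c,max)$-nuclearity lets one reinterpret the always $max$-positive evaluation functional as $c$-positive, and then, through the canonical embedding of $\cl S\otimes_c\cl S^d$ into $C^*_u(\cl S)\otimes_{max}\cl S^d$ together with Lance's duality, as a completely positive \emph{extension} of $\id_{\cl S}$ to $C^*_u(\cl S)$ — that is, a conditional-expectation-type idempotent onto $\cl S$. Once this idempotent is produced the conclusion is immediate; the verification of its unitality and idempotence, and the applicability of the two duality identifications and of the functional extension, are all routine.
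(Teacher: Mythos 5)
Your proof is correct and follows essentially the same route as the paper: the cycle $(2)\Rightarrow(1)\Rightarrow(3)\Rightarrow(2)$, with $(3)\Rightarrow(2)$ obtained by viewing $\id_{\cl S}$ as a positive functional on $\cl S\otimes_{max}\cl S^d$ via Lance's duality, transferring it to $\cl S\otimes_c\cl S^d\subset C^*_u(\cl S)\otimes_{max}\cl S^d$, extending, pulling back to a cp map $C^*_u(\cl S)\to\cl S$ extending the identity, and concluding by Choi--Effros. The only cosmetic difference is that you extend the positive functional by the order-unit Krein argument where the paper cites Arveson's extension theorem (equivalent for scalar-valued maps), and you spell out the unitality and idempotence checks that the paper leaves implicit.
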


\begin{proof} 
Since $c$ and $max$ coincides when one of the tensorants is a
C*-algebra, (2) implies (1). Clearly (1) implies (3). We will show that (3)
implies (2). Consider $id: \cl S \rightarrow \cl S$. This corresponds to a
positive linear
functional $f_{id}: \cl S \otimes_{max} \cl S^d \rightarrow \mathbb{C}$. Since
$max$ and $c$ coincide by the assumption and $\cl S \otimes_c \cl S^d \subset
C^*_u(\cl S) \otimes_{max} \cl S^d$, $f_{id}$ extends to a positive linear
functional $\tilde{f_{id}} : C^*_u(\cl S) \otimes_{max} \cl S^d \rightarrow
\mathbb{C} $ by Arveson's extension theorem. Let $\varphi : C^*_u(\cl S) 
\rightarrow \cl (S^d)^d = \cl S$ be the corresponding cp map. Clearly $\varphi$
extends the identity on $\cl S$. Now by using a slight modification of
\cite[Theorem 15.2]{Pa} we have that $\cl S$ has a structure of a C*-algebra. 
\end{proof}

$ $

\section{WEP and Kirchberg's Conjecture }\label{sec new WEP KC}

In this section we improve Kirchberg's WEP characterization for unital C*-algebras and we express Kirchberg's Conjecture
in terms of a five dimensional operator system system problem. The last schema in the previous section  
still includes many question marks. There is no known example of a non-nuclear C*-algebra which has WEP and LLP.
 One another major open question is whether LLP implies WEP, which is known as the Kirchberg Conjecture. More precisely, in his astonishing paper \cite{Ki2} he proves that:

\begin{theorem}[Kirchberg] The following are equivalent:
\begin{enumerate} 
 \item Every separable $II_1$-factor is a von Neumann subfactor of the ultrapower $R_{\omega}$ of the       
hyperfinite  $II_1$-factor $R$ for some ultrafilter $\omega \in \beta \mathbb{N} \setminus \mathbb{N}$.
 \item For a unital C*-algebra LLP implies WEP.
 \item Every unital C*-algebra is a quotient of a C*-algebra that has WEP (i.e. QWEP).
 \item $C^*( \mathbb{F}_{\infty}) \otimes_{min} C^*( \mathbb{F}_{\infty}) =  C^*( \mathbb{F}_{\infty}) \otimes_{max} 
C^*( \mathbb{F}_{\infty})$.
 \item $C^*( \mathbb{F}_{\infty}) $ has WEP.
\end{enumerate}
\end{theorem}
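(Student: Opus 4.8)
The plan is to split the five equivalences into an elementary tier --- the implications among (2), (4), (5), and one direction of (3) --- that follows directly from the operator system tensor toolkit assembled in Section \ref{sec nuclearity}, and a deep tier --- the equivalence with (1) together with the QWEP converse --- that leaves the operator system category entirely and is cited from \cite{Ki2}. Throughout I would use Kirchberg's WEP characterization recorded above: since WEP and DCEP coincide for C*-algebras, a unital C*-algebra $\cl A$ has WEP if and only if $\cl A \otimes_{min} C^*(\mathbb{F}_\infty) = \cl A \otimes_{max} C^*(\mathbb{F}_\infty)$.

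First I would dispose of (4) $\Leftrightarrow$ (5), which is immediate: condition (4) is literally the equality $C^*(\mathbb{F}_\infty) \otimes_{min} C^*(\mathbb{F}_\infty) = C^*(\mathbb{F}_\infty) \otimes_{max} C^*(\mathbb{F}_\infty)$, so applying the WEP characterization to the single C*-algebra $\cl A = C^*(\mathbb{F}_\infty)$ turns it into the statement that $C^*(\mathbb{F}_\infty)$ has WEP. Next I would establish (2) $\Leftrightarrow$ (5). For (2) $\Rightarrow$ (5), recall that every full free group C*-algebra has LLP (the same source that gives the $\cl S_n$ their lifting property); in particular $C^*(\mathbb{F}_\infty)$ has LLP, so if LLP implies WEP then $C^*(\mathbb{F}_\infty)$ has WEP. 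For (5) $\Rightarrow$ (2), let $\cl B$ be any unital C*-algebra with LLP; assuming $C^*(\mathbb{F}_\infty)$ has WEP, the pairing result noted in the Final Remarks of Section \ref{sec nuclearity} (WEP of one factor and LLP of the other force $\otimes_{min} = \otimes_{max}$) yields $C^*(\mathbb{F}_\infty) \otimes_{min} \cl B = C^*(\mathbb{F}_\infty) \otimes_{max} \cl B$. By symmetry of the minimal and maximal tensor products this is exactly $\cl B \otimes_{min} C^*(\mathbb{F}_\infty) = \cl B \otimes_{max} C^*(\mathbb{F}_\infty)$, i.e. $\cl B$ has WEP. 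Hence LLP $\Rightarrow$ WEP, and (2), (4), (5) are mutually equivalent using only the assembled machinery.

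For the accessible half of (3), namely (2) $\Rightarrow$ (3), I would use the universal property of the full free group C*-algebra: every unital C*-algebra is a quotient of some $C^*(\mathbb{F})$ (send the free generators onto a unitary generating set). Since $C^*(\mathbb{F})$ has LLP, it has WEP under assumption (2), and therefore every unital C*-algebra is a quotient of a C*-algebra with WEP, which is QWEP.

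The hard part, which I would \emph{cite} rather than reprove, is closing the circle through (3) and (1): that universal QWEP forces $C^*(\mathbb{F}_\infty)$ itself to have WEP, and that all of these are equivalent to the $R_\omega$-embeddability of every separable $II_1$-factor, i.e. Connes' embedding problem. This last equivalence is exactly where the operator system framework runs out: it passes through von Neumann algebra ultrapowers and Connes' approximation/hypertrace arguments, and constitutes the genuinely deep content of \cite{Ki2}. The tensor machinery developed here reaches only the $\min=\max$ formulation and its WEP/LLP consequences, so I would present the elementary tier in full and invoke \cite{Ki2} for the equivalence with (1) and the QWEP converse.
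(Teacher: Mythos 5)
Your proposal is mathematically sound, but note that the paper itself offers no proof of this statement at all: the theorem is quoted verbatim from Kirchberg's paper \cite{Ki2} as background (``in his astonishing paper \cite{Ki2} he proves that\dots''), and the paper's original contributions in Section \ref{sec new WEP KC} are the operator-system refinements that come afterwards (Theorem \ref{thm new WEP} and Theorem \ref{thm new KC}). What you have reconstructed is the standard organization of these equivalences in the literature (essentially Chapters 15--16 of \cite{pi}): the circle $(2)\Leftrightarrow(4)\Leftrightarrow(5)$ and the implication $(2)\Rightarrow(3)$ do follow, exactly as you argue, from Kirchberg's WEP characterization (a unital $\mathcal{A}$ has WEP iff $\mathcal{A}\otimes_{min}C^*(\mathbb{F}_\infty)=\mathcal{A}\otimes_{max}C^*(\mathbb{F}_\infty)$), the LLP of full free group C*-algebras, and the WEP/LLP tensor pairing recorded in the final remarks of Section \ref{sec nuclearity}; and you correctly isolate, and cite, the genuinely deep content: the equivalence with Connes' embedding problem in (1), and the converse direction out of (3), which rests on Kirchberg's ``LLP and QWEP together imply WEP'' (a result the paper itself invokes later with exactly this citation). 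Two caveats are worth making explicit. First, your ``elementary tier'' is elementary only relative to the WEP characterization, which is itself one of the deep theorems of \cite{Ki2}; so your decomposition reorganizes the depth rather than reducing it, and a reader should not come away thinking $(2)\Leftrightarrow(4)\Leftrightarrow(5)$ is soft. Second, in $(2)\Rightarrow(3)$ the free group must be allowed arbitrary cardinality (an arbitrary unital C*-algebra is a quotient of $C^*(\mathbb{F})$ with $\mathbb{F}$ free on a unitary generating set), so you need LLP for $C^*(\mathbb{F})$ of arbitrary rank; this holds because LLP is a local property, but the point deserves a sentence.
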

\noindent The equivalent conditions in this theorem are still unknown. The first one is the Connes' Embedding Problem.
We refer to \cite{Co1} for related definitions on this subject. The remaining equivalent arguments are known as the Kirchberg
Conjecture (or Kirchberg's QWEP Conjecture). As we pointed out before $C^*(\mathbb{F}_{\infty})$ 
(resp., $C^*(\mathbb{F}_{n})$) stands for the full C*-algebra of the free group with
a countably infinite number of (resp., with $n$) generators. As shown in \cite{Ki2}, in the above theorem $C^*(\mathbb{F}_{\infty})$ can be replaced by $C^*(\mathbb{F}_{2})$.
In fact, since there is an injective group homomorphism $\rho: \mathbb{F}_{\infty} \rightarrow \mathbb{F}_{2}$, by using Proposition 8.8. in \cite{pi}, we have that
  $C^*(\mathbb{F}_{\infty})$ can be represented as a C*-subalgebra of  $C^*(\mathbb{F}_{2})$ and, again by using the same theorem,
there is ucp inverse of this representation. Consequently the identity on   $C^*(\mathbb{F}_{\infty})$ factors via ucp maps through  $C^*(\mathbb{F}_{2})$.
Conversely, the identity on  $C^*(\mathbb{F}_{2})$ factors via ucp maps through  $C^*(\mathbb{F}_{\infty})$ in a trivial way.
\begin{lemma}\label{lem iden. decom.}
Let $\cl S$ and $\cl T$ be two operator systems. If the identity on $\cl S$ factors via ucp maps through $\cl T$ then any nuclearity
property of $\cl T$ passes to $\cl S$. That is if $\cl T$ is $(\tau_1,\tau_2)$-nuclear, where $\tau_1$ and $\tau_2$ are functorial
tensor products with $\tau_1 \leq \tau_2$, then $\cl S$ has the same property. 
\end{lemma}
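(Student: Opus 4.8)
The plan is to combine the tensorial form of the factorization hypothesis with the functoriality of $\tau_1$ and $\tau_2$. First I would unpack the hypothesis: saying that the identity on $\cl S$ factors via ucp maps through $\cl T$ means there exist ucp maps $\phi : \cl S \rightarrow \cl T$ and $\psi : \cl T \rightarrow \cl S$ with $\psi \circ \phi = \id_{\cl S}$. Fix an arbitrary operator system $\cl R$. Since $\tau_1 \leq \tau_2$, the identity $\cl S \otimes_{\tau_2} \cl R \rightarrow \cl S \otimes_{\tau_1} \cl R$ is automatically completely positive, so to establish $(\tau_1,\tau_2)$-nuclearity of $\cl S$ it suffices to prove that the reverse identity $\cl S \otimes_{\tau_1} \cl R \rightarrow \cl S \otimes_{\tau_2} \cl R$ is also completely positive; equality of the two operator systems then follows.

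The key step is to realize this reverse identity as a composition of ucp maps that ``crosses'' from the $\tau_1$-structure to the $\tau_2$-structure precisely at $\cl T$, where the nuclearity hypothesis collapses the two. Concretely, I would consider the chain
$$
\cl S \otimes_{\tau_1} \cl R \xrightarrow{\;\phi \otimes \id\;} \cl T \otimes_{\tau_1} \cl R = \cl T \otimes_{\tau_2} \cl R \xrightarrow{\;\psi \otimes \id\;} \cl S \otimes_{\tau_2} \cl R.
$$
The first arrow is ucp by functoriality of $\tau_1$ applied to the ucp maps $\phi$ and $\id_{\cl R}$; the middle identification is exactly the $(\tau_1,\tau_2)$-nuclearity of $\cl T$, interpreted as a complete order isomorphism realized by the identity map; and the last arrow is ucp by functoriality of $\tau_2$ applied to $\psi$ and $\id_{\cl R}$. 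Hence the composite is ucp.

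It remains to observe that this composite is nothing but the identity on $\cl S \otimes \cl R$ as a linear map, since on elementary tensors it sends $s \otimes r \mapsto (\psi \circ \phi)(s) \otimes r = s \otimes r$, i.e. the composite equals $(\psi \circ \phi) \otimes \id = \id_{\cl S} \otimes \id_{\cl R}$. Therefore the identity $\cl S \otimes_{\tau_1} \cl R \rightarrow \cl S \otimes_{\tau_2} \cl R$ is completely positive, and together with the automatic reverse direction it is a complete order isomorphism. As $\cl R$ was arbitrary, $\cl S$ is $(\tau_1,\tau_2)$-nuclear.

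The only genuine subtlety, which I would state explicitly, is the legitimacy of concatenating maps defined over two different operator system structures: it is the hypothesis that $\cl T \otimes_{\tau_1} \cl R$ and $\cl T \otimes_{\tau_2} \cl R$ coincide as operator systems (via the identity) that lets us enter the composition carrying the $\tau_1$-structure and leave it carrying the $\tau_2$-structure. There are no estimates or approximation arguments to carry out; the entire content is functoriality plus this single bridging identification, so no step is expected to present a real obstacle.
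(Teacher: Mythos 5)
Your proof is correct and follows essentially the same route as the paper's: the identical chain $\cl S \otimes_{\tau_1} \cl R \xrightarrow{\phi \otimes \id} \cl T \otimes_{\tau_1} \cl R = \cl T \otimes_{\tau_2} \cl R \xrightarrow{\psi \otimes \id} \cl S \otimes_{\tau_2} \cl R$, with functoriality supplying the two ucp arrows and the nuclearity of $\cl T$ supplying the middle identification. You merely make explicit a couple of points the paper leaves implicit (that the composite is the identity on elementary tensors, and that the reverse direction is automatic from $\tau_1 \leq \tau_2$), which is fine.
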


\begin{proof}
Let $\phi: \cl S \rightarrow \cl T$ and $\psi: \cl T \rightarrow \cl S$ be the ucp maps such that $\psi \circ \phi (s) = s$ for every $s$ in $\cl S$. Let
$\cl R$ be any operator system. Then, by using the functoriality we have that
$$
\cl S \otimes_{\tau_1} \cl R \xrightarrow{\phi \otimes id} \cl T \otimes_{\tau_1} \cl R = \cl T \otimes_{\tau_2} \cl R \xrightarrow{\psi \otimes id} 
\cl S \otimes_{\tau_2} \cl R 
$$
is a sequence of ucp maps such that the composition is the identity. Since $\tau_1 \leq \tau_2$ we have that $\cl S \otimes_{\tau_1} \cl R = \cl S \otimes_{\tau_2} \cl R$.
Thus, $\cl S$ is $(\tau_1,\tau_2)$-nuclear.
\end{proof}
Since WEP, equivalently DCEP for C*-algebras, coincides with (el,max)-nuclearity, it follows that $C^*(\mathbb{F}_{\infty})$ has WEP if
and only if $C^*(\mathbb{F}_{2})$ has WEP. By a similar argument the above conditions are equivalent to the statement   
$C^*(\mathbb{F}_{2}) \otimes_{min} C^*(\mathbb{F}_{2}) =  C^*(\mathbb{F}_{2}) \otimes_{max} C^*( \mathbb{F}_{2})$. We also
remark that Kirchberg's WEP characterization can be given as follows, which will be useful when we express WEP in terms of
a tensor product with a lower dimensional operator system:
\begin{theorem}
The following are equivalent for a unital C*-algebra $\cl A$:
\begin{enumerate}
 \item $\cl A$ has WEP.
 \item $\cl A \otimes_{min} C^*(\mathbb{F}_{\infty})         =   \cl A \otimes_{max} C^*(\mathbb{F}_{\infty})$.
 \item  $\cl A \otimes_{min} C^*(\mathbb{F}_2)         =   \cl A \otimes_{max} C^*(\mathbb{F}_2)$.
\end{enumerate}
\end{theorem}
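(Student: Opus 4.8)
The plan is to obtain both equivalences almost entirely from results already in hand, splitting the work into (1) $\Leftrightarrow$ (2) and (2) $\Leftrightarrow$ (3). For the first equivalence I would use that for a unital C*-algebra WEP and DCEP coincide (established above via Arveson's commutant lifting theorem), so that (1) simply says $\cl A$ has DCEP. The characterization of DCEP quoted earlier records that an operator system $\cl S$ has DCEP if and only if $\cl S \otimes_{min} C^*(\mathbb{F}_\infty) = \cl S \otimes_{max} C^*(\mathbb{F}_\infty)$; applied to $\cl S = \cl A$ this is verbatim condition (2). Hence (1) $\Leftrightarrow$ (2) needs no new argument.

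The genuine content is (2) $\Leftrightarrow$ (3), namely that $C^*(\mathbb{F}_\infty)$ may be traded for $C^*(\mathbb{F}_2)$ in the tensor criterion. For this I would invoke the fact, recorded in the paragraph preceding Lemma \ref{lem iden. decom.}, that the identity maps on $C^*(\mathbb{F}_\infty)$ and on $C^*(\mathbb{F}_2)$ each factor through the other by ucp maps. Concretely, fix ucp maps $a : C^*(\mathbb{F}_\infty) \to C^*(\mathbb{F}_2)$ and $b : C^*(\mathbb{F}_2) \to C^*(\mathbb{F}_\infty)$ with $b \circ a = \id$ on $C^*(\mathbb{F}_\infty)$, and symmetrically ucp maps $c : C^*(\mathbb{F}_2) \to C^*(\mathbb{F}_\infty)$ and $d : C^*(\mathbb{F}_\infty) \to C^*(\mathbb{F}_2)$ with $d \circ c = \id$ on $C^*(\mathbb{F}_2)$.

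Assuming (3), I would consider the composition
$$
\cl A \otimes_{min} C^*(\mathbb{F}_\infty) \xrightarrow{\id \otimes a} \cl A \otimes_{min} C^*(\mathbb{F}_2) = \cl A \otimes_{max} C^*(\mathbb{F}_2) \xrightarrow{\id \otimes b} \cl A \otimes_{max} C^*(\mathbb{F}_\infty),
$$
in which the middle equality is the hypothesis (3) and each arrow is ucp by functoriality of the minimal and maximal tensor products. The composition is $\id \otimes (b \circ a) = \id$, so the identity map from the minimal to the maximal structure on $\cl A \otimes C^*(\mathbb{F}_\infty)$ is ucp; since the reverse identity is always ucp (as $min \leq max$), this forces the equality in (2). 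The implication (2) $\Rightarrow$ (3) is identical after interchanging the roles of the two free-group algebras, using $c$ and $d$ in place of $a$ and $b$. This is exactly the mechanism of Lemma \ref{lem iden. decom.}, here applied in the second tensor slot with $\cl A$ fixed, so there is no real obstacle; the only point requiring care is to keep the directions of the four factoring maps straight, so that each composite meets the hypothesised coincidence of cones at the correct algebra.
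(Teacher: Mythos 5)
Your proposal is correct and follows essentially the same route as the paper: the equivalence $(1) \Leftrightarrow (2)$ is disposed of by previously quoted results (the paper cites Kirchberg's WEP characterization directly, which is the same fact you reach via the WEP $=$ DCEP coincidence and the DCEP tensor characterization), and your $(2) \Leftrightarrow (3)$ argument—factoring the identity of each free-group C*-algebra through the other by ucp maps and composing with functoriality of $\otimes_{min}$ and $\otimes_{max}$—is precisely the paper's argument, including the same three-arrow composition.
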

\begin{proof}
Equivalence of (1) and (2) is the Kirchberg's WEP characterization. To see that (3) implies (2)
we again use the fact that the identity on $C^*(\mathbb{F}_{\infty})$ factors through ucp maps
on $C^*(\mathbb{F}_{\infty})$. So let $\phi: C^*(\mathbb{F}_{\infty}) \rightarrow C^*(\mathbb{F}_2)$ and
$\psi: C^*(\mathbb{F}_2) \rightarrow C^*(\mathbb{F}_{\infty})$ be the ucp maps whose composition is the identity
on $C^*(\mathbb{F}_{\infty})$. Now, suppose (3) holds. By using the functoriality of min and max we have that
$$
\cl A \otimes_{min} C^*(\mathbb{F}_{\infty}) \xrightarrow{id \otimes \phi} \cl A \otimes_{min} C^*(\mathbb{F}_2) = 
\cl A \otimes_{max} C^*(\mathbb{F}_2) \xrightarrow{id \otimes \psi} 
\cl A \otimes_{max} C^*(\mathbb{F}_{\infty}).
$$
is a sequence of ucp maps such that the composition is the identity. Thus (2) holds. (2) implies
(3) is similar.
\end{proof}

Since WEP and LLP has natural extensions to general operator systems it is natural to approach Kirchberg's
Conjecture  from this perspective. We define $\cl S_n$ as the operator system in $C^*(\mathbb{F}_n)$
generated by the unitary generators, that is, 
$$
\cl S_n = span \{g_1,...,g_n,e,g_1^*,...,g_n^*\}  \subset C^*(F_n).
$$
$\cl S_n$ can also be considered as the universal operator system generated by
$n$ contractions as it satisfies the following universal property: Every function
$f:\{g_i\}_{i=1}^n \rightarrow \cl T$ with $\|f(g_i)\| \leq 1$ extends uniquely to a ucp map $\varphi_f:S_n\rightarrow
\cl T$
(in an obvious way).
$$
\xymatrix{
\{g_i\}_{i=1}^n     \ar[rrr]^{f}    \ar@{_{(}->}[d]  &  &  &  \cl T \\
\cl S_n \ar@{.>}[rrru]_{\varphi_f} & & &
}
$$
The proof this property relies on the unitary dilation of a contraction and the
reader may refer to the discussion in \cite[Sec. 9]{kptt2}. From this one can
easily deduce that $\cl S_n$ has the lifting property. Indeed, let $\varphi: \cl S_n
\rightarrow \cl A / I$ is a ucp map where $I \subset \cl A$ is an ideal, unital
C*-algebra couple. Let $\varphi(g_i) = a_i +I$ for $i=1,...,n$. Since
C*-algebra ideal quotients are proximinal (see \cite[Lem. 2.4.6.]{pi} e.g.)
there exists $b_i$ in $\cl A$ such that $b_i+I = a_i+I$ with $\|b_i\| =
\|a_i+I\|$. Since a ucp map is contractive we have that $\|a_i+I\| \leq 1$ and
so $\|b_i\|\leq 1$. Therefore the function $g_i\mapsto b_i$ extends uniquely to
a ucp map. It is elementary to show that this map is a lift of $\varphi$.

An operator subsystem $\cl S$ of a C*-algebra $\cl A$ is said to \textit{contain enough unitaries} if
there is a collection of unitaries in $\cl S$ which generates $\cl A$ as a C*-algebra, that is, $\cl A$ is the
smallest C*-algebra that contains these unitaries. This notion is inspired by a work of Pisier (see Chp. 13 of \cite{pi})
and in \cite{kptt2} it was shown that several nuclearity properties of
$\cl A$ can be deduced from $\cl S$ (see \cite[Cor. 9.6]{kptt2}).

\begin{lemma}
Let $\cl A$ and $\cl B$ be unital C*-algebras and $\{u_{\alpha}\}$ be a collection of unitaries in $\cl A$
which generates $\cl A$ as a C*-algebra. If $\varphi: \cl A \rightarrow \cl B$ is a ucp map such that
$\varphi(u_{\alpha})$ is a unitary in $\cl B$ for every $\alpha$ then $\varphi$ is a  $*$-homomorphism.
\end{lemma}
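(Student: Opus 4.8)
The plan is to show that every generating unitary lies in the \emph{multiplicative domain} of $\varphi$ and then to exploit that this domain is a C*-subalgebra. The basic tool is the Kadison--Schwarz inequality, available because $\varphi$ is ucp (hence $2$-positive): $\varphi(a)^*\varphi(a) \leq \varphi(a^*a)$ for all $a$, and more usefully its matricial form, which says that for any $a,b \in \cl A$
\begin{equation*}
\begin{pmatrix}
\varphi(a^*a) - \varphi(a)^*\varphi(a) & \varphi(a^*b) - \varphi(a)^*\varphi(b) \\
\varphi(b^*a) - \varphi(b)^*\varphi(a) & \varphi(b^*b) - \varphi(b)^*\varphi(b)
\end{pmatrix} \geq 0.
\end{equation*}
This is obtained by applying the positive second amplification $\varphi^2$ to the positive matrix built from the row $(a\ b)$. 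I would also record at the outset that $\varphi$ is $*$-preserving, being positive, so $\varphi(u_\alpha^*) = \varphi(u_\alpha)^*$.

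Next I would verify that each $u_\alpha$ makes the Schwarz defect vanish. Since $u_\alpha$ is unitary and $\varphi$ is unital, $\varphi(u_\alpha^* u_\alpha) = \varphi(1) = 1$, while $\varphi(u_\alpha)^*\varphi(u_\alpha) = 1$ precisely because $\varphi(u_\alpha)$ is assumed to be a unitary of $\cl B$; the same holds for $u_\alpha u_\alpha^*$, and hence (since $\varphi$ is $*$-preserving) also with $u_\alpha^*$ in place of $u_\alpha$. Feeding $a = u_\alpha^*$ into the matrix inequality makes its $(1,1)$-corner equal to $0$. A positive $2\times 2$ operator matrix with a vanishing diagonal entry has the entire corresponding row and column equal to zero, so the off-diagonal defect vanishes for every $b$; this yields $\varphi(u_\alpha b) = \varphi(u_\alpha)\varphi(b)$ for all $b \in \cl A$, and the symmetric (column) version gives $\varphi(b u_\alpha) = \varphi(b)\varphi(u_\alpha)$.

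Finally I would introduce $\cl M_\varphi = \{a \in \cl A : \varphi(ab) = \varphi(a)\varphi(b) \text{ and } \varphi(ba) = \varphi(b)\varphi(a) \text{ for all } b\}$ and check the routine facts that $\cl M_\varphi$ is a unital, norm-closed, $*$-closed subalgebra: closure under products uses that $\varphi(a_1 a_2) = \varphi(a_1)\varphi(a_2)$ already holds for $a_1,a_2 \in \cl M_\varphi$ (take $b = a_2$); closure under adjoints uses that $\varphi$ is $*$-preserving; and norm-closedness uses continuity of $\varphi$ and of multiplication. By the previous step every $u_\alpha$ lies in $\cl M_\varphi$, and since the $u_\alpha$ generate $\cl A$ as a C*-algebra we conclude $\cl M_\varphi = \cl A$. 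Thus $\varphi$ is multiplicative on all of $\cl A$, and being $*$-preserving it is a unital $*$-homomorphism.

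The only genuinely delicate point is the zero-corner implication for positive $2 \times 2$ operator matrices; everything else is bookkeeping with the Schwarz inequality. I expect no real obstruction, as this is the classical multiplicative-domain argument (cf.\ \cite[Ch.~3]{Pa}); the hypothesis that the $u_\alpha$ are unitaries mapped to unitaries is exactly what is needed to force them into the multiplicative domain.
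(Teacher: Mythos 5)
Your proof is correct and takes essentially the same route as the paper: both arguments show that each generating unitary $u_\alpha$ lies in the multiplicative domain of $\varphi$ (via the equalities $\varphi(u_\alpha^*u_\alpha)=\varphi(u_\alpha)^*\varphi(u_\alpha)$ and $\varphi(u_\alpha u_\alpha^*)=\varphi(u_\alpha)\varphi(u_\alpha)^*$) and then conclude because the multiplicative domain is a C*-subalgebra containing a generating set. The only difference is cosmetic: the paper invokes Choi's multiplicative-domain theorem directly, whereas you re-derive it from the $2\times 2$ matricial Schwarz inequality and the zero-corner lemma for positive operator matrices.
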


\begin{proof}
This is an application of Choi's work on the multiplicative domains in \cite{Choi MulDom}. Since
$e = \varphi(u_{\alpha} u_\alpha^*)  = \varphi(u_{\alpha}) \varphi( u_\alpha)^* = \varphi(u_{\alpha}^* u_\alpha) = \varphi(u_{\alpha})^* \varphi( u_\alpha)$,
each $u_\alpha$ belongs to multiplicative domain of $\varphi$. These elements generates $\cl A$, thus, $\varphi$ is a $*$-homomorphism.
\end{proof}

\begin{lemma}\label{lem uniq ext}
Let $\cl S \subset \cl A$ contain enough unitaries and let $\cl B$ be a unital C*-algebra. Let
$\{u_{\alpha}\}$ be the collection of unitaries in $\cl S$ which generates $\cl A$. Suppose
$\varphi: \cl S \rightarrow \cl B$ is a ucp map such that $\varphi(u_{\alpha})$ is a unitary in
$\cl B$ for every $\alpha$. Then $\varphi$ extends uniquely to a ucp map on $\cl A$ which is
necessarily a $*$-homomorphism.
 \end{lemma}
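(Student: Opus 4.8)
The plan is to first produce a ucp extension of $\varphi$ to all of $\cl A$ by an injectivity argument, and then to upgrade that extension to a $*$-homomorphism using the preceding lemma, which already settles the purely C*-algebraic situation via Choi's multiplicative domain theorem. Concretely, I would represent $\cl B$ faithfully as a unital C*-subalgebra of some $B(H)$. Since $\varphi : \cl S \rightarrow \cl B \subseteq B(H)$ is ucp and $\cl S$ is an operator subsystem of $\cl A$, Arveson's extension theorem supplies a ucp map $\tilde{\varphi}: \cl A \rightarrow B(H)$ with $\tilde{\varphi}|_{\cl S} = \varphi$.

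The key observation is that $\tilde{\varphi}(u_{\alpha}) = \varphi(u_{\alpha})$ is a unitary in $\cl B$, hence a unitary in $B(H)$, for every $\alpha$, and the $u_{\alpha}$ generate $\cl A$ as a C*-algebra. The preceding lemma therefore applies verbatim to $\tilde{\varphi}: \cl A \rightarrow B(H)$ and forces $\tilde{\varphi}$ to be a $*$-homomorphism. Being a $*$-homomorphism, $\tilde{\varphi}$ carries the C*-algebra generated by $\{u_{\alpha}\}$, namely $\cl A$, into the C*-algebra generated by $\{\tilde{\varphi}(u_{\alpha})\} = \{\varphi(u_{\alpha})\}$, which is contained in $\cl B$. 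Thus the range of $\tilde{\varphi}$ actually lands in $\cl B$, giving the desired ucp $*$-homomorphic extension $\tilde{\varphi} : \cl A \rightarrow \cl B$.

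For uniqueness, I would note that any ucp extension $\psi : \cl A \rightarrow \cl B$ of $\varphi$ again satisfies $\psi(u_{\alpha}) = \varphi(u_{\alpha})$, a unitary, so by the preceding lemma $\psi$ is itself a $*$-homomorphism. Two $*$-homomorphisms that agree on the generating set $\{u_{\alpha}\}$ necessarily agree on the $*$-algebra it generates, and hence everywhere by continuity; thus the extension is unique.

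The interesting point, rather than a genuine obstacle, is that Arveson extensions are in general highly non-unique, so the uniqueness asserted here is not a feature of the extension procedure but a consequence of the rigidity imposed by the unitarity hypothesis: the multiplicative-domain argument collapses every admissible ucp extension to the same $*$-homomorphism. The only mild care needed is to check that a unitary of $\cl B$ remains unitary in the ambient $B(H)$ so that the preceding lemma is applicable, which is immediate since $\cl B$ is represented unitally.
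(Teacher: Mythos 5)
Your proof is correct, and it differs from the paper's in the existence step. The paper obtains the extension by citing an external result (Lemma 4.16 of the quotients/exactness paper of Kavruk--Paulsen--Todorov--Tomforde), which directly asserts that a ucp map on $\cl S$ sending the generating unitaries to unitaries extends to a $*$-homomorphism of $\cl A$; the uniqueness argument is then exactly yours: any ucp extension hits unitaries on the generators, hence is a $*$-homomorphism by the multiplicative domain lemma, and $*$-homomorphisms agreeing on generators coincide. Your route replaces the external citation with a self-contained construction inside the paper's own toolkit: represent $\cl B \subseteq B(H)$ unitally, extend by Arveson, apply the preceding lemma to the Arveson extension $\tilde{\varphi}: \cl A \rightarrow B(H)$ (valid since $B(H)$ is a unital C*-algebra and unitaries of $\cl B$ stay unitary under a unital faithful representation), and then use the $*$-homomorphism property to see that the range, being the C*-algebra generated by $\{\varphi(u_\alpha)\} \subseteq \cl B$, lands in $\cl B$. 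What your version buys is independence from the cited lemma, at the cost of the (correctly handled) extra step verifying range containment; what the paper's version buys is brevity. Both arguments are sound.
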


\begin{proof}
Lemma 4.16  in \cite{kptt2} ensures that $\varphi$ extends to a $*$-homomorphism. So there exists a ucp extension
of $\varphi$ on $\cl A$. Also the above lemma implies that any ucp extension has to be a $*$-homomorphism.
Since $\{u_{\alpha}\}$ generates $\cl A$ and every extension coincides on  $\{u_{\alpha}\}$ it follows that
extension is unique.
\end{proof}

\begin{proposition}\label{prop enough=envelo.}
Suppose $\cl S \subset \cl A$ contains enough unitaries. Then $\cl A$ coincides with the enveloping
C*-algebra of $\cl S$, that is, the unique unital $*$-homomorphism $\pi: \cl A \rightarrow C^*_e(\cl S)$
which extends the inclusion of $\cl S$ in $ C^*_e(\cl S)$ is bijective.
\end{proposition}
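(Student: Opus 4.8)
The plan is to produce an explicit inverse for $\pi$ and verify that the two composites are identity $*$-homomorphisms. First I would record that $\pi$ exists and is surjective. Since $\cl S \subset \cl A$ contains enough unitaries, the unitaries $\{u_{\alpha}\} \subset \cl S$ generate $\cl A$, so in particular $\cl S$ generates $\cl A$ and the inclusion $\cl S \hookrightarrow \cl A$ exhibits $\cl A$ as a C*-cover of $\cl S$. The universal minimality property of $C^*_e(\cl S)$ then furnishes the unital $*$-homomorphism $\pi : \cl A \rightarrow C^*_e(\cl S)$ with $\pi|_{\cl S} = \id_{\cl S}$. Its range $\pi(\cl A)$ is a C*-subalgebra of $C^*_e(\cl S)$ containing $\cl S$, and since $\cl S$ generates $C^*_e(\cl S)$ we get $\pi(\cl A) = C^*_e(\cl S)$; thus $\pi$ is surjective.

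The substantive step is injectivity, which I would obtain by building a two-sided inverse with Lemma~\ref{lem uniq ext}. The key preliminary observation is that $\cl S$, now viewed inside its own enveloping algebra, again contains enough unitaries with the same family $\{u_{\alpha}\}$: because $\pi$ is a unital $*$-homomorphism, each $\pi(u_{\alpha}) = u_{\alpha}$ is a unitary of $C^*_e(\cl S)$, and because $\pi$ is surjective while $\{u_{\alpha}\}$ generates $\cl A$, their images $\{u_{\alpha}\}$ generate $C^*_e(\cl S)$. Now regard the inclusion $\cl S \hookrightarrow \cl A$ as a ucp map out of $\cl S \subset C^*_e(\cl S)$ sending each $u_{\alpha}$ to the unitary $u_{\alpha} \in \cl A$. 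By Lemma~\ref{lem uniq ext} it extends uniquely to a unital $*$-homomorphism $\rho : C^*_e(\cl S) \rightarrow \cl A$.

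Finally I would compose. The map $\rho \circ \pi : \cl A \rightarrow \cl A$ is a unital $*$-homomorphism that restricts to the identity on $\cl S$, hence fixes each generator $u_{\alpha}$; as $\{u_{\alpha}\}$ generates $\cl A$, we get $\rho \circ \pi = \id_{\cl A}$. Symmetrically $\pi \circ \rho : C^*_e(\cl S) \rightarrow C^*_e(\cl S)$ is a unital $*$-homomorphism fixing each $u_{\alpha}$, and these generate $C^*_e(\cl S)$, so $\pi \circ \rho = \id$. Therefore $\pi$ is bijective with inverse $\rho$.

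I expect the only genuinely delicate point to be the middle step: one must make sure the inclusion $\cl S \hookrightarrow \cl A$ really satisfies the hypothesis of Lemma~\ref{lem uniq ext} relative to the cover $C^*_e(\cl S)$, i.e.\ that the relevant unitaries both lie in $\cl S$ and generate $C^*_e(\cl S)$; this is exactly what surjectivity of $\pi$ guarantees. As an alternative to the composition argument, injectivity of $\rho$ (and hence bijectivity) can be read off directly from the rigidity of the enveloping C*-algebra, since $\rho|_{\cl S}$ is the complete order embedding $\cl S \hookrightarrow \cl A$ and rigidity then forces $\rho$ itself to be a complete order embedding.
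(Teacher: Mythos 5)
Your proof is correct, and it takes a genuinely different route from the paper's, although both hinge on the same pivotal observation: since $\pi$ is a unital $*$-homomorphism extending the inclusion $i$ and is surjective (its range is a C*-subalgebra of $C^*_e(\cl S)$ containing $\cl S$), the elements $i(u_\alpha)=\pi(u_\alpha)$ are unitaries of $C^*_e(\cl S)$ that generate it, i.e.\ $\cl S$ contains enough unitaries in its own enveloping C*-algebra. From there the two arguments diverge. The paper represents $\cl A$ inside some $B(H)$, uses Arveson's extension theorem to extend the ucp map $i(s)\mapsto s$ to a ucp map $\varphi: C^*_e(\cl S)\to B(H)$, invokes the multiplicative-domain lemma (the unnamed lemma preceding Lemma~\ref{lem uniq ext}) to conclude that $\varphi$ is a $*$-homomorphism with range $\cl A$, obtains injectivity of $\varphi$ from the rigidity of the enveloping C*-algebra, and finally identifies $\pi=\varphi^{-1}$ via the uniqueness clause in the universal property. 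You instead apply Lemma~\ref{lem uniq ext} directly to the pair $\cl S\subset C^*_e(\cl S)$ with target $\cl A$ --- legitimate precisely because of the shared observation above --- obtaining the candidate inverse $\rho$ without any Hilbert-space representation or appeal to Arveson, and you then finish with the elementary fact that unital $*$-homomorphisms agreeing on a generating family are equal, applied to $\rho\circ\pi$ and $\pi\circ\rho$. This makes the bijectivity step purely algebraic and avoids rigidity altogether; your closing alternative (injectivity of $\rho$ via rigidity, then uniqueness of $\pi$) is in fact exactly the paper's argument. The trade-off: your version is more economical given that Lemma~\ref{lem uniq ext} is already available, while the paper's version exhibits the inverse directly as a complete order embedding and rests only on classical tools. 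One small streamlining: once $\rho\circ\pi=\id_{\cl A}$ is known, $\pi$ is injective, and you proved surjectivity at the outset, so the second composite $\pi\circ\rho=\id$ is redundant --- though harmless, and it identifies $\rho=\pi^{-1}$ explicitly.
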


\begin{proof}
Let $\{u_\alpha\}$ be the collection of unitaries in $\cl S$ which generates $\cl A$ as a C*-algebra.
Let $i$ be the inclusion of  $\cl S$ in $ C^*_e(\cl S)$. Note that the image $\{\pi(u_{\alpha}) = i(u_\alpha)\}$ 
of the unitary collection $\{u_\alpha\}$ form a set of unitaries and it generates the image of $\pi$ which coincides with $ C^*_e(\cl S)$.
We can represent $\cl A$ into a $B(H)$ as a C*-subalgebra. Now, by Arveson's extension theorem,  
the inclusion of $\cl S$ in $\cl A \subset B(H)$ extends to ucp
map $\varphi$ on  $ C^*_e(\cl S)$. Note that $\varphi(i(u_{\alpha})) = u_{\alpha}$, that is, $\varphi$ maps a collection
of unitaries, which generates $ C^*_e(\cl S)$, to a collection of unitaries in $B(H)$. Now by using the above lemma
$\varphi$ must be a unital $*$-homomorphism. Moreover, since the image of $\{i(u_\alpha)\}$ stays
in $\cl A$ and generates $\cl A$, the image of $\varphi$ is precisely $\cl A$. The rigidity of the enveloping C*-algebra ensures that $\varphi$ is one to one too. 
Note that $\varphi^{-1}$ is again a unital $*$-homomorphism such that $\varphi^{-1}(s) = i(s)$ for every $s$ in $\cl S$. Now the universal
property of the enveloping C*-algebras ensure that $\pi = \varphi^{-1}$, thus $\pi$ is bijective.
\end{proof}

Despite this result we still prefer to use the term ``contains enough unitaries''. Our
very first example is, of course, $\cl S_n \subset C^*(\mathbb{F}_n)$. This also means that $C^*_e(\cl S_n) = C^*(\mathbb{F}_n)$.
It is also important to
remark that not every operator system contains enough unitaries in its enveloping C*-algebra. 
The following is an improvement of Proposition 9.5 of \cite{kptt2}:

\begin{proposition}\label{pr enough unitary}
Suppose $\cl S \subset \cl A$ and $\cl T \subset \cl B$ contains enough unitaries. Then
$$
\cl S \otimes_{min} \cl T \subset \cl A \otimes_{max} \cl B \Longrightarrow \cl A \otimes_{min} \cl B= \cl A \otimes_{max} \cl B.
$$
\end{proposition}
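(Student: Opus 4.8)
The plan is to exhibit the canonical identity map $\cl A \otimes_{min} \cl B \to \cl A \otimes_{max} \cl B$ as completely positive; since the reverse identity $\cl A \otimes_{max} \cl B \to \cl A \otimes_{min} \cl B$ is automatically ucp (because $min \leq max$), this gives the desired equality of operator systems. Concretely, I would build a unital $*$-homomorphism $\Phi : \cl A \otimes_{min} \cl B \to \cl A \otimes_{max} \cl B$ that fixes every elementary tensor, i.e. $\Phi(a \otimes b) = a \otimes b$. Being a $*$-homomorphism it is cp, and it is exactly the identity map that we want to show is cp.

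To construct $\Phi$ I would first represent $\cl A \otimes_{max} \cl B$ faithfully as a unital C*-subalgebra of some $B(K)$, so that $B(K)$ is an injective operator system containing $\cl A \otimes_{max} \cl B$. The hypothesis says the inclusion $\cl S \otimes_{min} \cl T \hookrightarrow \cl A \otimes_{max} \cl B \subset B(K)$ is a unital complete order embedding, hence a ucp map. On the other hand, injectivity of the minimal tensor product gives $\cl S \otimes_{min} \cl T \subset \cl A \otimes_{min} \cl B$ as an operator subsystem. Since $B(K)$ is injective, this ucp map extends to a ucp map $\Phi : \cl A \otimes_{min} \cl B \to B(K)$.

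Now let $\{u_\alpha\} \subset \cl S$ and $\{v_\beta\} \subset \cl T$ be the collections of unitaries generating $\cl A$ and $\cl B$. The elements $u_\alpha \otimes e_{\cl T} = u_\alpha \otimes 1$ and $e_{\cl S} \otimes v_\beta = 1 \otimes v_\beta$ lie in $\cl S \otimes \cl T$, so $\Phi$ agrees with the given embedding on them; thus $\Phi(u_\alpha \otimes 1)$ and $\Phi(1 \otimes v_\beta)$ are the corresponding unitaries of $\cl A \otimes_{max} \cl B \subset B(K)$. These unitaries generate $\cl A \otimes_{min} \cl B$ as a C*-algebra, so by the lemma on ucp maps that send generating unitaries to unitaries, $\Phi$ is forced to be a unital $*$-homomorphism. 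Its image is then the C*-algebra generated by $\{u_\alpha \otimes 1, 1 \otimes v_\beta\}$, namely $\cl A \otimes_{max} \cl B$; and, being multiplicative and agreeing with the identity on these generating unitaries, $\Phi$ satisfies $\Phi(a \otimes b) = a \otimes b$. This is the required cp identity map, which finishes the argument.

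The part that carries the real content is the passage from ``ucp extension'' to ``$*$-homomorphism'': a priori the injectivity of $B(K)$ only yields a ucp $\Phi$, with no reason to be multiplicative, and whose image could be far larger than $\cl A \otimes_{max} \cl B$. It is exactly the enough-unitaries hypothesis, via the preceding lemma built on Choi's multiplicative domain, that forces $\Phi$ to be a $*$-homomorphism and pins its range down to $\cl A \otimes_{max} \cl B$. A minor bookkeeping point I would watch is the completion: one should run the argument with the C*-completed tensor products, or simply note that the multiplicative-domain computation is purely algebraic, so the generating unitaries already lie in the multiplicative domain of $\Phi$ and $\Phi$ is multiplicative on the algebraic tensor product.
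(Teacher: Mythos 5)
Your proof is correct and takes essentially the same route as the paper: both arguments use the enough-unitaries hypothesis together with Choi's multiplicative domain to force a ucp extension of the inclusion $\cl S \otimes_{min} \cl T \hookrightarrow \cl A \otimes_{max} \cl B$ to be a $*$-homomorphism fixing elementary tensors, so that the identity $\cl A \otimes_{min} \cl B \to \cl A \otimes_{max} \cl B$ is completely positive. The only differences are bookkeeping: the paper invokes its Lemma \ref{lem uniq ext} to produce the $*$-homomorphic extension where you re-derive it via Arveson's extension theorem into $B(K)$, and your generating set $\{u_\alpha \otimes 1,\, 1 \otimes v_\beta\}$ is in fact slightly cleaner than the paper's $\{u_\alpha \otimes v_\beta\}$, which generates $\cl A \otimes_{min} \cl B$ only once the units are adjoined to the collections.
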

\begin{proof}
Let $\{u_{\alpha}\}$ and $\{v_{\beta}\}$ be unitaries in $\cl S$ and $\cl T$ that generates $\cl A$ and $\cl B$, respectively.
By using the injectivity of the minimal tensor product we have the inclusion $\cl S \otimes_{min} \cl T \subset \cl A \otimes_{min} \cl B$.
It is not hard to see that the unitaries $\{u_{\alpha} \otimes v_{\beta}\}$, which belongs to $\cl S \otimes_{min} \cl T$, generates $ \cl A \otimes_{min} \cl B$.
It is also clear that the inclusion $\cl S \otimes_{min} \cl T \hookrightarrow \cl A \otimes_{max} \cl B$ maps these unitaries to unitaries again. Thus,
by Lemma \ref{lem uniq ext}, this inclusion extends uniquely to a $*$-homomorphism which is necessarily the identity. So we conclude
that $\cl A \otimes_{min} \cl B= \cl A \otimes_{max} \cl B$.
\end{proof}

\begin{corollary}\label{co enough unitary}
Suppose $\cl S \subset \cl A$ and $\cl T \subset \cl B$ contains enough unitaries. Then
$$
\cl S \otimes_{min} \cl T = \cl S \otimes_{c} \cl T \Longrightarrow \cl A \otimes_{min} \cl B= \cl A \otimes_{max} \cl B.
$$
\end{corollary}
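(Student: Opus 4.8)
The plan is to reduce the statement to Proposition \ref{pr enough unitary}, whose hypothesis is the inclusion $\cl S \otimes_{min} \cl T \subset \cl A \otimes_{max} \cl B$. Thus the entire task is to manufacture, out of the assumed equality $\cl S \otimes_{min} \cl T = \cl S \otimes_{c} \cl T$, a ucp map from $\cl S \otimes_{min} \cl T$ into $\cl A \otimes_{max} \cl B$ that is the identity on elementary tensors; once this is in hand the corollary is immediate.

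First I would invoke the functoriality of the commuting tensor product $c$. Since the inclusions $\cl S \hookrightarrow \cl A$ and $\cl T \hookrightarrow \cl B$ are ucp, functoriality produces a ucp map $\cl S \otimes_{c} \cl T \to \cl A \otimes_{c} \cl B$ sending $s \otimes t$ to $s \otimes t$. Next I would rewrite both ends: by Theorem \ref{thm c=max}, applied to the C*-algebras $\cl A$ and $\cl B$, the target is $\cl A \otimes_{c} \cl B = \cl A \otimes_{max} \cl B$; and by the standing hypothesis the source is $\cl S \otimes_{c} \cl T = \cl S \otimes_{min} \cl T$. Composing these identifications gives precisely a ucp map $\cl S \otimes_{min} \cl T \to \cl A \otimes_{max} \cl B$ acting as the identity on elementary tensors, which is the inclusion required by Proposition \ref{pr enough unitary}.

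The one point deserving care is the distinction between a mere ucp inclusion and a complete order embedding, since Proposition \ref{pr enough unitary} is phrased with the symbol $\subset$. Inspecting its proof, however, shows that only the ucp property of the map $\cl S \otimes_{min} \cl T \hookrightarrow \cl A \otimes_{max} \cl B$ is actually used: it serves merely to carry the generating unitaries $\{u_\alpha \otimes v_\beta\}$ of $\cl A \otimes_{min} \cl B$ to unitaries in $\cl A \otimes_{max} \cl B$, after which Lemma \ref{lem uniq ext} extends it to a unital $*$-homomorphism $\cl A \otimes_{min} \cl B \to \cl A \otimes_{max} \cl B$ that is forced to be the identity. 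Hence the ucp map built above suffices, and applying (the proof of) Proposition \ref{pr enough unitary} yields $\cl A \otimes_{min} \cl B = \cl A \otimes_{max} \cl B$. I expect no serious obstacle here; the only thing to watch is keeping the two reinterpretations of the $c$-tensor product straight, so that the resulting map is genuinely the identity on $\cl S \otimes \cl T$ rather than some other ucp map, which is exactly what licenses the appeal to the enough-unitaries argument.
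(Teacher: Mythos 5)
Your proof is correct, and it shares the paper's overall skeleton (both reduce the statement to Proposition \ref{pr enough unitary}), but the mechanism you use to produce the required map into $\cl A \otimes_{max} \cl B$ is genuinely different. The paper never invokes functoriality of $c$: it lets $\tau$ be the operator system structure induced on $\cl S \otimes \cl T$ by the inclusion into $\cl A \otimes_{max} \cl B$, observes that $min \leq \tau \leq c$ --- the second inequality coming from the defining ``maximal commuting'' property of $c$, since the copies of $\cl S$ and $\cl T$ inside $\cl A \otimes_{max} \cl B$ commute --- and then uses the hypothesis $min = c$ to collapse the chain, so that $\tau = min$ and $\cl S \otimes_{min} \cl T \subset \cl A \otimes_{max} \cl B$ is a genuine complete order embedding; Proposition \ref{pr enough unitary} then applies exactly as stated. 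Your route (functoriality of $c$ plus Theorem \ref{thm c=max}, then rewriting the source via the hypothesis) only yields a ucp map, which is why you must reopen the proof of Proposition \ref{pr enough unitary}; your claim that its proof uses only the ucp property --- namely to carry the generating unitaries $u_\alpha \otimes v_\beta$ to unitaries so that Lemma \ref{lem uniq ext} applies to the genuine embedding $\cl S \otimes_{min} \cl T \subset \cl A \otimes_{min} \cl B$ --- is accurate, so your argument is complete. One remark: you could avoid inspecting that proof altogether. Composing your ucp map with the canonical ucp map $\cl A \otimes_{max} \cl B \rightarrow \cl A \otimes_{min} \cl B$ gives the inclusion $\cl S \otimes_{min} \cl T \subset \cl A \otimes_{min} \cl B$, a complete order embedding by injectivity of $min$; since a ucp map is a complete order embedding whenever its composition with another ucp map is (the fact the paper itself uses in Proposition \ref{prop rep of c2}), your map is automatically a complete order embedding, and Proposition \ref{pr enough unitary} can be cited as a black box. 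The paper's sandwich argument buys brevity and produces the embedding directly; yours trades the maximality characterization of $c$ for its functoriality plus Theorem \ref{thm c=max}, at the cost of this extra upgrading step.
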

\begin{proof}
Let $\cl S \otimes_\tau \cl T$ be the operator system tensor product arising from the inclusion $\cl A \otimes_{max} \cl B$. Clearly
min $ \leq \; \tau \; \leq $ c. (Note: c is the maximal commuting tensor product.) Since min and c coincides on $\cl S \otimes \cl T$
we have that $\cl S \otimes_{min} \cl T \subset \cl A \otimes_{max} \cl B$. Thus, by Proposition \ref{pr enough unitary}, the result follows.
\end{proof}

\begin{theorem}\label{thm new WEP}
The following are equivalent for a unital C*-algebra $\cl A$:
\begin{enumerate}
 \item $\cl A$ has WEP (equivalently DCEP).
 \item $\cl A \otimes_{min} \cl S_2 = \cl A \otimes_{max} \cl S_2$.
\end{enumerate}
\end{theorem}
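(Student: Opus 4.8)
The plan is to prove the two implications separately, using the dictionary between the nuclearity properties and the primary tensor products together with the ``enough unitaries'' machinery developed above. Throughout we use that for a unital C*-algebra WEP and DCEP coincide, that $\cl S_2 \subset C^*(\mathbb{F}_2)$ contains enough unitaries with $C^*_e(\cl S_2) = C^*(\mathbb{F}_2)$, and the fact recorded earlier that $\cl S_2$ has the lifting property.

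For $(1) \Rightarrow (2)$ I would run a purely formal tensor computation. Since $\cl S_2$ has the lifting property it is $(min,er)$-nuclear, so $\cl S_2 \otimes_{min} \cl A = \cl S_2 \otimes_{er} \cl A$. Using the symmetry of $min$ and the asymmetry relation $\cl S_2 \otimes_{er} \cl A = \cl A \otimes_{el} \cl S_2$, this rewrites as
$$
\cl A \otimes_{min} \cl S_2 = \cl A \otimes_{el} \cl S_2.
$$
On the other hand, $\cl A$ has WEP, hence is $(el,max)$-nuclear, which gives $\cl A \otimes_{el} \cl S_2 = \cl A \otimes_{max} \cl S_2$. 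Chaining the two equalities yields $(2)$.

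For $(2) \Rightarrow (1)$ I would pass from the five dimensional statement to a statement about $C^*(\mathbb{F}_2)$ and invoke Kirchberg's WEP characterization. First, $\cl A \otimes_{min} \cl S_2 = \cl A \otimes_{max} \cl S_2$ forces $\cl A \otimes_{min} \cl S_2 = \cl A \otimes_{c} \cl S_2$, since $min \leq c \leq max$. Now observe that $\cl A$, being unital, is generated by its unitaries, so $\cl A \subset \cl A$ contains enough unitaries; and $\cl S_2 \subset C^*(\mathbb{F}_2)$ contains enough unitaries. Corollary \ref{co enough unitary} then upgrades the equality on the operator system level to the C*-level:
$$
\cl A \otimes_{min} C^*(\mathbb{F}_2) = \cl A \otimes_{max} C^*(\mathbb{F}_2).
$$
By Kirchberg's WEP characterization (the preceding theorem), this is exactly the statement that $\cl A$ has WEP, completing the proof.

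The formal direction $(1)\Rightarrow(2)$ is routine once the nuclearity--tensor dictionary is in place; the substantive step is $(2)\Rightarrow(1)$, and within it the crucial input is Corollary \ref{co enough unitary}, which allows the five dimensional equality for $\cl S_2$ to propagate to the full free group C*-algebra. The only point that needs a moment's care is checking that both sides indeed contain enough unitaries --- trivial for $\cl A \subset \cl A$ and already recorded for $\cl S_2 \subset C^*(\mathbb{F}_2)$ --- so that the corollary applies.
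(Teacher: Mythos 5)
Your proof is correct and follows essentially the same route as the paper: the $(1)\Rightarrow(2)$ direction via the nuclearity dictionary (lifting property of $\cl S_2$ as $(min,er)$-nuclearity plus WEP as $(el,max)$-nuclearity), and the $(2)\Rightarrow(1)$ direction via Corollary \ref{co enough unitary} followed by Kirchberg's WEP characterization. The only differences are presentational: you spell out the $er$/$el$ flip and the step from $min=max$ to $min=c$, which the paper leaves implicit.
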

\begin{proof}
We already know that WEP and DCEP are equivalent for C*-algebras. Now suppose (1). Since WEP coincides
with (el,max)-nuclearity and $\cl S_2$ has the lifting property (equivalently (min,er)-nuclearity) 
(also keeping in mind that it is written on the right  hand side)
we have
$$
\cl A \otimes_{min} \cl S_2 = \cl A \otimes_{el} \cl S_2 = \cl A \otimes_{max} \cl S_2.
$$
Conversely suppose (2) holds. Since $\cl S_2$ contains enough unitaries in $C^*(\mathbb{F}_2)$ (and
$\cl A$ contains enough unitaries in itself), by the above corollary, we obtain that  $\cl A \otimes_{min} C^*(\mathbb{F}_2) = \cl A \otimes_{max} C^*(\mathbb{F}_2)$.
Thus $\cl A$ has WEP.
\end{proof}

In the following theorem the equivalence of (1)-(4) is Theorem 9.1. and 9.4 of \cite{kptt2}. So we will
only prove that these are equivalent to (5) and (6), which express KC in terms of a five dimensional
operator system problem.

\begin{theorem}\label{thm new KC}
The following are equivalent:
\begin{enumerate}
 \item Kirchberg Conjecture has an affirmative answer.
 \item $\cl S_n$ has DCEP for every $n$.
 \item $\cl S_n \otimes_{min} \cl S_n =  \cl S_n \otimes_{c} \cl S_n$ for every
$n$.
 \item Every finite dimensional operator system with the lifting property has DCEP.
  \item $\cl S_2$ has DCEP.
 \item $\cl S_2 \otimes_{min} \cl S_2 =  \cl S_2 \otimes_{c} \cl S_2$.
\end{enumerate}
\end{theorem}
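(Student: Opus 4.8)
The plan is to graft (5) and (6) onto the already-established equivalence of (1)--(4) by proving the cycle $(2)\Rightarrow(5)\Rightarrow(6)\Rightarrow(1)$; since $(1)\Leftrightarrow(2)$ is given, this places both new conditions on the same footing as the first four. The implication $(2)\Rightarrow(5)$ is immediate, since (2) asserts that $\cl S_n$ has DCEP for every $n$, in particular for $n=2$.

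For $(5)\Rightarrow(6)$ I would play the two nuclearity characterizations available for $\cl S_2$ against each other. Recall that $\cl S_2$ always has the lifting property, hence is (min,er)-nuclear, giving $\cl S_2 \otimes_{min} \cl S_2 = \cl S_2 \otimes_{er} \cl S_2$; and under (5) it has DCEP, hence is (el,c)-nuclear, giving $\cl S_2 \otimes_{el} \cl S_2 = \cl S_2 \otimes_{c} \cl S_2$. To splice these together I would invoke the asymmetry relation $\cl S_2 \otimes_{el} \cl S_2 = \cl S_2 \otimes_{er} \cl S_2$ realized by the flip $\sigma: s\otimes t \mapsto t\otimes s$, which is a complete order isomorphism carrying the el-cone onto the er-cone at every matrix level, together with the symmetry of $min$ and $c$, which makes $\sigma$ fix the min-cone and the c-cone. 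Then any $u$ in the c-cone lies in the el-cone by DCEP, so $\sigma(u)$ lies in the er-cone, which equals the min-cone by the lifting property; applying the involution $\sigma$ once more and using symmetry of $min$ returns $u$ to the min-cone. Hence the min-cone and c-cone coincide, which is precisely (6).

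The implication $(6)\Rightarrow(1)$ is where the five-dimensional statement is transported to the group C*-algebra. Since $\cl S_2 \subset C^*(\mathbb{F}_2)$ contains enough unitaries, Corollary~\ref{co enough unitary} applied with $\cl S = \cl T = \cl S_2$ and $\cl A = \cl B = C^*(\mathbb{F}_2)$ converts (6) into
$$
C^*(\mathbb{F}_2) \otimes_{min} C^*(\mathbb{F}_2) = C^*(\mathbb{F}_2) \otimes_{max} C^*(\mathbb{F}_2),
$$
which, after the reduction from $\mathbb{F}_\infty$ to $\mathbb{F}_2$ recorded earlier in this section, is exactly the Kirchberg Conjecture. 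This closes the cycle and establishes the equivalence of (5) and (6) with (1)--(4).

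Most of this is bookkeeping with the partial order on tensor products and a single appeal to the ``enough unitaries'' machinery. The one genuinely delicate point is $(5)\Rightarrow(6)$: one must orchestrate the flip between el and er correctly against the symmetry of $min$ and $c$, and keeping the direction of the asymmetry relation straight is where an error would most easily slip in.
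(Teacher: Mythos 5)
Your proposal is correct in substance and follows essentially the same route as the paper: the equivalence of (1)--(4) is quoted from the earlier work, (2)$\Rightarrow$(5) is trivial, (5)$\Rightarrow$(6) plays the lifting property ((min,er)-nuclearity) of $\cl S_2$ against DCEP ((el,c)-nuclearity) through the el/er asymmetry, and (6)$\Rightarrow$(1) is exactly the paper's appeal to Corollary~\ref{co enough unitary} for $\cl S_2\subset C^*(\mathbb{F}_2)$. The paper compresses your flip argument into the parenthetical remark that one copy of $\cl S_2$ sits on the right-hand side of the tensor product; you have simply made that mechanism explicit.

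One correction to the step you yourself flagged as delicate. Write $C^{min}, C^{el}, C^{er}, C^{c}$ for the cones on $\cl S_2\otimes\cl S_2$ at any fixed matrix level. Since $min\leq c$ means precisely that the c-cone is \emph{contained} in the min-cone, your chase --- starting with $u$ in the c-cone and ending with $u$ in the min-cone --- establishes only the inclusion $C^{c}\subseteq C^{min}$, which holds with no hypotheses whatsoever; the substantive content of (6) is the reverse inclusion $C^{min}\subseteq C^{c}$. Your argument survives because every link you invoke is an equality of cones, namely $C^{min}=C^{er}$ (lifting property), $C^{el}=C^{c}$ (DCEP), $\sigma(C^{el})=C^{er}$ (asymmetry), and $\sigma(C^{min})=C^{min}$, $\sigma(C^{c})=C^{c}$ (symmetry of min and c), and $\sigma$ is an involution; hence
$$
C^{min}=C^{er}=\sigma(C^{el})=\sigma(C^{c})=C^{c},
$$
or, chasing elements in the useful direction: $u\in C^{min}=C^{er}$ gives $\sigma(u)\in C^{el}=C^{c}$, so $u=\sigma(\sigma(u))\in\sigma(C^{c})=C^{c}$. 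As literally written, though, the displayed chase followed by ``hence the cones coincide'' is a non sequitur, so a final write-up should run the chain in this direction (or simply record the set identity above).
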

\begin{proof}
The equivalence of (1),(2),(3) and (4) follows from Theorem 9.1 and 9.4 of \cite{kptt2}. These conditions
clearly imply (5) and (6). Moreover (5) implies (6). In fact, we know that $\cl S_2$ has the lifting property 
(equivalently (min,er)-nuclearity). If we assume that it has DCEP (equivalently (el,c)-nuclearity) then (also
keeping in mind that one of the $\cl S_2$ is written on the right  hand side)  it follows that
$$
\cl S_2 \otimes_{min} \cl S_2 = \cl S_2 \otimes_{er} \cl S_2 = \cl S_2 \otimes_{c} \cl S_2.
$$
Conversely suppose that (6) holds. Since $\cl S_2$ contains enough unitaries in $C^*(\mathbb{F}_2)$, by Corollary \ref{co enough unitary}, 
it follows that  $C^*(\mathbb{F}_2) \otimes_{min} C^*(\mathbb{F}_2) = C^*(\mathbb{F}_2) \otimes_{max} C^*(\mathbb{F}_2)$, that is,
the Kirchberg Conjecture has an affirmative answer.
\end{proof}

$ $

\section{The Representation of the Minimal Tensor Product}

Suppose $V$ and $W$ are vector spaces with $dim(V)<\infty$, then it is well
known that there is a bijective correspondence between $ V \otimes W  
\cong L(V^*,W) $ where  $ L(V^*,W) $ is the vector space of linear
maps from $V^*$ into $W$. The bijective linear map is given by 
$$
 \Sigma v_i\otimes w_i \mapsto \ \widehat{\Sigma v_i\otimes w_i}\;\; \mbox{
where } \;\; \widehat{ \Sigma v_i\otimes w_i} (f) =  \Sigma f(v_i) w_i.
$$
This identification plays an important role in the characterization of minimal
tensor products both in Banach space and operator space theory (see \cite{bp}
e.g.) . (Note that every linear map defined from a finite dimensional operator
space is completely bounded which can be seen in \cite{Pa2}.) The following is
the operator system variant of this well known correspondence. In this section
we will study various application of this equivalence. The first part is \cite[Lem. 8.4]{kptt}.

\begin{proposition}\label{mindual}
Let $\cl S$ and $\cl T$ be operator systems where $dim(\cl S)$ is finite. Then
there is a bijective correspondence between
$$
(\cl S \otimes _{min} \cl T)^+ \longleftrightarrow CP(\cl S^d,\cl T).
$$
That is, a finite sum $\Sigma{s_i \otimes t_i}$ is positive if and only if the
corresponding map $\widehat{\Sigma{s_i \otimes t_i}}$ is completely positive
from
$\cl S^d$ into $\cl T$. In particular every linear map from $\cl S^d$ into $\cl
T$ can be written as a linear combination of completely positive maps.
\end{proposition}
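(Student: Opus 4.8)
The plan is to verify the order-equivalence in both directions directly from the definition of the dual matricial order and the definition of the minimal cone, and then to read off the final ``in particular'' clause by a spanning argument. Write $u = \sum_l s_l \otimes t_l$ and recall that the associated map is $\widehat u : \cl S^d \to \cl T$, $\widehat u(f) = \sum_l f(s_l)\,t_l$. Since $\dim \cl S < \infty$, the assignment $u \mapsto \widehat u$ is already a linear bijection from the algebraic tensor product $\cl S \otimes \cl T$ onto the space of all linear maps $\cl S^d \to \cl T$, so only the matching of the positive cone $(\cl S \otimes_{min}\cl T)^+$ with $CP(\cl S^d,\cl T)$ requires proof. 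The one entrywise identity I will use repeatedly is that, for any linear $\Psi : \cl S \to M_n$ written as $\Psi(s) = (\Psi_{ij}(s))$ with $\Psi_{ij} \in \cl S^d$, one has $(\widehat u(\Psi_{ij}))_{ij} = (\Psi \otimes id_{\cl T})(u)$ inside $M_n(\cl T) = M_n \otimes_{min}\cl T$.

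First I would show that $u \geq 0$ forces $\widehat u$ to be completely positive. Fix $(f_{ij}) \in M_n(\cl S^d)^+$; by the definition of the dual order this says precisely that $\Phi : \cl S \to M_n$, $\Phi(s) = (f_{ij}(s))$, is completely positive. By the identity above, $(\widehat u(f_{ij}))_{ij} = (\Phi \otimes id_{\cl T})(u)$, so it suffices to see that $\Phi \otimes id_{\cl T}$ sends the positive element $u$ to a positive element of $M_n(\cl T)$. This is the one point where the ucp functoriality of min recalled in Section \ref{sec tensor} does not apply verbatim, because $\Phi$ need not be unital; I would bridge the gap with the Arveson/Stinespring form $\Phi(s) = V^* \rho(s) V$, where $\rho : \cl S \to M_N$ is unital completely positive and $V \in M_{N,n}$. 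Then $(\Phi \otimes id_{\cl T})(u) = V^*\bigl[(\rho \otimes id_{\cl T})(u)\bigr]V$, which is positive because $\rho \otimes id_{\cl T}$ is ucp and hence carries $u \geq 0$ into $M_N(\cl T)^+$, and conjugation by the scalar matrix $V$ preserves positivity by the compatibility axiom of a matrix ordering. Thus $\widehat u$ is completely positive.

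For the converse I would assume $\widehat u$ completely positive and test $u$ against arbitrary ucp maps $\phi : \cl S \to M_k$ and $\psi : \cl T \to M_m$, as demanded by the definition of $C_1^{min}(\cl S,\cl T)$. Writing $\phi(s) = (\phi_{ab}(s))$, complete positivity of $\phi$ gives $(\phi_{ab}) \in M_k(\cl S^d)^+$, and the entrywise identity yields $(\phi \otimes id_{\cl T})(u) = (\widehat u(\phi_{ab}))_{ab}$, which lies in $M_k(\cl T)^+$ exactly because $\widehat u$ is completely positive. Applying the genuinely unital functorial map $id_{M_k} \otimes \psi$ then gives $(\phi \otimes \psi)(u) \geq 0$ in $M_k \otimes_{min} M_m = M_{km}$. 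As $\phi$ and $\psi$ were arbitrary ucp maps into matrix algebras, $u$ lies in the minimal cone.

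Finally, the ``in particular'' clause follows formally: $u \mapsto \widehat u$ is a linear isomorphism, and every element of the operator system $\cl S \otimes_{min}\cl T$ is a linear combination of positive elements, since for self-adjoint $w$ one has $\alpha\,(e_{\cl S}\otimes e_{\cl T}) \pm w \geq 0$ for large $\alpha$, and a general element splits into self-adjoint and skew-adjoint parts. As positives correspond to completely positive maps under the bijection, every linear map $\cl S^d \to \cl T$ is a linear combination of completely positive ones. I expect the only genuine obstacle to be the non-unitality of $\Phi$ in the forward direction, handled by the Stinespring decomposition together with the conjugation axiom; the remaining steps are bookkeeping with the two defining cones.
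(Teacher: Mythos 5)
Your proof is correct, but it is genuinely more self-contained than the paper's. The paper does not reprove the equivalence $u \geq 0 \Leftrightarrow \widehat{u} \in CP(\cl S^d,\cl T)$ at all: it cites \cite[Lem. 8.4]{kptt} for that correspondence, and its written proof consists entirely of the ``in particular'' clause, which it obtains essentially the way you do (write an arbitrary element of $\cl S \otimes_{min} \cl T$ as $x_1 - x_2 + ix_3 - ix_4$ with each $x_j$ positive, using the basis adapted to a faithful state from Lemma \ref{lem special basis}). Your converse direction --- testing $u$ against ucp pairs $(\phi,\psi)$, reading the entries of $\phi$ as a positive matrix over $\cl S^d$, and then amplifying $\psi$ --- is exactly the natural argument hidden behind that citation, and your entrywise identity $(\widehat{u}(\Psi_{ij}))_{ij} = (\Psi \otimes id_{\cl T})(u)$ is the right bookkeeping device for both directions.

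One point needs repair, though it is cosmetic. In the forward direction you invoke ``the Arveson/Stinespring form'' $\Phi = V^*\rho(\cdot)V$ with $\rho:\cl S \to M_N$ ucp and $N$ finite. What Arveson extension (to $C^*_u(\cl S)$) plus Stinespring literally produce is a ucp map $\rho$ into $B(K)$ with $K$ possibly infinite dimensional; the minimal Stinespring space of a cp map need not be finite dimensional even when the domain operator system is. There are two clean fixes. Either keep $B(K)$: functoriality of the minimal tensor product applies to ucp maps into arbitrary operator systems, so $(\rho\otimes id_{\cl T})(u) \geq 0$ in $B(K)\otimes_{min}\cl T$, and conjugation by the bounded operator $V$ still carries this into $M_n(\cl T)^+$ (represent $\cl T \subset B(H)$ and use that min is spatial). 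Or invoke Proposition \ref{unu} of this paper, whose $w^*$-compactness proof is independent of the present proposition: it gives precisely your decomposition with $N = n$, namely $\Phi = R\,\psi(\cdot)\,R$ with $\psi:\cl S \to M_n$ ucp and $R = \Phi(e)^{1/2} \geq 0$, after which only the compatibility axiom $R^* C_n R \subseteq C_n$ is needed. With either repair your argument is complete; what it buys over the paper's version is an actual proof of the correspondence rather than a pointer to \cite{kptt}, at the cost of the extra lemma handling non-unital cp maps.
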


\begin{proof}
The bijective correspondence is already shown in \cite{kptt}. Now let  $\cl S =
span \{e\! =\! s_1,s_2,...,s_n \}$  written in the special basis form as
in Lemma \ref{lem special basis} and let  $\cl S^d =
span\{\delta_1, \delta_2,...,\delta_n\}$ written as the corresponding dual
basis form. Consider a linear map $\varphi : \cl S^d \rightarrow \cl T$ where
$\varphi(\delta_i) = t_i$. Now  $\Sigma (s_i \otimes t_i)$ can be written as
linear combination of positives in $\cl S \otimes_{min} \cl T$, say $\Sigma (s_i
\otimes t_i) = x_1 - x_2 + i x_3 - i x_4$ where each $x_i$ is positive. By the
first part, the corresponding maps $\hat{x_i}$ are completely positive
from $\cl S^d$ into $\cl T$ and clearly $\varphi  = \hat{x_1} -\hat{x_2} +
i\hat{x_3} - i\hat{x_4}$. This finishes the proof.
\end{proof}

\begin{corollary}
If $\cl S$ and $\cl T$ are operator systems with $dim(\cl S) < \infty$
then every linear map from $\cl S$ to $\cl T$ can be written as a linear
combination of completely positive maps.
\end{corollary}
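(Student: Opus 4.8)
The plan is to reduce the statement to Proposition \ref{mindual}, which already delivers the analogous decomposition for linear maps out of a \emph{dual} operator system, by exploiting the reflexivity of finite dimensional operator systems under duality.

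First I would observe that since $\dim(\cl S) < \infty$ the dual $\cl S^d$ is again a finite dimensional operator system, once a faithful state is fixed as its Archimedean matrix order unit (Choi--Effros). Applying Proposition \ref{mindual} with $\cl S^d$ in the role of the finite dimensional operator system, every linear map from $(\cl S^d)^d = \cl S^{dd}$ into $\cl T$ can be written as a linear combination of completely positive maps.

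Next I would transport this decomposition back to $\cl S$. Because $\cl S$ is finite dimensional, the canonical embedding $\iota : \cl S \hookrightarrow \cl S^{dd}$, which is a complete order embedding, is also surjective by a dimension count and hence a unital complete order isomorphism. Given an arbitrary linear map $\varphi : \cl S \rightarrow \cl T$, the composite $\varphi \circ \iota^{-1} : \cl S^{dd} \rightarrow \cl T$ is linear, so by the previous step it equals $\sum_j \lambda_j \psi_j$ for scalars $\lambda_j$ and completely positive maps $\psi_j : \cl S^{dd} \rightarrow \cl T$. Pre-composing with $\iota$ gives $\varphi = \sum_j \lambda_j (\psi_j \circ \iota)$, and since $\iota$ is a complete order isomorphism each $\psi_j \circ \iota$ is completely positive, yielding the desired decomposition.

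The only point requiring care --- and the main, if modest, obstacle --- is the identification $\cl S \cong \cl S^{dd}$ together with the verification that complete positivity is preserved under pre-composition with $\iota$. Both follow from the fact that $\iota$ is a unital complete order isomorphism in the finite dimensional case, so no genuinely new estimate is needed beyond Proposition \ref{mindual}.
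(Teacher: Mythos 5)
Your proof is correct and is essentially the argument the paper intends: the corollary is stated as an immediate consequence of Proposition \ref{mindual}, obtained by applying that proposition to the finite dimensional operator system $\cl S^d$ and identifying $\cl S^{dd}$ with $\cl S$ via the canonical unital complete order isomorphism. Your explicit verification that this identification is bijective in finite dimensions and that pre-composition with it preserves complete positivity fills in exactly the details the paper leaves implicit.
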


\noindent \textbf{Aside:} Supposing $\cl S$ and $\cl T$ are operator systems
with $dim(\cl S) < \infty$ then $CB(\cl S, \cl T)$ has a structure of an
operator system: The involution is given by $\varphi^*(s) = \varphi(s^*)^*$ and
the positive cones structures can be describe as
$$
(\varphi_{ij})  \in M_n(CB(\cl S,\cl T)) \mbox{ is positive if the map }   
\cl S \ni  s \mapsto (\varphi_{ij}(s)) \in M_n(\cl T) \mbox{ is cp}. 
$$
The non-canonical Archimedean order unit can be chosen to be $\tilde{\delta} =
\delta(\cdot) e_{\cl T}$ where $\delta$ is a faithful state on $\cl S$.
Moreover we obtain the following identity
$$
\cl S^d \otimes_{min} \cl T = CB(\cl S,\cl T)
$$
unitally and completely order isomorphicaly. Of course, this also means that $
\cl S \otimes_{min} \cl T = CB(\cl S^d,\cl T) $ where the identity of $CB(\cl
S^d,\cl T)$ is chosen to be $\hat{e}(\cdot) e_{\cl T}$.

$ $

Proposition \ref{mindual} has several important consequences. We want to start
with the following duality property between the minimal and the maximal tensor products given in
\cite{pf}. We also include the proof as it relies on the representation of the tensor products.

\begin{theorem}[Farenick, Paulsen]\label{thm dualminmax}
For finite dimensional operator systems $\cl S$ and $\cl T$ we have the following unital complete
order isomorphisms:
$$
(\cl S \otimes_{max} \cl T)^d = \cl S^d \otimes_{min} \cl T^d \;\;\mbox{ and } \;\; (\cl S \otimes_{min} \cl T)^d = \cl S^d \otimes_{max} \cl T^d.
$$
More precisely, if $\delta_{\cl S}$ and $\delta_{\cl T}$ are faithful states on $\cl S$ and $\cl T$, resp., which we set as Archimedean
order units,  then $\delta_{\cl S} \otimes \delta_{\cl T}$
is again a faithful state on $\cl S \otimes_{min} \cl T$ and $\cl S \otimes_{max} \cl T$ when considered as a linear functional.
\end{theorem}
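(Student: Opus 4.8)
The plan is to prove only the first isomorphism $(\cl S \otimes_{max} \cl T)^d = \cl S^d \otimes_{min} \cl T^d$ and then deduce the second one by duality. Indeed, applying the first isomorphism to the finite dimensional operator systems $\cl S^d$ and $\cl T^d$ gives $(\cl S^d \otimes_{max} \cl T^d)^d = \cl S^{dd} \otimes_{min} \cl T^{dd} = \cl S \otimes_{min} \cl T$, where I use that the canonical embedding $\cl S \hookrightarrow \cl S^{dd}$ is a complete order isomorphism in finite dimensions. Dualizing once more and using that a finite dimensional operator system is reflexive, I obtain $\cl S^d \otimes_{max} \cl T^d = (\cl S \otimes_{min} \cl T)^d$. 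On the underlying $*$-vector spaces the identification is the standard one $(\cl S \otimes \cl T)^* \cong \cl S^* \otimes \cl T^*$ sending $\sum f_i \otimes g_i$ to the functional $s \otimes t \mapsto \sum f_i(s) g_i(t)$, and a direct check shows it intertwines the involutions $f \mapsto f^*$, so I only have to match the matricial cones and the Archimedean order units.

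First I match the scalar cones. By Lance's duality (recorded in Section \ref{sec tensor}), a functional $f$ on $\cl S \otimes_{max} \cl T$ is positive exactly when the map $\varphi_f \colon \cl S \to \cl T^d$, $\varphi_f(s)(t) = f(s \otimes t)$, is completely positive, so $(\cl S \otimes_{max}\cl T)^{d,+} = CP(\cl S,\cl T^d)$. On the other side, Proposition \ref{mindual} applied to the finite dimensional $\cl S^d$ and to $\cl T^d$ identifies $(\cl S^d \otimes_{min} \cl T^d)^+$ with $CP((\cl S^d)^d, \cl T^d) = CP(\cl S,\cl T^d)$ via $u = \sum f_i \otimes g_i \mapsto \widehat u$, $\widehat u(s) = \sum f_i(s) g_i$. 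Since $\varphi_u = \widehat u$ under the vector space identification above, the two positive cones coincide. Taking $\delta_{\cl S} \otimes \delta_{\cl T}$ as unit: it is a state on $\cl S\otimes_{min}\cl T$ because $\delta_{\cl S},\delta_{\cl T}$ are ucp, hence positive there and, since the max cone is contained in the min cone, positive on $\cl S \otimes_{max}\cl T$ as well. For faithfulness on min, if $u = \sum s_i \otimes t_i \ge 0$ then $\widehat u \colon \cl S^d \to \cl T$ is cp and $(\delta_{\cl S}\otimes\delta_{\cl T})(u) = \delta_{\cl T}(\widehat u(\delta_{\cl S})) = \delta_{\cl T}(\widehat u(e_{\cl S^d}))$; as $\delta_{\cl T}$ is faithful and $\widehat u(e) = 0$ forces the cp map $\widehat u$ to vanish, we get $u = 0$. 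Faithfulness on max then follows because the max cone sits inside the min cone. Declaring $\delta_{\cl S}\otimes\delta_{\cl T}$ the Archimedean order unit of $\cl S^d \otimes_{min}\cl T^d$ makes the identification unital, which is the content of the ``more precisely'' clause.

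It remains to promote the agreement of cones to every matrix level, and here I compute the $n$-th cone of each side as $CP(\cl S, M_n(\cl T^d))$. For the minimal side I invoke the Aside to Proposition \ref{mindual}, which gives a complete order isomorphism $\cl S^d \otimes_{min}\cl T^d = CB(\cl S,\cl T^d)$ whose level-$n$ positive elements are exactly the tuples $(\varphi_{ij})$ for which $s \mapsto (\varphi_{ij}(s)) \in M_n(\cl T^d)$ is completely positive. For the dual of the maximal tensor product, $(f_{ij}) \in M_n\big((\cl S\otimes_{max}\cl T)^d\big)$ is positive iff $w \mapsto (f_{ij}(w))$ is a cp map $\cl S \otimes_{max}\cl T \to M_n$, and I reduce this to the scalar Lance duality: a cp map into $M_n$ corresponds to a positive functional on $M_n(\cl S \otimes_{max}\cl T)$, and using that $M_n$ is nuclear together with the associativity of max one has $M_n(\cl S \otimes_{max}\cl T) = M_n(\cl S)\otimes_{max}\cl T$, so such functionals correspond to cp maps $M_n(\cl S) \to \cl T^d$, equivalently to cp maps $\cl S \to M_n(\cl T^d)$. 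Thus both $n$-th cones equal $CP(\cl S, M_n(\cl T^d))$ and agree under the fixed vector space identification.

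I expect this last matricial step to be the main obstacle: the scalar Lance duality is quoted directly, but upgrading it to the matrix levels requires the careful bookkeeping of the identifications $M_n(\cl S\otimes_{max}\cl T) = M_n(\cl S)\otimes_{max}\cl T$ and $CP(M_n(\cl S),\cl T^d) = CP(\cl S, M_n(\cl T^d))$, and verifying that these currying isomorphisms are compatible with the single vector space identification fixed at the outset, so that one genuinely recovers the identity map as the complete order isomorphism rather than merely an abstract one.
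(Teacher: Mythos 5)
Your proof is correct and is essentially the paper's own argument in mirror image: the paper establishes $\cl S \otimes_{min} \cl T = (\cl S^d \otimes_{max} \cl T^d)^d$ and then dualizes, while you establish the equivalent identity $(\cl S \otimes_{max} \cl T)^d = \cl S^d \otimes_{min} \cl T^d$ and dualize, but both rest on exactly the same ingredients: Lance's duality $(\cl S \otimes_{max} \cl T)^{d,+} = CP(\cl S, \cl T^d)$, Proposition \ref{mindual} for the minimal cone, and the reduction of the matrix levels to the ground level via the identification $M_n(\cl R)^d = M_n(\cl R^d)$ (equivalently Choi's correspondence between cp maps into $M_n$ and positive functionals on $M_n(\cdot)$) together with the associativity of the maximal tensor product and the nuclearity of $M_n$. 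The only cosmetic differences are that you curry on the dual side where the paper substitutes $M_n(\cl S)$ into the ground-level identity, and your verification that $\delta_{\cl S}\otimes\delta_{\cl T}$ is a faithful state matches what the paper asserts when fixing the unit.
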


\begin{proof}
We first show that  $\cl S \otimes_{min} \cl T$ and $ (\cl S ^d \otimes_{max} \cl T^d)^{d}$ are completely order isomorphic.
Note that
$$
(\cl S \otimes_{min} \cl T ) ^+ = CP(\cl S^d , \cl T) = (\cl S ^d \otimes_{max} \cl T^d)^{d,+}. 
$$
Here the second equation follows from the representation
of the maximal tensor product that we discussed in Subsection \ref{subsec max}. Therefore, we obtain that
a positive linear functional on $\cl S ^d \otimes_{max} \cl T^d$ corresponds to
a positive element in $\cl S \otimes_{min} \cl T$. This shows that the bijective linear map
$$
\cl S \otimes_{min} \cl T \rightarrow  (\cl S ^d \otimes_{max} \cl T^d)^{d}  \;\;\; s \otimes t \mapsto s \dot{\otimes} t 
\mbox{ where } s \dot{\otimes} t (\Sigma f_i \otimes g_i) = \Sigma f_i(s)g_i(t)
$$
is an order isomorphism. To see that it is an complete order isomorphism we can reduce the matricial
levels to a ground level as follows. First note that
$$
M_n(\cl S) \otimes_{min} \cl T \mbox{ and }  \left( M_n(\cl S)^d \otimes_{max} \cl T^d \right)^d
$$
are order isomorphic. The left hand side can be identified with $M_n(\cl S \otimes_{min} \cl T)$. 
On the other hand, for any operator system $\cl R$ we have the  identification  $M_n(\cl R^d) = (M_n(\cl R))^d  $ 
given by $(f_{ij}) \mapsto F$ where $ F (r_{ij}) = \Sigma f_{ij}(r_{ij}) $. In fact, we first
identify $M_n(\cl R^d)$ with linear operators from $\cl R$ into $M_n$ (where we use the definition of positivity) and 
these linear operators are identified with linear functionals on $M_n(\cl R)$ (see \cite[Thm. 6.1.]{Pa}, e.g.). By the associativity of the 
maximal tensor product we have that the right hand side can be identified with
$$
\left( M_n(\cl S^d) \otimes_{max} \cl T^d \right)^d = \left( M_n(\cl S^d \otimes_{max} \cl T^d) \right)^d =M_n\left(( \cl S^d \otimes_{max} \cl T^d)^d \right).
$$
Thus the above map is completely order isomorphic. We may suppose that these operator systems
have the same unit by simply declaring $e_{\cl S} \dot{\otimes} e_{\cl T}$ as the Archimedean order unit on 
$ (\cl S ^d \otimes_{max} \cl T^d)^d$. (Since both of these matrix ordered spaces are completely order isomorphic, clearly,
$e_{\cl S} \dot{\otimes} e_{\cl T}$ plays the same role on $ (\cl S ^d \otimes_{max} \cl T^d)^d$.) Finally by taking appropriate duals,
we obtain both first and second desired identifications. 
\end{proof}

This duality correspondence allows us to recover the following special case about the projectivity of the maximal
tensor product given in \cite{Han}.
\begin{theorem}
Let $\cl S$ and $\cl T$ be finite dimensional operator systems and $J \subset \cl S$ be a null subspace.
Then $J \otimes \cl T \subset \cl S \otimes_{max} \cl T$ is a null subspace and we have that
$$
( \cl S \otimes_{max} \cl T) / (J \otimes \cl T) = (\cl S / J) \otimes_{max} \cl T.
$$
In other words, the induced map $ \cl S \otimes_{max} \cl T\longrightarrow (\cl S / J) \otimes_{max} \cl T$ is
a unital quotient map.
\end{theorem}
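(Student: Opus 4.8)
The plan is to transport the whole statement to the dual side. By Theorem \ref{thm dualminmax} the maximal tensor product turns into the minimal one, and by Proposition \ref{prop dual of quotient} the quotient map $q:\cl S\to\cl S/J$ turns into an inclusion of operator subsystems. On the dual side the assertion then reduces to injectivity of the minimal tensor product, and re-dualizing via the Farenick--Paulsen First Isomorphism Theorem (Theorem \ref{thm dual of inc}) recovers exactly $q\otimes id$ as a unital quotient map.

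First I would fix faithful states $\delta_{\cl S}$ on $\cl S$ and $\delta_{\cl T}$ on $\cl T$ with $J\subseteq\ker\delta_{\cl S}$; by Proposition \ref{prop dual of quotient} these make $q^d:(\cl S/J)^d\to\cl S^d$ a unital complete order embedding, so that $(\cl S/J)^d$ is an operator subsystem of $\cl S^d$. Next I would invoke injectivity of the minimal tensor product to conclude that
$$(\cl S/J)^d\otimes_{min}\cl T^d\;\hookrightarrow\;\cl S^d\otimes_{min}\cl T^d$$
is a unital complete order embedding. Then I would dualize this inclusion: Theorem \ref{thm dualminmax} applied to the two minimal tensor products gives $(\cl S^d\otimes_{min}\cl T^d)^d=\cl S\otimes_{max}\cl T$ and $((\cl S/J)^d\otimes_{min}\cl T^d)^d=(\cl S/J)\otimes_{max}\cl T$, using that $\cl S^{dd}=\cl S$, $(\cl S/J)^{dd}=\cl S/J$ and $\cl T^{dd}=\cl T$ completely order isomorphically. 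Finally, Theorem \ref{thm dual of inc} says the adjoint of an inclusion of finite dimensional operator systems is a unital quotient map; here that adjoint is $(q^d\otimes id)^d=q^{dd}\otimes id=q\otimes id$, which is therefore the desired unital quotient map $\cl S\otimes_{max}\cl T\to(\cl S/J)\otimes_{max}\cl T$, and this yields the stated identity $(\cl S\otimes_{max}\cl T)/(J\otimes\cl T)=(\cl S/J)\otimes_{max}\cl T$.

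For the null subspace claim I would note that the algebraic kernel of $q\otimes id$ is $(\ker q)\otimes\cl T=J\otimes\cl T$, and that in finite dimensions the maximal tensor products carry the same underlying algebraic tensor product, so the kernel of the quotient map is precisely $J\otimes\cl T$. Since this quotient map is realized as the adjoint of an inclusion, the Remark following Theorem \ref{thm dual of inc} shows that its kernel is a null subspace, which is exactly what is needed.

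I expect the main obstacle to be purely bookkeeping: one must choose the faithful states on $\cl S$, $\cl S/J$ and $\cl T$ (and the induced units on the various tensor products) consistently so that every map in the chain is genuinely unital, and then verify that under the canonical identifications $\cl S^{dd}\cong\cl S$, etc., the adjoint $(q^d\otimes id)^d$ is \emph{literally} $q\otimes id$ rather than merely agreeing with it up to those isomorphisms. Once the duality dictionary of Theorem \ref{thm dualminmax} is in place, none of the individual steps is computationally heavy.
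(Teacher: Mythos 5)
Your proposal is correct and follows essentially the same route as the paper's own proof: dualize via Proposition \ref{prop dual of quotient}, embed with the injectivity of the minimal tensor product, apply Theorem \ref{thm dual of inc} together with the remark following it, and translate back through the duality of Theorem \ref{thm dualminmax}, identifying the resulting adjoint with $q\otimes id$ and its kernel with $J\otimes\cl T$. The only cosmetic difference is that the paper disposes of the kernel identification with a brief dimension count, where you verify the algebraic kernel directly; the mathematical content is the same.
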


\begin{proof}
Proposition \ref{prop dual of quotient} ensures that $(\cl S/J)^d$ is an operator subsystem of $\cl S^d$. Thus,
by using the injectivity of the minimal tensor product, we have that 
$$
(\cl S/J)^d  \otimes_{min} \cl T^d \subset \cl S^d  \otimes_{min} \cl T^d.
$$
Now, Theorem \ref{thm dual of inc} (and the remark thereafter) ensure that the adjoint of this map is a quotient map
whose kernel is a null subspace. Thus, by using the above result, the adjoint of this inclusion, i.e., the natural map below
$$
\cl S \otimes_{max} \cl T\longrightarrow (\cl S / J) \otimes_{max} \cl T
$$
is a quotient map. By a dimension count argument its kernel is $J \otimes \cl T$ which is a null subspace. 
\end{proof}

With the following lemma we resolve some technical issues. Its proof is again based on the representation of the minimal tensor product.

\begin{lemma}\label{lem liftdualquotient}
Let $\cl S$ be a finite dimensional operator system, $\cl A$ be a C*-algebra and $I$ be an ideal in $\cl A$. Then the following are equivalent:
\begin{enumerate}
 \item For all $n$, every ucp map $\varphi: \cl S \rightarrow M_n(\cl A) / M_n(I)$ has cp lift on $M_n(\cl A)$.

 \item For all $n$, every cp map $\varphi: \cl S \rightarrow M_n(\cl A) / M_n(I)$ has cp lift on $M_n(\cl A)$.

\item We have the unital complete order isomorphism
$$
(\cl S^d \otimes_{min} \cl A ) / (\cl S^d \otimes I) = \cl S^d \otimes_{min} (\cl A /I).
$$
\end{enumerate}
\end{lemma}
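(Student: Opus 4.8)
The plan is to prove the cycle $(1)\Rightarrow(3)\Rightarrow(2)\Rightarrow(1)$, first reducing every matricial statement to the ground level by the absorption identity $M_n(\cl S^d\otimes_{min}\cl A)=\cl S^d\otimes_{min}M_n(\cl A)$, which follows from the associativity of $\min$, the nuclearity of $M_n$, and the standard fact $M_n(\cl A)/M_n(I)=M_n(\cl A/I)$. Under this identification the $n$-th matrix level of the canonical map in (3) becomes the induced map
$$
\Theta^{(n)}:(\cl S^d\otimes_{min}M_n(\cl A))/(\cl S^d\otimes M_n(I))\longrightarrow \cl S^d\otimes_{min}(M_n(\cl A)/M_n(I)),
$$
which is always a bijective ucp map by Remark \ref{rem ex for fd} (applied with the finite dimensional system $\cl S^d$ and the C*-algebra $M_n(\cl A)$). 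Consequently (3) holds precisely when $(\Theta^{(n)})^{-1}$ is positive for every $n$.

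The bridge to maps is Proposition \ref{mindual}: since $\cl S^{dd}=\cl S$, the positive elements of $\cl S^d\otimes_{min}\cl B$ are exactly the cp maps $\cl S\to\cl B$, and under this correspondence the map $id\otimes q$ sends a positive element to the cp map obtained by post-composing with $q$. Thus for a fixed positive $U\leftrightarrow\varphi:\cl S\to M_n(\cl A)/M_n(I)$, the existence of a positive representative of $(\Theta^{(n)})^{-1}(U)$ in $\cl S^d\otimes_{min}M_n(\cl A)$ is the same as the existence of a cp lift $\tilde\varphi:\cl S\to M_n(\cl A)$ of $\varphi$. Here the kernel $\cl S^d\otimes M_n(I)$ is completely proximinal by \cite[Cor. 5.15]{kptt2}, so ``positive in the quotient operator system'' coincides with ``has an honest positive representative''. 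This yields $(3)\Rightarrow(2)$, while $(2)\Rightarrow(1)$ is trivial since a ucp map is in particular cp.

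The one delicate implication is $(1)\Rightarrow(3)$, where one must cross the gap between the ucp hypothesis of (1) and the arbitrary cp maps governing (3). I would unwind positivity in the Archimedean quotient cone: let $\delta$ be the faithful state on $\cl S$ chosen as order unit of $\cl S^d$, so that $\delta\otimes 1$ is the unit of $\cl S^d\otimes_{min}M_n(\cl A)$ and corresponds to $s\mapsto\delta(s)1$. Writing $U\leftrightarrow\varphi$ with $a:=\varphi(e)$, the preimage of $U$ is positive in the quotient iff for every $\epsilon>0$ the element $U+\epsilon[\delta\otimes 1]$ admits a positive preimage, which under the correspondence means exactly that the cp map $\varphi_\epsilon:=\varphi+\epsilon\,\delta(\cdot)1$ has an honest cp lift. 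The point is that $\varphi_\epsilon(e)=a+\epsilon 1$ is now \emph{invertible} in the unital C*-algebra $M_n(\cl A)/M_n(I)$, and this is what lets (1) do its work: $\psi:=(a+\epsilon 1)^{-1/2}\varphi_\epsilon(\cdot)(a+\epsilon 1)^{-1/2}$ is a genuine ucp map, so by (1) it lifts to a cp map $\tilde\psi:\cl S\to M_n(\cl A)$; choosing a positive lift $B$ of $a+\epsilon 1$ (a C*-ideal quotient admits positive lifts) and setting $\tilde\varphi_\epsilon:=B^{1/2}\tilde\psi(\cdot)B^{1/2}$ produces a cp lift of $\varphi_\epsilon$. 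Hence every $U$ has a positive preimage after $\epsilon$-perturbation for each $\epsilon$, which is exactly the statement that $(\Theta^{(n)})^{-1}$ is positive; carrying this out at every level gives (3) and closes the cycle.

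I expect the main obstacle to be precisely this ucp-versus-cp discrepancy. A general cp map cannot be turned into a unital one by a corner or diagonal padding, since the $(1,1)$-entry of a unital map must send $e$ to $1$, and the only clean unitalization---conjugation by $\varphi(e)^{-1/2}$---requires $\varphi(e)$ to be invertible. The resolution is to exploit the Archimedeanization already built into the operator system quotient: perturbing $\varphi(e)$ by $\epsilon 1$ makes it invertible, and the resulting family of honest lifts of $\varphi_\epsilon$ is exactly what the Archimedean cone demands, after which the complete proximinality of $\cl S^d\otimes M_n(I)$ upgrades these $\epsilon$-lifts into a single honest cp lift. The remaining steps---the absorption identities and the translation through Proposition \ref{mindual}---are routine once the invertibility trick is in place.
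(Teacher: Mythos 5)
Your proof is correct and takes essentially the same approach as the paper's own argument: the same matricial reduction via $M_n(\cl S^d \otimes_{min} \cl A) = \cl S^d \otimes_{min} M_n(\cl A)$, the same use of Proposition \ref{mindual} together with complete proximinality of the kernel for $(3)\Rightarrow(2)$, and the same unitalization-by-invertibility trick with the Archimedean $\epsilon$-perturbation for $(1)\Rightarrow(3)$. The only cosmetic difference is that you lift $\varphi_\epsilon(e)$ to a positive $B$ and conjugate by $B^{1/2}$, whereas the paper lifts $\varphi_\epsilon(e)^{1/2}$ and conjugates by that lift, which amounts to the same thing.
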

\begin{proof}
We first remark that if we replace ``for all $n$''  with ``for $n = 1$'' in (1) and (2) and remove ``complete'' in (3) and
prove the lemma this way then the the original arguments automatically satisfied. In fact this follows
from the identifications
$$
M_n((\cl S^d \otimes_{min} \cl A ) / (\cl S^d \otimes I) ) = M_n(\cl S^d \otimes_{min} \cl A ) / M_n(\cl S^d \otimes I ) = (\cl S^d \otimes_{min} M_n(\cl A) ) / (\cl S^d \otimes M_n(I))
$$
and $M_n(\cl S^d \otimes_{min} (\cl A /I)) = \cl S^d \otimes_{min} M_n(\cl A /I)$. So we will prove the equivalences only for the ground level.
Clearly (2) implies (1). (3) implies (2) is also easy. The cp map  $\varphi: \cl S \rightarrow \cl A / I$ corresponds to a positive element
in $\cl S^d \otimes_{min} (\cl A /I)$. Since we assumed (3) and the first quotient in (3) is proximinal (see Cor. 5.15 of \cite{kptt2} e.g.) it follows that $u$ is
quotient of a positive element $v$ in $\cl S^d \otimes_{min} \cl A$. Now, again by using the representation of minimal tensor product,
$v$ corresponds a cp map $\tilde{\varphi}: \cl S \rightarrow \cl A$. It is not hard to show that $\tilde{\varphi}$ is a lift of $\varphi$.

We finally show that (1) implies (3). First note that (1) implies the following: Whenever $\phi: \cl S \rightarrow \cl A / I$ is a cp map with
$\phi(e)$ is invertible then $\phi$ has cp lift on $\cl A$. In fact if we set $\psi = \phi(e)^{-1/2} \phi(\cdot)  \phi(e)^{-1/2}$ then
$\psi$ is a ucp map and hence has a cp lift $\tilde{\psi}$ on $\cl A$. Now, if $a$ is in $\cl A^+$ with $a + I = \phi(e)^{1/2}$ then it is easy to see that
the cp map $a \tilde{\psi}(\cdot) a$ is a lift of $\phi$. Secondly, we remark that the induced map from $(\cl S^d \otimes_{min} \cl A ) / (\cl S^d \otimes I)$
to $\cl S^d \otimes_{min} (\cl A /I)$ is already bijective and ucp (see Remark \ref{rem ex for fd}). Thus we need to show that its inverse is positive. 
So let $u$ be positive in $\cl S^d \otimes_{min} (\cl A /I)$ and set  $u_\epsilon = u + \epsilon 1$ for $\epsilon>0$, where
$1$ is the unit of $\cl S^d \otimes_{min} (\cl A /I)$. (Note: $1 = f \otimes \dot{e}_{\cl A}$ where $f$ is a faithful sate on $\cl S$.)
Since $u$ and $u_\epsilon$ are positive elements they corresponds to cp maps $\varphi$ and $\varphi_\epsilon$ 
from $\cl S $ into $ \cl A / I$, respectively. It is not hard to see that
$\varphi_\epsilon(e_{\cl S}) = \varphi(e_{\cl S}) + \epsilon  \dot{e}_{\cl A}$. This means that $\varphi_\epsilon(e_{\cl S}) $ is invertible
and so it has a cp lift $\tilde{\varphi_\epsilon}$ from $\cl S$ into $\cl A$. This again corresponds to a positive element $U_{\epsilon}$
in $\cl S^d \otimes_{min} \cl A$. Now it is not hard to see that the positive element $U_{\epsilon} + \cl S^d \otimes I$ is the inverse image of
 $u_{\epsilon} = u + \epsilon 1$ for every $\epsilon>0$. This is enough to conclude that two operator systems in (3) are order isomorphic.
\end{proof}

\begin{theorem}\label{exactdualLP}
Let $\cl S$ be a finite dimensional operator system. Then $\cl S$ has the lifting property if
and only if $\cl S^d$ is exact (and vice versa). In other words, $\cl S$ is
(min,er)-nuclear if and only if $\cl S^d$ is (min,el)-nuclear.
\end{theorem}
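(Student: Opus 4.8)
The plan is to read the equivalence directly off Lemma \ref{lem liftdualquotient} and Remark \ref{rem ex for fd}, after unwinding what each property means in the finite dimensional setting. Since $\cl S$ is finite dimensional, the lifting property carries no genuinely \emph{local} content: it says precisely that for every unital C*-algebra $\cl A$ and ideal $I$, every ucp map $\varphi:\cl S\to\cl A/I$ admits a cp (equivalently ucp) lift on $\cl A$. On the dual side, exactness of the finite dimensional operator system $\cl S^d$ means, by Remark \ref{rem ex for fd}, exactly that the automatically bijective, unital, completely positive induced map
$$
(\cl S^d\otimes_{min}\cl A)/(\cl S^d\otimes I)\longrightarrow \cl S^d\otimes_{min}(\cl A/I)
$$
is a complete order isomorphism for every $\cl A$ and every ideal $I$. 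This is nothing other than statement (3) of Lemma \ref{lem liftdualquotient}.

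Next I would apply Lemma \ref{lem liftdualquotient} one pair $(\cl A,I)$ at a time. For a fixed such pair the Lemma asserts that condition (3) holds if and only if, for every $n$, every ucp map $\cl S\to M_n(\cl A)/M_n(I)$ lifts to a cp map on $M_n(\cl A)$. Now I would quantify over all pairs $(\cl A,I)$ and observe that the family of targets $M_n(\cl A)/M_n(I)=M_n(\cl A/I)$, as $n$ and $(\cl A,I)$ vary, is precisely the family of all C*-algebra quotients $\cl B/J$: taking $n=1$ already recovers every quotient $\cl A/I$, while the lifting property of $\cl S$, being stated for all C*-algebras, covers the amplified pairs $(M_n(\cl A),M_n(I))$. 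Hence the totality of the left-hand conditions over all $(\cl A,I)$ is exactly the lifting property of $\cl S$, and the totality of the right-hand conditions (3) over all $(\cl A,I)$ is exactly exactness of $\cl S^d$. This gives the stated equivalence.

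The ``and vice versa'' then follows from reflexivity. For a finite dimensional operator system the canonical embedding gives $\cl S^{dd}=\cl S$ unitally and completely order isomorphically (the bidual theorem in Section 1), so applying the equivalence just established to $\cl S^d$ in place of $\cl S$ shows that $\cl S^d$ has the lifting property if and only if $\cl S=\cl S^{dd}$ is exact. Finally the reformulation in terms of nuclearity is immediate upon substituting exactness $=$ (min,el)-nuclearity (Theorem \ref{thm exact=(min,el)}) and lifting property $=$ (min,er)-nuclearity (osLLP).

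I expect no serious obstacle, since the substantive content is already packaged inside Lemma \ref{lem liftdualquotient}; the only point demanding a little care is the passage from the fixed-pair formulation of that lemma to the global lifting property, and this is settled precisely by the remark that $(M_n(\cl A),M_n(I))$ is itself an admissible (C*-algebra, ideal) pair, so that the matrix amplifications needed for the \emph{complete} order isomorphism in (3) are absorbed into the single quantifier over all pairs.
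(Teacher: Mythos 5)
Your proposal is correct and follows essentially the same route as the paper: the paper's proof likewise reads the theorem directly off Lemma \ref{lem liftdualquotient} (lifting property of $\cl S$ $\Leftrightarrow$ condition (1) for every pair $(\cl A, I)$ $\Leftrightarrow$ condition (3) for every pair, i.e., exactness of $\cl S^d$ via Remark \ref{rem ex for fd}), and obtains the ``vice versa'' from $\cl S^{dd}=\cl S$. Your added care about absorbing the matrix amplifications $(M_n(\cl A),M_n(I))$ into the quantifier over all pairs is exactly the point the paper leaves implicit.
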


\begin{proof}
The proof is based on Lemma \ref{lem liftdualquotient}. If $\cl S$ has the lifting property then
(1) in the same lemma will be satisfied for every C*-algebra and ideal. Thus (3) implies that $\cl S^d$
is exact. The reverse direction similar. Since $\cl S^{dd} = \cl S$ we clearly have that
$\cl S$ is exact  if and only if $\cl S^d$ has the lifting property.
\end{proof}

\begin{theorem}
If the Kirchberg conjecture has an affirmative answer then, in the finite dimensional case, 
C*-nuclearity is preserved under duality, that is, if $\cl S$ is C*-nuclear then $\cl S^d$ is again
C*-nuclear.
\end{theorem}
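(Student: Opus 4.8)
The plan is to exploit the way C*-nuclearity factors through the intermediate tensor products, combined with the duality between exactness and the lifting property established in Theorem \ref{exactdualLP}. First I would record that C*-nuclearity is precisely $(min,c)$-nuclearity, and that the lattice relations $min \leq el, er \leq c$ let one squeeze: if $\cl S \otimes_{min} \cl T = \cl S \otimes_c \cl T$ for every $\cl T$, then since the matricial cones satisfy $C^{min} \supseteq C^{el} \supseteq C^c$ and $C^{min} \supseteq C^{er} \supseteq C^c$, equality of the two extreme cones collapses the intermediate ones, giving $\cl S \otimes_{min} \cl T = \cl S \otimes_{el} \cl T = \cl S \otimes_{er} \cl T = \cl S \otimes_c \cl T$ for every $\cl T$. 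In particular a C*-nuclear $\cl S$ is both exact (i.e. $(min,el)$-nuclear) and has the lifting property (i.e. $(min,er)$-nuclear).

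Next I would transport these two properties to the dual. By Theorem \ref{exactdualLP}, for a finite dimensional operator system the lifting property of $\cl S$ is equivalent to exactness of $\cl S^d$, and exactness of $\cl S$ is equivalent to the lifting property of $\cl S^d$ (both directions being available because $\cl S^{dd} = \cl S$ completely order isomorphically). Applying this to the previous step: since $\cl S$ is exact, $\cl S^d$ has the lifting property; since $\cl S$ has the lifting property, $\cl S^d$ is exact. Thus $\cl S^d$ is simultaneously exact and has the lifting property, and note this much uses no hypothesis on the Kirchberg conjecture.

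The Kirchberg conjecture enters only at the last step. Under the hypothesis, Theorem \ref{thm new KC}, condition (4), asserts that every finite dimensional operator system with the lifting property has DCEP; since $\cl S^d$ is finite dimensional (of the same dimension as $\cl S$) and has the lifting property, it therefore has DCEP, that is, $\cl S^d$ is $(el,c)$-nuclear. Combining this with the exactness of $\cl S^d$, for every operator system $\cl T$ one gets $\cl S^d \otimes_{min} \cl T = \cl S^d \otimes_{el} \cl T = \cl S^d \otimes_c \cl T$, which is exactly $(min,c)$-nuclearity. Hence $\cl S^d$ is C*-nuclear, as desired.

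There is no genuine analytic obstacle here: the argument is essentially bookkeeping with the nuclearity dictionary, powered by two external inputs, namely the exactness–lifting duality of Theorem \ref{exactdualLP} and the Kirchberg conjecture via Theorem \ref{thm new KC}. The only points worth verifying with care are the squeezing in the first paragraph (that equality of $C^{min}$ and $C^c$ truly forces the $el$ and $er$ cones to coincide with them) and that the hypotheses of Theorem \ref{thm new KC}(4) genuinely apply to $\cl S^d$, which they do since duality preserves finite dimensionality.
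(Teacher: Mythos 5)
Your proof is correct and follows essentially the same route as the paper: extract exactness and the lifting property from C*-nuclearity, transport both to $\cl S^d$ via Theorem \ref{exactdualLP}, invoke Theorem \ref{thm new KC} to get DCEP for $\cl S^d$, and recombine $(min,el)$- with $(el,c)$-nuclearity. The only difference is that you spell out the two steps the paper leaves implicit (the cone-squeezing argument showing $(min,c)$-nuclearity implies both $(min,el)$- and $(min,er)$-nuclearity, and the chain of tensor identities showing exactness plus DCEP yields C*-nuclearity), which is a faithful elaboration rather than a new argument.
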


\begin{proof}
Let $\cl S$ be a finite dimensional C*-nuclear operator system. In particular $\cl S$ is exact and
has the lifting property. By the above result $\cl S^d$ has both of these properties.
Now if the Kirchberg conjecture is true then Theorem \ref{thm new KC} implies that $\cl S^d$ has DCEP. It is easy
to see that exactness and DCEP together imply C*-nuclearity. Thus, $\cl S^d$ is C*-nuclear.
\end{proof}

The local lifting property of a C*-algebra, in general, does not pass to its
quotients by ideals. In fact it is well known that every C*-algebra is the 
quotient of a full C*-algebra of a free group which has the local lifting
property however there are C*-algebras without this property. On the
finite dimensional operator systems this situation is different:

\begin{theorem}\label{thm lp-quotient}
Let $\cl S$ be a finite dimensional operator system and let $J$ be a null
subspace of $\cl S$. If $\cl S$ has the lifting property then $\cl S / J$ has
the same property.
\end{theorem}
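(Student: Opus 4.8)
The plan is to prove the statement entirely by duality, converting the lifting property of $\cl S/J$ into an exactness statement about a subsystem of $\cl S^d$. The three ingredients I would combine are Theorem~\ref{exactdualLP} (in finite dimensions the lifting property and exactness are dual pairs), Proposition~\ref{prop dual of quotient} (the dual of a quotient by a null subspace embeds inside $\cl S^d$), and Proposition~\ref{prop exac pass ss} (exactness passes to operator subsystems).

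First I would translate the hypothesis: since $\cl S$ is finite dimensional and has the lifting property, Theorem~\ref{exactdualLP} gives that $\cl S^d$ is exact. Next, because $J$ is a \emph{null} subspace of $\cl S$, Proposition~\ref{prop dual of quotient} identifies $(\cl S/J)^d$ with an operator subsystem of $\cl S^d$ (after the appropriate selection of faithful states so that the embedding $q^d$ is unital). This is precisely the step where the null-subspace hypothesis, rather than the weaker ``kernel'' hypothesis, is genuinely needed, since only then does the adjoint of the quotient map realize $(\cl S/J)^d$ as a unital complete order embedding into $\cl S^d$.

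I would then apply Proposition~\ref{prop exac pass ss}: exactness is inherited by operator subsystems, so from the exactness of $\cl S^d$ I obtain that $(\cl S/J)^d$ is exact. Finally I would dualize back. The operator system $\cl S/J$ is again finite dimensional, hence reflexive with $(\cl S/J)^{dd}=\cl S/J$, so Theorem~\ref{exactdualLP} applies to it and says that $\cl S/J$ has the lifting property precisely when $(\cl S/J)^d$ is exact. Since the latter has just been established, $\cl S/J$ has the lifting property, as desired.

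The argument is short, and I do not anticipate a genuine obstacle once this dictionary is in place; the only point requiring care is the second step, where the null-subspace assumption must be invoked to guarantee that $(\cl S/J)^d$ sits inside $\cl S^d$ as a subsystem rather than merely as a quotient. This asymmetry is exactly what makes the theorem hold for null-subspace quotients of operator systems while the analogous statement fails for ideal quotients of C*-algebras.
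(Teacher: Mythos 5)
Your proposal is correct and is essentially identical to the paper's own proof: both arguments dualize via Theorem~\ref{exactdualLP}, use Proposition~\ref{prop dual of quotient} to realize $(\cl S/J)^d$ as an operator subsystem of $\cl S^d$, apply Proposition~\ref{prop exac pass ss} to transfer exactness to that subsystem, and then dualize back. The extra care you take in flagging where the null-subspace hypothesis enters matches the paper's intent exactly.
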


\begin{proof}
Recall from Proposition \ref{prop dual of quotient} that $(\cl S /J)^d$ is an
operator subsystem of $\cl S^d$. Since $\cl S$ has the lifting property then $\cl
S^d$ is exact by Theorem \ref{exactdualLP}. Proposition \ref{prop exac pass ss} states that exactness passes to operator subsystems so
$(\cl S /J)^d$ is exact and consequently using Theorem \ref{exactdualLP} again
it follows that $\cl S / J$ has the lifting property.
\end{proof}

\begin{example}\label{exam MnJn}
We define $J_n \subset M_n$ as the subspace which includes all the
diagonal operators with 0 trace. Clearly $J_n$ is a null subspace and
consequently, by Proposition \ref{prop nullsubspace}, it is a kernel. Since
$M_n$ is a nuclear C*-algebra, it is a (min,max)-nuclear operator system. In
particular, it is (min,er)-nuclear equivalently has the lifting property. Thus, by
the above theorem $M_n/J_n$ has the lifting property. We will come back to this
example in later sections.
\end{example}

The lifting property is also stable when passing to universal C*-algebras. The
following result is an unpublished work of Ivan Todorov which he informed me of
during this research. The operator space analogue can be seen in \cite{Oz3}.

\begin{theorem}\label{thm lift univ}
Let $\cl S$ be a finite dimensional operator system. Then $\cl S$ has lifting
property if and only if $C^*_u(\cl S)$ has LLP.
\end{theorem}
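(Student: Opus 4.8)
The plan is to prove the two implications separately, using two facts from the earlier material. First, Kirchberg's formulation of LLP: a unital C*-algebra $\cl C$ has LLP precisely when every unital $*$-homomorphism $\pi : \cl C \to \cl B/I$ into a quotient of a unital C*-algebra \emph{locally lifts}, i.e. the restriction of $\pi$ to each finite dimensional operator subsystem admits a ucp (equivalently completely contractive) lift into $\cl B$; this is exactly osLLP tested on $*$-homomorphisms, and LLP $=$ osLLP for C*-algebras. Second, the universal property of $C^*_u(\cl S)$: every ucp map $\varphi : \cl S \to \cl C$ into a C*-algebra extends to a \emph{unique} unital $*$-homomorphism $C^*_u(\cl S) \to \cl C$. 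Throughout I identify $\cl S$ with its canonical copy $\iota(\cl S)$, a finite dimensional operator subsystem of $C^*_u(\cl S)$.

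For the direction ``$C^*_u(\cl S)$ has LLP $\Rightarrow \cl S$ has the lifting property'': let $\varphi : \cl S \to \cl A/I$ be ucp. Since $\cl A/I$ is a C*-algebra, the universal property extends $\varphi$ to a unital $*$-homomorphism $\pi : C^*_u(\cl S) \to \cl A/I$. As $\cl S \subset C^*_u(\cl S)$ is finite dimensional and $C^*_u(\cl S)$ has LLP, the restriction $\pi|_{\cl S} = \varphi$ lifts to a completely positive map $\tilde{\varphi} : \cl S \to \cl A$; since local lifts in the definition of osLLP may be taken unital, we may assume $\tilde{\varphi}$ is ucp. This $\tilde{\varphi}$ is the required lift, so $\cl S$ has the lifting property.

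For the converse ``$\cl S$ has the lifting property $\Rightarrow C^*_u(\cl S)$ has LLP'': by the formulation above it suffices to lift an arbitrary unital $*$-homomorphism $\pi : C^*_u(\cl S) \to \cl B/I$. Restricting gives a ucp map $\varphi = \pi|_{\cl S} : \cl S \to \cl B/I$, which by the lifting property of the finite dimensional $\cl S$ admits a ucp lift $\tilde{\varphi} : \cl S \to \cl B$, so $q \circ \tilde{\varphi} = \varphi$ with $q : \cl B \to \cl B/I$ the quotient map. The universal property extends $\tilde{\varphi}$ to a unital $*$-homomorphism $\tilde{\pi} : C^*_u(\cl S) \to \cl B$. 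Now $q \circ \tilde{\pi}$ and $\pi$ are both unital $*$-homomorphisms $C^*_u(\cl S) \to \cl B/I$ whose compositions with $\iota$ equal $\varphi$; by the uniqueness clause of the universal property, $q \circ \tilde{\pi} = \pi$. Thus $\tilde{\pi}$ is in fact a \emph{global} $*$-homomorphism lift of $\pi$, and in particular its restriction to any finite dimensional operator subsystem is a ucp local lift, so $C^*_u(\cl S)$ has LLP.

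The extensions and restrictions are routine; the only point that needs care, and the real obstacle, is the reduction of LLP to the lifting of $*$-homomorphisms. One must invoke Kirchberg's characterization (equivalently the coincidence of LLP and osLLP for C*-algebras recorded in Section \ref{sec nuclearity}) to know that testing LLP against unital $*$-homomorphisms $\pi : C^*_u(\cl S) \to \cl B/I$, rather than against all ucp maps out of $C^*_u(\cl S)$, is sufficient; the global lifts produced above then give more than enough. As a consistency check one may note the tensor route: the lifting property is $(\mathrm{min},\mathrm{er})$-nuclearity, and since $B(H)$ is injective one has $\cl S \otimes_{er} B(H) = \cl S \otimes_{max} B(H)$, whence $\cl S \otimes_{min} B(H) = \cl S \otimes_{max} B(H)$; transferring this equality up to $C^*_u(\cl S) \otimes_{min} B(H) = C^*_u(\cl S) \otimes_{max} B(H)$ is awkward because the natural inclusions run the wrong way, which is precisely why the universal-property-plus-lifting argument is the cleaner path.
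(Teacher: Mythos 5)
Your proof is correct and follows essentially the same route as the paper's: both directions hinge on the universal property of $C^*_u(\cl S)$ together with the reduction (Pisier, Rem.~16.3(ii), cited in the paper) that LLP of a C*-algebra need only be tested against unital $*$-homomorphisms, and your uniqueness argument simply makes explicit the step the paper labels ``elementary to show,'' namely that the $*$-homomorphism extension of the ucp lift is a genuine (global) lift of $\pi$. No changes are needed.
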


 \begin{proof}
First suppose that $\cl S$ has the lifting property. Let $\pi : C^*_u(\cl S) 
\rightarrow \cl A / I$ be a unital $*$-homomorphism. (Note: As pointed out in
\cite[Rem. 16.3 (ii)]{pi} it is enough to consider the the unital
representations to verify the LLP of a C*-algebra.) Let $\pi_0$ be the
restriction of $\pi$ on $\cl S$. By using the local lifting property of $\cl S$
we have a ucp map $\varphi$ from $\cl S$ to $\cl A$ which lifts $\pi_0$. Let
$\rho: C_u^*(\cl S) \rightarrow \cl A$ be the unital $*$-homomorphism extending
$\varphi$. It is elementary to show that $\rho$ is a lift of $\pi$. Conversely
suppose that $C^*_u(\cl S)$ has LLP. Let $\varphi : \cl S \rightarrow \cl A /
I$ be a ucp map. Let $\pi : C^*_u(\cl S) \rightarrow \cl A / I$ be the
associated $*$-homomorphism. Now since $\cl S$ is a finite dimensional operator
subsystem of $C^*_u(\cl S)$, the restriction of $\pi$ on $\cl S$, namely
$\varphi$, lifts to a ucp map on $\cl A$. This completes the proof.
\end{proof}

For some other applications the following result will be useful.

\begin{proposition}
Suppose $\cl S$ and $\cl T$ are two finite dimensional operator systems with the
same dimensions. Then there is a surjective ucp map $\varphi : \cl S 
\rightarrow \cl T$.
\end{proposition}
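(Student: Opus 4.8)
The plan is to obtain the map as a small perturbation of the ``collapse to the unit'' map by a fixed unital linear isomorphism, and to extract complete positivity from the operator system structure on $CB(\cl S,\cl T)$ recorded in the Aside following Proposition \ref{mindual}. First I would set $n = \dim(\cl S) = \dim(\cl T)$ and fix a faithful state $\delta$ on $\cl S$ (which exists by the Choi--Effros theorem quoted above). Let $\Delta: \cl S \rightarrow \cl T$ be the map $\Delta(x) = \delta(x)\, e_{\cl T}$; this is ucp, being the composition of the state $\delta: \cl S \rightarrow \mathbb{C}$ with the unital embedding of the scalars into $\cl T$. Next, using the special basis of Lemma \ref{lem special basis}, I would write $\cl S = span\{e_{\cl S}=s_1,s_2,\dots,s_n\}$ and $\cl T = span\{e_{\cl T}=t_1,t_2,\dots,t_n\}$ with all $s_i,t_i$ self-adjoint, and define a unital bijection $\psi_0 : \cl S \rightarrow \cl T$ by $\psi_0(s_i) = t_i$. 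Since $\psi_0$ sends self-adjoints to self-adjoints it is $*$-preserving, hence a hermitian element of $CB(\cl S,\cl T)$. I would then study the segment $\varphi_\lambda = (1-\lambda)\Delta + \lambda \psi_0$, each member of which is unital and $*$-preserving.

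The key point is complete positivity, and this is where the identification $\cl S^d \otimes_{min} \cl T = CB(\cl S,\cl T)$ does the essential work. Recall from the Aside after Proposition \ref{mindual} that $CB(\cl S,\cl T)$ is an operator system whose positive cone consists exactly of the completely positive maps and whose Archimedean matrix order unit may be taken to be $\tilde{\delta} = \delta(\cdot)\,e_{\cl T} = \Delta$. Since $\psi_0$ is a hermitian element of this operator system and $\Delta$ is an order unit, there is a constant $c>0$ with $\psi_0 + c\Delta$ completely positive. Then for every $\lambda$ with $0 < \lambda \leq 1/(1+c)$ one can write $\varphi_\lambda = \lambda(\psi_0 + c\Delta) + \big(1-\lambda(1+c)\big)\Delta$, which exhibits $\varphi_\lambda$ as a sum of two completely positive maps (the coefficient of $\Delta$ being nonnegative), so $\varphi_\lambda$ is cp; being unital, it is ucp.

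Finally I would address bijectivity and conclude. Fixing bases of $\cl S$ and $\cl T$, the quantity $\det(\varphi_\lambda)$ is a polynomial in $\lambda$ that does not vanish at $\lambda=1$, since $\varphi_1 = \psi_0$ is bijective; hence it vanishes for only finitely many $\lambda$. Choosing $\lambda \in (0,\,1/(1+c)]$ outside this finite set produces a map $\varphi_\lambda : \cl S \rightarrow \cl T$ that is simultaneously ucp and a linear bijection, and therefore surjective, as required.

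The apparent obstacle is to force complete positivity at all matrix levels by a single choice of $\lambda$, since a naive norm estimate cannot absorb the perturbation where $(\delta(x_{ij}))$ degenerates. This difficulty evaporates once one recognizes $\Delta$ as the Archimedean matrix order unit of $CB(\cl S,\cl T)$: the order-unit property packages the entire infinite family of matricial positivity conditions into the single statement that $\psi_0 + c\Delta$ is completely positive for large $c$, which is exactly what the construction needs.
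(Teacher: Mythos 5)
Your proof is correct and follows essentially the same route as the paper: both rest on the special basis of Lemma \ref{lem special basis} and the identification $CB(\cl S,\cl T)\cong \cl S^d\otimes_{min}\cl T$ (Proposition \ref{mindual} and the Aside), using the Archimedean order unit property to absorb a hermitian basis-matching map into the map $\Delta=\delta(\cdot)e_{\cl T}$; indeed your $\varphi_\lambda=(1-\lambda)\Delta+\lambda\psi_0$ coincides with the paper's map $\Delta+\frac{1}{M}\psi_0'$ under $\lambda=1/M$, where $\psi_0'(s_1)=0$ and $\psi_0'(s_j)=t_j$ for $j\geq 2$. The only cosmetic difference is at the end: since $\delta(s_j)=0$ for $j\geq 2$, the image of $\varphi_\lambda$ contains $e_{\cl T}$ and $\lambda t_j$, which already span $\cl T$, so your determinant argument for bijectivity is correct but unnecessary.
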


\begin{proof}
Let $\cl S = span \{e \! = \! s_1,s_2,...,s_n \}$  and $\cl T = span \{e  \! = 
\! t_1,t_2,...,t_n \}$  written in the special basis form as in Lemma \ref{lem
special basis}. Let $\cl S^d = span\{\delta_1, \delta_2,...,\delta_n\}$  given
in the corresponding dual basis form. Recall that $\delta_1$ is an Archimedean
order unit for $\cl S^d$. Note that
$$
\delta_2 \otimes t_2 + \cdots \delta_n \otimes t_n
$$
is a self-adjoint element of $\cl S^d \otimes_{min} \cl T$ and consequently
there is a large $M$ such that
$$
\delta_1 \otimes e+ (\delta_2 \otimes t_2 + \cdots \delta_n \otimes t_n) / M
$$
is positive. Now by using Proposition \ref{mindual} it is elementary to see that
the corresponding completely positive map from $\cl S$ to $\cl T$ is unital
and surjective. 
\end{proof}

\begin{corollary}\label{sur}
Suppose $\cl S$ and $\cl T$ are operator systems with $dim(\cl T)$ finite
and $dim(\cl T) \leq dim(\cl S)$. Then there is a surjective ucp map from $\cl
S$ to $\cl T$.
\end{corollary}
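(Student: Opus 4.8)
The plan is to reduce the statement to the preceding Proposition, which already produces a surjective ucp map between two finite dimensional operator systems of \emph{equal} dimension. The idea is to factor the desired map as $\cl S \to \cl S/J \to \cl T$, where $\cl S/J$ is an operator system quotient of $\cl S$ whose dimension is exactly $n := \dim(\cl T)$. Producing such a quotient is the heart of the argument, and it is where the hypothesis $\dim(\cl T) \le \dim(\cl S)$ is used.

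First I would choose $n$ linearly independent states $f_1,\dots,f_n$ on $\cl S$. This is possible because the positive cone of $\cl S^d$ is generating (every bounded functional is a linear combination of states), so the linear span of the states has dimension $\dim(\cl S^d) \ge \dim(\cl S) \ge n$, and one may extract $n$ independent states from any spanning family. Setting $J = \bigcap_{i=1}^n \ker(f_i)$, the characterization of kernels recalled in Section \ref{sec quotients} (a common kernel of a collection of states is a kernel) guarantees that $J$ is a kernel, so the quotient operator system $\cl S/J$ is defined and the quotient map $q : \cl S \to \cl S/J$ is a surjective ucp map. Since the $f_i$ are linearly independent, the map $s \mapsto (f_1(s),\dots,f_n(s))$ has kernel exactly $J$ and rank $n$, so $\dim(\cl S/J) = n$.

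With this $n$-dimensional quotient in hand, the preceding Proposition applied to the pair $(\cl S/J,\cl T)$ yields a surjective ucp map $\psi : \cl S/J \to \cl T$, and the composition $\psi \circ q : \cl S \to \cl T$ is the required surjective ucp map. This route works uniformly whether $\cl S$ is finite or infinite dimensional, which is exactly what is needed since the hypothesis constrains only $\dim(\cl T)$. (When $\cl S$ happens to be finite dimensional one could instead run the construction in the proof of the preceding Proposition verbatim, using the first $n$ vectors of a dual basis of $\cl S^d$, but the quotient argument has the advantage of being dimension-agnostic in the source.)

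The main obstacle, and the reason I avoid the more obvious route, is the passage from a finite dimensional subsystem back to all of $\cl S$. One is tempted to pick a subsystem $\cl S_0 \subseteq \cl S$ with $\dim(\cl S_0) = n$, obtain a surjective ucp map $\psi_0 : \cl S_0 \to \cl T$ from the preceding Proposition, and then extend $\psi_0$ to $\cl S$; but Arveson's extension theorem only extends into an injective operator system containing $\cl T$ (such as $B(H)$ or $I(\cl T)$), and the extension need not take values in $\cl T$ itself, since $\cl T$ is not injective in general. The quotient construction sidesteps this entirely by fixing the codomain at $\cl T$ and enlarging the domain through $q$. The only points that then require care are the generating property of the state cone in $\cl S^d$ and the exactness of the count $\dim(\cl S/J) = n$; both are routine but essential to the argument.
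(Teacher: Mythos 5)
Your proof is correct, but it takes a genuinely different route from the paper's. The paper picks an $n$-dimensional operator subsystem $\cl S_0 \subseteq \cl S$ (where $n = \dim(\cl T)$), applies the preceding Proposition to get a surjective ucp map $\cl S_0 \to \mathbb{C}^n$, then uses the \emph{injectivity} of $\mathbb{C}^n$ to extend this map to a ucp map $\cl S \to \mathbb{C}^n$ --- which is still surjective because it already surjects on $\cl S_0$ --- and finally applies the Proposition a second time to surject $\mathbb{C}^n$ onto $\cl T$. Note that this is exactly the ``obvious route'' you rejected: your objection that an Arveson-type extension need not land in $\cl T$ is valid, but the paper sidesteps it by never extending into $\cl T$ at all; the extension goes into the injective intermediary $\mathbb{C}^n$, and only afterwards does one map onto $\cl T$. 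Your alternative replaces extension by quotienting: you cut $\cl S$ down to an $n$-dimensional quotient $\cl S/J$ using the characterization of kernels as intersections of kernels of states (Proposition 2.1(4)), then apply the Proposition once to $(\cl S/J, \cl T)$. Each approach has its merits: the paper's argument is shorter once injectivity of $\mathbb{C}^n$ is granted, while yours stays entirely within the quotient machinery of Section 2, invokes the Proposition only once, and needs no injectivity. The details you flag as requiring care do hold: the span of the states is all of $\cl S^d$ (extend a functional to a containing $B(H)$ by Hahn--Banach, use the Jordan decomposition there, and restrict; positive functionals on an operator system are multiples of states), so $n$ linearly independent states exist, and linear independence of $f_1,\dots,f_n$ forces the joint evaluation map $s \mapsto (f_1(s),\dots,f_n(s))$ to have rank exactly $n$, giving $\dim(\cl S/J) = n$.
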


\begin{proof}
Suppose $dim(\cl T) = n $ and let $\cl S_0$ be an $n$-dimensional operator
subsystem of $\cl S$. By using the above proposition there is a surjective ucp
map from $\cl S_0$ onto $\mathbb{C}^n$. Since $\mathbb{C}^n$ is injective this
map extends to a ucp map from $\cl S$ on $\mathbb{C}^n$. Now again by using
the above proposition we have surjective ucp map from $\mathbb{C}^n$ onto $\cl T$.
Composition of these two maps is surjective and ucp.
\end{proof}

In \cite{kw} Kirchberg and Wasserman exemplify  the behavior of universal
C*-algebras of some low dimensional operator systems. More
precisely they show that:
\begin{enumerate}
 \item $C^*_u(\mathbb{C}^2)$ is unitally $*$-isomorphic to $C[0,1]$, in
particular, it is nuclear.
 \item $C^*_u(\mathbb{C}^3)$ is not exact.
\end{enumerate}
By using Corollary \ref{sur} we obtain the following:
\begin{proposition} \label{prop two dim}
$\mbox{ }$
\begin{enumerate}
 \item If $\cl S$ is a two dimensional operator system then $C_u^*(\cl S)$ is
nuclear. In particular $\cl S$ is (min,c)-nuclear (equivalently C*-nuclear).
\item If $\cl S$ is an operator system with $dim(\cl S) \geq 3$ then
$C_u^*(\cl S)$ is not exact.
\end{enumerate}
\end{proposition}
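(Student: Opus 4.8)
The plan is to handle the two parts separately, using the Kirchberg--Wasserman computations of $C_u^*(\mathbb{C}^2)$ and $C_u^*(\mathbb{C}^3)$ recalled just above as the only external inputs, together with Corollary \ref{sur}.

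For (1) I would first show that every two dimensional operator system is unitally completely order isomorphic to $\mathbb{C}^2$. Writing $\cl S = span\{e,s\}$ with $s=s^*$ non-scalar (a selfadjoint generator exists by the construction in Lemma \ref{lem special basis}) and performing an affine change $s\mapsto (s-ae)/(b-a)$ with $a=\min\sigma(s)$, $b=\max\sigma(s)$, I may assume $\min\sigma(s)=0$ and $\max\sigma(s)=1$. I would then consider the unital bijection $\varphi:\cl S\to\mathbb{C}^2$, $\alpha e+\beta s\mapsto(\alpha,\alpha+\beta)$. Identifying $\cl S$ with $span\{1,\iota\}\subseteq C(\sigma(s))$ (where $\iota(\lambda)=\lambda$), an element $A\otimes e+B\otimes s$ with $A,B\in M_n$ hermitian is positive in $M_n(\cl S)$ exactly when $A+\lambda B\geq 0$ for every $\lambda\in\sigma(s)$; since $A+\lambda B$ is affine in $\lambda$ and $\sigma(s)\subseteq[0,1]$ contains both endpoints, this is equivalent to positivity at $\lambda=0$ and $\lambda=1$, i.e. to $A\geq 0$ and $A+B\geq 0$, which is precisely positivity of $\varphi^{(n)}(A\otimes e+B\otimes s)$ in $M_n(\mathbb{C}^2)=M_n\oplus M_n$. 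Hence $\varphi$ is a unital complete order isomorphism, so by functoriality of $C_u^*$ we get $C_u^*(\cl S)\cong C_u^*(\mathbb{C}^2)=C[0,1]$, which is nuclear; the asserted C*-nuclearity of $\cl S$ is then immediate since $\cl S\cong\mathbb{C}^2$ is itself a nuclear C*-algebra (alternatively via Proposition \ref{prop rep of c2} and injectivity of min).

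For (2), let $\dim\cl S\geq 3$. By Corollary \ref{sur}, applied with $\cl T=\mathbb{C}^3$ (whose dimension $3$ is $\leq\dim\cl S$), there is a surjective ucp map $\varphi:\cl S\to\mathbb{C}^3$. Composing with the inclusion $\mathbb{C}^3\hookrightarrow C_u^*(\mathbb{C}^3)$ and invoking the universal ``maximality'' property of $C_u^*(\cl S)$, this ucp map extends to a unital $*$-homomorphism $\Phi:C_u^*(\cl S)\to C_u^*(\mathbb{C}^3)$. Because $\Phi(\cl S)=\varphi(\cl S)=\mathbb{C}^3$ generates $C_u^*(\mathbb{C}^3)$ and the image of a $*$-homomorphism is a C*-subalgebra, $\Phi$ is surjective, so $C_u^*(\mathbb{C}^3)$ is a C*-quotient of $C_u^*(\cl S)$. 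I would then invoke the classical fact that the class of exact C*-algebras is closed under quotients (Kirchberg): if $C_u^*(\cl S)$ were exact, its quotient $C_u^*(\mathbb{C}^3)$ would be exact, contradicting the Kirchberg--Wasserman result. Hence $C_u^*(\cl S)$ is not exact, and this single surjection handles all $\dim\cl S\geq 3$ at once without passing to a three dimensional subsystem.

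The hard part is different in each case. In (1) the only real content is the complete order isomorphism check, which becomes routine once matricial positivity is reduced to the two spectral endpoints by convexity. In (2) the substantive ingredient is the quotient-stability of exactness; this is exactly what converts the easily produced surjection onto $C_u^*(\mathbb{C}^3)$ into non-exactness of $C_u^*(\cl S)$. I would emphasize that one cannot instead route (2) through Lemma \ref{lem iden. decom.}: factoring the identity of $C_u^*(\mathbb{C}^3)$ through $C_u^*(\cl S)$ would require a ucp section of $\Phi$, equivalently a complete order embedding $\mathbb{C}^3\hookrightarrow\cl S$, which need not exist (for instance when $\cl S=span\{1,z,z^*\}\subset C(\mathbb{T})$, which is three dimensional but not order isomorphic to $\mathbb{C}^3$). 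It is precisely the closure of exactness under quotients that sidesteps the absence of such an embedding.
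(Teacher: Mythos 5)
Your proof is correct, and the two parts relate to the paper differently. Part (2) is essentially identical to the paper's argument: a surjective ucp map $\varphi:\cl S\to\mathbb{C}^3$ from Corollary \ref{sur}, promoted via the universal property to a surjective $*$-homomorphism onto $C_u^*(\mathbb{C}^3)$, followed by Kirchberg's quotient-stability of exactness and the Kirchberg--Wasserman non-exactness of $C_u^*(\mathbb{C}^3)$. Part (1), however, takes a genuinely different route. The paper never classifies two-dimensional operator systems; it applies Corollary \ref{sur} in the opposite direction to get a surjective ucp map $\mathbb{C}^2\to\cl S$, hence a surjective $*$-homomorphism $C_u^*(\mathbb{C}^2)\to C_u^*(\cl S)$, and then invokes the Choi--Effros theorem that a quotient of a nuclear C*-algebra is nuclear, finally deducing (min,c)-nuclearity of $\cl S$ from the inclusions $\cl S\otimes_{min}\cl T\subset C_u^*(\cl S)\otimes_{min}\cl T$ and $\cl S\otimes_c\cl T\subset C_u^*(\cl S)\otimes_{max}\cl T$. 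You instead prove the stronger structural fact that every two-dimensional operator system is unitally completely order isomorphic to $\mathbb{C}^2$; your reduction of matricial positivity to the two spectral endpoints via convexity is sound (the set of $\lambda$ with $A+\lambda B\geq 0$ is convex and contains $\sigma(s)\ni 0,1$), so $C_u^*(\cl S)\cong C_u^*(\mathbb{C}^2)=C[0,1]$ on the nose. What each approach buys: yours is more self-contained (no quotient-stability of nuclearity needed) and yields the classification of two-dimensional operator systems as a by-product; the paper's is uniform across both parts, running the same pattern (Corollary \ref{sur} plus stability of a nuclearity property under C*-quotients) twice, and avoids the hands-on positivity computation. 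One minor caveat: in your closing aside, a ucp factorization of the identity of $C_u^*(\mathbb{C}^3)$ through $C_u^*(\cl S)$ is not literally equivalent to a complete order embedding $\mathbb{C}^3\hookrightarrow\cl S$, since a ucp section of $\Phi$ need not carry $\mathbb{C}^3$ into $\cl S$; but that remark plays no role in your actual argument.
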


\begin{proof}
Both parts of the proof are based on Corollary \ref{sur}. Suppose $\cl S$ is a two
dimensional operator system. Let $\varphi: \mathbb{C}^2 \rightarrow \cl S$ be
a surjective ucp map and let $\pi : C^*_u(\mathbb{C}^2) \rightarrow
C^*_u(\cl S)$ be the corresponding unital $*$-homomorphism. Note that $\pi$ is
surjective so $C^*_u(\mathbb{C}^2)/ker(\pi)$ and $C^*_u(\cl S) $ are
$*$-isomorphic C*-algebras. This means that $C^*_u(\cl S) $ is quotient of a
nuclear C*-algebra and consequently it is nuclear (see \cite{CE} e.g.). To see
that $\cl S$ is (min,c)-nuclear first fix an operator system $\cl T$. We have
the inclusions
$$
\cl S \otimes_{min} \cl T \subset
C^*_u(\cl S) \otimes_{min} \cl T    \mbox{ and } \cl S \otimes_{c} \cl T
\subset
C^*_u(\cl S) \otimes_{max} \cl T.
$$
Since the tensor products on the right coincide it follows that $\cl S$ is
(min,c)-nuclear.

Now let $\cl S$ be an operator system with $\dim(\cl S) \geq 3$. Assume for a
contradiction that $C^*_u(\cl S)$ is exact. Let  $\varphi: \cl S
\rightarrow \mathbb{C}^3$ be a surjective ucp map and let $\pi : C^*_u(\cl S)
\rightarrow  C^*_u(\mathbb{C}^3) $ be the corresponding unital $*$-homomorphism
which is surjective. This means that $C^*_u(\mathbb{C}^3)$ is a quotient of an
exact C*-algebra. So another result of Kirchberg \cite{Ki1}, which states
that exactness passes to quotients by ideals, requires $C^*_u(\mathbb{C}^3)$ to
be exact which is a contradiction. 
\end{proof}

For another application of Corollary \ref{sur} we need some preliminary
results. If $X$ is an operator space then there is an, essentially unique,
operator system $\cl T_X$ together with a  completely isometric inclusion $ i :
X \hookrightarrow \cl T_X$ such that it satisfies the following universal
property: For every completely contractive map $\phi:X\rightarrow \cl S$, where
$\cl S$ is an operator system, there exists a unique ucp map
$\varphi: \cl T_X \rightarrow \cl S$ such that $\varphi(i(x)) = \phi(x)$ for
every $x$ in $X$. 
$$
\xymatrix{
X\ar@{_{(}->}[d]_i    \ar[rr]^{cc \; \phi}  &  &    \cl S\\
\cl T_X \ar@{.>}[rru]_{ucp \; \varphi}
}
$$
To see the existence of $\cl T_X$ one can first consider the universal unital
C*-algebra $C_u^*\langle X \rangle$ of the operator space $X$. Recall that it
has the following universal property: Every completely contractive map defined
from $X$ into a unital C*-algebra $\cl A$ extends uniquely to a unital
$*$-homomorphism. (See \cite[Thm. 8.14]{pi} e.g.) Now let the span of $X$, $X^*$ and the
unit $e$ be
$\cl T_X$. (Also note that the image can be taken to an operator
system.) If $X_0$ is an operator subspace of $X$ then we have a unital
complete order embedding $\cl T_{X_0} \subset \cl T_X$. We leave the
proof of this as an exercise. Also, the following identification is immediate:
$$
C_u^*\langle X \rangle = C_u^*(\cl T_X).
$$

Recall that an operator space $X$ is said to have the $\lambda$-\textit{operator
space local lifting property} ($\lambda$-OLLP) if the following holds for every
unital C*-algebra $\cl A$ and ideal $I$ in $\cl A$. If $\phi: X \rightarrow \cl
A / I$ is a completely contractive (cc) map and $X_0$ is a finite dimensional
operator subspace of $X$ then $\phi|_{X_0}$ has a lift $\tilde{\phi_0}$ on $\cl
A$ with $\|\tilde{\phi_0}\|_{cb} \leq \lambda $. We claim that:

\begin{proposition}
Let $X$ be an operator space. Then $X$ has $1$-OLLP if and only if $\cl T_X$
has osLLP.
\end{proposition}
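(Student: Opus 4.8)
The plan is to exploit the universal property of $\cl T_X$ in both directions, translating between completely contractive maps on $X$ and ucp maps on $\cl T_X$, and to reduce the subsystem appearing in the definition of osLLP to one of the special form $\cl T_{X_0}$ for a finite dimensional operator subspace $X_0 \subset X$. The structural observation I would record first is that every finite dimensional operator subsystem $\cl T_0$ of $\cl T_X$ is contained in $\cl T_{X_0}$ for some finite dimensional $X_0 \subset X$: since $\cl T_X = span\{X, X^*, e\}$, a finite spanning set of $\cl T_0$ involves only finitely many elements of $X$, whose span I take to be $X_0$; then the unital complete order embedding $\cl T_{X_0} \hookrightarrow \cl T_X$ noted in the excerpt gives $\cl T_0 \subset \cl T_{X_0} \subset \cl T_X$.

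For the forward direction, assume $X$ has $1$-OLLP and let $\varphi: \cl T_X \rightarrow \cl A/I$ be ucp with $\cl T_0$ a finite dimensional operator subsystem. Passing to $\cl T_{X_0}$ as above, I would restrict to $X_0$: the composite $\varphi \circ i|_{X_0}: X_0 \rightarrow \cl A/I$ is completely contractive because $\varphi$ is ucp (hence cc) and $i$ is a complete isometry. By $1$-OLLP this has a completely contractive lift $\psi_0: X_0 \rightarrow \cl A$, and the universal property of $\cl T_{X_0}$ promotes $\psi_0$ to a ucp map $\Psi_0: \cl T_{X_0} \rightarrow \cl A$. To see that $\Psi_0$ genuinely lifts $\varphi$, I would observe that $q \circ \Psi_0$ and $\varphi|_{\cl T_{X_0}}$ are both ucp maps on $\cl T_{X_0}$ whose precompositions with $i|_{X_0}$ agree (both equal $\varphi \circ i|_{X_0}$); by the uniqueness clause of the universal property they coincide, so $\Psi_0|_{\cl T_0}$ is the desired ucp lift.

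For the converse, assume $\cl T_X$ has osLLP and let $\phi: X \rightarrow \cl A/I$ be completely contractive with $X_0 \subset X$ finite dimensional. The universal property of $\cl T_X$ extends $\phi$ to a ucp map $\varphi: \cl T_X \rightarrow \cl A/I$. Applying osLLP to the finite dimensional subsystem $\cl T_{X_0}$ produces a lift $\Psi_0: \cl T_{X_0} \rightarrow \cl A$; here I would invoke the remark that the cp local liftings in the definition of osLLP may be taken ucp, so that $\Psi_0$ is ucp. Restricting, $\Psi_0|_{X_0}: X_0 \rightarrow \cl A$ is completely contractive (a ucp map is cc) and lifts $\phi|_{X_0}$, since $q \circ \Psi_0 = \varphi|_{\cl T_{X_0}}$ agrees with $\phi$ on $X_0$. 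Hence $X$ has $1$-OLLP.

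The only delicate point is the insistence on unitality. In the converse a merely cp lift $\Psi_0$ would satisfy $\|\Psi_0\|_{cb} = \|\Psi_0(e)\|$, which could exceed $1$, so its restriction to $X_0$ might fail to be completely contractive; the proof therefore hinges on the strengthening of osLLP to ucp liftings, as without it one would only recover $\lambda$-OLLP for some $\lambda > 1$ rather than the sharp constant $1$. The reduction to $\cl T_{X_0}$ and the uniqueness argument in the universal property are otherwise routine, and the argument is visibly symmetric once these two translations are in place.
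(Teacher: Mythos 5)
Your proof is correct and follows essentially the same route as the paper's: both directions reduce the given finite dimensional subsystem (resp.\ subspace) to the pair $X_0 \subset \cl T_{X_0}$ and use the universal property of $\cl T_{X_0}$ to convert completely contractive lifts into ucp lifts and back, together with the fact that the local lifts in osLLP may be taken unital. The only difference is that you verify explicitly, via the uniqueness clause of the universal property, that the ucp map $\Psi_0$ obtained from the cc lift really does lift $\varphi|_{\cl T_{X_0}}$ -- a step the paper's proof asserts without comment.
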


\begin{proof}
Let $\cl A$ be a unital C*-algebra and $I$ be an ideal in $\cl A$. First
suppose that  $X$ has $1$-OLLP. Let $\varphi: \cl T_X \rightarrow \cl A / I$ be
a ucp map and let $\cl T_0$ be a finite dimensional operator subsystem of $\cl
T_X$. Clearly we can find a finite dimensional subspace $X_0$ of $X$ such that
the operator system generated by $X_0$, which is actually $\cl T_{X_0}$,
contains $\cl T_0$. Note that $\varphi|_{X}$ is cc and so its restriction on
$X_0$ has a cc lift on $\cl A$. Now by using the universal property of $\cl
T_{X_0}$ we obtain a ucp map from $\cl T_{X_0}$ on $\cl A$. Now the restriction
of this map on $\cl T_0$ is a ucp lift on $\cl A$.

Conversely suppose  $\cl T_X$ has osLLP and let $\phi: X \rightarrow \cl A / I$
be a cc map. This map has a ucp extension $\varphi$ on $\cl T_X$. Let $X_0$ be
a finite dimensional operator subspace of $X$. Clearly $\cl T_{X_0}$ is a
finite dimensional operator subsystem of $\cl T_{X}$ and consequently
$\varphi$, when restricted to $\cl T_{X_0}$ has a ucp lift on $\cl A$. Finally
restriction of this lift on $X_0$ is cc. This finishes the proof.
\end{proof}

When $X = \mathbb{C}$, $\cl T_X$ is a three dimensional operator system. The
following is from \cite{Ki2}.
\begin{proposition}
The following are equivalent:
\begin{enumerate}
 \item The Kirchberg conjecture has an affirmative answer.
 \item $C_u^*\langle \mathbb{C} \rangle$ has WEP.
\end{enumerate}
\end{proposition}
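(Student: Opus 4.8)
The plan is to identify $C_u^*\langle \mathbb{C}\rangle$ with the universal C*-algebra of a single contraction and then play its lifting property against the Kirchberg conjecture in the two directions separately. First I would record the relevant identifications: since $X=\mathbb{C}$, the operator system $\cl T_{\mathbb C}=\mathrm{span}\{e,x,x^*\}$ is precisely the universal operator system generated by one contraction, so $\cl T_{\mathbb C}=\cl S_1$, and consequently $C_u^*\langle\mathbb{C}\rangle=C_u^*(\cl T_{\mathbb C})=C_u^*(\cl S_1)$ is the universal unital C*-algebra generated by a single contraction.

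For (1)$\Rightarrow$(2) I would argue as follows. The operator system $\cl S_1$ is the $n=1$ instance of $\cl S_n$ and hence has the lifting property. By Theorem \ref{thm lift univ}, a finite dimensional operator system has the lifting property if and only if its universal C*-algebra has LLP, so $C_u^*(\cl S_1)=C_u^*\langle\mathbb{C}\rangle$ has LLP. If the Kirchberg conjecture holds, then LLP implies WEP for C*-algebras, and therefore $C_u^*\langle\mathbb{C}\rangle$ has WEP. This direction is entirely within the machinery already available in the paper.

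For (2)$\Rightarrow$(1) the key point I would exploit is that $C_u^*\langle\mathbb{C}\rangle$ is universal among contraction-generated C*-algebras. Every separable unital C*-algebra $\cl A$ is singly generated, so $\cl A=C^*(a)$ for a single element which, after scaling, I may take to be a contraction; the universal property of $C_u^*\langle\mathbb{C}\rangle$ then produces a surjective unital $*$-homomorphism $C_u^*\langle\mathbb{C}\rangle\twoheadrightarrow\cl A$. Assuming (2), the algebra $C_u^*\langle\mathbb{C}\rangle$ has WEP, so every separable unital C*-algebra is a quotient of a WEP algebra, i.e. is QWEP. Using that QWEP is closed under quotients and inductive limits, and that every C*-algebra is a directed union of its separable unital subalgebras, I would upgrade this to the statement that every unital C*-algebra is QWEP, which is exactly condition (3) of the quoted theorem of Kirchberg and hence the Kirchberg conjecture.

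The hard part will be (2)$\Rightarrow$(1). It does not come from the tensor-product calculus of the paper: from a single universal contraction one cannot manufacture two free unitaries, so one cannot hope to reach the form $\cl S_2\otimes_{min}\cl S_2=\cl S_2\otimes_{c}\cl S_2$ of Theorem \ref{thm new KC} directly. Instead the argument must route through QWEP, and it leans on two external inputs, namely that every separable C*-algebra is singly generated (so that it is realized as a quotient of the one-contraction universal algebra) and the standard permanence properties of the QWEP class; these, rather than any operator-system computation, are the crux of this implication.
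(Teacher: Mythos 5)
Your direction (1)$\Rightarrow$(2) is sound: $\cl T_{\mathbb{C}}$ and $\cl S_1$ both satisfy the universal property of the operator system generated by one contraction, so $C_u^*\langle\mathbb{C}\rangle=C_u^*(\cl S_1)$; the lifting property of $\cl S_1$ together with Theorem \ref{thm lift univ} gives LLP for this algebra, and KC upgrades LLP to WEP. (Note, for calibration, that the paper does not prove this proposition at all -- it quotes it from \cite{Ki2} -- so the comparison is really with Kirchberg's argument.) The direction (2)$\Rightarrow$(1), however, contains a genuine gap: it rests on the claim that every separable unital C*-algebra is singly generated, and that claim is false. For a commutative algebra $C(X)$, the unital C*-subalgebra generated by a single element $f$ equals $C(X)$ (by Stone--Weierstrass) if and only if $f$ separates the points of $X$, i.e.\ if and only if $X$ embeds homeomorphically into $\mathbb{C}$; hence $C([0,1]^3)$ is a separable unital C*-algebra that is not singly generated. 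Since any unital $*$-homomorphic image of $C_u^*\langle\mathbb{C}\rangle$ is generated by the unit together with one contraction (the image of the universal generator), the quotients of $C_u^*\langle\mathbb{C}\rangle$ are precisely the singly generated separable unital C*-algebras. So your argument shows only that \emph{those} algebras are QWEP; it never reaches $C([0,1]^3)$, let alone Kirchberg's condition that every unital C*-algebra is QWEP, and the inductive-limit step cannot recover this because QWEP is not known to pass to arbitrary C*-subalgebras.

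The missing idea -- and the real content of Kirchberg's implication -- is that a single contraction \emph{can} encode two free unitaries, via elements of finite spectrum; your closing remark that this is impossible is exactly the obstruction your own proof runs into. Concretely, $\mathbb{F}_2$ embeds in $\mathbb{Z}_2 * \mathbb{Z}_3$ (this embedding is already used in the Remark of Section \ref{Sec MNR}), and $C^*(\mathbb{Z}_2 * \mathbb{Z}_3)$ \emph{is} singly generated: if $a,b$ are the canonical generating unitaries, then $a$ is a symmetry and $b$ has spectrum in the cube roots of unity, so $C^*(1,a)=C^*(1,p)$ for the projection $p=(1+a)/2$ and $C^*(1,b)=C^*(1,h)$ for a suitable self-adjoint $h$, whence the single element $p+ih$ generates. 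Granting (2), $C^*(\mathbb{Z}_2*\mathbb{Z}_3)$ is a quotient of $C_u^*\langle\mathbb{C}\rangle$ and hence QWEP; the inclusion $C^*(\mathbb{F}_2)\subset C^*(\mathbb{Z}_2*\mathbb{Z}_3)$ admits a ucp inverse (\cite[Prop. 8.8]{pi}), and QWEP passes to such relatively weakly injective subalgebras, so $C^*(\mathbb{F}_2)$ is QWEP; finally $C^*(\mathbb{F}_2)$ has LLP, and Kirchberg's implication QWEP $+$ LLP $\Rightarrow$ WEP yields WEP for $C^*(\mathbb{F}_2)$, i.e.\ KC. This still uses the external QWEP permanence properties you invoked (quotients, relatively weakly injective subalgebras, increasing unions), but the pivot through a singly generated algebra containing $C^*(\mathbb{F}_2)$ with a conditional expectation is indispensable, and it is the step your proposal is missing.
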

Depending heavily on this characterization we can obtain further equivalences.
(The equivalence of (1) and (4) was pointed out by Vern Paulsen.)
\begin{proposition} The following are equivalent:
\begin{enumerate}
 \item The Kirchberg conjecture has an affirmative answer.
 \item There exists a three dimensional operator system $\cl S$ such that
$C_u^*(\cl S)$ has WEP.
 \item There exists an operator system $\cl S$ with $dim(\cl S) \geq 3$ such
that $C_u^*(\cl S)$ has WEP.
 \item $C_u^*(M_2)$ has WEP.
\end{enumerate}
\end{proposition}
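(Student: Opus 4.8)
The plan is to establish the cycle $(1)\Rightarrow(2)\Rightarrow(3)\Rightarrow(1)$ together with $(1)\Rightarrow(4)\Rightarrow(3)$, which renders all four statements equivalent. The implications $(2)\Rightarrow(3)$ and $(4)\Rightarrow(3)$ are immediate: a three dimensional witness, and $M_2$ (which is four dimensional), are both instances of an operator system of dimension at least three. For $(1)\Rightarrow(2)$ I would simply invoke the preceding proposition (Kirchberg's characterization $\mathrm{KC}\Leftrightarrow C_u^*\langle\mathbb{C}\rangle$ has WEP): since $C_u^*\langle\mathbb{C}\rangle = C_u^*(\cl T_{\mathbb{C}})$ and $\cl T_{\mathbb{C}}$ is three dimensional, $\cl S=\cl T_{\mathbb{C}}$ is the required witness. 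For $(1)\Rightarrow(4)$, note that $M_2$ is nuclear, hence $(\mathrm{min},\mathrm{er})$-nuclear, i.e. it has the lifting property; by Theorem \ref{thm lift univ}, $C_u^*(M_2)$ has LLP, and assuming (1) — that LLP implies WEP for C*-algebras — we conclude that $C_u^*(M_2)$ has WEP.

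The heart of the matter is $(3)\Rightarrow(1)$. Suppose $\cl S$ has dimension at least three and $C_u^*(\cl S)$ has WEP. Since $\dim \cl T_{\mathbb{C}} = 3 \le \dim \cl S$, Corollary \ref{sur} produces a surjective ucp map $\varphi:\cl S \to \cl T_{\mathbb{C}}$, whose induced unital $*$-homomorphism $q:=\varphi_* : C_u^*(\cl S) \to C_u^*(\cl T_{\mathbb{C}})$ is again surjective, because its image is a C*-subalgebra containing $\varphi(\cl S)=\cl T_{\mathbb{C}}$, which generates. Writing $A=C_u^*(\cl S)$, $B=C_u^*(\cl T_{\mathbb{C}})=C_u^*\langle\mathbb{C}\rangle$ and $I=\ker q$, we have $B=A/I$ with $I$ an ideal.

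The decisive step is to split $q$ by a $*$-homomorphism. The inclusion $\cl T_{\mathbb{C}}\hookrightarrow B=A/I$ is a ucp map out of the three dimensional operator system $\cl T_{\mathbb{C}}$, which, being the universal operator system of a single contraction (that is, $\cl S_1$), has the lifting property. As $\cl T_{\mathbb{C}}$ is finite dimensional, this lift is global: there is a ucp map $\psi:\cl T_{\mathbb{C}}\to A$ with $q\circ\psi$ equal to the inclusion. By the universal property of $C_u^*(\cl T_{\mathbb{C}})$, $\psi$ extends to a unital $*$-homomorphism $\Psi:B\to A$; and since $q\circ\Psi$ is a unital $*$-homomorphism restricting to the identity on the generating system $\cl T_{\mathbb{C}}$, we obtain $q\circ\Psi=\mathrm{id}_B$. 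Thus the identity on $B$ factors through $A$ via the ucp maps $B\xrightarrow{\;\Psi\;}A\xrightarrow{\;q\;}B$. Because WEP coincides with $(\mathrm{el},\mathrm{max})$-nuclearity and both $\mathrm{el}$ and $\mathrm{max}$ are functorial, Lemma \ref{lem iden. decom.} transfers WEP from $A$ to $B$; hence $C_u^*\langle\mathbb{C}\rangle$ has WEP, and the preceding proposition yields (1).

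The main obstacle — and the single conceptual point — is the splitting in the third paragraph. It would be tempting, but wrong, to try to exhibit $\cl T_{\mathbb{C}}$ as a completely positive retract of $\cl S$ at the operator system level; no such retraction exists in general (for instance when $\cl S$ is commutative, this would force C*-nuclearity of $\cl T_{\mathbb{C}}$, an open problem). The resolution is to split not at the operator system level but at the C*-level: the lifting property of the finite dimensional system $\cl T_{\mathbb{C}}$ provides a genuine $*$-homomorphic section of the quotient map $q$, and once this section is in hand the transfer of WEP is routine via Lemma \ref{lem iden. decom.}.
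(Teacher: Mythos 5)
Your proof is correct, and for the crucial implication it takes a genuinely different route from the paper's. The paper proves the hard direction as $(3)\Rightarrow(2)\Rightarrow(1)$, with both steps resting on Kirchberg's theorem that QWEP and LLP together imply WEP: from the surjective $*$-homomorphism $C_u^*(\cl S)\rightarrow C_u^*(\cl T)$ it extracts only that $C_u^*(\cl T)$ is QWEP, and then LLP of $C_u^*(\cl T)$ (Theorem \ref{thm lift univ}) upgrades QWEP to WEP. You instead prove $(3)\Rightarrow(1)$ in one step by showing that the surjection $q\colon C_u^*(\cl S)\rightarrow C_u^*(\cl T_{\mathbb{C}})$ splits $*$-homomorphically: the lifting property of the finite dimensional system $\cl T_{\mathbb{C}}\cong\cl S_1$ lifts the inclusion $\cl T_{\mathbb{C}}\hookrightarrow C_u^*(\cl S)/\ker q$ to a ucp map $\psi$, the universal property of $C_u^*(\cl T_{\mathbb{C}})$ extends $\psi$ to a unital $*$-homomorphism $\Psi$, and $q\circ\Psi=\mathrm{id}$ because two unital $*$-homomorphisms agreeing on a generating operator system coincide. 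WEP, being (el,max)-nuclearity with both tensor products functorial, then passes to the retract by Lemma \ref{lem iden. decom.}. This splitting device is exactly the one used in the paper's own proof of Theorem \ref{thm lift univ}, so every ingredient is at hand; what your route buys is the complete elimination of the external theorem QWEP $+$ LLP $\Rightarrow$ WEP, leaving an argument that is self-contained apart from Kirchberg's characterization of KC by WEP of $C_u^*\langle\mathbb{C}\rangle$, which both proofs must quote. What the paper's route buys is flexibility: its step $(3)\Rightarrow(2)$ runs with an arbitrary three dimensional system with the lifting property (it uses $\mathbb{C}^3$), needs no splitting of the quotient, and displays the QWEP mechanism in which this circle of problems is usually phrased.

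One slip in your closing commentary, harmless for the proof itself: you call C*-nuclearity of $\cl T_{\mathbb{C}}$ an open problem, but having identified $\cl T_{\mathbb{C}}$ with $\cl S_1$ you should note that the paper proves $\cl S_1$ \emph{is} C*-nuclear (the Sz.-Nagy dilation example in Section 8); the open question concerns $span\{1,f,f^*\}\subset C(X)$ for a general compact subset $X$ of the disk, not $X=\mathbb{T}$. Your impossibility claim about operator-system-level retractions survives, but for a different reason: a ucp retraction of a commutative C*-algebra such as $\mathbb{C}^3$ onto a copy of $\cl T_{\mathbb{C}}$ would make $\cl T_{\mathbb{C}}$ (min,max)-nuclear, hence unitally completely order isomorphic to a C*-algebra by the proposition closing Section 4, which is false since $C_e^*(\cl S_1)=C(\mathbb{T})$ is infinite dimensional.
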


\begin{proof}
Clearly (4) implies (3). To see that (3) implies (2), let $\cl S$ be an
operator system with $dim(\cl S) \geq 3$ such that $C_u^*(\cl S)$ has WEP.
Let $\cl T$ be a three dimensional operator system with the lifting property. (For
example $\mathbb{C}^3$). By using Corollary
\ref{sur}, we know that there is surjective ucp map $\varphi$ from $\cl S$ to
$\cl T$. Note that this ucp map extends to surjective *-homomorphism $\pi :
C_u^*(\cl S) \rightarrow C_u^*(\cl T) $. Since $C_u^*(\cl S) / ker(\pi)$ and
$C_u^*(\cl T)$ are *-isomorphic C*-algebras we obtain that $C_u^*(\cl T)$ is
QWEP. Also by Theorem \ref{thm lift univ}, $C_u^*(\cl T)$ has LLP. A well known
result of Kirchberg states that QWEP and LLP together imply WEP (\cite{Ki2}). Thus, (3) implies (2).
Now we will show (2) implies (1). By
using the above result of Kirchberg it is enough to prove that $C_u^*\langle
\mathbb{C} \rangle$ has WEP. Recall that $C_u^*\langle
\mathbb{C} \rangle = C_u^*(\cl T_\mathbb{C})$. Since $\mathbb{C}$ has 1-OLLP it
follows that $\cl T_\mathbb{C}$ has the lifting property. By Theorem \ref{thm
lift univ} $C_u^*(\cl T_\mathbb{C})$ has LLP. By using an argument that we used
in the implication (3) $\Rightarrow$ (2) it is easy to see that existence of a
three dimensional operator system with WEP implies $C_u^*(\cl T_\mathbb{C})$ is
QWEP. Consequently $C_u^*(\cl T_\mathbb{C})   =  C_u^*\langle
\mathbb{C} \rangle$ has WEP. Finally to see that (1) implies (4), note that
$C_u^*(M_2)$  has LLP (since $M_2$ has the lifting property). So assuming KC it
follows that $C_u^*(M_2)$ has WEP.
\end{proof}

$ $

\section{Further Exactness and Lifting Properties}

 We first want to review some instances where the operator space and the
operator system quotients are completely isometric. Then by using a result of
Ozawa \cite{Oz6}, we obtain simpler exactness and local liftability conditions for operator systems.
This follows the track of Pisier and Ozawa's approach for operator spaces (see Thm. 16.10 and Rem. 17.6 of \cite{pi}, e.g.). 
Let $\cl S$ be operator system $\cl A$ be a unital C*-algebra
and $I$ be an ideal in $\cl A$. As we pointed out in Subsection \ref{subsec
exact}, $\cl S \bar{\otimes} I \subset \cl S \hat{\otimes}_{min} \cl A$ is a
kernel. ($ \hat{\otimes}_{min} $ denotes the completed minimal tensor product
and $\bar{\otimes}$ is the closure of the algebraic tensor product in the larger
space.) Moreover, the canonical operator spaces structure on the operator system
quotient
$$
 (\cl S\hat{\otimes}_{min} \cl A) \; / \; (\cl S \bar{\otimes} I) 
$$
coincides with the operator space quotient of $\cl S \hat{\otimes}_{min} \cl A$
by its closed subspace $ \cl S \bar{\otimes} I$. (See \cite[Thm. 5.1]{kptt2}).
Also recall from Remark \ref{rem ex for fd} that when $\cl S$ is finite dimensional then the minimal tensor
of $\cl S$ with a C*-algebra is already a completed object so we will
use $\otimes_{min}$ instead of $\hat{\otimes}_{min}$. Similarly if $I$ is an
ideal in a C*-algebra $\cl A$ then $\cl S \bar{\otimes} \cl I$ coincides with
the algebraic tensor product $\cl S \otimes I$. So we omit the bar over the
tensor product.

$ $

\noindent \textbf{Notation:} For simplicity in the following results we let
$\mathbb{B}$ denote $B(l^2)$ and $\mathbb{K}$ stands for the ideal of compact
operators in $B(l^2)$.

$ $

Suppose $\cl A$ is a unital C*-algebra and $I$ is an ideal in $\cl A$. Let
$$
C = \{ \phi : \cl A \rightarrow \mathbb{B} : \phi \mbox{ is ucp and  } \phi(I)
\subseteq \mathbb{K} \}.
$$
For $\phi$ in $C$ we use the notation $\dot{\phi}$ for the induced map $\cl A /
I \rightarrow \mathbb{B} / \mathbb{K}$. If $X$ is a finite
dimensional operator space then $\bar{\phi}$ denotes the corresponding map
$$
 ( X \hat{\otimes} \cl A)  /  (X {\otimes} I)   \rightarrow 
 (X \hat{\otimes} \mathbb{B}) / ( X \otimes \mathbb{K} )  .
$$
where $\hat{\otimes}$ is the minimal operator space tensor product. We are
ready to state:
\begin{proposition}[Ozawa, \cite{Oz6}],
Let $X$ be a finite dimensional operator space. If $\cl A$ is a unital
separable C*-algebra and $\cl I$ is an ideal in $\cl A$ then for any $u$ in
$X \otimes \cl A / I$ we have
$$
\|  u  \|_{X \hat{\otimes} \cl A / I } = \sup_{\phi\in C} \| (id\otimes
\dot{\phi})(u) \|_{X \hat{\otimes} \mathbb{B}/\mathbb{K}}
$$
and for any $v$ in $ ( X \hat{\otimes} \cl A)  /  (X {\otimes} I) $
$$
\|  v  \|_{( X \hat{\otimes} \cl A)  /  (X {\otimes} I) } = \sup_{\phi\in C} \|
(id\otimes \bar{\phi})(v) \|_{(X \hat{\otimes} \mathbb{B}) / ( X \otimes
\mathbb{K} )}.
$$
\end{proposition}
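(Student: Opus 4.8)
The plan is to establish each identity as a two-sided estimate, with the inequality in which the supremum is dominated by the norm being routine and the reverse inequality carrying all the content. For this easy direction, observe that for every $\phi \in C$ the induced map $\dot\phi : \cl A/I \to \mathbb{B}/\mathbb{K}$ is unital and completely positive, being the map on quotients induced by the ucp map $\phi$ with $\phi(I) \subseteq \mathbb{K}$; hence it is completely contractive. Since the minimal operator space tensor product $\hat\otimes$ is functorial, $id \otimes \dot\phi$ is completely contractive, giving $\|(id \otimes \dot\phi)(u)\| \le \|u\|$, and taking the supremum over $\phi$ yields the inequality $\ge$ in the first formula. The same reasoning applies to $\bar\phi$: the map $id \otimes \phi : X \hat\otimes \cl A \to X \hat\otimes \mathbb{B}$ is completely contractive and carries $X \otimes I$ into $X \otimes \mathbb{K}$, so it descends to a complete contraction $\bar\phi$ on the quotient operator spaces, which gives the corresponding inequality in the second formula.

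For the reverse inequality in the first identity I would exploit that $\cl A$ is separable, so $\cl A/I$ admits a faithful unital representation $\iota : \cl A/I \hookrightarrow B(l^2)$. Choosing an increasing sequence of finite rank projections $P_n \nearrow 1$ on $l^2$ and writing $\gamma_n = P_n \iota(\cdot) P_n : \cl A/I \to M_{k_n}$ for the (ucp) compressions, spatiality of the minimal tensor product together with $\dim X < \infty$ gives $\|(id \otimes \gamma_n)(u)\| \nearrow \|u\|_{X \hat\otimes \cl A/I}$. Now set $\phi = \bigoplus_n (\gamma_n \circ q)$, where $q : \cl A \to \cl A/I$ is the quotient map, acting block diagonally on $l^2 = \bigoplus_n \mathbb{C}^{k_n}$; this $\phi$ is ucp and satisfies $\phi(I) = \{0\} \subseteq \mathbb{K}$, so $\phi \in C$. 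Using that finite dimensional operator spaces are exact, so that $X \hat\otimes (\mathbb{B}/\mathbb{K}) = (X \hat\otimes \mathbb{B})/(X \hat\otimes \mathbb{K})$, and that the essential norm of a block diagonal element is the $\limsup$ of the block norms, one computes $\|(id \otimes \dot\phi)(u)\| = \limsup_n \|(id \otimes \gamma_n)(u)\| = \|u\|$. This produces a single $\phi \in C$ attaining the norm, which is even stronger than the stated supremum.

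The second identity is the delicate one, and it is where I expect the main obstacle. Here the left side is the \emph{quotient} operator space norm, which is strictly larger than $\|v\|_{X \hat\otimes \cl A/I}$ when exactness fails; consequently maps $\phi$ that factor through $q$ (such as the one used above) only recover the smaller norm and are useless. One must instead produce $\phi \in C$ that remembers the lift, i.e. that is approximately isometric on $\cl A$ itself while still compressing $I$ into $\mathbb{K}$. The plan is to build such $\phi$ from a quasicentral approximate unit $(e_n)$ for $I$ (available since $\cl A$ is separable): taking $c_n = (e_{n+1} - e_n)^{1/2}$ and forming the sum of cut downs $\phi(a) = \sum_n c_n\, a\, c_n$ arranged orthogonally in $\mathbb{B}$, quasicentrality forces the commutator cross terms to vanish in the limit, so that $\phi$ is asymptotically multiplicative and essentially isometric on $\cl A$, whereas on $I$ the telescoping of the $c_n$ makes $\phi(a)$ compact. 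A $\limsup$ and essential norm bookkeeping as in the first part then shows that for a near optimal lift $w \in X \hat\otimes \cl A$ of $v$ the element $(id \otimes \phi)(w)$ retains essential norm close to $\|w\|$, giving $\|\bar\phi(v)\|$ close to $\|v\|$.

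The technical heart, and the step I expect to be hardest, is precisely this simultaneous control: keeping $\phi$ norm preserving on all of $\cl A$ while driving $\phi(I)$ into the compacts. This is exactly the point at which Arveson's quasicentral approximate unit technique is indispensable, and it is the reason separability of $\cl A$ enters in an essential way (beyond merely forcing $\cl A/I$ onto $l^2$). Once the cut down map $\phi$ is in hand, the remaining bookkeeping parallels the first identity and should be routine.
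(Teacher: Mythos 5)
First, a point of reference: the paper does not prove this proposition at all — it is imported verbatim from Ozawa's thesis \cite{Oz6} and used as a black box — so your attempt has to be judged against what a correct proof requires rather than against an argument in the text.

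Your easy direction and your choice of $\phi=\bigoplus_n(\gamma_n\circ q)$ for the first identity are fine (this $\phi$ is indeed ucp, kills $I$, and attains the supremum), but the justification of $\|(id\otimes\dot{\phi})(u)\|=\|u\|$ rests on a false statement: ``finite dimensional operator spaces are exact, so that $X\hat{\otimes}(\mathbb{B}/\mathbb{K})=(X\hat{\otimes}\mathbb{B})/(X\hat{\otimes}\mathbb{K})$.'' That isometric identification is precisely $1$-exactness of $X$, which fails for finite dimensional operator spaces in general; in fact the entire point of the section of this paper where the proposition is used (Theorem \ref{fdexactness}) is that this identity \emph{characterizes} exactness among finite dimensional operator systems, and the paper exhibits finite dimensional non-exact examples ($\cl S_2$, whose dual inside $M_2\oplus M_2$ then fails the lifting property). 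Worse, the inequality that is available in general — the canonical map $(X\hat{\otimes}\mathbb{B})/(X\bar{\otimes}\mathbb{K})\rightarrow X\hat{\otimes}(\mathbb{B}/\mathbb{K})$ is contractive — runs in the opposite direction to the one you need: it bounds the Calkin-tensor norm \emph{above} by the limsup of block norms, whereas you need it bounded \emph{below}. The gap is repairable: show directly that $\dot{\phi}:\cl A/I\rightarrow\mathbb{B}/\mathbb{K}$ is a \emph{complete isometry} (at each matrix level its image is, after conjugating by a permutation unitary, block diagonal, and the essential norm of a block diagonal operator is the limsup of the block norms, which increase to $\|(\dot{a}_{ij})\|_{M_k(\cl A/I)}$), and then invoke injectivity of the minimal tensor product to get $id\otimes\dot{\phi}$ isometric. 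Simpler still: take $\pi$ a faithful unital representation of $\cl A/I$ of infinite multiplicity and $\phi=\pi\circ q$; then $\dot{\phi}$ is an injective $*$-homomorphism into the Calkin algebra, hence completely isometric, and injectivity of $\hat{\otimes}$ finishes. No exactness statement is needed, and none is true.

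For the second identity you have correctly located the difficulty (maps factoring through $q$ only see the smaller norm; one needs $\phi$ that remembers lifts while crushing $I$ into $\mathbb{K}$; Arveson's quasicentral approximate units are the right tool), but what you offer is a plan, not a proof, and it has two concrete holes. (a) The map $\phi(a)=\sum_n c_n a c_n$ with $c_n=(e_{n+1}-e_n)^{1/2}\in I$, ``arranged orthogonally,'' does not visibly satisfy $\phi(I)\subseteq\mathbb{K}$: if the terms sit as diagonal blocks via some representation, then for $a\in I$ the block norms tend to $0$, but a block diagonal operator with norms tending to zero is compact only when the individual blocks are compact; you must additionally compress each block by suitably large finite rank projections, and this step (which is also what makes the target genuinely $\mathbb{B}$ and $\phi$ an element of $C$) never appears. (b) The decisive estimate — that for a lift $w$ of $v$ one has $\mathrm{dist}\bigl((id\otimes\phi)(w),\,X\bar{\otimes}\mathbb{K}\bigr)\geq\mathrm{dist}(w,\,X\otimes I)-\epsilon$ — \emph{is} the second identity, and you dispose of it as ``limsup and essential norm bookkeeping.'' What it actually requires is a tensorial version of the quasicentral cut-down lemma, e.g.\ that for a quasicentral approximate unit $(e_n)$ of $I$ one has $\lim_n\|(1\otimes(1-e_n)^{1/2})\,w\,(1\otimes(1-e_n)^{1/2})\|=\mathrm{dist}(w,\,X\bar{\otimes}I)$, combined with a block-diagonal essential-norm computation taken relative to $X\bar{\otimes}\mathbb{K}$ (note $X\bar{\otimes}\mathbb{K}$ is not contained in the compacts on $H\otimes l^2$, so even this comparison needs the weakly-null-vector or block-compression argument rather than the scalar fact you quote). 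Until such a lemma is stated and proved, the hard half of the second formula — the only part of the proposition with real content — remains open in your write-up.
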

Before stating the following result we want to emphasize that the the minimal
operator system and the minimal operator space tensor products coincide. (In
fact they are both spatial.) The exactness criteria in the next theorem is true
for every operator system which we included as a corollary.

\begin{theorem}\label{fdexactness}
Suppose $\cl S$ is a finite dimensional
operator system. Then $\cl S$ is exact if and only if 
$$
(\cl S \otimes_{min} \mathbb{B}) /(\cl S \otimes \mathbb{K})  \cong \cl S
\otimes_{min} \,\mathbb{B}  / \mathbb{K}.
$$
\end{theorem}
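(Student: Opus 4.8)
The plan is to treat the two implications separately, the forward one being immediate and the reverse one carrying all the content. For the ``only if'' direction, if $\cl S$ is exact then by the very definition of exactness the induced map is a complete order isomorphism for every unital C*-algebra and ideal, in particular for the pair $(\mathbb{B},\mathbb{K})$.

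For the ``if'' direction I would first record that for every unital C*-algebra $\cl A$ and ideal $I$ the canonical map
$$
q_{\cl A}:(\cl S\otimes_{min}\cl A)/(\cl S\otimes I)\longrightarrow \cl S\otimes_{min}(\cl A/I)
$$
is already unital, bijective and completely positive (Remark \ref{rem ex for fd}), so that exactness of $\cl S$ is precisely the assertion that $q_{\cl A}$ is a complete order isomorphism. Since $q_{\cl A}$ is unital and bijective, it is a complete order isomorphism as soon as it is a complete isometry for the canonical operator space structures: for a self-adjoint element $a$ of an operator system one has $a\geq 0$ exactly when $\big\|\,\|a\|\,e-a\,\big\|\leq \|a\|$, a condition preserved at every matrix level by unital complete isometries. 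Thus the goal reduces to proving that $q_{\cl A}$ is a complete isometry.

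The key step is to compute norms through Ozawa's formulas (the preceding Proposition, \cite{Oz6}), which I would apply with the finite dimensional operator space $X=M_n(\cl S)$ so as to reach all matrix levels simultaneously, using that the minimal operator system and operator space tensor products coincide and that the operator space structure of the quotient agrees with the operator space quotient (\cite[Thm. 5.1]{kptt2}). For each $\phi$ in $C=\{\phi:\cl A\to\mathbb{B}\ \text{ucp},\ \phi(I)\subseteq\mathbb{K}\}$ the naturality square
$$
q_{\mathbb{B}}\circ\bar\phi=(id\otimes\dot\phi)\circ q_{\cl A}
$$
commutes. Now the hypothesis says exactly that $q_{\mathbb{B}}$ is a complete order isomorphism, hence a complete isometry, whence $\|\bar\phi(v)\|=\|(id\otimes\dot\phi)(q_{\cl A}(v))\|$ for every $v$. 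Taking the supremum over $\phi\in C$ and invoking the two displayed formulas of Ozawa---the second for the left-hand side and the first applied to $u=q_{\cl A}(v)$ for the right-hand side---yields $\|v\|=\|q_{\cl A}(v)\|$. Hence $q_{\cl A}$ is a complete isometry, therefore a complete order isomorphism, and $\cl S$ is exact.

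The main obstacle is that Ozawa's formulas are stated for separable $\cl A$, whereas exactness must be checked against all C*-algebras, so a separability reduction is needed. Given a positive element to be lifted, which by finite dimensionality of $\cl S$ involves only finitely many elements of $\cl A$, I would pass to a separable C*-subalgebra $\cl A_0\subseteq\cl A$ containing them together with the unit, arranged by the usual exhaustion so that, with $I_0=I\cap\cl A_0$, the inclusion $\cl A_0/I_0\hookrightarrow\cl A/I$ is a complete order embedding; the injectivity and functoriality of the minimal tensor product then transport the separable conclusion back to $\cl A$. It is this reduction, rather than the norm computation itself, where the care is required.
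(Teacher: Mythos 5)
Your proposal is correct and follows essentially the same route as the paper's proof: the trivial forward direction, the identification of the operator system quotient with the operator space quotient so that the hypothesis yields a complete isometry, Ozawa's two norm formulas to transfer this isometry to every separable pair $(\cl A, I)$, and a passage to a separable C*-subalgebra $\cl A_0$ with $I_0 = I \cap \cl A_0$ (using injectivity and functoriality of the minimal tensor product) to handle arbitrary $\cl A$. The only cosmetic differences are that you reach matrix levels by taking $X = M_n(\cl S)$ in Ozawa's proposition where the paper instead replaces $(\cl A, I)$ by $(M_n(\cl A), M_n(I))$, and you run the separability reduction directly while the paper argues by contradiction.
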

\begin{proof}
One direction is clear. So suppose that 
$
(\cl S \otimes_{min} \mathbb{B}) /(\cl S \otimes_{min} \mathbb{K})  \cong \cl S
\otimes_{min} \,\mathbb{B}  / \mathbb{K}.
$ 
In particular this implies that the associated map is completely isometric.
(Recall: The operator space quotient and operator system quotient has same
operator space structure.) So using Ozawa's above result we have that for every
separable unital C*-algebra $\cl A$ and ideal $I$ in $\cl A$ the associated map
$$
( \cl S \otimes_{{min}}
\cl A ) / ( \cl S {\otimes} I)   \longrightarrow    \cl S \otimes_{{min}}
\cl A / I 
$$
is isometric. (Note: the minimal tensor product of operator systems coincides
with the minimal operator space tensor product.) To see that it is complete
isometry it is enough to consider the identification $M_n(\cl A/I) = M_n(\cl
A)/M_n(I)$. Since a unital complete isometry is a complete order isomorphism
we have that the exactness is satisfied for the separable case. Now suppose $\cl A$
is an arbitrary unital C*-algebra and $I$ is an ideal in $\cl A$. Assume for a
contradiction that the associated map
$$
( \cl S \otimes_{{min}}
\cl A ) / ( \cl S {\otimes} I)   \longrightarrow    \cl S \otimes_{{min}}
\cl A / I 
$$
is not a complete isometry. Again considering the
identification $M_n(\cl A/I) = M_n(\cl
A)/M_n(I)$ we may suppose that the map is not an isometry. This means that
there is an element $u$ of $ \cl S \otimes_{{min}}
\cl A$ such that the norm of $u +  \cl S {\otimes} I $ under this associated
map is strictly smaller. Clearly $\cl A$ has a separable unital C*-subalgebra $\cl
A_0$ such that $u$ belongs to $\cl S \otimes \cl A_0$. Let $I_0 = \cl A_0 \cap
I$, which is an ideal in $\cl A_0$. Moreover we have  $\cl A_0/I_0 \subset \cl A
/ I$ so the injectivity of  minimal tensor products ensures that 
$$
\cl S
\otimes_{{min}}
\cl A_0 / I_0 \subset \cl S \otimes_{{min}}
\cl A / I .
$$
We also have the following sequence of ucp maps:
$$
 \cl S \otimes_{{min}}  \cl A_0 \hookrightarrow  \cl S \otimes_{{min}}
\cl A \rightarrow ( \cl S \otimes_{{min}}
\cl A ) / ( \cl S {\otimes} I)
$$
which has the kernel $\cl S \otimes I_0$. So the associated map $ ( \cl S
\otimes_{{min}}
\cl A_0 ) / ( \cl S {\otimes} I_0) \rightarrow  ( \cl S \otimes_{{min}}
\cl A ) / ( \cl S {\otimes} I)$ is ucp. Finally when we look at the following
sequence of ucp maps
$$
 ( \cl S
\otimes_{{min}}
\cl A_0 ) / ( \cl S {\otimes} I_0) \rightarrow  ( \cl S \otimes_{{min}}
\cl A ) / ( \cl S {\otimes} I) \longrightarrow
 \cl S \otimes_{{min}}
\cl A / I 
\supset
 \cl S
\otimes_{{min}} 
\cl A_0 / I_0 
$$
the norm of the element $u +  \cl S {\otimes} I_0 $ is smaller. This
is a contradiction as the exactness of $\cl S$ fails for a separable
C*-algebra and ideal in it.
\end{proof}

\begin{corollary}
Let $\cl S$ be an operator system. Then $\cl S$ is exact if and only if
$$
(\cl S \otimes_{\hat{min}} \mathbb{B}) /(\cl S \bar{\otimes} \mathbb{K})  \cong
\cl S
\otimes_{\hat{min}} \,\mathbb{B}  / \mathbb{K}.
$$
\end{corollary}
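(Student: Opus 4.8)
The plan is to reduce the assertion to its finite-dimensional version, Theorem \ref{fdexactness}, through the localization of exactness recorded in Proposition \ref{prop exac pass ss}. The forward implication is immediate from the definition of exactness: if $\cl S$ is exact then the canonical map $(\cl S \hat{\otimes}_{min} \cl A)/(\cl S \bar{\otimes} I) \to \cl S \hat{\otimes}_{min}(\cl A/I)$ is a complete order isomorphism for \emph{every} unital C*-algebra $\cl A$ and ideal $I$, and taking $\cl A = \mathbb{B}$, $I = \mathbb{K}$ yields exactly the stated isomorphism. So all the content sits in the converse.

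For the converse I would proceed as follows. Assuming the $(\mathbb{B},\mathbb{K})$-isomorphism for $\cl S$, Proposition \ref{prop exac pass ss} tells me it suffices to prove that every finite-dimensional operator subsystem $\cl S_0 \subseteq \cl S$ is exact, and by Theorem \ref{fdexactness} this is equivalent to the finite-dimensional criterion $(\cl S_0 \otimes_{min} \mathbb{B})/(\cl S_0 \otimes \mathbb{K}) \cong \cl S_0 \otimes_{min}(\mathbb{B}/\mathbb{K})$. By injectivity of the minimal tensor product (and Remark \ref{rem ex for fd}, which lets me drop completions over the finite-dimensional $\cl S_0$) the inclusions $\cl S_0 \otimes_{min}\mathbb{B} \hookrightarrow \cl S \hat{\otimes}_{min}\mathbb{B}$ and $\cl S_0 \otimes_{min}(\mathbb{B}/\mathbb{K}) \hookrightarrow \cl S \hat{\otimes}_{min}(\mathbb{B}/\mathbb{K})$ are complete order embeddings, and they fit into a commuting square whose bottom edge is the given complete order isomorphism for $\cl S$. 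The finite-dimensional comparison map $\Psi_0$ is unital, completely positive and bijective, so the whole question becomes whether $\Psi_0^{-1}$ is completely positive; equivalently, whether a positive element of $\cl S_0 \otimes_{min}(\mathbb{B}/\mathbb{K})$ admits a positive lift living already in $\cl S_0 \otimes_{min}\mathbb{B}$, and not merely in the ambient $\cl S \hat{\otimes}_{min}\mathbb{B}$.

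This descent is the step I expect to be the real obstacle. The ``tensor side'' of the hypothesis passes to $\cl S_0$ for free, because min is injective; but the ``quotient side'' does not, since the left-hand edge of the square is a priori only a complete contraction, and there is no completely contractive (let alone unital completely positive) projection $\cl S \to \cl S_0$ through which one could compress an ambient positive lift down onto $\cl S_0$. Hence the finite-dimensional criterion cannot be obtained by a purely formal restriction of the hypothesis, and Proposition \ref{prop exac pass ss} cannot be invoked directly for $\cl S_0$ either, as that would require knowing $\cl S$ exact in advance. The way I would attempt to close the gap is through the local techniques of Ozawa \cite{Oz6} already used in Theorem \ref{fdexactness}: his norm formulas express the relevant finite-dimensional quotient data as a supremum over the ucp maps $\phi$ with $\phi(I)\subseteq\mathbb{K}$ of quantities read off from the single universal pair $(\mathbb{B},\mathbb{K})$, which is precisely a local-reflexivity substitute for the missing projection. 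Reconciling, via these formulas, the lift computed in $\cl S_0 \otimes_{min}\mathbb{B}$ with the one supplied by the hypothesis in $\cl S \hat{\otimes}_{min}\mathbb{B}$ is the delicate point on which the whole converse turns; once it is settled, $\Psi_0$ is a complete order isomorphism for each $\cl S_0$ and Proposition \ref{prop exac pass ss} completes the proof.
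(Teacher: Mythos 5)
Your overall strategy is the same as the paper's: handle the forward direction by definition, and prove the converse by reducing, via Proposition \ref{prop exac pass ss}, to finite-dimensional subsystems $\cl S_0 \subseteq \cl S$ and then invoking the criterion of Theorem \ref{fdexactness}. You also diagnose the obstacle correctly: injectivity of min restricts the ``tensor side'' to $\cl S_0$ for free, but the ``quotient side'' does not restrict for free, since there is no ucp (or even completely contractive) projection of $\cl S$ onto $\cl S_0$. (One small correction: the comparison map $(\cl S_0 \otimes_{min} \mathbb{B})/(\cl S_0 \otimes \mathbb{K}) \to (\cl S \hat{\otimes}_{min} \mathbb{B})/(\cl S \bar{\otimes} \mathbb{K})$ is a priori ucp, not merely completely contractive, by the universal property of operator system quotients; the issue is whether it is a complete order embedding.) The problem is that your proposal stops at this diagnosis: you gesture at Ozawa's norm formulas as a ``local-reflexivity substitute'' and then explicitly defer the reconciliation of the two lifts (``once it is settled\dots''). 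That deferred step is the entire content of the converse, so as written the proof has a genuine gap.

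What closes the gap in the paper is a specific result you did not invoke: Corollary 5.6 of \cite{kptt2}, which states that for a finite-dimensional operator subsystem $\cl S_0 \subseteq \cl S$ the canonical map
$$
(\cl S_0 \otimes_{min} \mathbb{B})/(\cl S_0 \otimes \mathbb{K}) \longrightarrow (\cl S \hat{\otimes}_{min} \mathbb{B})/(\cl S \bar{\otimes} \mathbb{K})
$$
is a complete order embedding; in particular, positivity in the small quotient can be tested in the ambient quotient, which is exactly the descent you could not perform. With this embedding on the quotient side, and injectivity of min giving $\cl S_0 \otimes_{min} (\mathbb{B}/\mathbb{K}) \subseteq \cl S \hat{\otimes}_{min} (\mathbb{B}/\mathbb{K})$, the hypothesized isomorphism for $\cl S$ restricts to the $(\mathbb{B},\mathbb{K})$-criterion for $\cl S_0$; Theorem \ref{fdexactness} then gives exactness of $\cl S_0$, and Proposition \ref{prop exac pass ss} finishes. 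Your instinct that Ozawa's formulas might substitute for the missing projection is not unreasonable --- they concern the same local data, and they already carry the weight inside the proof of Theorem \ref{fdexactness} --- but carrying that plan out here would amount to reproving the quoted corollary of \cite{kptt2}, and your proposal does not do it.
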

\begin{proof}
One direction is trivial. So suppose exactness in $\mathbb{K} \subset
\mathbb{B}$ satisfied. Let $\cl S_0 $ be a finite dimensional operator
subsystem. By using Corollary 5.6 of \cite{kptt2} we have that
$$
\frac{\cl S_0 \otimes_{min} \mathbb{B} }{  \cl S_0 \otimes \mathbb{K}  }
\subset \frac{\cl S \otimes_{\hat{min}} \mathbb{B}} {\cl S \bar{\otimes}
\mathbb{K}}.
$$
Similarly, by the injectivity of the minimal tensor product, we have $\cl S_0
\otimes_{min} \mathbb{B}/\mathbb{K}   \subset  \cl S
\otimes_{\hat{min}} \mathbb{B}/\mathbb{K}$. So for the operator system $\cl
S_0$ the exactness condition for $\mathbb{K} \subset  \mathbb{B}$ in 
Proposition \ref{fdexactness} is satisfied and consequently it is exact. Since
$\cl S_0$ is an arbitrary finite dimensional operator subsystem of $\cl S$, by Proposition \ref{prop exac pass ss},
it follows that $\cl S$ is exact.
\end{proof}

Recall from Theorem \ref{exactdualLP} that exactness and the lifting property are dual pairs. That is
a finite dimensional  operator system $\cl S$ has the lifting property if and only $\cl S^d$ is exact.
This, together with Lemma \ref{lem liftdualquotient}, lead to the following simplification of the lifting property.

\begin{proposition}\label{lpB(H)}
A finite dimensional operator system $\cl S$ has the lifting property if and only
if every ucp map defined from $\cl S$ into $\mathbb{B}/\mathbb{K}$ has a ucp
lift on $\mathbb{B}$.
\end{proposition}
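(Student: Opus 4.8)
The plan is to deduce the statement from three results already established: the duality between the lifting property and exactness (Theorem \ref{exactdualLP}), the concrete exactness criterion against $\mathbb{K}\subset\mathbb{B}$ for finite dimensional operator systems (Theorem \ref{fdexactness}), and the dictionary between liftings and minimal-tensor order isomorphisms (Lemma \ref{lem liftdualquotient}). One direction is immediate: if $\cl S$ has the lifting property, then taking $\cl A=\mathbb{B}$ and $I=\mathbb{K}$ in the definition shows that every ucp map $\cl S\to\mathbb{B}/\mathbb{K}$ lifts to a ucp map on $\mathbb{B}$. So the substance lies in the converse, and my strategy is to reroute it through exactness of the dual.

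For this I would run the following chain of equivalences. Apply Lemma \ref{lem liftdualquotient} with the operator system taken to be $\cl S$ itself and with $\cl A=\mathbb{B}$, $I=\mathbb{K}$. Condition (3) there reads $(\cl S^d\otimes_{min}\mathbb{B})/(\cl S^d\otimes\mathbb{K})=\cl S^d\otimes_{min}(\mathbb{B}/\mathbb{K})$, which is precisely the criterion of Theorem \ref{fdexactness} for the finite dimensional operator system $\cl S^d$ to be exact; and by Theorem \ref{exactdualLP} the exactness of $\cl S^d$ is equivalent to $\cl S$ having the lifting property. On the other hand, condition (1) of Lemma \ref{lem liftdualquotient} for this pair $(\cl A,I)$ asserts that for every $n$ each ucp map $\cl S\to M_n(\mathbb{B})/M_n(\mathbb{K})$ admits a cp lift on $M_n(\mathbb{B})$.

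The key simplification is that the index $n$ collapses in this special case: since $(l^2)^n\cong l^2$, a unitary identification induces a $*$-isomorphism $M_n(\mathbb{B})\cong\mathbb{B}$ carrying $M_n(\mathbb{K})$ onto $\mathbb{K}$, hence $M_n(\mathbb{B})/M_n(\mathbb{K})\cong\mathbb{B}/\mathbb{K}$. Thus condition (1) is equivalent to its $n=1$ instance, namely that every ucp map $\cl S\to\mathbb{B}/\mathbb{K}$ has a cp lift on $\mathbb{B}$. Combining with the previous paragraph, $\cl S$ has the lifting property if and only if every ucp map $\cl S\to\mathbb{B}/\mathbb{K}$ has a \emph{cp} lift on $\mathbb{B}$.

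Finally I would close the loop without having to upgrade cp lifts to ucp lifts by hand. The hypothesis of the proposition (existence of ucp lifts) trivially implies the existence of cp lifts, which by the chain above yields the lifting property; and the lifting property trivially returns the hypothesis. Schematically: lifting property $\Rightarrow$ every ucp map into $\mathbb{B}/\mathbb{K}$ has a ucp lift $\Rightarrow$ every such map has a cp lift $\Rightarrow$ lifting property, so all three conditions coincide. The only genuinely substantive step is the middle equivalence supplied by Lemma \ref{lem liftdualquotient} and Theorem \ref{fdexactness}; everything else is formal. I expect the one point needing care is verifying that the identification $M_n(\mathbb{B})\cong\mathbb{B}$ is compatible both with the ideal $\mathbb{K}$ and with the operator system structure on the quotient, so that the reduction to $n=1$ is fully legitimate.
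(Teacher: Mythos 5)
Your proof is correct and follows essentially the same route as the paper's: both directions run through Lemma \ref{lem liftdualquotient} applied with $\cl A=\mathbb{B}$, $I=\mathbb{K}$, the collapse of the index $n$ via the compact-preserving identification $M_n(\mathbb{B})\cong\mathbb{B}$, Theorem \ref{fdexactness} to get exactness of $\cl S^d$, and Theorem \ref{exactdualLP} to return to the lifting property for $\cl S$. The only cosmetic difference is that you organize the argument as a cycle of implications among three equivalent conditions, while the paper proves the two directions separately.
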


\begin{proof}
Recall from Section \ref{sec osLLP} that in the definition of osLLP the local cp lifts can be taken to be unital.
This proves one direction. Conversely suppose that every ucp map defined from $\cl S$ into
$\mathbb{B}/\mathbb{K}$ has a ucp lift on $\mathbb{B}$. This means that, for every $n$, every ucp 
map $\varphi: \cl S \rightarrow M_n( \mathbb{B}) / M_n( \mathbb{K})$ has a ucp lift on $M_n( \mathbb{B})$.
In fact this directly follows from the fact that $M_n( \mathbb{B})$ can be identified with $\mathbb{B}$ via
a C*-algebraic isomorphism which preserves the compactness. Thus, $\cl S$ satisfies the property (1) in Lemma \ref{lem liftdualquotient}.
So the dual system $\cl S^d$ has the property (3) in the same lemma. Now, Theorem \ref{fdexactness} implies that $\cl S^d$ is exact. Finally, by Theorem
\ref{exactdualLP}, $\cl S$ has the lifting property.
\end{proof}

$ $

\section{Coproducts of Operator Systems}

In this chapter we recall basic facts on the amalgamated direct sum of two operator systems over
the unit introduced in \cite{KL} (or with the language of \cite{TF} coproduct of two operator systems) and
 we will show that it can be formed directly
by using the operator system quotient theory. We show that the lifting property is
preserved under coproducts. However the stability of the double commutant
expectation property turns out to be related to Kirchberg Conjecture.  Recall that
 if $\cl S$ and $\cl T$ are two operator systems then the coproduct $\cl S
\oplus_1 \cl T$ of $\cl S$ and $\cl T$ is an operator system together with
unital complete order embeddings $i:\cl S \hookrightarrow \cl S \oplus_1 \cl T$
and $j:\cl T \hookrightarrow \cl S \oplus_1 \cl T$ which satisfies the following
universal property: For every ucp map $\phi: \cl S \rightarrow \cl R$ and ucp
map $\psi:\cl T \rightarrow \cl R$, where $\cl R$ is an operator system, there
exists a unique ucp map $\varphi:\cl S  \oplus_1 \cl T \rightarrow R$ such that
$\varphi(i(s)) = \phi(s)$  and $\varphi(j(t)) = \psi(t)$ for every $s$ in $\cl
S$ and $t$ in $\cl T$.
$$
\xymatrix{
\cl S \ar@{_{(}->}[d]_i   \ar[drr]^{\phi} && \\
\cl S   \oplus_1 \cl T  \ar@{.>}[rr]^{\varphi}  & & \cl R  \\
\cl T \ar@{^{(}->}[u]^j   \ar[urr]_{\psi} & &
}
$$
One way to construct this object can be described as
follows: Consider C*-algebra free product of $C_{u}^*(\cl S) \ast_1
C_{u}^*(\cl T)$ amalgamated over the identity. Define $\cl S  \oplus_1 \cl T$ as the operator system generated
by $\cl S$ and $\cl T$ in $C_{u}^*(\cl S) \ast_1 C_{u}^*(\cl T)$. We leave the
verification that this span has the above universal property as an exercise. We
also refer to \cite[Sec. 3]{TF} for a different construction of the coproducts.
Below we will obtain coproducts in terms of operator system quotients.

$ $

Consider $\cl S \oplus \cl T$. Since $(e,-e)$ is a selfadjoint element which is
neither positive nor negative, by Theorem \ref{npnn}, $J = span\{(e,-e)\}$ is a
kernel in $\cl S \oplus \cl T$ (in fact it is a null subspace and hence a
completely proximinal kernel by Proposition \ref{prop nullsubspace}). So we have
a quotient operator system $(\cl S \oplus \cl T) /J$. Note that in the quotient
we have
$$
(e,e) + J = (2e,0) + J = (0,2e) + J.
$$
Consider $i: \cl S \rightarrow \cl S \oplus \cl T / J$ by $s\mapsto (2s,0)+J$.
We claim that $i$ is a unital complete order isomorphism. Clearly it is unital
and completely positivity follows from the fact that it can be written as a
composition of cp maps, namely $\cl S \rightarrow \cl S \oplus \cl T$, $s\mapsto
(2s,0)$ and the quotient map. Now suppose that the image of $(s_{ij}) \in
M_n(\cl S)$ is positive. That is, $( (2(s_{ij},0)+J) )$ is positive in
$M_n( \cl S \oplus \cl T /J )$. Since $J$ is completely proximinal there are
scalars $ \alpha_{ij} $ such that $( (2s_{ij} + \alpha_{ij} e, -\alpha_{ij} e
))$ is positive in $M_n(\cl S \oplus \cl T)$. Note that this forces
$(-\alpha_{ij}e)$ to be positive in $M_n(\cl T)$. So we have that $(2s_{ij} +
\alpha_{ij} e) + (-\alpha_{ij}e) = 2(s_{ij})$ must be positive in $M_n(\cl S)$.
Hence $(s_{ij})$ is positive and it follows that $i$ is a complete order
isomorphism.

Similarly $j: \cl T \rightarrow \cl S \oplus \cl T / J$, $t\mapsto (0,2t) + J$
is also a unital complete order isomorphism. Finally let  $\phi: \cl S
\rightarrow \cl R$ and $\psi:\cl T \rightarrow \cl R$ be ucp maps. Consider
$\varphi:\cl S \oplus
\cl T  / J \rightarrow R$ given by $\varphi((s,t)+J) = (\phi(s) + \psi(t))/2$.
It is elementary to check $\varphi$ is ucp, $\varphi(i(\cdot)) = \phi$ and
$\varphi(j(\cdot)) = \psi$. Consequently with the above mentioned inclusions we
have
$$
\cl S  \oplus_1  \cl T =  \cl S \oplus \cl T\,/\,span\{(e,-e)\}.
$$
We also remark that $C_u^*(\cl S \oplus_1 \cl T) = C_u^*(\cl S) \ast_1 C_u^*(\cl
T)$, which in fact follows from the universal property of the coproduct of
operator systems and unital free products of C*-algebras. It is also clear
that when $\cl S$ and $\cl T$ are finite dimensional then $dim(\cl S \oplus_1
\cl T) =dim (\cl S) + \dim(\cl T) - 1$.

$ $

The lifting property is preserved under coproducts:

\begin{proposition}
The following are equivalent for finite dimensional operator systems $\cl S$
and $\cl T$:
\begin{enumerate}
 \item $\cl S$ and $\cl T$ have the lifting property.
 \item $\cl S \oplus_1 \cl T$ has the lifting property.
\end{enumerate}
\end{proposition}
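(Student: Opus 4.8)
The plan is to prove the two implications separately, exploiting the universal property of the coproduct together with the identification $\cl S \oplus_1 \cl T = (\cl S \oplus \cl T)/\mathrm{span}\{(e,-e)\}$ and the canonical ucp embeddings $i:\cl S \hookrightarrow \cl S \oplus_1 \cl T$ and $j:\cl T \hookrightarrow \cl S \oplus_1 \cl T$. Throughout I would use that, for a finite dimensional operator system, the lifting property means precisely that every ucp map into a quotient $\cl A/I$ admits a ucp lift on $\cl A$, and, equivalently, that the lifting property is $(\mathrm{min},\mathrm{er})$-nuclearity.

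For $(1)\Rightarrow(2)$ I would argue directly from the definition. Given a ucp map $\varphi:\cl S\oplus_1\cl T\rightarrow \cl A/I$, I first restrict along the two embeddings to obtain ucp maps $\phi=\varphi\circ i:\cl S\rightarrow \cl A/I$ and $\psi=\varphi\circ j:\cl T\rightarrow \cl A/I$. Since $\cl S$ and $\cl T$ have the lifting property, these lift to ucp maps $\tilde\phi:\cl S\rightarrow\cl A$ and $\tilde\psi:\cl T\rightarrow\cl A$ with $q\circ\tilde\phi=\phi$ and $q\circ\tilde\psi=\psi$, where $q:\cl A\rightarrow\cl A/I$ is the quotient map. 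The universal property of the coproduct then produces a single ucp map $\Phi:\cl S\oplus_1\cl T\rightarrow\cl A$ with $\Phi\circ i=\tilde\phi$ and $\Phi\circ j=\tilde\psi$. It remains to check that $\Phi$ is a lift of $\varphi$, i.e. $q\circ\Phi=\varphi$. The key point is that both $q\circ\Phi$ and $\varphi$ are ucp maps $\cl S\oplus_1\cl T\rightarrow\cl A/I$ restricting to the same pair $(\phi,\psi)$ along $i$ and $j$, so the uniqueness clause of the universal property forces $q\circ\Phi=\varphi$. Thus $\Phi$ is the desired ucp lift and $\cl S\oplus_1\cl T$ has the lifting property.

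For $(2)\Rightarrow(1)$ I would instead exhibit the identity of $\cl S$ as a ucp factorization through $\cl S\oplus_1\cl T$ and invoke Lemma~\ref{lem iden. decom.}. The embedding $i:\cl S\rightarrow\cl S\oplus_1\cl T$ is ucp, and choosing any state $\omega$ on $\cl T$ the map $t\mapsto\omega(t)e_{\cl S}$ is a ucp map $\cl T\rightarrow\cl S$; together with $\mathrm{id}_{\cl S}$ the universal property yields a ucp retraction $r:\cl S\oplus_1\cl T\rightarrow\cl S$ with $r\circ i=\mathrm{id}_{\cl S}$. Hence the identity on $\cl S$ factors via ucp maps through $\cl S\oplus_1\cl T$. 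Since the lifting property is $(\mathrm{min},\mathrm{er})$-nuclearity with $\mathrm{min}$ and $\mathrm{er}$ functorial and $\mathrm{min}\leq\mathrm{er}$, Lemma~\ref{lem iden. decom.} transfers the lifting property of $\cl S\oplus_1\cl T$ to $\cl S$; the identical argument with the roles of $\cl S$ and $\cl T$ interchanged handles $\cl T$.

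The bulk of the work is conceptual rather than computational, so I do not expect a serious obstacle. The one step deserving care is the uniqueness argument in $(1)\Rightarrow(2)$: one must verify that $q\circ\Phi$ is genuinely ucp (immediate, being a composition of ucp maps) and that it satisfies the same intertwining relations as $\varphi$, so that the coproduct's uniqueness applies verbatim. An alternative but equivalent route for $(1)\Rightarrow(2)$ would specialize the pair $(\cl A,I)$ to $(\mathbb{B},\mathbb{K})$ and invoke Proposition~\ref{lpB(H)}, but this gains nothing over the direct argument. A duality-based proof via Theorem~\ref{exactdualLP} (showing $(\cl S\oplus_1\cl T)^d$ is exact) is tempting, but it is awkward for $(2)\Rightarrow(1)$: since $(\cl S\oplus_1\cl T)^d$ sits as an operator subsystem of $\cl S^d\oplus\cl T^d$, passing to a subsystem does not recover exactness of the individual summands, which is why I prefer the factorization argument there.
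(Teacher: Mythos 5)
Your proposal is correct, and your $(1)\Rightarrow(2)$ direction is essentially identical to the paper's: restrict $\varphi$ along the two canonical embeddings, lift each restriction, recombine via the universal property, and check the result is a lift. (The paper dismisses this last check as elementary; your appeal to the uniqueness clause of the universal property is a clean way to carry it out, since both $q\circ\Phi$ and $\varphi$ are ucp maps restricting to the same pair along $i$ and $j$.) Where you genuinely diverge is $(2)\Rightarrow(1)$. The paper argues by direct lifting: given ucp $\phi:\cl S\rightarrow\cl A/I$, it uses a state $f$ on $\cl T$ to build the ucp map $\psi=f(\cdot)(e+I)$ into $\cl A/I$, combines $\phi$ and $\psi$ into a ucp map on $\cl S\oplus_1\cl T$, lifts that map by hypothesis, and restricts the lift back to $\cl S$. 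You instead use the state to build a ucp retraction $r:\cl S\oplus_1\cl T\rightarrow\cl S$ with $r\circ i=\mathrm{id}_{\cl S}$, so that the identity of $\cl S$ factors via ucp maps through the coproduct, and then invoke Lemma~\ref{lem iden. decom.} together with the characterization of the lifting property as $(\mathrm{min},\mathrm{er})$-nuclearity. Both uses of the universal property produce what is in effect the same map (the paper's extension of $\phi$ is exactly $\phi\circ r$), but your packaging is more abstract and more general: the identical argument shows that \emph{any} $(\tau_1,\tau_2)$-nuclearity property for functorial tensor products --- DCEP, exactness, C*-nuclearity --- passes from $\cl S\oplus_1\cl T$ to the summands, which is the same principle the paper exploits later for tensor products (Proposition~\ref{pro tensor prop.S2}). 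The paper's version buys self-containedness (it never leaves the lifting framework); yours buys reusability. Your closing remark about why the duality route through Theorem~\ref{exactdualLP} is awkward for $(2)\Rightarrow(1)$ is also accurate: since $i$ is a complete order embedding, Theorem~\ref{thm dual of inc} makes $i^d$ a quotient map rather than an embedding, so exactness of $(\cl S\oplus_1\cl T)^d$ does not transfer to $\cl S^d$ by passing to subsystems.
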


\begin{proof} Suppose  $\cl S \oplus_1 \cl T$ has the lifting property. Let $\phi:
\cl S \rightarrow \cl A /I$ be a ucp map where $I \subset \cl A$ is a
C*-algebra, ideal couple. Suppose $f$ is a state on $\cl T$ and set $\psi: \cl T
\rightarrow \cl A/I$ by $\psi = f(\cdot) (e+I) $. By using the universal
property of  $\cl S \oplus_1 \cl T$ we obtain a ucp map $\varphi : \cl S \ast
\cl T \rightarrow \cl A / I$. For simplicity we will identify the $\cl S$ and
$\cl T$ with their canonical images in $\cl S  \oplus_1 \cl T$.  Clearly a ucp
lift of $\varphi$ on $\cl A$ is a ucp lift of $\phi$ when restricted to $\cl S$.
Thus $\cl S$ has osLLP. A similar argument shows that $\cl T$ has the same
property.

Conversely suppose $\cl S$ and $\cl T$ have the lifting property. Let $\varphi: \cl
S \oplus_1 \cl T \rightarrow \cl A /I$ be a ucp map. Again  we will identify the
$\cl S$ and $\cl T$ with their canonical images in $\cl S \oplus_1 \cl T$. Let
$\phi: \cl S \rightarrow \cl A$ be a ucp lift of  $\varphi|_{\cl S}$, the
restriction of $\varphi$ on $\cl S$. Similarly let $\psi$ be the ucp lift of
$\varphi|_{\cl T}$. Finally by using the universal property of $\cl S  \oplus_1
\cl T$ let $\tilde{\varphi}$ be the ucp map from $\cl S  \oplus_1 \cl T$ into
$\cl A$ associated with $\phi$ and $\psi$. It is elementary to see that
$\tilde{\varphi}$ is a lift of $\varphi$. This finishes the proof.
\end{proof}

Recall that we define $\cl S_n$ as
the operator system generated by the
unitary generator of $C^*(\mathbb{F}_n)$, that is,
$$
\cl S_n = span \{g_1,...,g_n,e,g_1^*,...,g_n^*\}  \subset C^*(F_n).
$$
We remind the reader that $\cl S_n$ can also be considered as the universal operator system generated by
$n$ contractions as it satisfies the following universal property: Every function
$f:\{g_i\}_{i=1}^n \rightarrow \cl T$
with $\|f(g_i)\| \leq 1$ extends uniquely to a ucp map $\varphi_f:S_n\rightarrow
\cl T$
(in an obvious way).
$$
\xymatrix{
\{g_i\}_{i=1}^n     \ar[rrr]^{f}    \ar@{_{(}->}[d]  &  &  &  \cl T \\
\cl S_n \ar@{.>}[rrru]_{\varphi_f} & & &
}
$$

It is easy to see that $\cl S_n$ is naturally included in $\cl S_{n+k}$ where the inclusion is given
by the map $g_i\mapsto g_i $ for $i = 1 , ... , n$. In a similar way, $\cl S_{k}$ can also be represented
in $\cl S_{n+k}$ via the map $g_i\mapsto g_{n+i}$ for $i = 1,...,k$. Thus, there is a map
from $\cl S_n  \oplus_1 \cl S_k  $ to $\cl S_{n+k}$. The following result states that this natural
map is a complete order isomorphism. We skip its elementary proof. In fact, it is easy to show that
$\cl S_{n+k}$ satisfies the universal property that $\cl S_n  \oplus_1 \cl S_k$ has.
\begin{lemma}
$\cl S_n  \oplus_1 \cl S_k = \cl S_{n+k}$.
\end{lemma}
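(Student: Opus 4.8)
The plan is to show that $\cl S_{n+k}$, equipped with the natural inclusions described above, satisfies precisely the universal property that characterizes the coproduct $\cl S_n \oplus_1 \cl S_k$; since an object defined by a universal property is unique up to unital complete order isomorphism, this yields the desired identification. Write $\iota_n : \cl S_n \to \cl S_{n+k}$ for the map $g_i \mapsto g_i$ ($1 \le i \le n$) and $\iota_k : \cl S_k \to \cl S_{n+k}$ for the map $g_i \mapsto g_{n+i}$ ($1 \le i \le k$), each extended unitally and $*$-linearly. Both are ucp by the universal property of $\cl S_n$ (resp. $\cl S_k$) applied to the contractions $g_1,\dots,g_n$ (resp. $g_{n+1},\dots,g_{n+k}$) sitting inside $\cl S_{n+k}$, these being unitaries and hence of norm one.

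First I would check that $\iota_n$ and $\iota_k$ are unital complete order \emph{embeddings}, which is the part requiring a genuine argument. For this I produce ucp left inverses. The group homomorphism $\mathbb{F}_{n+k} \to \mathbb{F}_n$ that fixes the first $n$ generators and sends the remaining $k$ generators to the identity induces a unital $*$-homomorphism $\rho_n : C^*(\mathbb{F}_{n+k}) \to C^*(\mathbb{F}_n)$ with $\rho_n(g_i) = g_i$ for $i \le n$ and $\rho_n(g_{n+i}) = e$. Restricting $\rho_n$ gives a ucp map $r_n : \cl S_{n+k} \to \cl S_n$ with $r_n \circ \iota_n = \mathrm{id}_{\cl S_n}$. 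A ucp map admitting a ucp left inverse is automatically a complete order embedding: if $\iota_n^{(m)}((s_{ij}))$ is positive in $M_m(\cl S_{n+k})$, then applying $r_n^{(m)}$ recovers $(s_{ij})$ as a positive element of $M_m(\cl S_n)$. The symmetric construction, using the homomorphism that collapses the first $n$ generators, handles $\iota_k$.

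Next I would verify the universal mapping property, which is where the universal description of $\cl S_{n+k}$ does the work. Given ucp maps $\phi : \cl S_n \to \cl R$ and $\psi : \cl S_k \to \cl R$, the elements $\phi(g_1),\dots,\phi(g_n),\psi(g_1),\dots,\psi(g_k)$ are contractions in $\cl R$, since a ucp map is completely contractive. Hence the universal property of $\cl S_{n+k}$ furnishes a unique ucp map $\varphi : \cl S_{n+k} \to \cl R$ with $\varphi(g_i) = \phi(g_i)$ for $i \le n$ and $\varphi(g_{n+i}) = \psi(g_i)$ for $i \le k$. Because $\varphi \circ \iota_n$ and $\phi$ agree on the spanning set $\{e, g_1,\dots,g_n, g_1^*,\dots,g_n^*\}$ and both are $*$-linear, we get $\varphi \circ \iota_n = \phi$; likewise $\varphi \circ \iota_k = \psi$. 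Uniqueness of $\varphi$ is inherited from the uniqueness clause in the universal property of $\cl S_{n+k}$, as any competing map is forced to take the same values on the generators. Thus $\cl S_{n+k}$ is a coproduct of $\cl S_n$ and $\cl S_k$, and the identification follows.

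I expect the only real obstacle to be establishing the complete order embedding of the inclusions in the second paragraph; everything else is a formal consequence of the two universal properties. The retraction argument is exactly what guarantees that $\cl S_n$ and $\cl S_k$ retain their own operator system structure inside $\cl S_{n+k}$, rather than merely mapping into it completely positively.
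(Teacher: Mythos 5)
Your proof is correct and follows exactly the route the paper indicates but leaves unproved: showing that $\cl S_{n+k}$, with the natural inclusions of $\cl S_n$ and $\cl S_k$, satisfies the universal property characterizing the coproduct. The retraction argument via the group homomorphisms collapsing generators is a clean way to supply the complete order embedding detail that the paper's ``we skip its elementary proof'' glosses over.
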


\begin{proof}[Example]
 We wish to show that $\cl S_1 =span \{g,e,g^*\} \subset C^*(\mathbb{F}_1)$ is C*-nuclear. This
is based on Sz.-Nagy's dilation theorem (see \cite[Thm. 1.1]{Pa}, e.g.): If $T \in B(H)$ is a contraction
then there is a Hilbert space $K$ containing $H$ as a subspace and a unitary operator $U$ in $B(K)$ such that
$T^n = P_{H} U^n |_{H}$ for every positive $n$. Of course, by taking the adjoint, we also have that $(T^*)^n =  P_{H} (U^*)^n |_{H}$
for every positive $n$.  This means that there is a ucp map defined from $C^*(\mathbb{F}_1)$ into $C^*\{I, T,T^*\}$, the C*-algebra generated
by $T$ in $B(H)$, which is given by the compression of the unital $*$-homomorphism extending the representation $g\mapsto U$.
That is, the map $\gamma_T$ defined from $C^*(\mathbb{F}_1)$ into $B(H)$ given by $g^n \mapsto T^n$, $e \mapsto I$ and $g^{-n} \mapsto (T^*)^n$ is
ucp. Now we wish to show that $\cl S_1 \otimes_{max} \cl A \subset C^*(\mathbb{F}_1) \otimes_{max} \cl A$ for every $\cl A$. 
Let $\varphi: \cl S_1 \otimes_{max} \cl A \rightarrow B(K)$ be a ucp map. Then by Proposition \ref{prop ucp on c}, There is a Hilbert space
$K_1$ containing $K$ as a subspace and ucp maps $\phi: \cl S_1 \rightarrow B(K_1)$ and $\psi: \cl A \rightarrow B(K_1)$ with
commuting ranges such that $\varphi = P_K \phi \cdot \psi |_K$. Note that $\phi(g)$ must be a contraction. The map
$\gamma_{\phi(g)}$ is a ucp extension of $\phi$ on $C^*(\mathbb{F}_1)$. Clearly $\gamma_{\phi(g)}$ and $\psi$ have commuting ranges. Thus
$P_K \gamma_{\phi(g)} \cdot \psi |_K$ is a ucp extension of $\varphi$ on $C^*(\mathbb{F}_1) \otimes_{max} \cl A$. In conclusion
we have that every ucp map defined from $\cl S_1 \otimes_{max} \cl A$ into a $B(K)$ extends to a ucp map on $C^*(\mathbb{F}_1) \otimes_{max} \cl A$.
This is enough to conclude that $\cl S_1 \otimes_{max} \cl A \subset C^*(\mathbb{F}_1) \otimes_{max} \cl A$. It is well known that
$C^*(\mathbb{F}_1) = C^*(\mathbb{Z}) = C(\mathbb{T})$ (see \cite{Peder}, e.g.) and the C*-algebra of continuous functions on a compact set is nuclear
(see \cite{Pa}, e.g.). Since $\cl S_1 \otimes_{min} \cl A \subset  C^*(\mathbb{F}_1) \otimes_{min} \cl A$ and $ C^*(\mathbb{F}_1)$ is nuclear we conclude
that $\cl S_1$ is C*-nuclear.
 \renewcommand{\qedsymbol}{}\end{proof}

\begin{question}
In the previous example we have shown that the three dimensional operator system $span\{1,z,z^*\} \subset C(\mathbb{T})$, where $z$
is the coordinate function, is C*-nuclear. In general, if $X$ is a compact set then is every three dimensional operator subsystem $span\{1,f,f^*\} \subset C(X)$
C*-nuclear? In fact by using spectral theorem it is enough to consider the case when $X$ is subset of $\{z: |z|\leq 1\}$. So,
when this subset is the unit circle then the answer is affirmative.
\end{question}

Since the Kirchberg Conjecture (KC) is equivalent to the statement that $S_2$ has DCEP it is natural to
raise the following questions:

\begin{question} \label{firstQ}
Suppose $\cl S$ and $\cl T$ are two finite dimensional operator systems with
DCEP. Does $\cl S \oplus_1 \cl T$ have DCEP?
\end{question}

\begin{question}\label{secondQ}
Suppose $\cl S$ and $\cl T$ are two finite dimensional C*-nuclear operator systems. Does $\cl S \oplus_1 \cl T$  have DCEP?
\end{question}

\noindent \textbf{Results:} An affirmative answer to the Question \ref{firstQ} implies an
affirmative answer to the KC. This follows from the fact that $\cl S_2 = \cl S_1
\oplus_1 \cl S_1$ and $\cl S_1$ is C*-nuclear, in particular it has DCEP.
On the other hand Question \ref{secondQ} is equivalent to the KC. First suppose that KC is true. If $\cl S$ and $\cl T$ are
C*-nuclear operator systems then, in particular, they have the lifting property and so $\cl S \oplus_1 \cl T$ has the lifting property.
Since we assumed KC, by using Theorem \ref{thm new KC},  $\cl S \oplus_1 \cl T$  must have DCEP. Conversely
if we suppose that Question \ref{secondQ} is true then in particular $\cl S_2 = \cl S_1
\oplus_1 \cl S_1$ has DCEP.

$ $

\section{k-minimality and k-maximality}

In this section we review k-minimality and k-maximality in
the category of operator systems introduced by Xhabli in \cite{blerina}. This
theory and a similar construction in the category of operator spaces are used
extensively in the understanding of entanglement breaking maps and separability
problems in quantum information theory (\cite{blerina}, \cite{blerina2} and
\cite{Pa3}, e.g.). Our interest in k-minimality and k-maximality arises from
their compatiblity with exactness and the lifting property which will be
apparent in this section. We start with the following observation:

\begin{proposition}\label{unu}
Let $\varphi: \cl S \rightarrow B(H) $ be a linear map. Then $\varphi$ is
$k$-positive if and only if there is a unital $k$-positive map $\psi : \cl S
\rightarrow B(H)$ and $R\geq 0$ in $B(H)$ such that $\varphi = R \psi(\cdot) R$.
\end{proposition}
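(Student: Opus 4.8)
The plan is to prove the trivial implication directly and to obtain the substantial converse by a perturbation-and-limit argument built around $R = \varphi(e)^{1/2}$. First I would record the easy direction: if $\varphi = R\psi(\cdot)R$ with $R\ge 0$ and $\psi$ $k$-positive, then for any $(s_{ij})\in M_k(\cl S)^+$ we have $(\psi(s_{ij}))\ge 0$ in $M_k(B(H))$, and conjugating by the diagonal matrix $\diag(R,\dots,R)$ preserves positivity, so $(\varphi(s_{ij})) = (R\psi(s_{ij})R)\ge 0$; hence $\varphi$ is $k$-positive. (Unitality of $\psi$ is not needed here.)

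For the converse, suppose $\varphi$ is $k$-positive, so in particular positive, hence $*$-preserving and $\varphi(e)\ge 0$. The natural choice $R = \varphi(e)^{1/2}$ and $\psi = R^{-1}\varphi(\cdot)R^{-1}$ works verbatim \emph{when $\varphi(e)$ is invertible}: then $\psi$ is unital ($\psi(e) = R^{-1}\varphi(e)R^{-1} = I$) and $k$-positive by the computation above applied to the positive operator $R^{-1}$. The entire difficulty is that $\varphi(e)$ need not be invertible, so $R^{-1}$ may fail to exist or be unbounded; this is the main obstacle, and it is genuine, since $\psi_\epsilon$ below will not converge in norm when $R$ has non-closed range.

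To circumvent this I would fix a state $\delta$ on $\cl S$ and set, for $\epsilon>0$,
$$
\varphi_\epsilon(\cdot) = \varphi(\cdot) + \epsilon\,\delta(\cdot)\,I,
\qquad R_\epsilon = \varphi_\epsilon(e)^{1/2} = (\varphi(e)+\epsilon I)^{1/2}.
$$
Since $\delta(\cdot)I$ is completely positive, $\varphi_\epsilon$ remains $k$-positive, and now $R_\epsilon\ge \sqrt{\epsilon}\,I$ is invertible. Thus $\psi_\epsilon = R_\epsilon^{-1}\varphi_\epsilon(\cdot)R_\epsilon^{-1}$ is a \emph{unital} $k$-positive map satisfying $\varphi_\epsilon = R_\epsilon\,\psi_\epsilon(\cdot)\,R_\epsilon$.

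It remains to let $\epsilon\to 0$. Each $\psi_\epsilon$ is unital and positive, hence contractive on self-adjoint elements, so $\|\psi_\epsilon(s)\|\le 2\|s\|$ for every $s$. Therefore the net $(\psi_\epsilon)$, directed by decreasing $\epsilon$, lies in the compact product space $\prod_{s\in\cl S}\{A\in B(H):\|A\|\le 2\|s\|\}$ in the weak-$*$ ($\sigma$-weak) topology, and admits a subnet with $\psi_{\epsilon_\alpha}(s)\to\psi(s)$ $\sigma$-weakly for all $s$, where $\epsilon_\alpha\to 0$. The limit $\psi$ is linear, unital ($\psi(e)=\lim I = I$), and $k$-positive, since a $\sigma$-weak limit of the positive matrices $(\psi_{\epsilon_\alpha}(s_{ij}))$ is positive. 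To identify $R\psi(\cdot)R$ with $\varphi$, I would use $\|R_\epsilon - R\|\le\sqrt{\epsilon}\to 0$ together with the uniform bound on $\psi_\epsilon(s)$ to get $R_{\epsilon_\alpha}\psi_{\epsilon_\alpha}(s)R_{\epsilon_\alpha} - R\psi_{\epsilon_\alpha}(s)R\to 0$ in norm; and since left and right multiplication by the fixed operator $R$ is $\sigma$-weakly continuous, $R\psi_{\epsilon_\alpha}(s)R\to R\psi(s)R$ $\sigma$-weakly. Hence $\varphi_{\epsilon_\alpha}(s) = R_{\epsilon_\alpha}\psi_{\epsilon_\alpha}(s)R_{\epsilon_\alpha}\to R\psi(s)R$ $\sigma$-weakly, while $\varphi_{\epsilon_\alpha}(s)\to\varphi(s)$ in norm; uniqueness of limits yields $\varphi = R\psi(\cdot)R$, completing the proof.
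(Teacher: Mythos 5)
Your proof is correct, and it follows the same broad strategy as the paper's: regularize by $\epsilon$ so that the image of the unit becomes invertible, conjugate to obtain unital $k$-positive approximants, and extract a point-weak-$*$ limit using compactness of bounded sets of $B(H)$ in the $\sigma$-weak topology. But the regularization itself is genuinely different, and the difference matters. The paper leaves $\varphi$ untouched and conjugates it by $(A+\epsilon I)^{-1/2}$, where $A=\varphi(e)$; the resulting maps $\varphi_\epsilon$ are then only approximately unital, with $\varphi_\epsilon(e) = A(A+\epsilon I)^{-1}$, and the paper's assertion that this converges to $I$ is valid only when $A$ is injective --- in general the $\sigma$-weak limit is the projection $P$ onto $\overline{\operatorname{ran}\,A}$, so the limit map has to be corrected (e.g.\ by adding $\delta(\cdot)(I-P)$ for a state $\delta$) before it is unital, a step the paper does not carry out; its identification $\varphi = A^{1/2}\psi(\cdot)A^{1/2}$ also implicitly uses the fact that $P\varphi(s)P = \varphi(s)$ for positive $\varphi$. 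Your additive perturbation $\varphi_\epsilon = \varphi + \epsilon\,\delta(\cdot)I$ arranges that $\varphi_\epsilon(e) = A+\epsilon I$ is exactly $R_\epsilon^2$, so each $\psi_\epsilon$ is exactly unital and unitality of the limit is automatic. The price is a slightly longer identification step, but each ingredient you invoke --- the bound $\|R_\epsilon - R\|\le\sqrt{\epsilon}$, the uniform bound $\|\psi_\epsilon(s)\|\le 2\|s\|$ for unital positive maps, and the $\sigma$-weak continuity of $T\mapsto RTR$ for fixed $R$ --- is correct, so your version is, if anything, the more watertight of the two arguments.
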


\begin{proof} We will show only the non-trivial direction.  Let $\varphi: \cl S
\rightarrow B(H) $ be a $k$-positive map. We assume that $\varphi(e) = A$
satisfies $0\leq A \leq I$, where $I$ is the identity in $B(H)$. For any
$\epsilon > 0$ let $\varphi_\epsilon : \cl S \rightarrow B(H)$ be the map
defined by $\varphi_\epsilon = (A+\epsilon I)^{-1/2} \varphi(\cdot)  (A+\epsilon
I)^{-1/2}$. Since $B(\cl S, B(H))$ is a dual object, which arises from the fact that
$B(H)$ is dual of a Banach space, the net 
$\{\varphi_\epsilon\}$ has a $w^*$-limit, say $\psi$.
First note that $\psi$ is unital. Indeed, $\varphi_\epsilon (e) = A(A+\epsilon
I)^{-1}$ converges to the identity $I$ in the $w^*$-topology of $B(H)$. 
Consequently $\psi$ is unital.
We also claim that $\psi$ is $k$-positive. To see this let $(s_{ij})$ be
positive in $M_k(\cl S)$. Since $\varphi_{\epsilon}$ is $k$-positive we have
that $(\varphi_{\epsilon}(s_{ij}))$ is positive in $M_k(B(H))$. The weak
convergence $\varphi_{\epsilon}\rightarrow \psi$ ensures that, for fixed $i,j$, 
$\varphi_{\epsilon}(s_{ij})$ has a limit in the $w^*$-topology of $B(H)$
which is necessarily $\psi(s_{ij})$. Now the result follows from the fact that
positives cones are closed in the $w^*$-topology of $B(H)$. Finally we claim
that $\varphi = A^{1/2} \psi(\cdot) A^{1/2}$. Indeed this follows from the
uniqueness of the $w^*$-limit in $B(\cl S, B(H))$. In fact we have that 
$A^{1/2}
\varphi_{\epsilon}(\cdot) A^{1/2}$ converges to $A^{1/2} \psi(\cdot)
A^{1/2}$. On the other hand for fixed $s$ in $\cl S$, $A^{1/2}
\varphi_{\epsilon}(s) A^{1/2}$ converges to $ \varphi(s)$ (in the
$w^*$-topology of $B(H)$). So the proof is done.
\end{proof}

\begin{corollary} \label{cor unital non unital}
The following properties of an operator system $\cl S$ are equivalent:
\begin{enumerate}
 \item Every k-positive map defined from $\cl S$ into an operator system is cp.
 \item Every unital  k-positive map defined from $\cl S$ into an operator system
is cp.
\end{enumerate}
\end{corollary}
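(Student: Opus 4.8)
The plan is to read the statement off directly from Proposition \ref{unu}. The implication (1) $\Rightarrow$ (2) is immediate, since a unital $k$-positive map is in particular $k$-positive; so the substance lies in the converse.

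For (2) $\Rightarrow$ (1), I would begin with an arbitrary $k$-positive map $\varphi: \cl S \rightarrow \cl T$ into an operator system $\cl T$ and aim to show it is completely positive. First I would invoke the Choi--Effros representation theorem to fix a unital complete order embedding $\iota: \cl T \hookrightarrow B(H)$. Since $\iota$ is completely positive and the composition $\iota^{(k)}\circ\varphi^{(k)} = (\iota\circ\varphi)^{(k)}$ carries $M_k(\cl S)^+$ into $M_k(B(H))^+$, the map $\iota\circ\varphi: \cl S \rightarrow B(H)$ is again $k$-positive. Moreover, because $\iota$ is a complete order embedding, $(\varphi(s_{ij}))$ is positive in $M_n(\cl T)$ if and only if $(\iota\varphi(s_{ij}))$ is positive in $M_n(B(H))$; hence it suffices to prove that $\iota\circ\varphi$ is completely positive.

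Next I would apply Proposition \ref{unu} to $\iota\circ\varphi$. After rescaling by a positive constant, which affects neither $k$-positivity nor complete positivity (the case $\varphi = 0$ being trivial), I may assume $0 \leq (\iota\varphi)(e) \leq I$, so the hypothesis of Proposition \ref{unu} is met. The proposition then produces a unital $k$-positive map $\psi: \cl S \rightarrow B(H)$ and an element $R \geq 0$ in $B(H)$ with $\iota\circ\varphi = R\,\psi(\cdot)\,R$. Now hypothesis (2) applies to $\psi$: being unital and $k$-positive with range in the operator system $B(H)$, it is completely positive.

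Finally I would note that conjugation by a fixed positive element preserves complete positivity: if $(s_{ij}) \in M_n(\cl S)^+$ then $(\psi(s_{ij})) \geq 0$ in $M_n(B(H))$, and $(R\,\psi(s_{ij})\,R) = \diag(R)\,(\psi(s_{ij}))\,\diag(R) \geq 0$. Hence $\iota\circ\varphi = R\,\psi(\cdot)\,R$ is completely positive, and by the equivalence noted above so is $\varphi$. The only points requiring any care are the normalization of $\varphi(e)$ needed to invoke Proposition \ref{unu} and the descent of complete positivity through $\iota$; both are routine, so I expect no serious obstacle, the corollary being essentially a repackaging of the factorization supplied by Proposition \ref{unu}.
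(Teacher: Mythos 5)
Your proof is correct and is exactly the intended derivation: the paper states this as an immediate corollary of Proposition \ref{unu}, with the understood argument being precisely your reduction to $B(H)$-valued maps via the Choi--Effros embedding, the factorization $\iota\circ\varphi = R\,\psi(\cdot)\,R$ with $\psi$ unital and $k$-positive, hypothesis (2) applied to $\psi$, and the observation that conjugation by $R\geq 0$ preserves complete positivity. The normalization of $\varphi(e)$ and the descent through the complete order embedding $\iota$ are handled properly, so there is nothing to add.
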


Before getting started with the k-minimality and k-maximality we also recall the
following result  (see Thm. 6.1 of \cite{Pa}).
\begin{lemma}
Suppose $\phi:\cl S \rightarrow M_k$ is a linear map. Then $\phi$ is k-positive
if and only if it is completely positive.
\end{lemma}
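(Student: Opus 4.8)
The plan is to prove the two implications separately, with the forward direction being immediate and the converse carrying all the content. By definition complete positivity means that the amplification $\phi^{(n)}\colon M_n(\cl S)\to M_n(M_k)$ is positive for \emph{every} $n$; specializing to $n=k$ shows at once that a completely positive map is $k$-positive. So only the converse requires an argument: assuming that $\phi^{(k)}\colon M_k(\cl S)\to M_k(M_k)$ is positive, I must show that $\phi^{(m)}$ is positive for an arbitrary $m$.

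To that end, fix $m$ and let $P=(s_{ij})\in M_m(\cl S)^+$; I want to prove $(\phi(s_{ij}))_{ij}\in M_m(M_k)=M_{mk}$ is positive, which is equivalent to $\langle (\phi(s_{ij}))\,v,v\rangle\ge 0$ for every $v\in\mathbb{C}^m\otimes\mathbb{C}^k$. Writing $v=\sum_i e_i\otimes v_i$ with $v_i\in\mathbb{C}^k$, the quadratic form becomes $\sum_{i,j}\langle\phi(s_{ij})v_j,v_i\rangle$. The key step is to encode the test vector as a scalar matrix and exploit the compatibility (congruence) axiom of the cones. Let $C\in M_{m,k}$ be the matrix whose $(i,\alpha)$ entry is the $\alpha$-th coordinate of $v_i$. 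Then $C^{*}PC\in M_k(\cl S)$, and since congruence by $C\in M_{m,k}$ carries $M_m(\cl S)^+$ into $M_k(\cl S)^+$, we obtain $C^{*}PC\in M_k(\cl S)^+$. Applying the hypothesis, $\phi^{(k)}(C^{*}PC)$ is positive in $M_k(M_k)$. A direct index expansion then shows that pairing this positive matrix against the unnormalized maximally entangled vector $\xi=\sum_{\gamma=1}^{k}e_\gamma\otimes e_\gamma\in\mathbb{C}^k\otimes\mathbb{C}^k$ recovers exactly $\sum_{i,j}\langle\phi(s_{ij})v_j,v_i\rangle$, so this quantity is nonnegative. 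As $v$ and then $m$ are arbitrary, $\phi$ is completely positive.

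The only genuine content is this congruence-plus-entangled-vector identity, and it is precisely where the hypothesis that the range is $M_k$ enters: the $m$ component vectors $v_1,\dots,v_m$ all live in $\mathbb{C}^k$, so they can be packaged into the single $k$-column matrix $C$, collapsing the arbitrary size $m$ down to the fixed size $k$ at which positivity is assumed. An equivalent route, which I would mention as an alternative, is to set $W=\mathrm{span}\{v_1,\dots,v_m\}$, note $d:=\dim W\le k$, compress against an orthonormal basis of $W$ so that the congruence lands in $M_d(\cl S)^+$, and then invoke the fact that $k$-positivity entails $d$-positivity for every $d\le k$ (padding with zero rows and columns is itself a congruence, so the relevant block of $\phi^{(d)}$ sits inside a positive $\phi^{(k)}$-image). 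I expect the bookkeeping in the entangled-vector identity — matching the matrix-entry indices $(\alpha,\beta)$ of $\phi(\cdot)\in M_k$ with the block indices of $C^{*}PC$ — to be the only place that demands care; everything else is a formal consequence of the compatibility axiom and the definition of amplification.
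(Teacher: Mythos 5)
Your proof is correct. The paper does not prove this lemma at all — it simply recalls it as a known result, citing Thm.\ 6.1 of \cite{Pa} (Choi's theorem) — and your argument is essentially that standard proof: packaging $v_1,\dots,v_m\in\mathbb{C}^k$ into $C\in M_{m,k}$, using the compatibility axiom to get $C^*PC\in M_k(\cl S)^+$, and pairing $\phi^{(k)}(C^*PC)$ against $\sum_{\gamma}e_\gamma\otimes e_\gamma$ does recover $\sum_{i,j}\langle\phi(s_{ij})v_j,v_i\rangle$ (the index computation checks out, and in the complex setting nonnegativity of the quadratic form already yields self-adjointness, so no separate argument is needed there).
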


Following Xhabli \cite{blerina}, for an operator system $\cl S$ we define the
k-minimal cone structure as follows:
$$
C^{k\mbox{-}min}_n = \{ (s_{ij})\in M_n(\cl S) : (\phi(s_{ij})) \geq 0 \mbox{
for every ucp } \phi:\cl S \rightarrow M_k   \}.
$$
By considering Proposition \ref{unu} one can replace ucp by cp in this
definition. Now, the $*$-vector space $\cl S$ together
with the matricial cone structure $\{C^{k\mbox{-}min}_n\}_{n=1}^\infty$ and the
unit $e$ form an operator system which is called the \textit{k-minimal
operator system structure generated by $\cl S$} and denoted by $\OMIN_k(\cl S)$.
We refer \cite[Section 2.3]{blerina} for the proof of these results
and we remark that $\OMIN_k(\cl S)$ is named as \textit{ super k-minimal
structure} so we drop the term ``super'' in this paper. 
Roughly speaking $\OMIN_k(\cl S)$ is (possibly) a new operator system whose
positive cones coincide with the positive cones of
$\cl S$ up to the $k^{th}$ level and after the $k^{th}$ level they are the
largest cones
so that the total matricial cone structure is still an operator system. Note
that larger cones generate smaller canonical operator space structure  so this
construction is named the k-minimal structure.
We list a couple of remarkable results from \cite{blerina}:

\begin{theorem}[Xhabli] Suppose $\cl S$ is an operator system and $k$ is a fixed
number. Then:
\begin{enumerate}
 \item $\OMIN_k(\cl S)$ can be represented in $M_k( C(X))$ for some compact
space $X$.
\item If $\varphi : \cl T \rightarrow \OMIN_k(\cl S)$ is a $k$-positive map
then $\varphi$ is completely positive.
\item The identity $id:\OMIN_k(\cl S) \rightarrow \cl S$ is k-positive.
\item For any $m \leq k$ the identities $\cl S \rightarrow \OMIN_k(\cl
S)\rightarrow \OMIN_m(\cl S) $ are completely positive. 
\end{enumerate}
\end{theorem}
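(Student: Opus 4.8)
The plan is to treat the four parts in an order that lets each build on the last, the common thread being that by definition the cone $C^{k\mbox{-}min}_n$ is cut out precisely by the family of all ucp maps $\phi:\cl S\to M_k$. For (1) I would let $X$ be the set of all ucp maps $\phi:\cl S\to M_k$ with the topology of pointwise convergence; since each such $\phi$ is contractive and positivity and unitality survive pointwise limits, $X$ is a closed subset of a product of balls of $M_k$, hence compact Hausdorff by Tychonoff. The evaluation map $\Phi:\cl S\to M_k(C(X))=C(X,M_k)$, $\Phi(s)(\phi)=\phi(s)$, is unital, and $(\Phi(s_{ij}))$ is positive in $M_n(M_k(C(X)))$ exactly when $(\phi(s_{ij}))\geq0$ in $M_n(M_k)$ for every $\phi\in X$, i.e. exactly when $(s_{ij})\in C^{k\mbox{-}min}_n$; thus $\Phi$ is a unital complete order embedding (it is injective because states, composed with $\bb C\hookrightarrow M_k$, separate the points of $\cl S$). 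Two byproducts of this picture are used throughout: since cp maps preserve positivity, $M_n(\cl S)^+\subseteq C^{k\mbox{-}min}_n$, so the identity $\cl S\to\OMIN_k(\cl S)$ is cp; and each ucp $\phi:\cl S\to M_k$ remains, tautologically, ucp as a map out of $\OMIN_k(\cl S)$.

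For (2), given a $k$-positive $\varphi:\cl T\to\OMIN_k(\cl S)$ and an arbitrary ucp $\phi:\cl S\to M_k$, I would first check that $\phi\circ\varphi:\cl T\to M_k$ is $k$-positive: for $(t_{ij})\in M_k(\cl T)^+$ the element $(\varphi(t_{ij}))$ is positive in $M_k(\OMIN_k(\cl S))$, and applying the cp map $\phi$ preserves this. By the Lemma stated just above (\cite[Thm. 6.1]{Pa}) a $k$-positive map into $M_k$ is automatically cp, so every $\phi\circ\varphi$ is cp; hence for any $(t_{ij})\in M_n(\cl T)^+$ one gets $(\phi(\varphi(t_{ij})))\geq0$ for all ucp $\phi$, i.e. $(\varphi(t_{ij}))\in C^{k\mbox{-}min}_n$, so $\varphi$ is cp.

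Part (3) is the crux and reduces to the reverse inclusion $C^{k\mbox{-}min}_k\subseteq M_k(\cl S)^+$. The plan is to use the standard bijection (\cite[Thm. 6.1]{Pa}) between ucp maps $\phi:\cl S\to M_k$ and states $\omega$ on the operator system $M_k(\cl S)$, under which $\omega((s_{ij}))=\frac1k\sum_{i,j}(\phi(s_{ij}))_{ij}$, together with the identity $\langle(\phi(s_{ij}))\eta,\eta\rangle=\frac1k\sum_{i,j}(\phi(s_{ij}))_{ij}$ obtained by pairing the block matrix $(\phi(s_{ij}))\in M_k(M_k)$ against the maximally entangled vector $\eta=\frac1{\sqrt k}\sum_l e_l\otimes e_l$. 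Then $(s_{ij})\in C^{k\mbox{-}min}_k$ gives $(\phi(s_{ij}))\geq0$ for every $\phi$, hence $\omega((s_{ij}))\geq0$ for every state $\omega$ of $M_k(\cl S)$, and since states detect positivity in an operator system we conclude $(s_{ij})\in M_k(\cl S)^+$. The main obstacle is exactly this self-duality: one must verify that every state of $M_k(\cl S)$ comes from a ucp map into $M_k$ and that the entangled-vector pairing recovers it, so that the level-$k$ cones of $\OMIN_k(\cl S)$ and $\cl S$ genuinely coincide.

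Finally, for (4) the map $\cl S\to\OMIN_k(\cl S)$ is cp by the byproduct above, and for $\OMIN_k(\cl S)\to\OMIN_m(\cl S)$ with $m<k$ (the case $m=k$ being trivial) I would prove $C^{k\mbox{-}min}_n\subseteq C^{m\mbox{-}min}_n$ by inflation: given a ucp $\psi:\cl S\to M_m$, fix a state $f$ on $\cl S$ and form the block-diagonal ucp map $\tilde\psi=\psi\oplus f(\cdot)I_{k-m}:\cl S\to M_m\oplus M_{k-m}\subseteq M_k$. If $(s_{ij})\in C^{k\mbox{-}min}_n$ then $(\tilde\psi(s_{ij}))\geq0$, and the permutation identifying $M_n(M_m\oplus M_{k-m})$ with $M_n(M_m)\oplus M_n(M_{k-m})$ forces the summand $(\psi(s_{ij}))\geq0$; as $\psi$ is arbitrary, $(s_{ij})\in C^{m\mbox{-}min}_n$, so the identity is cp and the theorem follows.
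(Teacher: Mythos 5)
The paper itself offers no proof of this theorem: it is quoted from Xhabli's thesis with a pointer to \cite[Section 2.3]{blerina}, so your proposal has to be judged on its own merits. Parts (1), (2) and (4) are correct and complete: the weak compactness of the set $X$ of ucp maps $\cl S \to M_k$, the tautological fact that every ucp $\phi:\cl S\to M_k$ remains cp as a map out of $\OMIN_k(\cl S)$, and the inflation $\tilde\psi = \psi\oplus f(\cdot)I_{k-m}$ are exactly the right arguments. The problem is in part (3), at the very step you flag as ``the main obstacle'': the claim that \emph{every state of $M_k(\cl S)$ comes from a ucp map $\cl S\to M_k$} is false. Under the correspondence of \cite[Thm. 6.1]{Pa}, $s_\phi\bigl((s_{ij})\bigr)=\frac1k\sum_{i,j}\phi(s_{ij})_{ij}$, positive functionals on $M_k(\cl S)$ correspond bijectively to \emph{completely positive} maps $\phi:\cl S\to M_k$, and the states correspond to those cp maps normalized by $\tr(\phi(e))=k$, not by $\phi(e)=I_k$. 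Already for $\cl S=\bb C$ and $k=2$: the state $A\mapsto A_{11}$ of $M_2(\bb C)=M_2$ corresponds to the map $1\mapsto 2E_{11}$, which is not unital, whereas the only ucp map $\bb C\to M_2$ is $1\mapsto I_2$, whose associated state is the normalized trace. So the states your argument reaches do not exhaust the state space, and the conclusion $(s_{ij})\in M_k(\cl S)^+$ does not follow as written.

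The gap is genuine but repairable, and the missing ingredient is in fact Proposition \ref{unu} of the paper (stated immediately before this theorem, and used there precisely to note that ``one can replace ucp by cp'' in the definition of the $k$-minimal cones). Namely, it suffices to know that $(s_{ij})\in C^{k\mbox{-}min}_k$ forces $(\phi(s_{ij}))\geq 0$ for every \emph{cp} map $\phi:\cl S\to M_k$; granted this, every state $\omega$ of $M_k(\cl S)$ equals $s_\phi$ for some cp $\phi$, your maximally-entangled-vector identity gives $\omega\bigl((s_{ij})\bigr)=\bigl\langle (\phi(s_{ij}))\eta,\eta\bigr\rangle\geq 0$, and state-detection of positivity in the operator system $M_k(\cl S)$ finishes the argument. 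The cp upgrade can be done either by quoting Proposition \ref{unu} ($\phi=R\psi(\cdot)R$ with $\psi$ unital $k$-positive into $M_k$, hence ucp, and conjugation by $\diag(R,\dots,R)$ preserves positivity), or elementarily: for $\epsilon>0$ put $\phi_\epsilon=\phi+\epsilon f(\cdot)I_k$ with $f$ a state, so $A_\epsilon=\phi_\epsilon(e)$ is invertible and $\psi_\epsilon=A_\epsilon^{-1/2}\phi_\epsilon(\cdot)A_\epsilon^{-1/2}$ is ucp; then $(\phi_\epsilon(s_{ij}))=\diag(A_\epsilon^{1/2},\dots,A_\epsilon^{1/2})^*(\psi_\epsilon(s_{ij}))\diag(A_\epsilon^{1/2},\dots,A_\epsilon^{1/2})\geq 0$, and letting $\epsilon\to 0$ gives $(\phi(s_{ij}))\geq 0$ since the positive cone of $M_k(M_k)$ is closed. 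With this one correction your outline becomes a complete proof.
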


\begin{lemma}\label{lem SisOminS}
Let $\cl S$ be an operator system. Then $\cl S = \OMIN_k(\cl S)$ if and only if
every k-positive map defined from an operator system into $\cl S$ is completely
positive. 
\end{lemma}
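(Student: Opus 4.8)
The plan is to prove the two implications separately, using three facts already available in the excerpt: the preceding lemma that a map into $M_k$ is completely positive precisely when it is $k$-positive; Xhabli's theorem, which gives that the identity $id:\OMIN_k(\cl S)\rightarrow \cl S$ is $k$-positive while the identity $\cl S \rightarrow \OMIN_k(\cl S)$ is completely positive; and the defining description of the cones $C^{k\mbox{-}min}_n$ through completely positive maps into $M_k$ (where, by Proposition \ref{unu}, one may use ucp or cp maps interchangeably). Note that the complete positivity of $\cl S \rightarrow \OMIN_k(\cl S)$ says $M_n(\cl S)^+ \subseteq C^{k\mbox{-}min}_n$ for all $n$, so the equality $\cl S = \OMIN_k(\cl S)$ is equivalent to the reverse inclusion $C^{k\mbox{-}min}_n \subseteq M_n(\cl S)^+$ for all $n$.

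For the direction assuming that every $k$-positive map into $\cl S$ is completely positive, I would simply apply the hypothesis to the identity $id:\OMIN_k(\cl S) \rightarrow \cl S$, which is $k$-positive by Xhabli's theorem. The hypothesis forces it to be completely positive, that is, $C^{k\mbox{-}min}_n \subseteq M_n(\cl S)^+$ for every $n$. Combined with the automatic reverse inclusion noted above, this yields $\cl S = \OMIN_k(\cl S)$.

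For the converse, suppose $\cl S = \OMIN_k(\cl S)$ and let $\varphi: \cl T \rightarrow \cl S$ be $k$-positive. To show $\varphi$ is completely positive, fix $n$ and $(t_{ij}) \in M_n(\cl T)^+$; I must place $(\varphi(t_{ij}))$ in $M_n(\cl S)^+ = C^{k\mbox{-}min}_n$. By the definition of the $k$-minimal cone it suffices to check that $(\phi(\varphi(t_{ij}))) \geq 0$ for every completely positive $\phi: \cl S \rightarrow M_k$. I would argue that the composite $\phi \circ \varphi: \cl T \rightarrow M_k$ is $k$-positive: if $(t_{rs}) \in M_k(\cl T)^+$, then $(\varphi(t_{rs})) \in M_k(\cl S)^+$ by $k$-positivity of $\varphi$, and applying the completely positive $\phi$ entrywise preserves positivity. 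By the preceding $M_k$-lemma, $\phi \circ \varphi$ is then completely positive, so $(\phi(\varphi(t_{ij}))) \geq 0$ at the level $n$ we fixed. As $\phi$ was arbitrary, $(\varphi(t_{ij})) \in C^{k\mbox{-}min}_n = M_n(\cl S)^+$, completing the argument.

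The only delicate point, and the step I would treat most carefully, is this composition argument in the converse: one must observe that membership in $C^{k\mbox{-}min}_n$ is tested by completely positive maps into $M_k$, and that each such test reduces, via the $M_k$-lemma, to the mere $k$-positivity of $\phi \circ \varphi$, which is all that the $k$-positivity of $\varphi$ can deliver. This is exactly where the hypothesis $\cl S = \OMIN_k(\cl S)$ (rather than an estimate at a single matrix level) is consumed, since it is what converts positivity detected by all the $M_k$-valued maps into genuine positivity in $M_n(\cl S)$.
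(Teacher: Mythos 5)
Your proposal is correct, and one half of it coincides with the paper's argument: for the implication from the mapping hypothesis to $\cl S = \OMIN_k(\cl S)$, both you and the paper apply the hypothesis to the identity $id:\OMIN_k(\cl S)\rightarrow\cl S$ (which is $k$-positive by Xhabli's theorem) and combine the resulting complete positivity with the automatic complete positivity of its inverse. The difference lies in the other implication. The paper handles it by citation: part (2) of Xhabli's theorem states that every $k$-positive map into $\OMIN_k(\cl S)$ is completely positive, which under $\cl S=\OMIN_k(\cl S)$ is exactly the conclusion. You instead prove it directly: given a $k$-positive $\varphi:\cl T\rightarrow\cl S$ and $(t_{ij})\in M_n(\cl T)^+$, you test $(\varphi(t_{ij}))$ against every completely positive $\phi:\cl S\rightarrow M_k$ --- legitimate, since Proposition \ref{unu} allows cp maps in place of ucp maps in the definition of $C^{k\mbox{-}min}_n$ --- observe that $\phi\circ\varphi$ is $k$-positive, and upgrade it to completely positive by Choi's lemma that $k$-positive maps into $M_k$ are completely positive. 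This is sound; in effect you have reproved the cited part of Xhabli's theorem, so your argument is self-contained, resting only on the cone definition and Choi's lemma, where the paper's is shorter but leans on the imported result.
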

\begin{proof}
One direction follows from the above result of Xhabli. Conversely, suppose that
every
k-positive map defined into $\cl S$ is cp. This, in particular, implies that
the identity $id: \OMIN_k(\cl S) \rightarrow \cl S$, which is k-positive, is
cp.   Since the inverse of this map is also cp it follows that $\cl S =
\OMIN_k(\cl S)$.
\end{proof}

Let $\cl S$ be an operator system and $k$ be a fixed natural number. To define
the k-maximal structure we first consider the following cones:
$$
D^{k\mbox{-}max}_n = \{ A^*DA:\; A \in M_{mk,n} \mbox{ and  } D =
diagonal(D_1,...,D_m) \hspace{2cm}
$$
$$
  \hspace{4cm} \mbox{ where }D_i \in M_k(\cl S)^+ \mbox{ for } i =
1,...,m\}. 
$$
$\{D^{k\mbox{-}max}_n\}$ forms a strict compatible matricial order
structure on $\cl S$ and $e$ is an matricial order unit. However, $e$ fails to
be Archimedean and to resolve this problem we use the Archimedeanization process
(see \cite{pt}):
$$
C^{k\mbox{-}max}_n = \{(s_{ij}) \in M_n(\cl S): \;\; (s_{ij}) + \epsilon e_n
\in D^{k\mbox{-}max}_n \mbox{ for every } \epsilon >0\}.
$$
Note that $D^{k\mbox{-}max}_n \subset C^{k\mbox{-}max}_n$. The
$*$-vector space $\cl S$ together
with the matricial cone structure $\{C^{k\mbox{-}max}_n\}_{n=1}^\infty$ and the
unit $e$ form an operator system which is called \textit{k-maximal
operator system structure generated by $\cl S$} and denoted by $\OMAX_k(\cl
S)$. For related proof we refer \cite[Sec. 2.3.]{blerina}. (We again drop the
term ``super''.) The $\OMAX_k(\cl S)$ is (possibly) a new operator system
structure on the $*$-vector space $\cl S$ such that the matricial cones
coincide with the matricial cones of the operator system $\cl S$ up to
$k^{th}$-level and after $k$, the cones are the smallest possible cones such a
way that the total structure makes $\cl S$ an operator system with unit $e$.

\begin{theorem}[Xhabli]
Let $\cl S$ be an operator system and $k$ be a fixed
number. Then: 
\begin{enumerate}
 \item Every $k$-positive map defined from $\OMAX_k(\cl S)$ into an
operator system is completely positive.
 \item The identity $id :\cl S \rightarrow \OMAX_k(\cl S)$ is $k$-positive.
 \item For any $m \leq k$ the identities $\OMAX_m(\cl S) \rightarrow \OMAX_k
(\cl S) \rightarrow \cl S$ are completely positive.
\end{enumerate}
\end{theorem}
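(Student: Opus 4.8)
The plan is to read off part (2) and the easy half of (3) straight from the definition of the cones, then to establish part (1) --- the substantive statement --- by a single absorption computation, and finally to deduce the remaining half of (3) by a zero-padding argument. For part (2), I would simply note that if $(s_{ij}) \in M_k(\cl S)^+$ then, taking $m = 1$, $A = I_k \in M_{k,k}$ and $D = D_1 = (s_{ij})$, one has $(s_{ij}) = A^* D A \in D^{k\mbox{-}max}_k \subseteq C^{k\mbox{-}max}_k$, so that $id : \cl S \to \OMAX_k(\cl S)$ is $k$-positive. For the half $\OMAX_k(\cl S) \to \cl S$ of part (3), take $(s_{ij}) \in C^{k\mbox{-}max}_n$; for each $\epsilon > 0$ we may write $(s_{ij}) + \epsilon e_n = A^* D A$ with $D = \diag(D_1, \dots, D_m)$ and $D_\ell \in M_k(\cl S)^+$. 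Since a block-diagonal matrix of positives is positive, $D \in M_{mk}(\cl S)^+$, and the compatibility axiom of the matrix ordering on $\cl S$ gives $A^* D A \in M_n(\cl S)^+$. Letting $\epsilon \to 0$ and using that $e$ is an Archimedean matrix order unit for $\cl S$, we conclude $(s_{ij}) \in M_n(\cl S)^+$, so $id : \OMAX_k(\cl S) \to \cl S$ is completely positive.

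Part (1) is the heart of the matter. By Corollary \ref{cor unital non unital} applied to the operator system $\OMAX_k(\cl S)$, it suffices to show that every \emph{unital} $k$-positive map $\varphi : \OMAX_k(\cl S) \to \cl T$ is completely positive, and by the Choi--Effros theorem I may assume $\cl T \subseteq B(H)$. Fix $n$ and take $(s_{ij}) \in C^{k\mbox{-}max}_n$. For $\epsilon > 0$ write $(s_{ij}) + \epsilon e_n = A^* D A$ with $D = \diag(D_1, \dots, D_m)$, $D_\ell \in M_k(\cl S)^+$ and $A \in M_{mk,n}$ a scalar matrix. Since $\varphi$ is linear and $A$ has scalar entries, entrywise application of $\varphi$ commutes with the congruence by $A$, giving
$$
\varphi^{(n)}\big((s_{ij}) + \epsilon e_n\big) = A^* \, \diag\big(\varphi^{(k)}(D_1), \dots, \varphi^{(k)}(D_m)\big) \, A .
$$
As $\varphi$ is $k$-positive, each $\varphi^{(k)}(D_\ell) \geq 0$, so the block-diagonal middle factor is positive and the whole expression is positive in $M_n(B(H))$. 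Unitality yields $\varphi^{(n)}((s_{ij}) + \epsilon e_n) = (\varphi(s_{ij})) + \epsilon I_n$, whence $(\varphi(s_{ij})) + \epsilon I_n \geq 0$ for every $\epsilon > 0$; the Archimedean property of $B(H)$ then forces $(\varphi(s_{ij})) \geq 0$. Thus $\varphi^{(n)}$ is positive for all $n$.

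Finally, for the half $\OMAX_m(\cl S) \to \OMAX_k(\cl S)$ of part (3) with $m \leq k$, I would prove the inclusion $D^{m\mbox{-}max}_n \subseteq D^{k\mbox{-}max}_n$, which passes to the Archimedeanized cones. Given $A^* D A$ with $D = \diag(D_1, \dots, D_p)$ and $D_\ell \in M_m(\cl S)^+$, set $W = \left(\begin{smallmatrix} I_m \\ 0 \end{smallmatrix}\right) \in M_{k,m}$ and $\tilde D_\ell = W D_\ell W^* \in M_k(\cl S)^+$ (positivity again by compatibility). Then $D_\ell = W^* \tilde D_\ell W$, so with $U = \diag(W, \dots, W)$ one obtains $A^* D A = (UA)^* \diag(\tilde D_1, \dots, \tilde D_p)(UA) \in D^{k\mbox{-}max}_n$; hence $id : \OMAX_m(\cl S) \to \OMAX_k(\cl S)$ is completely positive and part (3) is complete. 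The only real obstacle is the bookkeeping in part (1): one must verify carefully that the scalar congruence $A^*(\cdot)A$ genuinely commutes with the entrywise action of $\varphi$ and that $k$-positivity --- rather than full complete positivity --- is exactly what the block-diagonal structure of $D$ calls for. Once this is in place, every part reduces to the Archimedean property of the target.
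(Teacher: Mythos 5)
Your proof is correct, and there is nothing in the paper to compare it against: the paper states this theorem as a quoted result of Xhabli and refers to Section 2.3 of her thesis for the proof, so what you have produced is a self-contained verification from the cone definitions rather than an alternative to an argument in the text. Your route is the natural one, and every step checks out: part (2) is the trivial instance $m=1$, $A=I_k$ of the definition of $D^{k\mbox{-}max}_k$; the half $\OMAX_k(\cl S)\to\cl S$ of (3) follows from compatibility of the matrix ordering on $\cl S$ plus the Archimedean property; the scalar-congruence identity $\varphi^{(n)}(A^*DA)=A^*\varphi^{(mk)}(D)A$ in part (1) is the standard computation; and the padding $W=\left(\begin{smallmatrix} I_m \\ 0 \end{smallmatrix}\right)$ argument gives $D^{m\mbox{-}max}_n\subseteq D^{k\mbox{-}max}_n$, which indeed passes to the Archimedeanized cones. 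Two minor remarks. First, in part (1) the inference ``$\varphi$ is $k$-positive, hence $\varphi^{(k)}(D_\ell)\geq 0$'' silently uses part (2), i.e.\ that $M_k(\cl S)^+$ sits inside the $k$-th cone of $\OMAX_k(\cl S)$; since you establish (2) before (1) this is fine, but it deserves a sentence. Second, the reduction to unital maps via Corollary \ref{cor unital non unital} is legitimate but avoidable: without unitality one gets $(\varphi(s_{ij}))\geq -\epsilon\,(\varphi(e))_n \geq -\epsilon\,\|\varphi(e)\|\,e_n$ in $M_n(B(H))$, and the Archimedean property closes the argument just as well, which slightly shortens the proof.
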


The proof of the following lemma is similar to Lemma \ref{lem SisOminS} so we
skip it.

\begin{lemma}\label{lem SisOmaxS}
Let $\cl S$ be an operator system. Then $\cl S = \OMAX_k(\cl S)$ if and only if
every k-positive map defined from $\cl S$ into another operator system is
completely positive. 
\end{lemma}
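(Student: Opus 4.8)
The plan is to mirror the proof of Lemma \ref{lem SisOminS}, substituting the preceding theorem of Xhabli (the one asserting the three properties of $\OMAX_k$) for its $\OMIN_k$ counterpart. The essential point is that the two constructions are dual in their variance: for $\OMIN_k$ one controls $k$-positive maps \emph{into} $\cl S$, whereas for $\OMAX_k$ one controls $k$-positive maps \emph{out of} $\cl S$. Once this is kept straight, the argument is entirely formal and reduces to showing that the two identity maps relating $\cl S$ and $\OMAX_k(\cl S)$ are both completely positive.

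First I would dispose of the forward implication. Assume $\cl S = \OMAX_k(\cl S)$. Part (1) of the Xhabli theorem above states precisely that every $k$-positive map defined from $\OMAX_k(\cl S)$ into an operator system is completely positive. Since $\cl S$ and $\OMAX_k(\cl S)$ are the same operator system by hypothesis, every $k$-positive map out of $\cl S$ is completely positive, which is the desired conclusion.

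For the converse, suppose that every $k$-positive map defined from $\cl S$ into another operator system is completely positive, and consider the identity map $id : \cl S \rightarrow \OMAX_k(\cl S)$. Part (2) of the theorem asserts that this map is $k$-positive; as it is a $k$-positive map out of $\cl S$ into the operator system $\OMAX_k(\cl S)$, the hypothesis forces it to be completely positive. On the other hand, part (3) guarantees that the reverse identity $id : \OMAX_k(\cl S) \rightarrow \cl S$ is completely positive. Hence both the identity and its inverse are completely positive, so each is a complete order isomorphism, and therefore $\cl S = \OMAX_k(\cl S)$ as operator systems.

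I do not expect any genuine obstacle here: the proof is a purely categorical manipulation of the three listed properties of $\OMAX_k$, exactly parallel to Lemma \ref{lem SisOminS}. The only subtlety worth flagging is the variance bookkeeping — one must invoke properties (1)--(3) for maps out of $\cl S$ rather than into it — together with the trivial observation that $\OMAX_k(\cl S)$ is itself an operator system, so that the hypothesis legitimately applies to the identity map $\cl S \rightarrow \OMAX_k(\cl S)$.
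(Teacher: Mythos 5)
Your proposal is correct and is precisely the argument the paper intends: the paper omits the proof, stating only that it is "similar to Lemma \ref{lem SisOminS}," and your proof is exactly that mirrored argument, using parts (1)--(3) of Xhabli's theorem for $\OMAX_k$ with the variance reversed. No gaps; the forward direction is part (1) verbatim, and the converse correctly combines the $k$-positivity of $id : \cl S \rightarrow \OMAX_k(\cl S)$ with the complete positivity of the reverse identity.
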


After these preliminary results we are ready to examine the role of
k-minimality and the k-maximality in the nuclearity theory. We start with the
following easy observation:

\begin{lemma}\label{lem ominexactness}
$\OMIN_k(\cl S)$ is exact for any operator system $\cl S$ and $k$.
\end{lemma}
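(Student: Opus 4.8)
The plan is to exploit the concrete representation of $\OMIN_k(\cl S)$ together with the stability of exactness under the formation of operator subsystems. The key structural input is the representation result of Xhabli recalled above: $\OMIN_k(\cl S)$ admits a unital complete order embedding into $M_k(C(X))$ for some compact Hausdorff space $X$. Thus $\OMIN_k(\cl S)$ is, up to a unital complete order isomorphism, an operator subsystem of the C*-algebra $M_k(C(X))$.

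My first step is to observe that $M_k(C(X))$ is a nuclear C*-algebra. Indeed $C(X)$ is commutative and hence nuclear, and $M_k(C(X)) \cong M_k \otimes C(X)$ is the tensor product of $C(X)$ with the finite dimensional (hence nuclear) algebra $M_k$; a tensor product in which one factor is nuclear is again nuclear. In the language of this paper $M_k(C(X))$ is therefore (min,max)-nuclear, and since $min \leq el \leq max$ the completely positive identity maps are squeezed, so that $M_k(C(X)) \otimes_{min} \cl T = M_k(C(X)) \otimes_{el} \cl T$ for every operator system $\cl T$. That is, $M_k(C(X))$ is (min,el)-nuclear, equivalently exact by Theorem \ref{thm exact=(min,el)}.

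Finally I would invoke Proposition \ref{prop exac pass ss}, which asserts that exactness passes to operator subsystems. Since $\OMIN_k(\cl S)$ is an operator subsystem of the exact C*-algebra $M_k(C(X))$, it follows immediately that $\OMIN_k(\cl S)$ is exact, which is the claim.

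There is no genuine obstacle here: the argument is a short assembly of facts already available in the excerpt. The only points worth stating with care are that the Xhabli representation is a unital complete order embedding (so that $\OMIN_k(\cl S)$ is truly an operator subsystem, not merely a completely positive image of one), and that nuclearity of a C*-algebra forces exactness in the operator system sense, which is exactly the squeeze $min \leq el \leq max$ applied to (min,max)-nuclearity.
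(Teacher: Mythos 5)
Your proof is correct and follows essentially the same route as the paper's: represent $\OMIN_k(\cl S)$ in $M_k(C(X))$ via Xhabli's theorem, note that this C*-algebra is nuclear and hence (min,max)-nuclear, so in particular (min,el)-nuclear (exact), and then apply Proposition \ref{prop exac pass ss} to pass exactness to the operator subsystem $\OMIN_k(\cl S)$. The only difference is that you spell out the justifications (nuclearity of $M_k(C(X))$ and the squeeze $min \leq el \leq max$) that the paper leaves implicit.
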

\begin{proof}
Recall that $\OMIN_k(\cl S)$ can be represented in $M_k(C(X))$ for some
compact space $X$. Note that $M_k(C(X))$ is a nuclear C*-algebra and
consequently it is (min,max)-nuclear operator system. Clearly (min,max)-nuclearity
implies (min,el)-nuclearity (equivalently exactness) and, by Proposition \ref{prop exac pass ss}, exactness passes to
operator subsystems so we have that $\OMIN_k(\cl S)$ is exact.
\end{proof}

Note that if $\cl S$ is a finite dimensional operator system then a faithful
state on $\cl S$ still has the same property when $\cl S$ is
equipped with $\OMIN_k$ or $\OMAX_k$ structure. Keeping this observation in
mind we are ready to state:
\begin{theorem}
Let  $\cl S$ be a finite dimensional operator system. Then we have the unital complete order isomorphisms
$$
\OMIN{} _k (\cl S)^d = \OMAX{ }_k(\cl S^d) \;\;\mbox{ and }\;\; \OMAX{ }_k(\cl S)^d = \OMIN{ }_k(\cl S^d).
$$
\end{theorem}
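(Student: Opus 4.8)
The plan is to prove the first isomorphism $\OMIN_k(\cl S)^d = \OMAX_k(\cl S^d)$ and then obtain the second for free. Indeed, replacing $\cl S$ by $\cl S^d$ in the first identity gives $\OMIN_k(\cl S^d)^d = \OMAX_k(\cl S^{dd}) = \OMAX_k(\cl S)$, and since everything is finite dimensional, dualizing once more and using $\cl S^{dd} = \cl S$ yields $\OMIN_k(\cl S^d) = \OMAX_k(\cl S)^d$. So the whole theorem reduces to the single identity $\OMAX_k(\cl S^d) = \OMIN_k(\cl S)^d$, which I would realize by the identity map on the common underlying vector space $\cl S^d$, showing it and its inverse are completely positive. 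The two tools I would use are Lemma \ref{lem dualofkpos.map} (a map is $k$-positive if and only if its adjoint is) and two universal-property reformulations of Xhabli's theorems, which I would record first.

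These reformulations read as follows. For any operator system $\cl R$ and any linear map $\varphi$, the map $\varphi:\OMAX_k(\cl T)\to\cl R$ is cp if and only if $\varphi:\cl T\to\cl R$ is $k$-positive, and the map $\psi:\cl R\to\OMIN_k(\cl S)$ is cp if and only if $\psi:\cl R\to\cl S$ is $k$-positive. Both follow immediately from Xhabli's theorems (every $k$-positive map out of $\OMAX_k$, respectively into $\OMIN_k$, is cp) together with the fact that $\OMAX_k(\cl T)$ and $\OMIN_k(\cl S)$ share their first $k$ matricial cones with $\cl T$ and $\cl S$. With these in hand, to see that $\id:\OMAX_k(\cl S^d)\to\OMIN_k(\cl S)^d$ is cp it suffices, by the $\OMAX_k$ reformulation, to check that $\id:\cl S^d\to\OMIN_k(\cl S)^d$ is $k$-positive; but this is the adjoint of $\id:\OMIN_k(\cl S)\to\cl S$, which is $k$-positive since $\OMIN_k$ only enlarges the cones beyond level $k$, so Lemma \ref{lem dualofkpos.map} applies.

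For the reverse direction I would instead show that $\id:\OMAX_k(\cl S^d)^d\to\OMIN_k(\cl S)$ is cp and then dualize. By the $\OMIN_k$ reformulation this reduces to checking that $\id:\OMAX_k(\cl S^d)^d\to\cl S=\cl S^{dd}$ is $k$-positive, which by Lemma \ref{lem dualofkpos.map} is the adjoint statement to $\id:\cl S^d\to\OMAX_k(\cl S^d)$ being $k$-positive, and the latter is exactly Xhabli's assertion that the canonical identity into $\OMAX_k$ is $k$-positive. Dualizing $\id:\OMAX_k(\cl S^d)^d\to\OMIN_k(\cl S)$ and using $\OMIN_k(\cl S)^{dd}=\OMIN_k(\cl S)$ then gives that $\id:\OMIN_k(\cl S)^d\to\OMAX_k(\cl S^d)$ is cp, completing the complete order isomorphism. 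Unitality is not an obstacle: by the remark preceding the theorem a faithful state $\delta$ on $\cl S$ remains faithful on $\OMIN_k(\cl S)$, while $\OMAX_k(\cl S^d)$ carries the same order unit as $\cl S^d$; choosing $\delta$ as the defining faithful state on both sides makes the identity map unital. I expect the only real care to lie in phrasing the two universal-property reformulations correctly and in keeping the bookkeeping of $k$-positivity straight under the repeated use of $\cl S^{dd}=\cl S$ and Lemma \ref{lem dualofkpos.map}; there is no hard estimate, and the argument is entirely formal once these reductions are in place.
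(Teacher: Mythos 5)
Your proof is correct, and it reorganizes the argument in a genuinely different way from the paper, even though both rest on the same two inputs (Lemma \ref{lem dualofkpos.map} and Xhabli's theorems). The paper's proof proceeds in two stages: setting $\cl R = \OMIN_k(\cl S)$, it first shows that $\cl R^d$ is \emph{intrinsically} $k$-maximal, i.e.\ $\cl R^d = \OMAX_k(\cl R^d)$, via a contradiction argument that dualizes a hypothetical $k$-positive non-cp map out of $\cl R^d$ and invokes the characterizations in Lemmas \ref{lem SisOminS} and \ref{lem SisOmaxS}; it then identifies $\OMAX_k(\cl R^d)$ with $\OMAX_k(\cl S^d)$ by observing that $\cl R^d$ and $\cl S^d$ carry the same matricial order up to level $k$, so their $k$-maximal structures agree. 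You instead bypass the intrinsic-maximality step entirely: you package Xhabli's theorems into two ``iff'' universal properties for $\OMAX_k$ and $\OMIN_k$ (legitimate, since both constructions leave the first $k$ cones unchanged) and then verify directly that the identity map is cp in each direction, each time by applying Lemma \ref{lem dualofkpos.map} to one of the canonical $k$-positive identities $\id:\OMIN_k(\cl S)\to\cl S$ and $\id:\cl S^d\to\OMAX_k(\cl S^d)$. Your route is shorter, avoids proof by contradiction, and makes the formal nature of the duality transparent; the paper's route costs an extra step but yields along the way the statement of independent interest that the dual of a $k$-minimal finite dimensional operator system is $k$-maximal in the intrinsic sense. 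Your handling of unitality (choosing one faithful state $\delta$ on $\cl S$ to serve simultaneously as the unit of $\OMIN_k(\cl S)^d$ and, via $\cl S^d$, of $\OMAX_k(\cl S^d)$) matches the paper's, as does your reduction of the second identity to the first by substituting $\cl S^d$ and dualizing.
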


\begin{proof}
We only prove the fist equality. The second equality follows from the first
one if we replace $\cl S$ by $\cl S^d$ and take the dual of both
side. To show the first one we set $\cl R = \OMIN_k (\cl
S)$ and we will first prove the following: Whenever $\varphi: \cl R^d \rightarrow \cl
T$ is a k-positive map then $\varphi$ is cp. So by using Lemma \ref{lem
SisOmaxS} we conclude that $\cl R^d = \OMAX_k(\cl R^d)$. Assume for a
contradiction that there is a k-positive map $\varphi: \cl R^d \rightarrow \cl
T$ which is not cp. Clearly we may assume that $\cl T$ is finite dimensional. (If not
we can consider an operator subsystem of $\cl T$ containing the image of $\varphi$.)
Now by using Lemma \ref{lem dualofkpos.map} we have that $\varphi^d : \cl T^d
\rightarrow \cl R$ is a k-positive map but it is not cp. This is a contradiction
as Lemma \ref{lem SisOminS} requires that $\varphi$ is a cp map. Thus $\cl R^d = \OMAX_k(\cl R^d)$.
Next we  show that $\OMAX_k(\cl R^d) = \OMAX_k(\cl S^d)$ which finishes the proof. To see
this note that the identity $id: \cl S \rightarrow \cl R$ is cp and its inverse is $k$-positive. This
implies that $id^d: \cl R^d \rightarrow \cl S$ is cp and its inverse is $k$-positive. (We skip the
elementary proof of the fact that $(\varphi^d)^{-1} = (\varphi^{-1})^{d}$.) Thus up to $k^{th}$ level
$\cl R^d$ and $\cl S^d$ are order isomorphic. Hence $\OMAX_k(\cl R^d) = \OMAX_k(\cl S^d)$. Finally
by using the observation that we mentioned before the theorem we may assume that this
identification is also unital.
\end{proof}

\begin{lemma}\label{omaxhaslp}
Suppose $\cl S$ is a finite dimensional operator system. Then $\OMAX_k(\cl S)$
has the lifting property  for any natural number $k$.
\end{lemma}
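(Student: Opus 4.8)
The claim is that $\OMAX_k(\cl S)$ has the lifting property for any finite dimensional operator system $\cl S$. Given the machinery assembled in this section, the natural route is to pass to the dual and invoke exactness. The plan is to use the duality theorem just proved, namely $\OMAX_k(\cl S)^d = \OMIN_k(\cl S^d)$, together with the fact established in Theorem~\ref{exactdualLP} that the lifting property and exactness are dual pairs for finite dimensional operator systems. So first I would observe that $\cl S^d$ is itself a finite dimensional operator system, hence $\OMIN_k(\cl S^d)$ is defined.

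**Carrying out the argument.**

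The key computation is: by Theorem~\ref{exactdualLP}, the operator system $\OMAX_k(\cl S)$ has the lifting property if and only if its dual $\OMAX_k(\cl S)^d$ is exact. Now the preceding duality theorem identifies $\OMAX_k(\cl S)^d = \OMIN_k(\cl S^d)$ up to unital complete order isomorphism. Finally, Lemma~\ref{lem ominexactness} tells us that $\OMIN_k(\cl T)$ is exact for \emph{every} operator system $\cl T$ and every $k$; applying this with $\cl T = \cl S^d$ gives that $\OMIN_k(\cl S^d)$ is exact. Chaining these together,
$$
\OMAX_k(\cl S) \text{ has LP} \iff \OMAX_k(\cl S)^d \text{ is exact} \iff \OMIN_k(\cl S^d) \text{ is exact},
$$
and the last statement is true by Lemma~\ref{lem ominexactness}. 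This completes the proof.

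**Where the difficulty lies.**

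The proof itself is essentially a one-line deduction once the correct lemmas are in place, so the real work has already been done upstream. The main conceptual obstacle—already handled by the duality theorem preceding this lemma—is verifying that $\OMAX_k$ and $\OMIN_k$ are genuinely exchanged under taking operator system duals, which in turn rested on Lemma~\ref{lem dualofkpos.map} (that $k$-positivity is preserved under the dual map) and on the characterizations in Lemmas~\ref{lem SisOminS} and \ref{lem SisOmaxS}. I would also double-check the finite-dimensionality hypothesis is used consistently: it is needed to ensure $\cl S^d$ is again an operator system (via the Choi--Effros theorem on faithful states) and that Theorem~\ref{exactdualLP} applies, since that theorem is stated only in the finite dimensional case. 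No further estimates or constructions are required; the statement follows formally from the dual-pairing of exactness with the lifting property and the unconditional exactness of $k$-minimal systems.
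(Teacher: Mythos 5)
Your proof is correct and follows essentially the same route as the paper's: both combine the duality theorem $\OMAX_k(\cl S)^d = \OMIN_k(\cl S^d)$, the unconditional exactness of $\OMIN_k$ (Lemma \ref{lem ominexactness}), and the exactness--lifting property duality (Theorem \ref{exactdualLP}). The only cosmetic difference is that the paper applies the duality theorem in the form $\OMIN_k(\cl S^d)^d = \OMAX_k(\cl S)$ and then dualizes, whereas you use the second stated identity directly; the ingredients and logic are identical.
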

\begin{proof}
Lemma \ref{lem ominexactness} states that $\OMIN_k(\cl S^d)$ is exact. By the
above theorem we see that $\OMIN_k(\cl S^d)^d =\OMAX_k(\cl S)$ and
by using Theorem \ref{exactdualLP} we conclude that this dual has the lifting property.
\end{proof}

We are now ready to establish a weaker lifting property:
\begin{theorem}\label{klproperty}
Every finite dimensional operator system $\cl S$ has the $k$-lifting property (for every $k$)
in the sense that whenever $I$ is an ideal in a 
unital C*-algebra $\cl A$ and $\varphi:\cl S \rightarrow \cl A/I$ is a
ucp map then, for every $k$, there exists a unital k-positive
map $\tilde{\varphi}:\cl S \rightarrow \cl A$ such that $q\circ \tilde{\varphi}
= \varphi$ where $q:\cl A \rightarrow \cl A /I$ is the quotient map.
\end{theorem}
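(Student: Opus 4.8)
The plan is to factor $\varphi$ through the $k$-maximal system $\OMAX_k(\cl S)$ and to exploit the fact, already established in Lemma \ref{omaxhaslp}, that this system enjoys the full lifting property. Recall from Xhabli's theorem on the $k$-maximal structure that the identity map $id:\OMAX_k(\cl S)\rightarrow \cl S$ is unital and completely positive, while the reverse identity $id:\cl S\rightarrow \OMAX_k(\cl S)$ is unital and $k$-positive. Since both of these maps are the identity on the underlying $*$-vector space, their composite through $\cl S$ is the identity on $\cl S$; this is precisely what will force the map we construct to be a genuine lift of $\varphi$.

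First I would form the ucp map
$$
\OMAX_k(\cl S)\xrightarrow{\;id\;}\cl S\xrightarrow{\;\varphi\;}\cl A/I,
$$
which is ucp as a composite of ucp maps. Because $\cl S$ is finite dimensional, so is $\OMAX_k(\cl S)$ (it has the same underlying space), and by Lemma \ref{omaxhaslp} it has the lifting property; hence this ucp map admits a \emph{global} ucp lift $\psi:\OMAX_k(\cl S)\rightarrow \cl A$ with $q\circ\psi=\varphi\circ id$. Next I would set $\tilde{\varphi}=\psi\circ id$, where now $id:\cl S\rightarrow\OMAX_k(\cl S)$ is the $k$-positive identity. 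Since $\psi$ is completely positive (in particular $k$-positive), and a completely positive map precomposed with a $k$-positive map is again $k$-positive, the resulting map $\tilde\varphi:\cl S\rightarrow\cl A$ is $k$-positive; it is unital because both $\psi$ and the identity are. Finally $q\circ\tilde\varphi=q\circ\psi\circ id=\varphi\circ id\circ id=\varphi$, using that the two identity maps compose to the identity on $\cl S$, so $\tilde\varphi$ is the desired unital $k$-positive lift.

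The argument is essentially a bookkeeping of functoriality, so I expect no serious analytic obstacle. The one point requiring care is that the lifting property provided by Lemma \ref{omaxhaslp} is applied to the \emph{finite dimensional} system $\OMAX_k(\cl S)$, so that it genuinely yields a global ucp lift of $\varphi\circ id$ rather than merely local lifts on finite dimensional subsystems. The essential input is thus Lemma \ref{omaxhaslp} itself, which in turn rests on the duality $\OMIN_k(\cl S^d)^d=\OMAX_k(\cl S)$ together with the exactness/lifting duality of Theorem \ref{exactdualLP}.
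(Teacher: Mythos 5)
Your proposal is correct and follows essentially the same route as the paper: equip $\cl S$ with the $\OMAX_k$ structure, note that $\varphi$ remains ucp there, apply Lemma \ref{omaxhaslp} to get a (global, since the system is finite dimensional) ucp lift, and then view the lift as a map on $\cl S$, where it is unital and $k$-positive because $id:\cl S\rightarrow\OMAX_k(\cl S)$ is $k$-positive. The only difference is expository: you make the two identity maps and their composition explicit, which the paper leaves implicit.
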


\begin{proof}
Let $\varphi: \cl S \rightarrow \cl A/I$ be a ucp map. When $\cl S$ is equipped with OMAX$_k$
structure $\varphi$ remains to be a ucp map. By Lemma \ref{omaxhaslp}  $\OMAX_k(\cl S)$ has the lifting
property so $\varphi$ lifts to a cp map  $\tilde{\varphi}$ on $\cl A$ which can taken to be unital.
Now when we consider $\tilde{\varphi}$
as a map defined from $\cl S$ it is $k$-positive. This completes the proof.
\end{proof}

We want to remark that if $\cl S$ is a finite dimensional operator system then
the $k$-lifting property, which $\cl S$ has for every $k$,  does not
imply the lifting property. In \cite[Theorem 3.3.]{Pa4} it was shown that there is
a five dimensional operator subsystem of the Calkin algebra
$\mathbb{B}/\mathbb{K}$ such that  the inclusion does not  have a ucp lift (or
cp lift) on $\mathbb{B}$. In the next section we will see that even $M_2 \oplus M_2$ has a five dimensional
operator system that does not have the lifting property. For three dimensional operator systems a similar
problem turns out to be equivalent to the Smith Ward problem which we will study
in Section \ref{Sec MNR}.

\begin{corollary}
Let $\cl S$ be a finite dimensional operator system, $\cl A$ be a C*-algebra and $I$ be an ideal. Then
every $k$-positive map $\cl S \rightarrow \cl A / I$ has a $k$-positive lifting to $\cl A$. If $\varphi$
is unital one can take the lift unital too.
\end{corollary}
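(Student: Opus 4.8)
The plan is to view this corollary as the non-unital, $k$-positive refinement of Theorem \ref{klproperty}, and to route the argument through the $k$-maximal operator system $\OMAX_k(\cl S)$. The decisive observation is that $k$-positivity of a map \emph{out of} $\cl S$ is the same thing as complete positivity of the same map out of $\OMAX_k(\cl S)$. Indeed, the cones of $\cl S$ and $\OMAX_k(\cl S)$ agree up to the $k$-th level, so a $k$-positive map $\varphi: \cl S \to \cl A/I$ remains $k$-positive when regarded as a map $\OMAX_k(\cl S) \to \cl A/I$; and by the defining property of the $k$-maximal structure every $k$-positive map out of $\OMAX_k(\cl S)$ is automatically completely positive. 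Hence $\varphi: \OMAX_k(\cl S) \to \cl A/I$ is a cp map.

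First I would invoke Lemma \ref{omaxhaslp}, which says that the finite dimensional operator system $\OMAX_k(\cl S)$ has the lifting property. Since for finite dimensional systems the lifting property may be tested against cp (and not merely ucp) maps -- this is precisely the content of Remark \ref{rem strong lp} -- the cp map $\varphi: \OMAX_k(\cl S) \to \cl A/I$ produced above admits a cp lift $\tilde{\varphi}: \OMAX_k(\cl S) \to \cl A$ satisfying $q \circ \tilde{\varphi} = \varphi$, where $q: \cl A \to \cl A/I$ is the quotient map.

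It then remains to read $\tilde{\varphi}$ back as a map on $\cl S$. Because the identity $id: \cl S \to \OMAX_k(\cl S)$ is $k$-positive and $\tilde{\varphi}$ is completely positive, the composite $\tilde{\varphi}: \cl S \to \cl A$ (the same underlying linear map) is $k$-positive, and it is visibly a lift of $\varphi$. For the unital assertion, if $\varphi(e) = e + I$ then $\varphi: \OMAX_k(\cl S) \to \cl A/I$ is ucp; since the cp liftings in the lifting property may be chosen unital, one obtains a ucp lift $\tilde{\varphi}$, which is then a unital $k$-positive lift once interpreted on $\cl S$.

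The only step that is not purely formal is the first one, namely the identification of $k$-positivity out of $\cl S$ with complete positivity out of $\OMAX_k(\cl S)$; this is exactly where the $k$-maximal construction does the work. Everything afterwards -- applying the lifting property of $\OMAX_k(\cl S)$ and precomposing with the $k$-positive identity $\cl S \to \OMAX_k(\cl S)$ -- is routine, the only care being to keep track of the matricial level at which each positivity statement is made.
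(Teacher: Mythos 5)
Your proof is correct and follows essentially the same route as the paper's own argument: equip $\cl S$ with the $\OMAX_k$ structure so that $\varphi$ becomes completely positive, apply Lemma \ref{omaxhaslp} together with Remark \ref{rem strong lp} to obtain a cp (or ucp) lift, and then read the lift back on $\cl S$ as a $k$-positive map via the $k$-positive identity $\cl S \to \OMAX_k(\cl S)$. The extra care you take in justifying the identification of $k$-positivity out of $\cl S$ with complete positivity out of $\OMAX_k(\cl S)$ is exactly what the paper leaves implicit.
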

\begin{proof}
If we equip $\cl S$ with $\OMAX_k$ structure then $\varphi$ is completely positive. Since
$\OMAX_k(\cl S)$ has the lifting property, by using Remark \ref{rem strong lp}, $\varphi$ can be lifted as a completely positive map on $\cl A$.
If $\varphi$ is unital one can pick the lift unital as well. Now when $\cl S$ is considered with its initial structure
this lift is $k$-positive.
\end{proof}

\begin{corollary}
Let $X$ be a finite dimensional operator space, $\cl A$ be a unital C*-algebra and $I$ be an ideal in $\cl A$. Then
every completely contractive (cc) map $\phi: X \rightarrow \cl A /I$ has a k-contractive lift on $\cl A$ for every $k$. 
\end{corollary}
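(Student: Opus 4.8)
The plan is to reduce this operator-space statement to the $k$-positive lifting result for operator systems (the preceding corollary) by passing through the universal operator system $\cl T_X$ associated with $X$. First I would invoke the universal property of $\cl T_X$ recalled in Section 7: since $\phi: X \rightarrow \cl A/I$ is completely contractive and $\cl A/I$ is an operator system, $\phi$ extends uniquely to a ucp map $\varphi: \cl T_X \rightarrow \cl A/I$ with $\varphi\circ i = \phi$, where $i: X \hookrightarrow \cl T_X$ is the canonical completely isometric inclusion. Because $X$ is finite dimensional, so is $\cl T_X = \mathrm{span}\{X, X^*, e\}$, which places us exactly in the hypotheses of the preceding corollary.

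Now fix $k$. Since $\varphi$ is ucp it is in particular unital and $2k$-positive, so by the preceding corollary there is a \emph{unital} $2k$-positive lift $\tilde\varphi: \cl T_X \rightarrow \cl A$ with $q\circ\tilde\varphi = \varphi$. I then set $\tilde\phi := \tilde\varphi\circ i : X \rightarrow \cl A$. This is genuinely a lift of $\phi$, since $q\circ\tilde\phi = q\circ\tilde\varphi\circ i = \varphi\circ i = \phi$.

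It remains to verify that $\tilde\phi$ is $k$-contractive, and this is the step carrying the real content. Here I would use the intrinsic formula for the canonical operator space structure of an operator system recorded in Section 1. Let $(x_{ij})\in M_k(X)$ with $\|(x_{ij})\|_k \leq 1$; since $i$ is a complete isometry this norm is computed inside $\cl T_X$, so for every $\alpha > 1$ the matrix $\left(\begin{smallmatrix} \alpha e_k & (x_{ij}) \\ (x_{ji}^*) & \alpha e_k \end{smallmatrix}\right)$ is positive in $M_{2k}(\cl T_X)$. Applying the $2k$-positive map $\tilde\varphi$ entrywise, and using that $\tilde\varphi$ is unital and (being positive) self-adjoint so that $\tilde\varphi(x_{ji}^*) = \tilde\varphi(x_{ji})^* = \tilde\phi(x_{ji})^*$, the image $\left(\begin{smallmatrix} \alpha e_k & (\tilde\phi(x_{ij})) \\ (\tilde\phi(x_{ij}))^* & \alpha e_k \end{smallmatrix}\right)$ is positive in the C*-algebra $M_{2k}(\cl A)$. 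The standard $2\times 2$ block-positivity criterion then forces $\|(\tilde\phi(x_{ij}))\|_k \leq \alpha$, and letting $\alpha\downarrow 1$ gives $\|(\tilde\phi(x_{ij}))\|_k \leq 1$. Hence $\tilde\phi$ is $k$-contractive.

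The one point to watch, and the reason the argument is not completely automatic, is the index bookkeeping: $k$-contractivity of the restriction is governed by positivity of a $2k\times 2k$ operator matrix, so one must feed the \emph{index $2k$}, rather than $k$, into the preceding corollary. Since that corollary holds for every index this causes no difficulty, and as $k$ is arbitrary the conclusion holds for all $k$ (with a lift that may depend on $k$).
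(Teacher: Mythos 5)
Your proposal is correct and follows essentially the same route as the paper: extend $\phi$ to a ucp map on the finite dimensional universal operator system $\cl T_X$, apply the preceding corollary with index $2k$ to obtain a unital $2k$-positive lift, and restrict back to $X$. The only difference is that where the paper simply cites the fact that a unital $2k$-positive map is $k$-contractive, you prove it explicitly via the intrinsic formula for the canonical operator space structure and the $2\times 2$ block-positivity criterion, which is a valid and complete verification of that step.
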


\begin{proof}
Recall that the universal operator system $\cl R_X \supset X$ has the property that every cc map
defined from $X$ into an operator system extends uniquely to a ucp map.  Now,
$\phi$ extends to a ucp map $\varphi: \cl R_X \rightarrow \cl A / I$. By the above corollary this map
has a unital $2k$-positive lift on $\cl A$. Since a  unital $2k$-positive map is $k$-contractive,
the restriction of this lift on $X$ has the desired property.
\end{proof}

$ $

\section{Quotients of the Matrix Algebra $M_n$}

In this section we obtain new proofs of some of the results of \cite{pf} and discuss some new formulations of the Kirchberg Conjecture (KC) in terms
of operator system quotients of the matrix algebras. The duality and the quotient theory when applied to
some special operator subsystems of $M_n$ raise difficult stability problems which will
be apparent in this section. We will also consider the problem about the minimal and the maximal tensor product
of three copies of $C^*(\mathbb{F}_\infty)$ from an operator system perspective. 

$ $

Recall from Example \ref{exam MnJn} that we define $J_n \subset M_n$ as the diagonal
matrices with 0 trace. As we pointed out, $J_n$ is a null subspace of $M_n$ and consequently,
by Proposition \ref{prop nullsubspace}, it is a completely proximinal kernel. (Also recall that $M_n/J_n$ has the lifting property.)
However, with the following result of Farenick and Paulsen we directly see that $J_n$ is a 
kernel and, moreover, we obtain an identification of $M_n/J_n$ as well as its enveloping C*-algebra.

$ $

As usual $C^*(\mathbb{F}_n)$ stands for the full C*-algebra of the free group $\mathbb{F}_n$ on $n$
generators, say $g_1,...,g_n$. Let $\cl W_n$ be the operator subsystem of $C^*(\mathbb{F}_n)$ given by
$$
\cl W_n = \{g_ig_j^*: 1\leq i,j \leq n\}.
$$
We are now ready to establish the connection between these operator systems given
in \cite{pf}. As usual $\{E_{ij}\}$ denotes the standard matrix units for $M_n$.
Consider $\varphi: M_n \rightarrow \cl W_{n}$ given by $\varphi(E_{ij}) =
g_ig_j^*/n $. Then

\begin{theorem}[Farenick, Paulsen]
The above map $\varphi: M_n \rightarrow \cl W_{n}$ is a quotient map with kernel
$J_n$. That is, the induced map $\bar{\varphi} : M_n / J_n \rightarrow \cl W_n$
is a bijective unital complete order isomorphism. Moreover, $C^*_e(M_n/J_n) =
C^*(\mathbb{F}_{n-1})$.
\end{theorem}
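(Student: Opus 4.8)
The plan is to realize $\varphi$ as a concrete compression, read off its kernel, and then extract the complete order isomorphism from the duality machinery of Section~\ref{sec quotients}, leaving one genuine positivity statement as the analytic core. First I would record the compression formula: writing $R=(g_1,\dots,g_n)\in M_{1,n}(C^*(\mathbb F_n))$, one has $\varphi(X)=\tfrac1n\,RXR^*$ for $X=(x_{ij})\in M_n$, since $RXR^*=\sum_{ij}x_{ij}g_ig_j^*$. Because $RR^*=\sum_i g_ig_i^*=ne$, the row $\tfrac1{\sqrt n}R$ is a coisometry, so $\varphi$ is unital, and the amplified identity $\varphi^{(m)}\big((X_{kl})\big)=\tfrac1n (I_m\otimes R)(X_{kl})(I_m\otimes R)^*$ shows that $\varphi$ is completely positive; it is evidently onto $\cl W_n$. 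For the kernel, $g_ig_i^*=e$ while the $g_ig_j^*$ with $i\ne j$ are distinct nontrivial group elements, hence linearly independent in $C^*(\mathbb F_n)$, so $\varphi(X)=0$ forces $x_{ij}=0$ for $i\ne j$ and $\sum_i x_{ii}=0$; thus $\ker\varphi=J_n$. Consequently $\bar\varphi:M_n/J_n\to\cl W_n$ is a unital, bijective, completely positive map, $J_n$ being a completely proximinal kernel by Proposition~\ref{prop nullsubspace}.

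For the enveloping algebra I would set $h_i=g_ig_1^*$, so that $h_1=e$ and $g_ig_j^*=h_ih_j^*$; each $h_i=g_ig_1^*$ lies in $\cl W_n$ and is a unitary. The elements $g_2g_1^{-1},\dots,g_ng_1^{-1}$ freely generate a rank $(n-1)$ free subgroup of $\mathbb F_n$ (the homomorphism $\mathbb F_n\to\langle h_2,\dots,h_n\rangle$ sending $g_1\mapsto e$, $g_i\mapsto h_i$ retracts onto it), so the C*-algebra generated by $\cl W_n$ is $C^*\{h_2,\dots,h_n\}\cong C^*(\mathbb F_{n-1})$. Thus $\cl W_n$ contains enough unitaries in $C^*(\mathbb F_{n-1})$, and Proposition~\ref{prop enough=envelo.} gives $C^*_e(\cl W_n)=C^*(\mathbb F_{n-1})$. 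Since the enveloping C*-algebra is an invariant of the operator system, once the complete order isomorphism $M_n/J_n\cong\cl W_n$ is in hand the assertion $C^*_e(M_n/J_n)=C^*(\mathbb F_{n-1})$ follows at once.

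It remains to upgrade $\bar\varphi$ to a complete order isomorphism, equivalently to show that $\bar\varphi^{-1}$ is completely positive, equivalently that $\varphi$ is a quotient map. I would organize this through duality and the self-duality $M_n^d\cong M_n$ (trace pairing, normalized trace as faithful state). On one side, applying Theorem~\ref{thm dual of inc} to the inclusion $\cl V_n\hookrightarrow M_n$, where $\cl V_n=\{B\in M_n:\ b_{11}=\cdots=b_{nn}\}$ is the constant–diagonal subsystem, shows that the adjoint $M_n^d\to\cl V_n^d$ is a (unital) quotient map; computing the annihilator $\cl V_n^\perp$ gives the traceless diagonals $J_n$, so
$$\cl V_n^d\ \cong\ M_n^d/J_n\ \cong\ M_n/J_n,$$
with the required positivity already supplied (via Arveson's theorem) inside the proof of Theorem~\ref{thm dual of inc}. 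On the other side, the dual map $\varphi^d(f)=\tfrac1n\big(f(g_ig_j^*)\big)_{ij}$ is completely positive by Lemma~\ref{lem dualofkpos.map}, lands in $\cl V_n$ because $g_ig_i^*=e$, and is a linear bijection $\cl W_n^d\to\cl V_n$ by the count $\dim\cl W_n=n^2-n+1=\dim\cl V_n$. Hence the whole theorem reduces to the single claim that $\varphi^d$ is a complete order embedding, i.e. $\cl W_n^d\cong\cl V_n$: granting this, $\cl W_n\cong\cl W_n^{dd}\cong\cl V_n^d\cong M_n/J_n$.

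The positivity statement $\cl W_n^d\cong\cl V_n$ (equivalently, that $(\varphi^d)^{-1}$ is completely positive) is where I expect essentially all the work to lie. Concretely, the plan is to verify the positive–lifting criterion for quotient maps directly: given $(t_{kl})\in M_m(\cl W_n)^+$ written as $t=\sum_{ij}g_ig_j^*\otimes C_{ij}$ with $C\in M_n\otimes M_m$ (determined only modulo $J_n\otimes M_m$), one must produce a block–diagonal, block–traceless correction $D$ with $C+D\ge-\varepsilon$, i.e. $C+J_n\otimes M_m\ge 0$ in $M_m(M_n/J_n)$. Testing $t\ge0$ against the left regular representation of $\mathbb F_n$ with the vectors $\sum_j\lambda(g_j)\delta_e\otimes v_j$ yields, after computing the relevant word moments, both $S:=\sum_i C_{ii}\ge0$ and
$$\langle Cv,v\rangle+\sum_k\langle Sv_k,v_k\rangle-\sum_i\langle C_{ii}v_i,v_i\rangle\ \ge\ 0 .$$
The obstacle is that such first–order tests leave the diagonal (``trace'') direction of the correction underdetermined, so the right $D$ cannot be read off from a single representation and must be extracted from the full matricial positivity. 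I would discharge this by feeding the two dual computations above against one another — forcing $\varphi^d$ to coincide with the adjoint inclusion already analyzed via Theorem~\ref{thm dual of inc} — or, failing a clean match, by routing the estimate through the min–max duality of Theorem~\ref{thm dualminmax}; controlling this trace direction is the step I expect to be the crux of the proof.
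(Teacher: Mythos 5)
Your proposal correctly disposes of the routine parts of the theorem: the compression formula $\varphi(X)=\tfrac1n RXR^*$ does show that $\varphi$ is ucp and surjective; linear independence of the words $g_ig_j^*$ ($i\neq j$) together with $g_ig_i^*=e$ does give $\ker\varphi=J_n$; and the identification $C^*_e(\cl W_n)=C^*(\mathbb{F}_{n-1})$ via the free subgroup generated by $h_i=g_ig_1^*$ and Proposition \ref{prop enough=envelo.} is sound, modulo one unstated (but standard) fact that the C*-subalgebra of $C^*(\mathbb{F}_n)$ generated by a subgroup $H$ is canonically $C^*(H)$ -- the same fact the paper invokes elsewhere through Proposition 8.8 of \cite{pi}. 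Your duality bookkeeping is also correct as a \emph{reformulation}: Theorem \ref{thm dual of inc} applied to $\cl V_n\subset M_n$, plus the annihilator computation, gives $\cl V_n^d\cong M_n/J_n$, and $\varphi^d$ corestricts to a cp linear bijection $\psi:\cl W_n^d\to\cl V_n$; if $\psi$ were a complete order isomorphism, then dualizing would indeed yield the theorem. (For calibration: the paper itself states this result without proof, attributing it to \cite{pf}, so there is no internal argument to compare your route against.)

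The genuine gap is that the one claim carrying all the content -- that $\psi$ (equivalently $\bar\varphi^{-1}$) is completely positive -- is never proved, and you acknowledge as much. Moreover, neither of your proposed repairs can close it. Combining your two dual computations is circular: Theorem \ref{thm dual of inc} produces quotient maps as adjoints of \emph{complete order embeddings}, and $\psi$ is known to be such an embedding only after the theorem is proved; feeding the cp bijection $\psi$ into the isomorphism $\cl V_n^d\cong M_n/J_n$ merely reproduces the cp bijection $\bar\varphi$ you already have, with no information about its inverse. Theorem \ref{thm dualminmax} concerns duals of tensor products and does not bear on the question at all. What is actually needed is a dilation/factorization argument -- in essence, that for every $\epsilon>0$ a positive matrix $(B_{ij})\in M_m(M_n)$ arising from a positive element $\sum_{ij}g_ig_j^*\otimes C_{ij}$ can be corrected by a block-diagonal, block-traceless $D$ so that $(nC_{ij})+D+\epsilon\geq 0$; equivalently, that positive operator matrices with constant diagonal blocks are, up to $\epsilon$, compressions of $(U_iU_j^*)_{ij}$ for unitaries $U_i$. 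This is how Farenick and Paulsen prove it in \cite{pf}, and nothing in the toolkit you invoke substitutes for it; your left-regular-representation test extracts only first-order consequences of positivity, which, as you note, underdetermine the diagonal correction.
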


Now we are ready to state:
\begin{theorem}
The following are equivalent:
\begin{enumerate}
 \item KC has an affirmative answer.
 \item $M_3/J_3$ has DCEP.
 \item $M_3/J_3 \otimes_{min} M_3/J_3 =  M_3/J_3 \otimes_{c} M_3/J_3 $.
\end{enumerate}
\end{theorem}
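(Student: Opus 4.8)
The plan is to prove the cycle $(1)\Rightarrow(2)\Rightarrow(3)\Rightarrow(1)$, using throughout the identification $M_3/J_3 \cong \cl W_3$ furnished by the preceding theorem of Farenick and Paulsen, which also records that $C^*_e(M_3/J_3) = C^*(\mathbb{F}_2)$. The two facts about $\cl S := M_3/J_3$ that make the whole argument go through are that it is finite dimensional and that, by Example~\ref{exam MnJn}, it has the lifting property, i.e.\ it is (min,er)-nuclear.

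For $(1)\Rightarrow(2)$ I would invoke Theorem~\ref{thm new KC} directly: KC is equivalent to the statement that every finite dimensional operator system with the lifting property has DCEP. Since $M_3/J_3$ is such a system, it has DCEP. For $(2)\Rightarrow(3)$ I would reproduce the computation in the proof of Theorem~\ref{thm new KC} verbatim, now with $\cl S = M_3/J_3$ in place of $\cl S_2$: writing the lifting property as (min,er)-nuclearity and DCEP as (el,c)-nuclearity, and keeping the two tensorands on the appropriate sides, one obtains
$$
\cl S \otimes_{min} \cl S = \cl S \otimes_{er} \cl S = \cl S \otimes_{c} \cl S,
$$
the first equality from (min,er)-nuclearity and the second from (el,c)-nuclearity via the flip identification $\cl S\otimes_{er}\cl S = \cl S\otimes_{el}\cl S$ together with the symmetry of $c$.

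The step with genuine content is $(3)\Rightarrow(1)$. Here I would transport (3) across the complete order isomorphism $M_3/J_3 \cong \cl W_3$ to read $\cl W_3 \otimes_{min} \cl W_3 = \cl W_3 \otimes_{c} \cl W_3$. The decisive observation is that $\cl W_3$, spanned by the elements $g_ig_j^*$, contains enough unitaries in $C^*(\mathbb{F}_2)$: each $g_ig_j^*$ is a unitary of $C^*(\mathbb{F}_3)$ lying in $\cl W_3$, and the C*-algebra these unitaries generate is exactly $C^*_e(\cl W_3) = C^*(\mathbb{F}_2)$ by Proposition~\ref{prop enough=envelo.}. Corollary~\ref{co enough unitary}, applied with $\cl S = \cl T = \cl W_3$ and $\cl A = \cl B = C^*(\mathbb{F}_2)$, then gives
$$
C^*(\mathbb{F}_2) \otimes_{min} C^*(\mathbb{F}_2) = C^*(\mathbb{F}_2) \otimes_{max} C^*(\mathbb{F}_2),
$$
and this equality is one of the standard reformulations of KC recorded in Section~\ref{sec new WEP KC}.

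The main (and really only) obstacle is confirming that $\cl W_3$ contains enough unitaries in its enveloping C*-algebra, i.e.\ that the $g_ig_j^*$ generate $C^*(\mathbb{F}_2)$ and not some other algebra; this is precisely the computation of $C^*_e(M_3/J_3)$ in the Farenick--Paulsen theorem, so no new work is required. The remaining steps are bookkeeping with the functorial tensor products and their interrelations set up in Sections~\ref{sec tensor} and~\ref{sec nuclearity}.
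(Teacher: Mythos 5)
Your proposal is correct and follows essentially the same route as the paper's own proof: the same cycle $(1)\Rightarrow(2)\Rightarrow(3)\Rightarrow(1)$, with $(1)\Rightarrow(2)$ from Theorem~\ref{thm new KC} plus Example~\ref{exam MnJn}, $(2)\Rightarrow(3)$ by the (min,er)/(el,c)-nuclearity computation, and $(3)\Rightarrow(1)$ by observing that $\cl W_3$ is spanned by unitaries, so that Proposition~\ref{prop enough=envelo.} identifies the C*-algebra it generates with $C^*_e(M_3/J_3)=C^*(\mathbb{F}_2)$ and Corollary~\ref{co enough unitary} yields $C^*(\mathbb{F}_2)\otimes_{min}C^*(\mathbb{F}_2)=C^*(\mathbb{F}_2)\otimes_{max}C^*(\mathbb{F}_2)$. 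No gaps; your justification of the second equality in $(2)\Rightarrow(3)$ via the flip $\cl S\otimes_{er}\cl S\cong\cl S\otimes_{el}\cl S$ and the symmetry of $c$ is exactly what the paper means by ``applying DCEP to the right-hand tensorand.''
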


\begin{proof}
Example \ref{exam MnJn} states that $M_3/J_3$ has the lifting property. So if we assume (1) then, by
Theorem \ref{thm new KC}, $M_3/J_3$ has DCEP. This proves that (1) implies (2). To see that
(2) implies (3) we recall that the lifting property is characterized by (min,er)-nuclearity. Thus
we readily have that $M_3/J_3 \otimes_{min} M_3/J_3 =  M_3/J_3 \otimes_{er} M_3/J_3 $. Now, by
our assumption, $M_3/J_3$ has DCEP, equivalently, (el,c)-nuclearity. Now, applying this to $M_3/J_3$
on the right hand side, we have that $M_3/J_3 \otimes_{er} M_3/J_3 =  M_3/J_3 \otimes_{c} M_3/J_3 $.
Thus, (2) implies (3). We finally show that (3) implies (1). In fact, $M_3/J_3$ contains enough
unitaries in its enveloping C*-algebra, namely, $C^*(\mathbb{F}_2)$ (see Section \ref{sec new WEP KC} for the related definition). 
This simply follows from the fact that $\cl W_3$ is linear span of unitaries, thus, it contains enough unitaries in the C*-algebra generated
by itself (in $C^*(\mathbb{F}_3)$). So, by Proposition \ref{prop enough=envelo.}, this C*-algebra must be coincides with its
enveloping C*-algebra. Now, by identifying $M_n/J_n$ with $\cl W_n$, we conclude that $M_3/J_3$ contains enough
unitaries in its enveloping C*-algebra, namely, $C^*(\mathbb{F}_2)$. Thus assuming (3),
by Corollary \ref{co enough unitary}, we have that 
$C^*(\mathbb{F}_2) \otimes_{min} C^*(\mathbb{F}_2) =  C^*(\mathbb{F}_2) \otimes_{max} C^*(\mathbb{F}_2) $. Thus (3) implies (1).
\end{proof}

We remark that Theorem 5.2. of \cite{pf} states that if $M_n/J_n \otimes_{min}
M_n/J_n =  M_n/J_n \otimes_{max} M_n/J_n$ for every $n$ then it follows that KC
has an affirmative answer.

\begin{question}
Is $M_n/J_n \otimes_{c} M_n/J_n =  M_n/J_n \otimes_{max} M_n/J_n$ for every $n$? What about $n=3$?
\end{question}

Recall that we define $S_n$ as the operator subsystem of $C^*(\mathbb{F}_n)$ which contains
the unitary generators. More precisely, $\cl S_n = \{g_1,...,g_n,e,g_1^*,...,g_n^*\}$. 
 Another important operator subsystem of $M_n$, which is related to $\cl S_n$, is the tridiagonal matrices $T_n$. We define
$$
T_n = \{ A\! = \! (a_{ij}) \in M_n\;: \;   a_{ij} = 0 \mbox{ if } |i-j| \geq 1  \}.
$$
The study on the nuclearity properties of these operator systems goes back to
\cite{kptt}. In Theorem 5.16 it was shown that $T_3$ is C*-nuclear (i.e.
(min,c)-nuclear). In general, Proposition 6.11 states that if $\cl S$ is an
operator subsystem of $M_n$ associated with a chordal graph $G$ then $\cl S$ is
C*-nuclear. We refer to Section 5 of \cite{kptt} for related definitions and
discussions. Since $T_n$ is associated with the chordal graph (over vertices
$\{1,2,...,n\}$)
$$
\{(1,1),\,(1,2),\,(2,1),\,(2,2),\,(2,3),\, (3,2), (3,3), \, (3,4),...,(n,n) \}
$$
we have that 
\begin{proposition}
$T_n$ is C*-nuclear for every $n$.
\end{proposition}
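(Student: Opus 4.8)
The plan is to recognize $T_n$ as the operator subsystem of $M_n$ attached to a graph and then quote the chordal-graph criterion from \cite{kptt}. First I would make the associated graph $G$ explicit: its vertex set is $\{1,\dots,n\}$ and its (loop-inclusive) edge set is
$$
\{(i,i): 1\leq i\leq n\}\;\cup\;\{(i,i+1),(i+1,i): 1\leq i\leq n-1\},
$$
which is precisely the pattern displayed immediately before the statement. By definition of the graph-to-operator-system correspondence, a matrix $A=(a_{ij})$ lies in $T_n$ exactly when $a_{ij}=0$ for every $(i,j)$ that is not an edge of $G$; thus $T_n$ is the operator system associated with $G$, and $G$ is just the path $1-2-\cdots-n$ equipped with a loop at each vertex.

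Next I would verify that $G$ is chordal. Since the underlying graph is a path, it contains no cycle of length at least $4$ whatsoever; hence the defining requirement for chordality — that every such cycle possess a chord — is satisfied vacuously. Put differently, a path is a tree, and trees are chordal, so $G$ falls squarely within the hypothesis of the chordal-graph result.

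Having identified $T_n$ as the operator subsystem of $M_n$ associated with the chordal graph $G$, I would then invoke Proposition 6.11 of \cite{kptt}, which asserts that any operator subsystem of $M_n$ associated with a chordal graph is C*-nuclear, i.e.\ (min,c)-nuclear. This applies verbatim and yields the proposition at once; it also recovers the special case $T_3$ of Theorem 5.16 of \cite{kptt}. I do not expect a genuine obstacle here, as the entire content is the observation that the tridiagonal pattern corresponds to a path and that paths are the simplest chordal graphs. The one point deserving a little care is conventional rather than mathematical: one must confirm that the correspondence of \cite{kptt} (in particular its handling of the diagonal/loop entries) produces exactly $T_n$ from this path graph, but this is routine bookkeeping against the definitions in Section 5 of that reference.
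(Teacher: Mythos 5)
Your proposal is correct and follows exactly the paper's argument: identify $T_n$ as the operator subsystem of $M_n$ associated with the path graph on $\{1,\dots,n\}$ (with loops), observe that this graph is chordal since a path contains no cycles of length at least four, and invoke Proposition 6.11 of \cite{kptt}, which states that operator subsystems of $M_n$ associated with chordal graphs are C*-nuclear. The paper gives precisely this reasoning (citing the same Proposition 6.11 and noting the special case $T_3$ from Theorem 5.16 of \cite{kptt}), so there is nothing to add.
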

As we mentioned at the end of Section \ref{sec tensor}, a finite dimensional operator system is (c,max)-nuclear if
and only if it completely order isomorphic to a C*-algebra. Consequently, for an
operator system which is not a C*-algebra, such as $T_n$, C*-nuclearity is the
highest nuclearity that one should expect.

$ $

Since $J_n$, the diagonal $n \times n$ matrices with 0 trace, is a null subspace
of $T_n$, by Proposition \ref{prop nullsubspace}, it is a completely proximinal kernel. 
Also note that C*-nuclearity clearly implies the lifting property and so, by Theorem \ref{thm lp-quotient}, we have that
$T_n/J_n$ has lifting  property. 
The following is from \cite{pf}:

\begin{theorem}
$T_n / J_n$ is unitally completely order isomorphic to $\cl S_{n-1}$. More
precisely, the ucp map $ \gamma: T_n \rightarrow \cl S_{n-1} $  given by
$$
 \begin{array}{lcl}
 E_{i,i}        &  \mapsto &  e/n \mbox{ for } i = 1,...,n \\
 E_{i,i+1}   &  \mapsto  &  g_i/n \mbox{ for } i = 1,...,n-1\\
 E_{i+1,i}   &  \mapsto  &  g_i^*/n \mbox{ for } i = 1,...,n-1
\end{array}
$$
is a quotient map with kernel $J_n$.
\end{theorem}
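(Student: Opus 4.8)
The plan is to realize $\gamma$ concretely as the restriction to $T_n$ of a unital completely positive map defined on all of $M_n$, and then to invert the induced map on the quotient by appealing to the universal property of $\cl S_{n-1}$.

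First I would establish the complete positivity and surjectivity of $\gamma$. The subtlety is that $\gamma$ is defined only on $T_n$, so Choi's matrix criterion does not apply directly; I therefore extend $\gamma$ to $M_n$. Put $w_1 = e$ and $w_{i+1} = g_i^* w_i$, so that $w_i = g_{i-1}^* g_{i-2}^* \cdots g_1^*$ is a unitary in $C^*(\mathbb{F}_{n-1})$ with $w_i w_i^* = e$ and $w_i w_{i+1}^* = g_i$. Define $\Gamma: M_n \rightarrow C^*(\mathbb{F}_{n-1})$ by $\Gamma(E_{ij}) = w_i w_j^*/n$. Its Choi matrix is $\tfrac{1}{n}(w_i w_j^*)_{ij} = \tfrac{1}{n} W W^*$, where $W$ is the column $(w_1,\dots,w_n)^{t}$; being of the form $W W^*$ it is positive in $M_n(C^*(\mathbb{F}_{n-1}))$, so by Choi's theorem (\cite[Thm. 6.1]{Pa}) $\Gamma$ is completely positive, and $\Gamma(I) = \sum_i w_i w_i^*/n = e$ makes it unital. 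Restricting to $T_n$ recovers $\gamma$ exactly, since $\Gamma(E_{ii}) = e/n$, $\Gamma(E_{i,i+1}) = g_i/n$ and $\Gamma(E_{i+1,i}) = g_i^*/n$. Hence $\gamma$ is ucp with image $\mathrm{span}\{e, g_i, g_i^*\} = \cl S_{n-1}$, so it is surjective; linear independence of $\{e, g_1,\dots,g_{n-1}, g_1^*,\dots,g_{n-1}^*\}$ identifies $\ker\gamma$ with the diagonal trace-zero matrices $J_n$, and a dimension count $\dim T_n - \dim J_n = (3n-2)-(n-1) = 2n-1 = \dim \cl S_{n-1}$ shows that the induced ucp map $\bar\gamma : T_n/J_n \rightarrow \cl S_{n-1}$ is bijective.

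It remains to show $\bar\gamma^{-1}$ is completely positive. Here I would use the universal property of $\cl S_{n-1}$: if each $y_i := q(n E_{i,i+1})$ is a contraction in $T_n/J_n$, where $q : T_n \rightarrow T_n/J_n$ is the quotient map, then there is a ucp map $\rho : \cl S_{n-1} \rightarrow T_n/J_n$ with $\rho(g_i) = y_i$. Since $\bar\gamma(y_i) = g_i$ and a ucp (hence linear and self-adjoint) map on $\cl S_{n-1}$ is determined by the images of the $g_i$, we get $\bar\gamma\circ\rho = \mathrm{id}$, forcing $\rho = \bar\gamma^{-1}$; as $\rho$ is completely positive this makes $\bar\gamma$ a unital complete order isomorphism, which is exactly the assertion that $\gamma$ is a quotient map with kernel $J_n$.

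The main obstacle is therefore the contractivity of $y_i$, i.e. showing $\left(\begin{smallmatrix} e+J_n & y_i \\ y_i^* & e+J_n \end{smallmatrix}\right) \geq 0$ in $M_2(T_n/J_n)$. Because $J_n$ is a null subspace it is completely proximinal (Proposition \ref{prop nullsubspace}), so it suffices to exhibit an element of $M_2(J_n)$ whose sum with $\left(\begin{smallmatrix} I_n & nE_{i,i+1} \\ nE_{i+1,i} & I_n \end{smallmatrix}\right)$ is positive in $M_2(T_n) \subset M_{2n}$. Adding $\mathrm{diag}(nE_{ii}-I_n,\, nE_{i+1,i+1}-I_n)$, which lies in $M_2(J_n)$ since $nE_{ii}-I_n$ and $nE_{i+1,i+1}-I_n$ are diagonal and trace-zero, produces $\left(\begin{smallmatrix} nE_{ii} & nE_{i,i+1} \\ nE_{i+1,i} & nE_{i+1,i+1} \end{smallmatrix}\right)$, whose only nonzero entries form the rank-one positive block $n\left(\begin{smallmatrix} 1 & 1 \\ 1 & 1 \end{smallmatrix}\right)$ on the two coordinates $i$ and $n+i+1$. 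Thus $y_i$ is a contraction and the argument closes, the unitality of $\bar\gamma$ being immediate from $\gamma(I) = e$.
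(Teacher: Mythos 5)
Your proof is correct, and it is worth noting at the outset that the paper itself gives no proof of this theorem: it is quoted from Farenick and Paulsen \cite{pf}. So your argument stands as a self-contained substitute rather than a variant of something in the text, and both halves hold up under checking. For the complete positivity of $\gamma$: the extension $\Gamma(E_{ij}) = w_i w_j^*/n$ has Choi matrix $\tfrac{1}{n}WW^* \geq 0$, and the only caveat is that \cite[Thm. 6.1]{Pa} is stated for maps into $M_k$, whereas you need Choi's criterion for maps from $M_n$ into an arbitrary C*-algebra; this general form is standard (the same proof works), and you could also bypass it entirely by observing that $\Gamma(A) = \tfrac{1}{n}V^*(A\otimes 1)V$, where $V\xi = \sum_i e_i \otimes w_i^*\xi$, so that $\tfrac{1}{\sqrt{n}}V$ is an isometry and $\Gamma$ is visibly ucp. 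The kernel computation is right (it uses the linear independence of $\{e,g_1,\dots,g_{n-1},g_1^*,\dots,g_{n-1}^*\}$ in $C^*(\mathbb{F}_{n-1})$), and once $\ker\gamma = J_n$ is identified the induced map $\bar\gamma$ is automatically injective, so your dimension count is redundant. For the inverse: reducing everything to the contractivity of $y_i = nE_{i,i+1}+J_n$ via the universal property of $\cl S_{n-1}$ is exactly the right move, and your correction $\diag(nE_{ii}-I_n,\, nE_{i+1,i+1}-I_n) \in M_2(J_n)$ does produce the representative $n\left(\begin{smallmatrix} E_{ii} & E_{i,i+1}\\ E_{i+1,i} & E_{i+1,i+1}\end{smallmatrix}\right)$, which is positive since its only nonzero entries form the rank-one block $n\left(\begin{smallmatrix}1&1\\1&1\end{smallmatrix}\right)$ on coordinates $i$ and $n+i+1$. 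One small simplification: you do not need the complete proximinality of $J_n$ (Proposition \ref{prop nullsubspace}) for this step, since a coset admitting a positive representative lies in $D_2 \subseteq C_2$ by the very definition of the quotient cones; proximinality concerns only the reverse containment. Finally, your use of the uniqueness clause of the universal property to get $\bar\gamma\circ\rho = \id$, hence $\rho = \bar\gamma^{-1}$ ucp, matches precisely the definition of quotient map recalled in Section \ref{sec quotients}, so the theorem follows as claimed.
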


\noindent This again brings difficult stability problems we have considered in the
last section:
\begin{corollary}
The following are equivalent:
\begin{enumerate}
 \item KC has an affirmative answer.
 \item For any finite dimensional C*-nuclear operator system $\cl S$ and null
subspace $J$ of $\cl S$ one has $\cl S / J$ has DCEP.
 \item $T_n / J_n$ has DCEP for every n.
 \item $T_3 / J_3$ has DCEP.
\end{enumerate}
\end{corollary}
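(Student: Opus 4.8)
The plan is to establish the cycle of implications $(1)\Rightarrow(2)\Rightarrow(3)\Rightarrow(4)\Rightarrow(1)$, leaning on the identification $T_n/J_n \cong \cl S_{n-1}$ from the preceding theorem together with the five dimensional reformulation of KC in Theorem \ref{thm new KC}. The architecture is natural: conditions $(3)$ and $(4)$ are really statements about the systems $\cl S_{n-1}$ and $\cl S_2$, for which Theorem \ref{thm new KC} already encodes the connection to KC, while $(2)$ is the general stability statement from which $(3)$ follows by specialization, and $(1)\Rightarrow(2)$ is where the stability machinery for the lifting property is invoked.

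For $(1)\Rightarrow(2)$ I would start from a finite dimensional C*-nuclear operator system $\cl S$ and a null subspace $J$. Since C*-nuclearity is $(min,c)$-nuclearity and $min \leq er \leq c$, the system $\cl S$ is in particular $(min,er)$-nuclear, i.e. it has the lifting property. By Theorem \ref{thm lp-quotient} the quotient $\cl S/J$ by a null subspace again has the lifting property. Assuming KC, condition $(4)$ of Theorem \ref{thm new KC} says that every finite dimensional operator system with the lifting property has DCEP; hence $\cl S/J$ has DCEP.

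For $(2)\Rightarrow(3)$ I would recall that $T_n$ is C*-nuclear (the proposition established above) and that $J_n$ is a null subspace of $T_n$, so applying the hypothesis $(2)$ with $\cl S = T_n$ and $J = J_n$ yields that $T_n/J_n$ has DCEP for every $n$. The implication $(3)\Rightarrow(4)$ is immediate by specializing to $n=3$. Finally, for $(4)\Rightarrow(1)$ the theorem preceding the corollary gives the unital complete order isomorphism $T_3/J_3 \cong \cl S_2$, so $(4)$ is exactly the statement that $\cl S_2$ has DCEP; by the equivalence of conditions $(1)$ and $(5)$ in Theorem \ref{thm new KC} this is equivalent to KC having an affirmative answer.

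The only step demanding real care is $(1)\Rightarrow(2)$, where one must correctly chain three facts: C*-nuclearity forces the lifting property, the lifting property is stable under null subspace quotients, and KC promotes the lifting property to DCEP in the finite dimensional setting. Each ingredient is supplied verbatim by an earlier result (the squeeze $min \leq er \leq c$, Theorem \ref{thm lp-quotient}, and Theorem \ref{thm new KC}(4)), so no genuinely new estimate is required; the work is entirely in assembling the citations in the right order and checking that the hypotheses of each are met.
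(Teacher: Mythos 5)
Your proposal is correct and follows essentially the same route as the paper: both rest on the chain C*-nuclearity $\Rightarrow$ lifting property (via $min \leq er \leq c$), stability of the lifting property under null subspace quotients (Theorem \ref{thm lp-quotient}), the promotion of the lifting property to DCEP under KC (Theorem \ref{thm new KC}(4)), and the identification $T_3/J_3 = \cl S_2$ together with Theorem \ref{thm new KC}(5) to close the loop. The only difference is cosmetic: you arrange the implications as a single cycle $(1)\Rightarrow(2)\Rightarrow(3)\Rightarrow(4)\Rightarrow(1)$, whereas the paper proves $(1)\Leftrightarrow(4)$ first and then attaches $(2)$ and $(3)$.
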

\begin{proof}
 Since $T_3/J_3 = \cl S_2$, (1) and (4) are equivalent by Theorem \ref{thm new KC}. Also, as we mentioned, $\cl T_n/J_n$ has lifting
property. So if we assume (1) we must have that  $\cl T_n/J_n$ has DCEP. (3) implies (4) is clear. Now we need to show 
that (2) is equivalent to remaining. Clearly (2) implies (4) (or (3)). On the other hand if $\cl S$ is C*-nuclear then, in particular,
it has the lifting property and so, by Theorem \ref{thm lp-quotient}, $\cl S/J$ has the lifting property. So assuming (1) we must
have that this quotient has DCEP.
\end{proof}

\noindent This corollary indicates that KC is indeed an operator system quotient problem. DCEP is one 
of the extensions of WEP from unital C*-algebras to general operator systems. In addition
to being equivalent to (el,c)-nuclearity we have seen that it is an important property in the understanding
of KC. However, the following definition will allow us to relax DCEP to another property:

\begin{definition}
We say that an operator system $\cl S$ has property $\mathbb{S}_2$ if $\cl S \otimes_{min} \cl S_2 = \cl S \otimes_{c} \cl S_2$.
\end{definition}

We remark that, for unital C*-algebras, property $\mathbb{S}_2$ coincides with WEP. That is,
a unital C*-algebra has WEP if and only if it has property $\mathbb{S}_2$. This directly
follows from Theorem \ref{thm new WEP}. It is also worth mentioning that, again for unital C*-algebras,  
property $\mathbb{S}_2$ coincides with property $\mathfrak{W}$ and property $\mathfrak{S}$ in \cite{pf}.
We refer the reader to Section 3 and 6 in \cite{pf} for related definitions. For the operator systems we have that
$$
WEP \;\;\;\Longrightarrow\;\;\; DCEP \;\;\;\Longrightarrow\;\;\;\; property \; \mathbb{S}_2.
$$
We know that DCEP, in general, does not imply WEP. For example if $\cl S$ is a finite
dimensional operator system then WEP is equivalent to $\cl S$ having the structure of a C*-algebra
(which follows from the fact that (el,max)-nuclearity implies (c,max)-nuclearity). On the other hand 
$T_n$ is a C*-nuclear operator system for every $n$ and in particular it has DCEP. So this family forms an example
that DCEP is weaker than WEP. To see that DCEP implies property $ \mathbb{S}_2$, let $\cl S$ be an operator system
with DCEP (equivalently (el,c)-nuclearity). Since $ \cl S_2$ has the lifting property (i.e. (min,er)-nuclearity) (and keeping
in mind that it is written on the right hand side) we have
$$
\cl S \otimes_{min} \cl S_2 = \cl S \otimes_{el}  \cl S_2 = \cl S \otimes_{c}   \cl S_2.
$$
Thus, $\cl S$ has property $ \mathbb{S}_2$. However we don't know whether property $\mathbb{S}_2$ implies DCEP.
\begin{question}
Does property $\mathbb{S}_2$ imply DCEP? 
\end{question}

\begin{proposition}\label{pro tensor prop.S2}
Suppose $\cl S \otimes_{\tau} \cl T$ has property $\mathbb{S}_2$ (resp. has DCEP) where $\tau$ is any functorial
tensor product. Then both $\cl S$ and $\cl T$ have property $\mathbb{S}_2$ (resp. have DCEP).
\end{proposition}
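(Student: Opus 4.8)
The plan is to reduce both assertions to the transfer principle recorded in Lemma \ref{lem iden. decom.}, namely that a nuclearity-type property passes along a factorization of the identity through ucp maps. The one structural input I need is that the identity on $\cl S$ factors via ucp maps through $\cl S \otimes_\tau \cl T$ (and symmetrically for $\cl T$). To build this, fix a state $f$ on $\cl T$ and consider
$$
\cl S \xrightarrow{\,\iota\,} \cl S \otimes_\tau \cl T \xrightarrow{\,\id \otimes f\,} \cl S \otimes_\tau \mathbb{C} = \cl S,
$$
where $\iota(s) = s \otimes e_{\cl T}$. The map $\iota$ is ucp: by the second axiom for operator system tensor products, $(s_{ij}) \in M_n(\cl S)^+$ forces $(s_{ij} \otimes e_{\cl T})$ positive, so $\iota^{(n)}$ preserves positivity, and $\iota$ is plainly unital. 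The slice $\id \otimes f$ is ucp by functoriality of $\tau$ applied to the ucp functional $f : \cl T \to \mathbb{C}$. Since $f(e_{\cl T}) = 1$, the composition sends $s \mapsto s \otimes e_{\cl T} \mapsto s$, i.e. it is the identity on $\cl S$. Exchanging the two factors (fixing a state on $\cl S$ and using $t \mapsto e_{\cl S} \otimes t$) factors the identity on $\cl T$ in the same way.

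The DCEP case is then immediate. DCEP is precisely (el,c)-nuclearity, and el, c are functorial tensor products with el $\leq$ c, so Lemma \ref{lem iden. decom.} applies directly: DCEP passes from $\cl S \otimes_\tau \cl T$ to $\cl S$, and by symmetry to $\cl T$.

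For property $\mathbb{S}_2$ the statement is not literally a $(\tau_1,\tau_2)$-nuclearity assertion, since it demands the equality only at the single operator system $\cl S_2$; hence Lemma \ref{lem iden. decom.} does not apply verbatim, and I would instead rerun its proof specialized to $\cl S_2$. Tensoring the factorization above with $\cl S_2$ and using functoriality of min and c produces the chain of ucp maps
$$
\cl S \otimes_{min} \cl S_2 \xrightarrow{\,\iota \otimes \id\,} (\cl S \otimes_\tau \cl T) \otimes_{min} \cl S_2 = (\cl S \otimes_\tau \cl T) \otimes_{c} \cl S_2 \xrightarrow{\,(\id \otimes f) \otimes \id\,} \cl S \otimes_{c} \cl S_2,
$$
where the middle equality is exactly property $\mathbb{S}_2$ for $\cl S \otimes_\tau \cl T$. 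The composition is the identity on $\cl S \otimes \cl S_2$, so the identity map $\cl S \otimes_{min} \cl S_2 \to \cl S \otimes_{c} \cl S_2$ is ucp; combined with the always-available relation min $\leq$ c (which makes the reverse identity cp) this forces $\cl S \otimes_{min} \cl S_2 = \cl S \otimes_{c} \cl S_2$, that is, property $\mathbb{S}_2$ for $\cl S$. The same diagram with the factors swapped gives it for $\cl T$.

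Every step is routine bookkeeping once the factorization is in place; the only point demanding genuine care is confirming that $\iota$ and $\id \otimes f$ are ucp for an \emph{arbitrary} functorial $\tau$, and not merely for the special tensor products min and max. This is exactly where the functoriality hypothesis on $\tau$ is used, and I do not anticipate any obstacle beyond it.
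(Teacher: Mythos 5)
Your proof is correct and follows essentially the same route as the paper's: factor the identity on $\cl S$ through $\cl S \otimes_\tau \cl T$ via $s \mapsto s \otimes e_{\cl T}$ and a slice by a state, invoke Lemma \ref{lem iden. decom.} for DCEP, and rerun that lemma's argument specialized to $\cl S_2$ (tensoring the factorization with $\cl S_2$ under functoriality of min and c) for property $\mathbb{S}_2$. The only difference is that you spell out the verification that the two factorization maps are ucp for an arbitrary functorial $\tau$, which the paper leaves implicit.
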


\begin{proof}
This follows from a very basic principle: The identity on $\cl S$ factors via ucp maps through $\cl S \otimes_{\tau} \cl T$.
More precisely, the inclusion $i: \cl S \rightarrow \cl S \otimes_{\tau} \cl T$ given by $s \mapsto s\otimes e_{\cl T}$ is
a ucp map. Conversely, if $g$ is a state on $\cl T$ then $id \otimes g : \cl S \otimes_{\tau} \cl T \rightarrow \cl S \otimes \mathbb{C} \cong \cl S$ 
is again a ucp map such that $(id \otimes g) \circ i$ is the identity on $\cl S$. This shows that if $ \cl S \otimes_{\tau} \cl T$
has DCEP (equivalently (el,c)-nuclearity) then by Lemma \ref{lem iden. decom.} $\cl S$ has DCEP. Clearly a similar
argument shows that $\cl T$ has the same property. Now suppose that $ \cl S \otimes_{\tau} \cl T$ has property $\mathbb{S}_2$.
By using the functoriality of min and c tensor products we have that
$$
\cl S \otimes_{min} \cl S_2 \xrightarrow{i \otimes id}  (\cl S \otimes_{\tau} \cl T)   \otimes_{min} \cl S_2 =   (\cl S \otimes_{\tau} \cl T)   \otimes_{c} \cl S_2
\xrightarrow{ (id\otimes g) \otimes id} \cl S \otimes_{c} \cl S_2
$$
is a sequence of ucp maps such that their composition is the identity on $\cl S \otimes \cl S_2$. Since min $\leq$ c we obtain that $\cl S$ has property $\mathbb{S}_2$.
The proof for $\cl T$ is similar.
\end{proof}

The fact that $\cl T_3 / J_3 = \cl S_2$ together with Theorem \ref{thm new WEP} allow us characterize WEP as follows (we refer the reader to \cite{FKP}
for further applications of this result):

\begin{theorem}
A unital C*-algebra $\cl A$ has WEP if and only if the associated map $\cl T_3 \otimes_{min} \cl A  \rightarrow ( \cl T_3 / J_3) \otimes_{min} \cl A $
is a quotient map. In other words we have the complete order isomorphism
$$
(\cl T_3 \otimes_{min} \cl A) / (J_3 \otimes \cl A ) = ( \cl T_3 / J_3) \otimes_{min} \cl A.
$$
\end{theorem}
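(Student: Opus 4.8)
The plan is to reduce the statement to the WEP characterisation of Theorem \ref{thm new WEP} by passing through the maximal tensor product, exploiting the two special features of $\cl T_3$ at our disposal: it is C*-nuclear, and $J_3$ is a null subspace with $\cl T_3 / J_3 = \cl S_2$. The guiding observation is that the natural map
$$
\Phi_{min}: (\cl T_3 \otimes_{min} \cl A)/(J_3 \otimes \cl A) \longrightarrow (\cl T_3 / J_3) \otimes_{min} \cl A = \cl S_2 \otimes_{min} \cl A
$$
induced by $q \otimes \id$ (where $q: \cl T_3 \to \cl T_3 / J_3$ is the quotient map) is, since $\cl T_3$ is finite dimensional, always unital, completely positive and bijective; this is the analogue of the bijectivity recorded in Remark \ref{rem ex for fd}, with the quotient now taken on the operator system factor. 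Thus the whole content of the theorem is whether $\Phi_{min}$ is a complete order isomorphism.

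First I would record that $\cl T_3 \otimes_{min} \cl A = \cl T_3 \otimes_{max} \cl A$. Indeed $\cl T_3$ is C*-nuclear, i.e.\ (min,c)-nuclear, so $\cl T_3 \otimes_{min} \cl A = \cl T_3 \otimes_{c} \cl A$, and since $\cl A$ is a unital C*-algebra Theorem \ref{thm c=max} gives $\cl T_3 \otimes_{c} \cl A = \cl T_3 \otimes_{max} \cl A$. Consequently the domain of $\Phi_{min}$ coincides, as an operator system, with $(\cl T_3 \otimes_{max} \cl A)/(J_3 \otimes \cl A)$. Next I would invoke the projectivity of the maximal tensor product from \cite{Han}: since $J_3$ is a null subspace of the finite dimensional system $\cl T_3$, the induced map
$$
\Psi_{max}: (\cl T_3 \otimes_{max} \cl A)/(J_3 \otimes \cl A) \longrightarrow (\cl T_3 / J_3) \otimes_{max} \cl A = \cl S_2 \otimes_{max} \cl A
$$
is a complete order isomorphism.

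Combining the two identifications, the domain of $\Phi_{min}$ is completely order isomorphic to $\cl S_2 \otimes_{max} \cl A$. Both $\Psi_{max}$ and $\Phi_{min}$ are induced by the same map $q \otimes \id$, so each sends the class of $s \otimes a$ to $q(s) \otimes a$; hence $\Phi_{min} \circ \Psi_{max}^{-1}$ is precisely the canonical identity $\cl S_2 \otimes_{max} \cl A \to \cl S_2 \otimes_{min} \cl A$. Therefore $\Phi_{min}$ is a complete order isomorphism if and only if $\cl S_2 \otimes_{min} \cl A = \cl S_2 \otimes_{max} \cl A$, which by the symmetry of min and max and Theorem \ref{thm new WEP} holds exactly when $\cl A$ has WEP. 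The main obstacle is the bookkeeping needed to certify that the two induced maps agree on the nose (so that the isomorphism property of $\Phi_{min}$ transfers to the canonical map $\cl S_2 \otimes_{max} \cl A \to \cl S_2 \otimes_{min} \cl A$), together with the technical point that the general projectivity of max from \cite{Han} is indeed applicable here with the infinite dimensional tensorand $\cl A$, rather than only in the finite dimensional form stated earlier in the paper.
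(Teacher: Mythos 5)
Your proof is correct and follows essentially the same route as the paper's: identify $(\cl T_3 \otimes_{min} \cl A)/(J_3 \otimes \cl A)$ with $\cl S_2 \otimes_{max} \cl A$ via the C*-nuclearity of $\cl T_3$ together with Han's projectivity of the maximal tensor product, and then observe that the induced map being a complete order isomorphism is exactly the statement $\cl S_2 \otimes_{min} \cl A = \cl S_2 \otimes_{max} \cl A$, i.e.\ WEP by Theorem \ref{thm new WEP}. The only difference is cosmetic: the paper routes the last equivalence through the phrase ``property $\mathbb{S}_2$,'' which for C*-algebras is the same statement, and your explicit bookkeeping that $\Phi_{min}\circ\Psi_{max}^{-1}$ is the canonical map, as well as the caveat about needing Han's general (not merely finite dimensional) projectivity, are points the paper uses implicitly.
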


\begin{proof}
By using the projectivity of the maximal tensor product and C*-nuclearity of $T_3 $ we have that
$$
 \cl S_2  \otimes_{max} \cl A = \cl T_3 / J_3 \otimes_{max} \cl A  =  (\cl T_3 \otimes_{max} \cl A) / (J_3 \otimes \cl A ) = (\cl T_3 \otimes_{min} \cl A) / (J_3 \otimes \cl A ).
$$
Now if $\cl A$ has WEP then it has property $\mathbb{S}_2$ and the equality in the theorem satisfies. Conversely
if the equality is satisfied then $\cl A$ must have property $\mathbb{S}_2$, equivalently, WEP.
\end{proof}

We now discuss some duality results from \cite{pf}. Recall that
we write $\cl S_n$ in the following basis form:  $\cl S_n = span  \{g_1,...,g_n,
e, g_1^*,...,g_n^*\}$. When we pass to dual basis we have that
$$
\cl S_n^d = span \{\delta_1,...,\delta_n, \delta, \delta_1^*,...,\delta_n^* \}.
$$
We leave the elementary proof of the fact that $\delta_{g_i^*} = \delta_i^*$ to
the reader. We also remind that $\delta$ is a faithful state and we consider it
as the Archimedean matrix order unit for the dual operator system. We now
see that $\cl S_n^d$ can be identified with an operator subsystem of $M_2 \oplus M_2 \oplus \cdots \oplus M_2$ (the direct sum of $n$ copies of $M_2$).
To avoid the excessive notation we use the following:
$$
e = (I_2,....,I_2), \;\;\; e_1 = (E_{12},0,...,0),\;\;\; e_2 =
(0,E_{12},0,..., 0), \; ...\;  e_n = (0,...,0,E_{12}).
$$
Consider the
following map:
$$
\gamma : \cl S_n^d \rightarrow \oplus_{i=1}^n M_2 \mbox{ given by }\delta
\mapsto e, \;\;\;\delta_i \mapsto e_i \; \mbox{ and } \; \delta_i^* \mapsto
e_i^* \mbox{ for } i = 1,...,n.
$$ 
Now we are ready to state:
\begin{theorem}[Farenick, Paulsen]
The above map $\gamma : \cl S_n^d \rightarrow \oplus_{i=1}^n M_2$ is a unital
complete order embedding.
\end{theorem}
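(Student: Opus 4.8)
The plan is to reduce the entire statement to a single positivity criterion for completely positive maps out of $\cl S_n$, and then read off both complete positivity of $\gamma$ and the complete order embedding property simultaneously from that criterion. First note that $\gamma$ is manifestly unital, linear and injective (it carries the basis $\{\delta,\delta_i,\delta_i^*\}$ to the linearly independent set $\{e,e_i,e_i^*\}$), so the whole content lies in the matricial order. By the definition of the dual matrix order, a self-adjoint $(f_{rs})\in M_k(\cl S_n^d)$ is positive precisely when the associated map $\Phi_f:\cl S_n\to M_k$, $x\mapsto (f_{rs}(x))$, is completely positive. Writing each $f_{rs}$ in the dual basis, set $A=(f_{rs}(e))$ and $B_i=(f_{rs}(g_i))$; self-adjointness of $(f_{rs})$ forces $A=A^*$ and $(f_{rs}(g_i^*))=B_i^*$.

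The key step I would isolate is the claim that $\Phi_f$ is completely positive if and only if
$$
\begin{pmatrix} A & B_i \\ B_i^* & A\end{pmatrix}\ge 0 \quad\text{in } M_{2k}\ \text{ for every } i=1,\dots,n.
$$
The forward implication is immediate: restricting $\Phi_f$ to the operator subsystem $\mathrm{span}\{e,g_i,g_i^*\}\cong\cl S_1$ gives a completely positive map whose matrix of values is exactly this block. For the converse — the crux — I would first treat $A>0$: block positivity is equivalent to $B_i=A^{1/2}K_iA^{1/2}$ with $\|K_i\|\le 1$ (the standard $2\times2$ operator-matrix fact, see \cite{Pa}). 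Since $\cl S_n$ is the universal operator system on $n$ contractions, the assignment $g_i\mapsto K_i$ extends to a ucp map $\Psi:\cl S_n\to M_k$, whence $\Phi_f=A^{1/2}\Psi(\cdot)A^{1/2}$ is completely positive, being a conjugation of a cp map. The general case $A\ge 0$ follows by replacing $A$ with $A+\epsilon I_k$, observing that the block matrices remain positive, and letting $\epsilon\to 0$ inside the closed cone of cp maps. The essential point — and the main obstacle to watch — is that the $n$ separate block conditions decouple exactly because the generators $g_i$ are free, so there is no relation obstructing the simultaneous realization of the contractions $K_i$ by one ucp map.

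Finally I would match this criterion with positivity in the target. Since $M_k\big(\oplus_{i=1}^n M_2\big)=\oplus_{i=1}^n M_k(M_2)$ and positivity in a direct sum is coordinatewise, $\gamma^{(k)}\big((f_{rs})\big)$ is positive iff each of its $n$ coordinates is positive in $M_k(M_2)\cong M_2(M_k)$. The $i$-th coordinate of $\gamma(f_{rs})$ is $a_{rs}I_2+b_{rs}^iE_{12}+c_{rs}^iE_{21}$, which under the canonical shuffle $M_k(M_2)\cong M_2(M_k)$ becomes precisely $\begin{pmatrix} A & B_i \\ B_i^* & A\end{pmatrix}$. Hence $\gamma^{(k)}\big((f_{rs})\big)\ge 0$ if and only if all $n$ block conditions hold, which by the key step is equivalent to $(f_{rs})$ being positive in $M_k(\cl S_n^d)$. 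As $k$ is arbitrary, this shows at once that $\gamma$ is completely positive and that a matrix is positive whenever its image is, i.e. that $\gamma$ is a unital complete order embedding.
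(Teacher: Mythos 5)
Your proposal is correct. Note first that the paper itself gives no proof of this statement: it is quoted from Farenick--Paulsen \cite{pf}, so there is no in-paper argument to compare against; your write-up is a legitimate self-contained proof of the cited result. The skeleton is sound: dual positivity of $(f_{rs})\in M_k(\cl S_n^d)$ is by definition complete positivity of $\Phi_f:\cl S_n\to M_k$; your key lemma converts this, via the universal property of $\cl S_n$ and the standard $2\times 2$ block-matrix criterion $B=A^{1/2}KA^{1/2}$, $\|K\|\le 1$ (for invertible $A$), into the $n$ conditions $\left(\begin{smallmatrix} A & B_i \\ B_i^* & A\end{smallmatrix}\right)\ge 0$; and the canonical shuffle $M_k(M_2)\cong M_2(M_k)$ identifies these blocks with the coordinates of $\gamma^{(k)}\bigl((f_{rs})\bigr)$ in $\oplus_{i=1}^n M_k(M_2)$. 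The perturbation $A\mapsto A+\epsilon I_k$ and passage to the pointwise-closed cone of cp maps handles non-invertible $A$ correctly. Two small points deserve a sentence each in a final version. First, in the forward direction of your key lemma, the block positivity comes from applying the second amplification of $\Phi_f|_{\mathrm{span}\{e,g_i,g_i^*\}}$ to the element $\left(\begin{smallmatrix} e & g_i \\ g_i^* & e\end{smallmatrix}\right)$, which is positive in $M_2(\cl S_n)$ because $g_i$ is unitary, so that $\left(\begin{smallmatrix} e & g_i \\ g_i^* & e\end{smallmatrix}\right)=\left(\begin{smallmatrix} e \\ g_i^*\end{smallmatrix}\right)\left(\begin{smallmatrix} e & g_i\end{smallmatrix}\right)$; "the matrix of values" does not by itself explain why that matrix must be positive. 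Second, for the order-embedding direction you apply your criterion to self-adjoint $(f_{rs})$, but the definition of complete order embedding quantifies over all $(f_{rs})$ whose image is positive; since $\gamma$ is $*$-linear (one checks $\delta^*=\delta$ and $(\delta_i)^*=\delta_i^*$ on the dual basis) and injective, positivity of the image forces $(f_{rs})$ to be self-adjoint, closing this loop. With these two remarks made explicit, the argument is complete.
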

\noindent By using the diagonal identification of $M_2 \oplus M_2$ in $M_4$, in particular, we have that
$$
\cl S_2^d =  \{ \left(
\begin{array}{cccc}
a & b & 0 & 0 \\
c & a & 0 & 0 \\
0 & 0 & a & d \\
0 & 0 & e & a
\end{array}
\right)   \;\;:\;\; a,b,c,d,e \in \mathbb{C} 
\}.
$$ 

In \cite{Was2} it was shown by Wasserman that $C^*(F_n)$ is not exact for any $n \geq 2$. Clearly
$\cl S_n$ contains enough unitaries in $C^*(F_n)$. The following
is Corollary 9.6 in \cite{kptt2}:
\begin{proposition} \label{prop enoughexact}
Suppose that $\cl S \subset \cl A$ contains enough unitaries. If $\cl S$ is exact then $\cl A$ is exact.
\end{proposition}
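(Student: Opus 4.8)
The plan is to reduce the statement to the nuclearity reformulation of exactness and then exploit the fact that the hypothesis forces $\cl A$ to lie inside the injective envelope of $\cl S$. By Theorem~\ref{thm exact=(min,el)} it suffices to show that $\cl A$ is $(min,el)$-nuclear. Since $\cl S\subset\cl A$ contains enough unitaries, Proposition~\ref{prop enough=envelo.} gives $\cl A = C^*_e(\cl S)\subset I(\cl S)$, so that $I(\cl A)=I(\cl S)$; by left injectivity of $el$ and the fact that $el=max$ over the injective system $I(\cl S)$ (cf. Theorem~\ref{thm el is li}), both $\cl S\otimes_{el}\cl T$ and $\cl A\otimes_{el}\cl T$ carry the order inherited from $I(\cl S)\otimes_{max}\cl T$. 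Thus, fixing an operator system $\cl T$, the goal becomes: the canonical map $\iota:\cl A\otimes_{min}\cl T\to I(\cl S)\otimes_{max}\cl T$ (identity on the algebraic tensor) is a complete order embedding. Since $min\le el$, the direction sending $el$-positive elements to $min$-positive ones is automatic; what must be proved is that $\iota$ is completely positive, i.e. that every $u\in M_n(\cl A\otimes\cl T)$ which is positive in $M_n(\cl A\otimes_{min}\cl T)$ is positive in $M_n(I(\cl S)\otimes_{max}\cl T)$.

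To detect positivity upstairs I would pass to honest representations. As $I(\cl S)$ is a C*-algebra, $I(\cl S)\otimes_{max}\cl T=I(\cl S)\otimes_{c}\cl T$ by Theorem~\ref{thm c=max}, and the inclusion $\cl T\hookrightarrow C^*_u(\cl T)$ induces a complete order embedding $I(\cl S)\otimes_{max}\cl T\hookrightarrow I(\cl S)\otimes_{max}C^*_u(\cl T)$: indeed, by Proposition~\ref{prop ucp on c} the order on a maximal (commuting) product is detected by commuting ucp pairs, and every commuting pair on $(I(\cl S),\cl T)$ extends, by the universal property of $C^*_u(\cl T)$, to a commuting pair on $(I(\cl S),C^*_u(\cl T))$ (commutation survives since $\cl T$ generates $C^*_u(\cl T)$). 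The right-hand side is a C*-algebraic maximal tensor product, whose positive cone is detected by the $*$-homomorphisms $\pi\cdot\sigma$ coming from commuting pairs of unital $*$-representations $\pi:I(\cl S)\to B(L)$ and $\sigma:C^*_u(\cl T)\to B(L)$. Writing $\psi=\sigma|_{\cl T}$, it therefore suffices to prove that for each such pair the map $\pi\cdot\psi:\cl A\otimes_{min}\cl T\to B(L)$, $a\otimes t\mapsto \pi(a)\psi(t)$, is completely positive.

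This is where exactness of $\cl S$ and the enough-unitaries hypothesis combine. The pair $(\pi|_{\cl S},\psi)$ is a commuting pair whose left leg is defined on all of $I(\cl S)$, so $\pi|_{\cl S}\cdot\psi$ is the restriction to $\cl S\otimes_{el}\cl T\subset I(\cl S)\otimes_{max}\cl T$ of a ucp map, hence it is $el$-ucp; by exactness of $\cl S$, i.e. $\cl S\otimes_{min}\cl T=\cl S\otimes_{el}\cl T$, it is ucp on $\cl S\otimes_{min}\cl T$. Because $min$ is injective in both legs we have $\cl S\otimes_{min}\cl T\subset \cl A\otimes_{min}C^*_u(\cl T)$ completely order isomorphically, so Arveson's extension theorem \cite{Ar1} produces a ucp map $\Lambda$ on the C*-algebra $\cl A\otimes_{min}C^*_u(\cl T)$ extending $\pi|_{\cl S}\cdot\psi$. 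Now $\Lambda(u_\alpha\otimes e)=\pi(u_\alpha)$ are unitaries, so by Choi's multiplicative domain theorem \cite{Choi MulDom} (cf. the lemma preceding Lemma~\ref{lem uniq ext}) each $u_\alpha\otimes e$, and hence the whole C*-subalgebra $\cl A\otimes e$ they generate, lies in the multiplicative domain of $\Lambda$. Consequently $\Lambda|_{\cl A\otimes e}$ is a $*$-homomorphism agreeing with $\pi$ on the generators, so $\Lambda(a\otimes e)=\pi(a)$, while $\Lambda(e\otimes t)=\psi(t)$ already on $\cl S\otimes\cl T$; factoring $a\otimes t=(a\otimes e)(e\otimes t)$ gives $\Lambda(a\otimes t)=\pi(a)\psi(t)$. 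Thus $\pi\cdot\psi=\Lambda|_{\cl A\otimes_{min}\cl T}$ is ucp, whence $(\pi\cdot\psi)^{(n)}(u)\ge 0$. Ranging over all test pairs shows $\iota(u)$ is positive in $I(\cl S)\otimes_{max}\cl T$, which is exactly the required inclusion.

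I expect the principal obstacle to be the identification of the Arveson extension $\Lambda$ with the multiplicative map $\pi\cdot\psi$: a priori $\Lambda$ is merely \emph{some} ucp extension of $\pi|_{\cl S}\cdot\psi$ and need not respect the product on $\cl A$. It is precisely the multiplicative-domain argument that forces them to coincide, and this is where the phrase ``contains enough unitaries'' is indispensable in two distinct ways: it yields $\cl A\subset I(\cl S)$ (so that the two $el$-ambients agree and $\pi|_{\cl S}$ counts as an $el$-test, letting exactness of $\cl S$ intervene), and it provides the generating unitaries that populate the multiplicative domain. Without the hypothesis neither $I(\cl A)=I(\cl S)$ nor the generation step survives, in accordance with the intended application to $\cl S_n\subset C^*(\mathbb{F}_n)$.
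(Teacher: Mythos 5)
Your proof is correct. There is nothing in the paper to compare it against: the paper does not prove this proposition but imports it as Corollary 9.6 of \cite{kptt2}, so what you have produced is a self-contained reconstruction from the paper's own toolkit, and every step holds up. The two pivotal moves both check out. First, the reduction of $el$-positivity to test pairs is sound: $\cl A \otimes_{el} \cl T$ carries the order induced from $I(\cl S)\otimes_{max}\cl T = I(\cl S)\otimes_{c}\cl T$ (Theorem \ref{thm c=max}), and the complete order embedding $I(\cl S)\otimes_{max}\cl T \subset I(\cl S)\otimes_{max}C^*_u(\cl T)$ follows either from your commuting-pair extension argument or directly from Proposition \ref{prop rep of c2} together with the symmetry of $c$ and $max$; positivity in the C*-algebraic maximal tensor product of the C*-algebras $I(\cl S)$ and $C^*_u(\cl T)$ is indeed detected by commuting pairs of unital $*$-representations. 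Second, the verification for a fixed pair $(\pi,\sigma)$ is where both hypotheses enter, and you use each exactly once and correctly: exactness of $\cl S$ makes $(\pi|_{\cl S})\cdot\psi$ ucp on $\cl S\otimes_{el}\cl T=\cl S\otimes_{min}\cl T$, and the enough-unitaries hypothesis, via Choi's multiplicative domain theorem, forces the Arveson extension $\Lambda$ to coincide with the multiplicative map $\pi\cdot\psi$ on all of $\cl A\otimes\cl T$ (the bimodule property of the multiplicative domain is precisely what legitimizes $\Lambda(a\otimes t)=\Lambda(a\otimes e)\Lambda(e\otimes t)$, and two $*$-homomorphisms agreeing on the generating unitaries agree on all of $\cl A$). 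This is the same unitaries-plus-multiplicative-domain mechanism the paper itself deploys in the lemma preceding Lemma \ref{lem uniq ext} and in Proposition \ref{pr enough unitary}, adapted to the $el$/$c$ setting. One simplification is available: you never need Proposition \ref{prop enough=envelo.} or the identification $I(\cl A)=I(\cl S)$. By left injectivity of $el$ (Theorem \ref{thm el is li}) one has $\cl S\otimes_{el}\cl T\subset \cl A\otimes_{el}\cl T\subset I(\cl A)\otimes_{max}\cl T$ directly, so the entire test-pair argument runs verbatim with $I(\cl A)$ in place of $I(\cl S)$ (note that $\cl A$ sits in $I(\cl A)$ as a C*-subalgebra, so representations of $I(\cl A)$ still send the $u_\alpha$ to unitaries); the enough-unitaries hypothesis is then used only in the multiplicative-domain step, which is where it genuinely belongs.
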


\begin{corollary}
$\cl S_n$ is not exact for any $n \geq 2$.
\end{corollary}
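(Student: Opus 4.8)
The plan is to prove this by contradiction, pulling together the two facts the excerpt has just lined up: the structural observation that $\cl S_n$ contains enough unitaries inside $C^*(\mathbb{F}_n)$, and the combination of Proposition \ref{prop enoughexact} with Wasserman's theorem. So first I would make explicit that $\cl S_n \subset C^*(\mathbb{F}_n)$ really does contain enough unitaries: the generators $g_1,\dots,g_n$ are unitaries lying in $\cl S_n = span\{g_1,\dots,g_n,e,g_1^*,\dots,g_n^*\}$, and by the very definition of the full group C*-algebra they generate $C^*(\mathbb{F}_n)$ as a C*-algebra. This is precisely the hypothesis needed to apply Proposition \ref{prop enoughexact}.

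Next I would assume, toward a contradiction, that $\cl S_n$ is exact for some $n \geq 2$. Proposition \ref{prop enoughexact} states that whenever $\cl S \subset \cl A$ contains enough unitaries and $\cl S$ is exact, then the ambient C*-algebra $\cl A$ is exact. Applying this with $\cl S = \cl S_n$ and $\cl A = C^*(\mathbb{F}_n)$ immediately forces $C^*(\mathbb{F}_n)$ to be exact. But Wasserman proved in \cite{Was2} that $C^*(\mathbb{F}_n)$ fails to be exact for every $n \geq 2$. This is the contradiction, so $\cl S_n$ cannot be exact.

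There is no genuine obstacle internal to this argument: once the two ingredients are in place it is a one-line deduction. The substance of the result lives entirely in the two inputs being invoked — Wasserman's non-exactness of the free group C*-algebras and the ``enough unitaries transfers exactness upward'' principle of Proposition \ref{prop enoughexact} — so the only care required is verifying the hypothesis that $\cl S_n$ contains enough unitaries, which is transparent here. I would also note in passing the mild elegance of the statement: via Theorem \ref{exactdualLP}, the non-exactness of $\cl S_n$ is equivalent to the failure of the lifting property for the dual system $\cl S_n^d$, which is consistent with $\cl S_n^d$ being representable inside $\oplus_{i=1}^n M_2$ yet still obstructed, though this remark is not needed for the proof itself.
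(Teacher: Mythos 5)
Your proof is correct and is exactly the argument the paper intends: it combines Wasserman's non-exactness of $C^*(\mathbb{F}_n)$ for $n \geq 2$ with Proposition \ref{prop enoughexact}, using the fact that $\cl S_n$ contains enough unitaries in $C^*(\mathbb{F}_n)$. The paper leaves this as an immediate consequence of the two preceding statements, which is precisely what you spelled out.
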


Exactness is stable under C*-algebra ideal
quotients, that is, if a C*-algebra is exact then any of its quotient by an
ideal has the same property (see \cite{Ki1} and \cite{Was}). This stability property is not valid for
general operator system quotients even under the favorable conditions: The
dimension of the operator system is finite and the kernel is a null subspace.
In fact since $\cl T_n$ is C*-nuclear (i.e. (min,c)-nuclear) then in particular
it is exact (equivalently (min,el)-nuclear). However, its quotient by the null
subspace $J_n$, namely $\cl S_n = T_n / J_n$, is not exact.

\begin{corollary}\label{cor M4hasnonlp}
$M_2 \oplus M_2$ (or $M_4$) has a five dimensional operator subsystem (namely $\cl S_2^d$) which does
not possess the lifting property.
\end{corollary}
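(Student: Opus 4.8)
The plan is to combine two facts already established in the paper: the duality between exactness and the lifting property (Theorem~\ref{exactdualLP}), and the non-exactness of $\cl S_2$ (Corollary above, stating $\cl S_n$ is not exact for $n\geq 2$). First I would recall that $\cl S_2^d$ is a five dimensional operator system; indeed, $\dim(\cl S_2) = 5$ (spanned by $e, g_1, g_2, g_1^*, g_2^*$), and passing to the dual preserves dimension, so $\dim(\cl S_2^d) = 5$. By the Farenick--Paulsen embedding theorem quoted just before this corollary, $\cl S_2^d$ is unitally completely order isomorphic to an operator subsystem of $M_2 \oplus M_2$, and hence, via the diagonal identification of $M_2\oplus M_2$ in $M_4$, to a five dimensional operator subsystem of $M_4$. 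This identifies the operator system whose lifting property we must refute.

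The heart of the argument is then the dual-pair principle. By Theorem~\ref{exactdualLP}, a finite dimensional operator system $\cl T$ has the lifting property if and only if $\cl T^d$ is exact. Apply this with $\cl T = \cl S_2^d$: since $(\cl S_2^d)^d = \cl S_2^{dd} = \cl S_2$ (using that $\cl S_2$ is finite dimensional, so the canonical embedding into the bidual is a unital complete order isomorphism), we conclude that $\cl S_2^d$ has the lifting property if and only if $\cl S_2$ is exact. But the preceding corollary asserts precisely that $\cl S_2$ is \emph{not} exact. Therefore $\cl S_2^d$ does \emph{not} have the lifting property.

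Putting these together, $M_2\oplus M_2$ (equivalently $M_4$) contains the five dimensional operator subsystem $\cl S_2^d$, and this subsystem fails the lifting property. I do not expect any serious obstacle here: every ingredient — the dimension count, the embedding into $M_2\oplus M_2$, the exactness/lifting duality, and the non-exactness of $\cl S_2$ — is already available in the excerpt, so the proof is essentially an assembly of cited results. The one point requiring a word of care is the reflexivity step $\cl S_2^{dd} = \cl S_2$, which is legitimate because $\cl S_2$ is finite dimensional and the bidual embedding is a complete order isomorphism in that case (Theorem on $\cl S^{dd}$ and the Choi--Effros faithful-state results in Section~1).
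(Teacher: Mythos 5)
Your proof is correct and follows essentially the same route as the paper: the paper's one-line proof also deduces the failure of the lifting property for $\cl S_2^d$ from the non-exactness of $\cl S_2$ via Theorem~\ref{exactdualLP}, with the embedding $\cl S_2^d \subset M_2 \oplus M_2 \subset M_4$ supplied by the Farenick--Paulsen theorem quoted just before the corollary. Your explicit appeal to $\cl S_2^{dd} = \cl S_2$ is exactly how the paper itself justifies the ``vice versa'' direction of that theorem, so nothing is different in substance.
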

\begin{proof}
Since $\cl S_2$ is not exact then, by Theorem \ref{exactdualLP}, its dual can not have the lifting property.
\end{proof}

The following result is perhaps well known but we are unable to provide a reference.

\begin{corollary}
The Calkin algebra $\mathbb{B}/\mathbb{K}$ does not have WEP.
\end{corollary}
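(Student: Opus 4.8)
The plan is to argue by contradiction: assuming the Calkin algebra $\cl Q = \mathbb{B}/\mathbb{K}$ has WEP, I would deduce that $\cl S_2$ must be exact, which contradicts the fact that $\cl S_2$ is not exact (this follows from Proposition \ref{prop enoughexact} together with Wasserman's theorem that $C^*(\mathbb{F}_2)$ is not exact, since $\cl S_2$ contains enough unitaries in $C^*(\mathbb{F}_2)$). The whole point is that WEP of $\cl Q$, combined with the lifting property of $\cl S_2$, collapses the $min$ and $max$ tensor products with $\cl S_2$ in a way that is precisely the exactness condition of Theorem \ref{fdexactness}.

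First I would record three facts about tensoring with $\cl S_2$. (i) Since $\cl S_2$ has the lifting property, i.e. is $(min,er)$-nuclear, we have $\cl S_2 \otimes_{min}\mathbb{B} = \cl S_2 \otimes_{er}\mathbb{B} = \mathbb{B}\otimes_{el}\cl S_2$; as $\mathbb{B}$ is injective, $\mathbb{B}\otimes_{el}\cl S_2 = \mathbb{B}\otimes_{max}\cl S_2$, so $\cl S_2 \otimes_{min}\mathbb{B} = \cl S_2 \otimes_{max}\mathbb{B}$ (this is the Kirchberg-type computation recalled in Section \ref{sec nuclearity}). (ii) Since $\mathbb{K}$ is a nuclear C*-algebra it is $(min,max)$-nuclear, so the kernel $\cl S_2\bar{\otimes}\mathbb{K}$ is the same computed in $min$ or $max$. (iii) By the projectivity (right-exactness) of the maximal tensor product used in the proof of Remark \ref{rem strong lp}, the natural map $id\otimes q$ gives $\cl S_2 \otimes_{max}\cl Q = (\cl S_2 \otimes_{max}\mathbb{B})/(\cl S_2\otimes_{max}\mathbb{K})$.

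Combining these: under the WEP assumption, Theorem \ref{thm new WEP} (applied to the unital C*-algebra $\cl Q$, using symmetry of $min$ and $max$) gives $\cl S_2 \otimes_{min}\cl Q = \cl S_2 \otimes_{max}\cl Q$ via the identity map, and by (i)--(iii) the right-hand side is naturally identified with $(\cl S_2 \otimes_{min}\mathbb{B})/(\cl S_2\bar{\otimes}\mathbb{K})$. Tracing an elementary tensor $s\otimes b$ through these identifications shows that the resulting complete order isomorphism $(\cl S_2 \otimes_{min}\mathbb{B})/(\cl S_2\bar{\otimes}\mathbb{K}) \to \cl S_2 \otimes_{min}\cl Q$ is exactly the canonical map appearing in Theorem \ref{fdexactness}. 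That theorem then forces $\cl S_2$ to be exact, which is the desired contradiction; hence $\cl Q$ cannot have WEP.

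The main obstacle will be the bookkeeping in (ii) and (iii): one must verify that the ideal $\cl S_2\otimes_{max}\mathbb{K}$, sitting as the kernel of $id\otimes q$ on the $max$ side, coincides as an operator system kernel with the $min$-closure $\cl S_2\bar{\otimes}\mathbb{K}$, and that each of the three identifications is implemented by the natural maps, so that their composite is literally the exactness map of Theorem \ref{fdexactness} rather than merely an abstract isomorphism; without this, the appeal to that theorem would not be licensed. Extra care is needed because $\mathbb{K}$ is non-unital, so the right-exactness of $max$ has to be invoked in its form for ideals.
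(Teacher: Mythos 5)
Your proposal is correct and is essentially the paper's own argument: assume WEP of the Calkin algebra, apply Theorem \ref{thm new WEP} (with symmetry), use the lifting property of $\cl S_2$ to get $\cl S_2 \otimes_{min} \mathbb{B} = \cl S_2 \otimes_{max} \mathbb{B}$, use projectivity of the maximal tensor product to identify $\cl S_2 \otimes_{max} \mathbb{B}/\mathbb{K}$ with $(\cl S_2 \otimes_{min} \mathbb{B})/(\cl S_2 \otimes \mathbb{K})$, and then invoke Theorem \ref{fdexactness} to conclude $\cl S_2$ is exact, contradicting its non-exactness. The only difference is cosmetic: your step (ii) and the closing worries about the two closures of $\cl S_2 \otimes \mathbb{K}$ are unnecessary, since once the min and max operator system structures on $\cl S_2 \otimes \mathbb{B}$ coincide (and $\cl S_2$ is finite dimensional, so the algebraic tensor product with $\mathbb{K}$ is already closed), the kernel and the resulting quotient operator systems are literally identical.
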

\begin{proof}
Assume for a contradiction that $\mathbb{B}/\mathbb{K}$ has WEP. This means that
$\cl S_2 \otimes_{min} \mathbb{B}/\mathbb{K} = \cl S_2 \otimes_{c=max} \mathbb{B}/\mathbb{K} $.
Since $\cl S_2$ has the lifting property we also have that $\cl S_2 \otimes_{min} \mathbb{B}= \cl S_2 \otimes_{max} \mathbb{B} $.
Thus,
$$
\cl S_2 \otimes_{max} \mathbb{B}/\mathbb{K} =  (\cl S_2 \otimes_{max} \mathbb{B}) / (\cl S_2 \otimes \mathbb{K}) = 
 (\cl S_2 \otimes_{min} \mathbb{B}) / (\cl S_2 \otimes \mathbb{K}) = \cl S_2 \otimes_{min} \mathbb{B}/\mathbb{K}.
$$
This means that, by Theorem \ref{fdexactness}, $\cl S_2$ is exact which is a contradiction.
\end{proof}

\begin{corollary}
$\cl S_2 \otimes_{max} \cl S_2$ has the lifting property.
\end{corollary}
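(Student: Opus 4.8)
The plan is to dualize the statement and reduce it to an exactness assertion about a minimal tensor product of subsystems of a finite dimensional C*-algebra. Since $\cl S_2 \otimes_{max} \cl S_2$ is finite dimensional (it is $25$-dimensional), Theorem \ref{exactdualLP} tells us that it has the lifting property if and only if its dual $(\cl S_2 \otimes_{max} \cl S_2)^d$ is exact. So first I would invoke the min--max duality of Theorem \ref{thm dualminmax} to identify
$$
(\cl S_2 \otimes_{max} \cl S_2)^d = \cl S_2^d \otimes_{min} \cl S_2^d,
$$
turning the problem into showing that $\cl S_2^d \otimes_{min} \cl S_2^d$ is exact.

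Next I would use the Farenick--Paulsen embedding theorem, which provides a unital complete order embedding $\gamma : \cl S_2^d \hookrightarrow M_2 \oplus M_2$. By the injectivity of the minimal tensor product, this yields a unital complete order embedding
$$
\cl S_2^d \otimes_{min} \cl S_2^d \hookrightarrow (M_2 \oplus M_2) \otimes_{min} (M_2 \oplus M_2).
$$
The target is the minimal tensor product of two finite dimensional C*-algebras, which coincides with the genuine C*-algebraic tensor product and is again a finite dimensional C*-algebra; in particular it is nuclear, hence (min,max)-nuclear, and therefore exact.

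Finally, since exactness passes to operator subsystems by Proposition \ref{prop exac pass ss}, the operator system $\cl S_2^d \otimes_{min} \cl S_2^d$ is exact. Combining this with the identification above and reading Theorem \ref{exactdualLP} in the reverse direction (using $(\cl S_2 \otimes_{max} \cl S_2)^{dd} = \cl S_2 \otimes_{max} \cl S_2$) gives that $\cl S_2 \otimes_{max} \cl S_2$ has the lifting property.

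I do not expect a serious obstacle here, since the argument is a clean chain of a duality, an injectivity embedding, and a subsystem-stability result. The only points that require care are the finite-dimensionality hypotheses needed for the duality identifications to be valid (so that the second duals collapse and $(\cl S_2 \otimes_{max} \cl S_2)^d = \cl S_2^d \otimes_{min} \cl S_2^d$), and the verification that the minimal operator system tensor product of the two copies of $M_2 \oplus M_2$ really is the C*-algebraic tensor product, so that nuclearity and hence exactness are available. Both of these are guaranteed by the results already recalled in the preliminary sections.
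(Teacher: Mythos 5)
Your proposal is correct and follows essentially the same route as the paper: dualize via Theorem \ref{exactdualLP} and Theorem \ref{thm dualminmax}, embed $\cl S_2^d \otimes_{min} \cl S_2^d$ into the minimal tensor product of two copies of $M_2 \oplus M_2$ (the paper writes this as $M_4 \otimes_{min} M_4$ using the diagonal identification), and conclude exactness from nuclearity of the ambient C*-algebra together with Proposition \ref{prop exac pass ss}. The only difference is cosmetic, and your added remarks about the finite-dimensionality hypotheses are accurate.
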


\begin{proof}
Note that $(\cl S_2 \otimes_{max} \cl S_2)^d = \cl S_2^d \otimes_{min} \cl S_2^d \subset M_4 \otimes_{min} M_4$. Since exactness
passes to operator subsystems we have that $(\cl S_2 \otimes_{max} \cl S_2)^d$ is exact. Thus, by Theorem \ref{exactdualLP},
$\cl S_2 \otimes_{max} \cl S_2$ has the lifting property.
\end{proof}

\noindent {\bf Remark:} We don't know whether the lifting property is preserved under the maximal tensor product.
For finite dimensional operator systems,  by using Theorem \ref{exactdualLP} and \ref{thm dualminmax}, the same question
can be reformulated as follows: Is exactness preserved under the minimal tensor product? If $\cl S$ and $\cl T$
contain enough unitaries in their enveloping C*-algebras (also under the assumption that both $\cl S$ and $\cl T$
are separable) the answer is affirmative. In fact, by Proposition \ref{prop enoughexact}, 
both $C^*_e(\cl S)$ and $C^*_e(\cl T)$
must be exact. Also note that both of these C*-algebras are separable. We know that
every separable exact C*-algebra can be represented in a nuclear  C*-algebra \cite{Ki3}. So 
$\cl S$ and $\cl T$ can be represented in nuclear C*-algebras, say $\cl A$ and $\cl B$, respectively. 
Note that $\cl S \otimes_{min} \cl T \subset \cl A \otimes_{min} \cl B$ and it is elementary to show 
that $\cl A \otimes_{min} \cl B$ is again nuclear. Thus,
$\cl S \otimes_{min} \cl T$ embeds in a  nuclear C*-algebra. Since nuclearity implies exactness
and exactness passes to operator subsystems it follows that $\cl S \otimes_{min} \cl T$ is exact. However,
 in general, the exactness of $\cl S$ may not pass to $C^*_e(\cl S)$. In \cite{kw}, Kirchberg
and Wassermann construct a separable, (min,max)-nuclear operator system $\cl S$ with the property that
$C_u^*(\cl S) = C_e^*(\cl S)$. Clearly $\cl S$ is exact. However, since $dim(\cl S) \geq 3$, $C_u^*(\cl S)$
equivalently $C_e^*(\cl S)$, is not exact. 

$ $

After these results we also relate the property $\mathbb{S}_2$ and the KC.

\begin{theorem}\label{thm KCandpS2}
The following are equivalent:
\begin{enumerate}
 \item KC has an affirmative answer.
 \item $\cl S_2$ has property $\mathbb{S}_2$.
 \item Every finite dimensional operator system with the lifting property has property $\mathbb{S}_2$.
 \item If $\cl S$ is a finite dimensional exact operator system then $\cl S^d$ has property $\mathbb{S}_2$.
 \item If $\cl S$ is a finite dimensional C*-nuclear operator system and $J$ is a null subspace of $\cl S$ then
$\cl S / J$ has property $\mathbb{S}_2$.
 \item $\cl S_2 \otimes_{max} \cl S_2$ has property $\mathbb{S}_2$.
\end{enumerate}
\end{theorem}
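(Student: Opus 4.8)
The plan is to make condition (2) --- that $\cl S_2$ has property $\mathbb{S}_2$ --- the hub of the argument, since for the universal object $\cl S_2$ the defining equality $\cl S_2 \otimes_{min} \cl S_2 = \cl S_2 \otimes_c \cl S_2$ of property $\mathbb{S}_2$ is \emph{verbatim} item (6) of Theorem \ref{thm new KC}. Thus (1) $\Leftrightarrow$ (2) is immediate from that theorem, and I would incorporate (3) by the short loop (1) $\Rightarrow$ (3) $\Rightarrow$ (2). For (1) $\Rightarrow$ (3): assuming KC, Theorem \ref{thm new KC} yields that every finite dimensional operator system with the lifting property has DCEP, and the implication DCEP $\Rightarrow$ property $\mathbb{S}_2$ recorded in the discussion preceding the statement then gives (3). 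For (3) $\Rightarrow$ (2): since $\cl S_2$ itself has the lifting property, applying (3) to $\cl S_2$ produces property $\mathbb{S}_2$. So (1), (2), (3) are equivalent, and it remains to tie each of (4), (5), (6) to this core by showing (3) implies it and it implies (2).

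For (4) I would invoke the duality of Theorem \ref{exactdualLP}: a finite dimensional operator system $\cl S$ is exact if and only if $\cl S^d$ has the lifting property. Hence (3) $\Rightarrow$ (4) is automatic, for if $\cl S$ is exact then $\cl S^d$ has the lifting property and (3) endows it with property $\mathbb{S}_2$. Conversely, to get (4) $\Rightarrow$ (2) I would observe that $\cl S_2$ has the lifting property, so $\cl S_2^d$ is exact by Theorem \ref{exactdualLP}; applying (4) to the exact system $\cl S_2^d$ shows that its dual $\cl S_2^{dd} = \cl S_2$ has property $\mathbb{S}_2$, which is (2).

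For (5) and (6) the same two-sided scheme applies. Since C*-nuclearity implies the lifting property and the lifting property is preserved under quotients by null subspaces (Theorem \ref{thm lp-quotient}), condition (3) gives (5) at once; and for the reverse I would use the concrete presentation $T_3/J_3 = \cl S_2$, with $T_3$ C*-nuclear and $J_3$ a null subspace, so that (5) applied to $\cl S = T_3$, $J = J_3$ returns (2). For (6), the corollary that $\cl S_2 \otimes_{max} \cl S_2$ (a finite dimensional system) has the lifting property, combined with (3), yields (3) $\Rightarrow$ (6); the reverse (6) $\Rightarrow$ (2) is Proposition \ref{pro tensor prop.S2}, which states that property $\mathbb{S}_2$ of $\cl S \otimes_\tau \cl T$ for a functorial $\tau$ descends to each tensorand, here with $\tau = max$. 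Collecting these, (3) implies each of (4), (5), (6) and each of (4), (5), (6) implies (2), closing all six equivalences.

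The work here is organizational rather than analytic: no new estimate is required, each arrow reducing to an already-established structural fact. The conceptual crux --- and the only point deserving real care --- is that for the universal object $\cl S_2$ the a priori weaker property $\mathbb{S}_2$ actually coincides with DCEP, both being equivalent to KC; this is exactly what Theorem \ref{thm new KC} supplies, and it is what lets the ``hub'' condition (3) carry the whole theorem. The remaining caution is purely bookkeeping: one must check in each direction that the system to which (3) is applied genuinely satisfies the relevant hypothesis (lifting property, exactness, C*-nuclearity, or being of the form $\cl S \otimes_{max} \cl T$), and that $\cl S_2$ is realizable as an instance of each of (4), (5), (6) via the identifications $\cl S_2 = \cl S_2^{dd}$ and $\cl S_2 = T_3/J_3$.
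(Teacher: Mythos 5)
Your proposal is correct and follows essentially the same route as the paper's proof: both hinge on (1) $\Leftrightarrow$ (2) via Theorem \ref{thm new KC}, use Theorem \ref{exactdualLP} and $\cl S_2^{dd}=\cl S_2$ for (4), Theorem \ref{thm lp-quotient} and $T_3/J_3=\cl S_2$ for (5), and the lifting property of $\cl S_2\otimes_{max}\cl S_2$ together with Proposition \ref{pro tensor prop.S2} for (6). The only difference is bookkeeping: you funnel the forward implications through (3), whereas the paper derives (4) and (5) directly from (1) via DCEP, which is the same logical content once (1) $\Leftrightarrow$ (3) is in hand.
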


\begin{proof}
The equivalence of (1) and (2) is simply a restatement of Theorem \ref{thm new KC}.
If we assume (1) then it follows that every finite dimensional operator system with lifting property has DCEP, in particular, property $\mathbb{S}_2$. This
proves that (1) implies (3). Clearly (3) implies (6). If we assume (6) then Proposition \ref{pro tensor prop.S2} implies that $\cl S_2$
has property $\mathbb{S}_2$. Thus, (2) is true. So we need to show that these are all equivalent to (4) and (5). 

\noindent (1) $\Rightarrow$ (4): Let $\cl S$ be an exact operator system. By Theorem \ref{exactdualLP}, $\cl S^d$ has the lifting property and consequently, it has DCEP.
DCEP implies property $\mathbb{S}_2$ thus (1) implies (4).

\noindent (4) $\Rightarrow$ (2): In fact $\cl S_2^d$ is exact so its dual, namely $\cl S_2$,
has property $\mathbb{S}_2$. 

\noindent (1) $\Rightarrow$ (5): If $\cl S$ is C*-nuclear, in particular, it has the lifting property. Thus, by Theorem \ref{thm lp-quotient},  
$\cl S/J$ has the lifting property. By using Theorem \ref{thm new KC},  $\cl S / J$ must have DCEP and, thus, it must have property $\mathbb{S}_2$.

\noindent (5) $\Rightarrow$ (2): So, in particular, $T_3/J_3 = \cl S_2$ has property $\mathbb{S}_2$. This finishes the proof.
\end{proof}

$ $

In quantum mechanics, one of the basic problems in modeling an experiment is determining whether
by using the classical probabilistic approach we can approximate all outcomes arising from the non-commutative
setting. More precisely, Tsirelson's problem asks whether the  non-relativistic behaviors  in a
quantum experiment can be described by relativistic approach. The proper definitions and basic result
in this question are beyond the scope of this paper and we refer the reader to \cite{SWT}, \cite{Tsi1}, \cite{TFTsirelson}. In  \cite{Tsi1} it was shown that 
when the actors are Alice and Bob (that is, in the bipartite scenario) the question is reduced to whether
$$
C^*(\mathbb{F}_{\infty}) \otimes_{min} C^*(\mathbb{F}_{\infty}) = C^*(\mathbb{F}_{\infty}) \otimes_{max} C^*(\mathbb{F}_{\infty}),
$$
in other words, the Kirchberg Conjecture. When Charlie  is also included, i.e. with three actors, Tsirelson's problem
is known to be related to whether the minimal and the maximal tensor products of three copies of  $C^*(\mathbb{F}_{\infty})$
coincide. So we want to close this section with a discussion on this topic from an operator system perspective.
\begin{conjecture}\label{con tripleKC}
$$
\bigotimes_{i=1}^3 {}_{min} C^*(\mathbb{F}_{\infty}) = \bigotimes_{i=1}^3 {}_{max} C^*(\mathbb{F}_{\infty}).
$$
\end{conjecture}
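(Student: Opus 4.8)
The statement is the three-party (tripartite) analogue of the Kirchberg Conjecture and, being equivalent to Tsirelson's problem for three actors, cannot realistically be settled here; as with the bipartite case in Theorem \ref{thm new KC}, the attainable goal is to \emph{reduce} it to a statement living entirely inside the category of operator systems. The plan is to isolate the purely combinatorial core of the ``enough unitaries'' technique and run it with three tensor factors instead of two.

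First I would establish the three-variable version of Proposition \ref{pr enough unitary}: if $\cl S_i \subset \cl A_i$ contains enough unitaries for $i=1,2,3$, then a (unital) complete order embedding
$$
\cl S_1 \otimes_{min} \cl S_2 \otimes_{min} \cl S_3 \hookrightarrow \cl A_1 \otimes_{max} \cl A_2 \otimes_{max} \cl A_3
$$
forces $\cl A_1 \otimes_{min} \cl A_2 \otimes_{min} \cl A_3 = \cl A_1 \otimes_{max} \cl A_2 \otimes_{max} \cl A_3$. Both the minimal and the maximal C*-tensor products are associative, so the triple C*-products are unambiguous, as is the triple minimal operator system tensor. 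The argument copies the bipartite one almost verbatim: by injectivity of $min$ the simple-tensor unitaries $u_\alpha \otimes v_\beta \otimes w_\gamma$ lie in $\cl S_1 \otimes_{min}\cl S_2 \otimes_{min}\cl S_3$ and generate $\cl A_1 \otimes_{min}\cl A_2 \otimes_{min}\cl A_3$ as a C*-algebra; the assumed embedding is a ucp map carrying this generating family of unitaries to unitaries in the maximal triple product, so Lemma \ref{lem uniq ext} extends it uniquely to a unital $*$-homomorphism, which is the identity on generators and hence yields the desired equality.

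With this lemma in hand, take $\cl S_i = \cl S_\infty$ (the span of the unitary generators of $C^*(\mathbb{F}_\infty)$, the evident infinite analogue of $\cl S_n$, which contains enough unitaries and satisfies $C^*_e(\cl S_\infty) = C^*(\mathbb{F}_\infty)$ by Proposition \ref{prop enough=envelo.}) and $\cl A_i = C^*(\mathbb{F}_\infty)$. Then Conjecture \ref{con tripleKC} becomes equivalent to the single assertion that the canonical inclusion
$$
\cl S_\infty \otimes_{min} \cl S_\infty \otimes_{min} \cl S_\infty \hookrightarrow C^*(\mathbb{F}_\infty) \otimes_{max} C^*(\mathbb{F}_\infty) \otimes_{max} C^*(\mathbb{F}_\infty)
$$
is a complete order embedding; equivalently, the minimal operator system structure on the triple tensor coincides with the structure it inherits from the maximal C*-product. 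I would expect the factorization argument of Lemma \ref{lem iden. decom.}, applied to each slot exactly as in Section \ref{sec new WEP KC}, to let one replace $C^*(\mathbb{F}_\infty)$ by $C^*(\mathbb{F}_2)$ throughout, so that the conjecture is reformulated as a statement about three copies of the five-dimensional operator system $\cl S_2$.

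The hard part is that this inherited structure has no ready name. In the bipartite reduction the reformulation was powerful precisely because the inherited structure was pinched between $min$ and the commuting tensor $c$, and $c$ is in turn tied to the recognizable property DCEP, so that the equality $\cl S_2 \otimes_{min}\cl S_2 = \cl S_2 \otimes_{c}\cl S_2$ closed the loop via Corollary \ref{co enough unitary}. For three factors one would need a genuine three-fold commuting tensor product (three mutually commuting ucp images) to play the role of $c$, and here the theory stalls: the two-variable commuting tensor $c$ is not known to be associative, so it cannot simply be iterated, and DCEP is intrinsically a two-sided algebra/commutant condition with no established tripartite counterpart. Thus, while the enough-unitaries reduction does produce a clean operator system reformulation, matching the inherited structure to an \emph{intrinsic} nuclearity property of $\cl S_2$ --- the step that made the bipartite theorem quotable --- is exactly where I expect the genuine obstruction to lie.
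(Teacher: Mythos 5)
You are right on the central point: this statement is an open conjecture (the tripartite analogue of the Kirchberg Conjecture), the paper offers no proof of it, and declining to ``prove'' it is the only correct response. Your partial reductions are also sound as far as they go: the three-variable version of Proposition \ref{pr enough unitary} does follow by the same Lemma \ref{lem uniq ext} argument applied to the generating unitaries $u_\alpha \otimes v_\beta \otimes w_\gamma$, and it does give the equivalence between Conjecture \ref{con tripleKC} and the assertion that $\cl S_\infty \otimes_{min} \cl S_\infty \otimes_{min} \cl S_\infty$ embeds completely order isomorphically into the triple maximal C*-product; likewise, Lemma \ref{lem iden. decom.} does let one replace $C^*(\mathbb{F}_\infty)$ by $C^*(\mathbb{F}_2)$, exactly as in the paper's first theorem following the conjecture.

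Where your route and the paper's diverge is at the obstruction you flag. You stop because there is no three-fold commuting tensor product and no tripartite DCEP, concluding that the reformulation cannot be matched to an intrinsic nuclearity property. The paper circumvents precisely this by \emph{bracketing}: it treats $\cl S_2 \otimes_{min} \cl S_2$ as a single operator system, observes that it contains enough unitaries in $C^*(\mathbb{F}_2) \otimes_{min} C^*(\mathbb{F}_2)$, and then runs the ordinary \emph{bipartite} machinery with this composite system as the left leg --- the relevant intrinsic property being property $\mathbb{S}_2$ (that is, $\cl S \otimes_{min} \cl S_2 = \cl S \otimes_{c} \cl S_2$). This yields the paper's quotable result: if $\cl S_2 \otimes_{min} \cl S_2$ has property $\mathbb{S}_2$ (in particular if it has DCEP), then Conjecture \ref{con tripleKC} holds. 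The trade-off is instructive: your reformulation is an honest equivalence but names no recognizable property, while the paper's condition is recognizable but only sufficient --- the converse is explicitly left open. The paper also obtains C*-level equivalences you did not reach (the conjecture holds if and only if $C^*(\mathbb{F}_2) \otimes_{max} C^*(\mathbb{F}_2)$ has WEP), via Kirchberg's WEP characterization. Finally, note that the difficulty you diagnose does not disappear in the paper; it resurfaces as its open questions --- whether $\cl S_2 \otimes_{c} \cl S_2 = \cl S_2 \otimes_{max} \cl S_2$ and whether DCEP or property $\mathbb{S}_2$ survives the commuting tensor product --- affirmative answers to which would upgrade the paper's implications to an equivalence between KC and the tripartite conjecture.
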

This should be considered as an extended version of the Kirchberg Conjecture. An affirmative answer of Conjecture \ref{con tripleKC}
implies that the Kirchberg Conjecture is true. In fact this follows from the fact that for any functorial tensor product $\tau$ and operator
systems $\cl S$ and $\cl T$ we have that $\cl S \cong \cl S \otimes \mathbb{C} \subset \cl S \otimes_{\tau} \cl T$. So if we put $C = C^*(\mathbb{F}_{\infty})$ then
$$
C \otimes_{min} C \subset (C \otimes_{min} C) \otimes_{min} C \;\mbox{ and }\; C \otimes_{max} C \subset (C \otimes_{max} C) \otimes_{max} C.
$$
Thus, if Conjecture \ref{con tripleKC} is true then KC is also true. On the other hand even if we assume that KC has an affirmative answer it is still
unknown whether Conjecture \ref{con tripleKC} is true or not. We want to start with the following observations which are perhaps well known
and will be more convenient when we express this problem in terms  of lower dimensional operator systems.
\begin{theorem} The following are equivalent:
\begin{enumerate}
 \item Conjecture \ref{con tripleKC} has an affirmative answer.
 \item $C^*(\mathbb{F}_{\infty}) \otimes_{max}C^*(\mathbb{F}_{\infty})$ has WEP.
 \item We have that
$$
\bigotimes_{i=1}^3 {}_{min} C^*(\mathbb{F}_2) = \bigotimes_{i=1}^3 {}_{max} C^*(\mathbb{F}_2).
$$
\item $C^*(\mathbb{F}_2) \otimes_{max}C^*(\mathbb{F}_2)$ has WEP.
\end{enumerate}
\end{theorem}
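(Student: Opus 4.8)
The plan is to reduce all four conditions to Kirchberg's WEP characterization, using the crucial observation that each of them already forces the Kirchberg conjecture (KC), after which the asserted tensor equalities collapse into one statement. Throughout I write $C=C^*(\mathbb{F}_\infty)$ and $C_2=C^*(\mathbb{F}_2)$ in prose, and I note that $C\otimes_{max}C$, $C_2\otimes_{max}C_2$ are unital C*-algebras, so for them WEP coincides with DCEP.

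I would first establish $(1)\Leftrightarrow(2)$. The discussion following Conjecture \ref{con tripleKC} already shows that $(1)$ implies KC: for each $\tau\in\{min,max\}$ the map $x\mapsto x\otimes e$ embeds $C\otimes_\tau C$ as an operator subsystem of $\bigotimes_{i=1}^3{}_\tau C$ (the ucp map $id\otimes g$ being a left inverse), so the triple equality restricts to $C\otimes_{min}C=C\otimes_{max}C$. For $(2)$, since the identity of $C$ factors through $C\otimes_{max}C$ by the ucp maps $s\mapsto s\otimes e$ and $id\otimes g$, Lemma \ref{lem iden. decom.} transfers DCEP$=$WEP of $C\otimes_{max}C$ down to $C$; and $C$ having WEP is exactly KC. Thus both $(1)$ and $(2)$ imply KC. Conversely, \emph{assuming} KC we obtain the complete order identification $C\otimes_{min}C=C\otimes_{max}C=:D$. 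Using associativity of $min$ and of the maximal tensor product on C*-algebras, the triple products rewrite as
$$
\textstyle\bigotimes_{i=1}^3{}_{min}C=D\otimes_{min}C,\qquad \bigotimes_{i=1}^3{}_{max}C=D\otimes_{max}C.
$$
Kirchberg's WEP characterization applied to the C*-algebra $D$ states precisely that $D$ has WEP if and only if $D\otimes_{min}C=D\otimes_{max}C$. Hence, under KC, condition $(1)$ and condition $(2)$ are literally the same statement, and combining with the two implications above gives $(1)\Leftrightarrow(2)$.

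Next I would run the identical argument with $C_2$ in place of $C$ to get $(3)\Leftrightarrow(4)$: condition $(3)$ forces $C_2\otimes_{min}C_2=C_2\otimes_{max}C_2$ (which is again KC, since KC may be tested with $C_2$), condition $(4)$ forces $C_2$ to have WEP (again KC) by the same factoring through $C_2\otimes_{max}C_2$, and under KC both reduce to ``$C_2\otimes_{max}C_2$ has WEP,'' using the $C_2$-version of Kirchberg's WEP characterization recorded in the excerpt. Finally I would bridge the two halves via $(2)\Leftrightarrow(4)$: starting from the ucp maps realizing the mutual factorization of the identities of $C$ and $C_2$ through one another, Proposition \ref{prop strong funt.max} shows that if $\phi$ is ucp then $\phi\otimes\phi$ is ucp on the maximal tensor products, so the identities of $C\otimes_{max}C$ and $C_2\otimes_{max}C_2$ factor through one another by ucp maps. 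Lemma \ref{lem iden. decom.} then transfers DCEP$=$WEP in both directions, yielding $(2)\Leftrightarrow(4)$. The chain $(1)\Leftrightarrow(2)\Leftrightarrow(4)\Leftrightarrow(3)$ completes the proof.

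The main obstacle I anticipate is the bookkeeping in the core step $(1)\Leftrightarrow(2)$: one cannot directly compare $(C\otimes_{max}C)\otimes_{min}C$ with the triple-minimal product, because the inner factors $C\otimes_{min}C$ and $C\otimes_{max}C$ are a priori distinct operator systems, so neither $(1)$ nor $(2)$ is, on its face, a nuclearity property of a fixed object that Lemma \ref{lem iden. decom.} could move around. The device that resolves this is exactly that each listed condition \emph{independently} implies KC; KC then identifies the inner factors, and associativity of the maximal tensor product reduces everything to a single application of the WEP characterization.
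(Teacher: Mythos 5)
Your proof is correct and takes essentially the same route as the paper: each condition is shown to force KC, after which Kirchberg's WEP characterization together with associativity of the minimal and maximal tensor products collapses (1) into (2) (and (3) into (4)), while the mutual ucp factorization of the identities of $C^*(\mathbb{F}_\infty)$ and $C^*(\mathbb{F}_2)$, tensored with itself and fed into Lemma \ref{lem iden. decom.}, gives (2) $\Leftrightarrow$ (4). The only organizational difference is that the paper closes the equivalence cycle by proving (1) $\Leftrightarrow$ (3) directly, tensoring the factorization maps three times, whereas you close it by rerunning the KC-collapse argument for $C^*(\mathbb{F}_2)$ to obtain (3) $\Leftrightarrow$ (4); both yield complete chains.
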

\begin{proof}
Since the identity on $C^*(\mathbb{F}_{\infty})$  factors via ucp 
maps through $C^*(\mathbb{F}_2)$, by using the functoriality of the max tensor product, it follows  that
the identity on $C^*(\mathbb{F}_{\infty}) \otimes_{max}C^*(\mathbb{F}_{\infty})$ factors via ucp maps
through $C^*(\mathbb{F}_2) \otimes_{max}C^*(\mathbb{F}_2)$. So by using Lemma \ref{lem iden. decom.}
we obtain that (4) implies (2). Since the  identity on $C^*(\mathbb{F}_2)$  factors via ucp 
maps through $C^*(\mathbb{F}_\infty)$, we similarly obtain that (2) implies (4). The proof of the equivalence of (1) and (3) 
is based on the same fact. In general, if the identity on $\cl S$ decomposes
into ucp maps through $\cl T$ (say $id = \psi \circ \phi$), also assuming that $\cl T \otimes_{min} \cl T \otimes_{min} \cl T 
= \cl T \otimes_{max} \cl T \otimes_{max} \cl T$, we have that the maps
$$
\bigotimes_{i=1}^3 {}_{min} \cl S \xrightarrow{\phi\otimes \phi\otimes \phi} \bigotimes_{i=1}^3 {}_{min} \cl T
= \bigotimes_{i=1}^3 {}_{max} \cl T \xrightarrow{\psi\otimes \psi\otimes \psi} \bigotimes_{i=1}^3 {}_{max} \cl S
$$
are ucp and their composition is the identity from triple minimal tensor product of $\cl S$ to maximal tensor
product of $\cl S$. Thus these two tensor products coincide. This proves that (1) and (3) are equivalent. Now let $C$ stand
for $C^*(\mathbb{F}_{\infty})$. We will show that (2) implies (1). Since $C \otimes_{max} C$ has WEP then
in particular, by Lemma \ref{lem iden. decom.}, this implies that $C$ has WEP, equivalently $C \otimes_{min} C = C \otimes_{max} C$.
(Recall: These are all equivalent arguments in Kirchberg's theorem that we mentioned at the beginning of Section \ref{sec new WEP KC}.)
By using Kirchberg's WEP characterization we readily have that $(C \otimes_{min} C) \otimes_{min} C = (C \otimes_{min} C) \otimes_{max} C$.
If we replace the min by max on the right hand side of this equation we obtain (1). Conversely suppose (1) is true. As we pointed
out earlier,  this, in particular, implies KC. Thus $C \otimes_{min} C = C \otimes_{max} C$. Since we assumed that
the triple minimal and the maximal tensor product of $C$ coincide, by replacing a max by min (as seen below)
$$
(C \otimes_{max} C) \otimes_{max} C =  (C \otimes_{min =max } C) \otimes_{min} C
$$
we have that , $C \otimes_{max} C$ satisfies Kirchberg's WEP characterization. So we obtain (2).
\end{proof}

\begin{theorem}
The following implications hold:

$\cl S_2 \otimes_{min} \cl S_2$ has DCEP $\; \Rightarrow\; $ $\cl S_2 \otimes_{min} \cl S_2$ has property $\mathbb{S}_2$ $\; \Rightarrow \;$ Conjecture \ref{con tripleKC} is true.
\end{theorem}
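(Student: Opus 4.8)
The plan is to treat the two arrows separately, the first being essentially formal and the second carrying the real content. For the first implication I would simply invoke the general principle, recorded in the discussion preceding Proposition \ref{pro tensor prop.S2}, that DCEP implies property $\mathbb{S}_2$ for every operator system. Applied to $\cl R = \cl S_2 \otimes_{min} \cl S_2$ this reads: since $\cl R$ is (el,c)-nuclear and $\cl S_2$ has the lifting property (i.e. is (min,er)-nuclear), placing $\cl S_2$ on the right gives
$$
\cl R \otimes_{min} \cl S_2 = \cl R \otimes_{el} \cl S_2 = \cl R \otimes_{c} \cl S_2,
$$
which is exactly property $\mathbb{S}_2$ for $\cl R$. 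Nothing further is needed here.

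The substance is the second implication, and my plan is to exploit the hypothesis --- property $\mathbb{S}_2$ of $\cl S_2 \otimes_{min} \cl S_2$ --- in two independent ways. Write $C = C^*(\mathbb{F}_2)$. First I would push property $\mathbb{S}_2$ down to a single factor: by Proposition \ref{pro tensor prop.S2} (with $\tau = \min$), property $\mathbb{S}_2$ of $\cl S_2 \otimes_{min} \cl S_2$ forces $\cl S_2$ itself to have property $\mathbb{S}_2$, whence Theorem \ref{thm KCandpS2} yields that the Kirchberg Conjecture holds, i.e. $C \otimes_{min} C = C \otimes_{max} C$. Second, I would apply the ``enough unitaries'' machinery in the threefold setting. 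Here one views $\cl S_2 \otimes_{min} \cl S_2 \subset C \otimes_{min} C$ and $\cl S_2 \subset C$; both contain enough unitaries, the point for the first inclusion being that the unitaries $u \otimes v$ with $u,v \in \{e, g_1, g_2\}$ (the unit must be included, so that the ``axis'' unitaries $g_i \otimes e$ and $e \otimes g_j$ appear) generate $C \otimes_{min} C$, exactly as in the proof of Proposition \ref{pr enough unitary}. Property $\mathbb{S}_2$ of $\cl S_2 \otimes_{min} \cl S_2$ is precisely the hypothesis $(\cl S_2 \otimes_{min} \cl S_2) \otimes_{min} \cl S_2 = (\cl S_2 \otimes_{min} \cl S_2) \otimes_{c} \cl S_2$ of Corollary \ref{co enough unitary}, so that corollary delivers
$$
(C \otimes_{min} C) \otimes_{min} C = (C \otimes_{min} C) \otimes_{max} C.
$$

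It then remains to splice the two facts together. Using associativity of the minimal tensor product and the two conclusions above I would compute
$$
C \otimes_{max} C \otimes_{max} C = (C \otimes_{min} C) \otimes_{max} C = (C \otimes_{min} C) \otimes_{min} C = C \otimes_{min} C \otimes_{min} C,
$$
where the first equality replaces the inner $\otimes_{max}$ by $\otimes_{min}$ via the Kirchberg Conjecture and the second uses the enough-unitaries equality. This is precisely condition (3) in the equivalence theorem for Conjecture \ref{con tripleKC}, so that conjecture holds. Equivalently, the second displayed equality says, by Kirchberg's WEP characterization applied to the unital C*-algebra $C \otimes_{min} C$, that $C \otimes_{min} C$ has WEP; combined with $C \otimes_{min} C = C \otimes_{max} C$ this transfers WEP to $C \otimes_{max} C$, which is condition (4) of the same theorem.

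I expect the main obstacle to be bookkeeping rather than anything conceptual: one must track carefully which tensor norm occupies the inner versus the outer slot, since the enough-unitaries step produces a genuinely \emph{mixed} product $(C \otimes_{min} C) \otimes_{max} C$, and it is only after invoking the Kirchberg Conjecture to collapse the inner norm that the symmetric triple equality emerges. A secondary point to verify with care is that $\cl S_2 \otimes_{min} \cl S_2$ really contains enough unitaries in $C \otimes_{min} C$, which hinges on including the unit among the generating unitaries so that the axis unitaries $g_i \otimes e$ and $e \otimes g_j$ are available.
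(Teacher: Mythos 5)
Your proposal is correct and follows essentially the same route as the paper: the first arrow is the formal fact that DCEP implies property $\mathbb{S}_2$, and the second arrow combines Proposition \ref{pro tensor prop.S2} plus Theorem \ref{thm KCandpS2} (to get KC, hence $C\otimes_{min}C = C\otimes_{max}C$) with the enough-unitaries corollary applied to $\cl S_2\otimes_{min}\cl S_2 \subset C^*(\mathbb{F}_2)\otimes_{min}C^*(\mathbb{F}_2)$, then replaces the inner min by max. Your only addition is spelling out the enough-unitaries verification (including the unit so the axis unitaries $g_i\otimes e$, $e\otimes g_j$ appear), which the paper dismisses as ``not hard to see'' but which is indeed the point that needs care.
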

\begin{proof} Clearly DCEP implies property $\mathbb{S}_2$. Now, suppose that $\cl S_2 \otimes_{min} \cl S_2$ has property $\mathbb{S}_2$.
It is not hard to see that $\cl S_2 \otimes_{min} \cl S_2$ contains enough unitaries in $C^*(\mathbb{F}_2) \otimes_{min} C^*(\mathbb{F}_2)$.
We also remark that our assumption implies KC, that is, if $\cl S_2 \otimes_{min} \cl S_2$ has property $\mathbb{S}_2$
then, in particular,  $\cl S_2$ has property $\mathbb{S}_2$ and by the above result
KC has an affirmative answer. So we also have that $C^*(\mathbb{F}_2) \otimes_{min} C^*(\mathbb{F}_2) = C^*(\mathbb{F}_2) \otimes_{max} C^*(\mathbb{F}_2) $.
Now by using Proposition \ref{pr enough unitary}, $(\cl S_2 \otimes_{min} \cl S_2) \otimes_{min} \cl S_2 = (\cl S_2 \otimes_{min} \cl S_2) \otimes_{c} \cl S_2$
implies that $(C^*(\mathbb{F}_2) \otimes_{min} C^*(\mathbb{F}_2)) \otimes_{min}    C^*(\mathbb{F}_2) = 
(C^*(\mathbb{F}_2) \otimes_{min} C^*(\mathbb{F}_2))  \otimes_{max}  C^*(\mathbb{F}_2) $. Since the min on the right
hand side can be replaced by max it follows that Conjecture \ref{con tripleKC} has an affirmative answer.
\end{proof}

We don't know whether any of the converse implications in the above theorem hold or not.

\begin{question}\label{ques first} 
Is $\cl S_2 \otimes_c \cl S_2 = \cl S_2 \otimes_{max} \cl S_2 $? 
\end{question}

\begin{question}
Are DCEP or property $\mathbb{S}_2$ preserved under commuting tensor product? That is, if $\cl S$
and $\cl T$ are operator systems with DCEP (or having  property $\mathbb{S}_2$) then does $\cl S \otimes_{c} \cl T$
have the same property?
\end{question}

\noindent An affirmative answer to any of these questions implies that KC is equivalent to Conjecture \ref{con tripleKC}.
We first remark that in the above theorem the min can be replaced by c, this
follows from the fact that any of the arguments implies KC is true and, thus, $\cl S_2 \otimes_{min} \cl S_2 = \cl S_2 \otimes_{c} \cl S_2 $.
Now if we suppose that the first question has an affirmative answer then Theorem \ref{thm KCandpS2} (6) and the second
argument in the above theorem gives this equivalence. Now suppose that the second question is true. If
we suppose KC has an affirmative answer (so that $\cl S_2$ has DCEP) then $\cl S_2 \otimes_{min} \cl S_2 =  \cl S_2 \otimes_{c} \cl S_2 $ and
this tensor product has DCEP (or property $\mathbb{S}_2$), thus,  Conjecture \ref{con tripleKC} is also true.

$ $

In \cite{Oz5}, Ozawa proved that $B(H) \otimes_{min} B(H)$ does not have WEP where $H = l^2$. Since WEP and DCEP coincide for C*-algebras and
$B(l^2)$ has WEP we see that DCEP, in general, does not preserved under the minimal tensor product.

$ $

\noindent Let $
\cl T =  span \{ I, E_{12}, E_{34}, E_{21}, E_{43} \} \subset M_4.
$ Recall that $\cl S_2^d$ and $\cl T$ are unitally completely order isomorphic. 
Thus we have that
$$
\cl S_2 \otimes_{min} \cl S_2 = \cl S_2 \otimes_{max} \cl S_2\;\;\; \Longleftrightarrow \;\;\; \cl T \otimes_{min} \cl T = \cl T \otimes_{max} \cl T
$$
which follows from the duality result in Theorem \ref{thm dualminmax}.

\begin{question}\label{ques 2}
Is $\cl T \otimes_{min} \cl T = \cl T \otimes_{max} \cl T$? Equivalently, is $\cl S_2 \otimes_{min} \cl S_2 = \cl S_2 \otimes_{max} \cl S_2$?
\end{question}

\noindent  Since KC is equivalent to the statement that $\cl S_2 \otimes_{min} \cl S_2 = \cl S_2 \otimes_{c} \cl S_2$ a positive answer
to this question provides an affirmative answer to KC. In addition to this it also proves that Conjecture \ref{con tripleKC} is true
since the condition in the Question \ref{ques first}
is satisfied and thus, by the previous paragraph, KC and  Conjecture \ref{con tripleKC} are equivalent.

\section{Matricial Numerical Range of an Operator}\label{Sec MNR}

Let $\cl S$ be an operator system. For $x\in \cl S$ we define the $n^{th}$
\textit{matricial numerical range of} $x$ by $w_n(x) = \{\varphi (x): \varphi : \cl S
\rightarrow M_n \mbox{ is ucp}\}$. Note that if we consider the
operator subsystem $\cl S_x = span \{e,x,x^*\}$ of $\cl S$ then, by using
Arveson's extension theorem, the matricial ranges of $x$ remain same when it is
considered as an element of $\cl S_x$. We
also remark that if $T$ is an operator in $B(H)$ then its numerical range $W(T) =
\{\langle Tx,x \rangle\,:\; \|x\| \leq 1   \} $ has the property that
$\overline{W(T)} = w_1(T) $ (see \cite{Ar2}, e.g.). For several properties and
results regarding matricial ranges we refer the reader to \cite{Ar2}, \cite{Pa4}
and \cite{SW}.  We include some of
these results in the sequel. We start with the following well known fact (see \cite[Lem. 4.1]{kptt} e.g.).

\begin{lemma}
Let $\cl S$ be an operator system and $A \in M_n(\cl S)$. Then $A$ is positive if and only if for every $k$ and
for every ucp map $\varphi: \cl S \rightarrow M_k$ one has $\varphi^{(n)}(A)$
is positive in $M_n(M_k)$.
\end{lemma}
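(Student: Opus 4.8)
The forward implication is immediate: if $A \in M_n(\cl S)^+$, then since every ucp map $\varphi: \cl S \to M_k$ is completely positive, its $n$-th amplification $\varphi^{(n)}$ is positive, so $\varphi^{(n)}(A) \in M_n(M_k)^+$. The substance is the converse, and my plan is to prove it by reducing positivity of $A$ to a family of vector inequalities in a concrete representation and then detecting each one with a compression onto a finite dimensional subspace.

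By the Choi--Effros theorem I would first fix a concrete representation $\cl S \subseteq B(H)$ that is a unital complete order embedding, so that $M_n(\cl S)^+ = M_n(\cl S) \cap M_n(B(H))^+$. Writing $A = (A_{ij})$ with $A_{ij} \in \cl S \subseteq B(H)$, positivity of $A$ in $M_n(B(H)) \cong B(H^{(n)})$ is equivalent to the statement that $\sum_{i,j} \langle A_{ij}\xi_j, \xi_i\rangle \geq 0$ for every $\xi = (\xi_1,\dots,\xi_n) \in H^{(n)}$.

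The key step is to produce, for each such $\xi$, a ucp map into a matrix algebra that witnesses the corresponding scalar inequality. Fix $\xi$, let $K = \mathrm{span}\{\xi_1,\dots,\xi_n\} \subseteq H$, which is finite dimensional of dimension $k \leq n$, and let $V: K \hookrightarrow H$ be the inclusion isometry. The compression $\varphi(s) = V^* s V$ defines a map $\cl S \to B(K) \cong M_k$ which is unital (since $V^* e V = V^* V = I_K$) and completely positive (being of the standard form $V^*(\cdot)V$), hence ucp. By hypothesis $\varphi^{(n)}(A) = (V^* A_{ij} V)_{ij}$ is positive in $M_n(B(K))$. Evaluating the associated quadratic form at $\xi$, regarded as a vector of $K^{(n)}$, and using the adjoint relation $\langle V^* A_{ij} V \xi_j, \xi_i\rangle = \langle A_{ij} V\xi_j, V\xi_i\rangle = \langle A_{ij}\xi_j, \xi_i\rangle$ (valid since $V\xi_j = \xi_j$ for $\xi_j \in K$), I obtain $\langle \varphi^{(n)}(A)\xi, \xi\rangle = \sum_{i,j}\langle A_{ij}\xi_j, \xi_i\rangle \geq 0$, which is exactly the inequality sought for this $\xi$.

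Since $\xi$ was arbitrary, this yields $A \geq 0$ in $M_n(B(H))$ and therefore $A \in M_n(\cl S)^+$ by the complete order embedding. I do not anticipate a serious obstacle; the only points requiring care are checking that the compression is genuinely unital and completely positive (so the hypothesis applies to it) and observing that the span of finitely many vectors is automatically finite dimensional, so that the target algebra is indeed some $M_k$ as the statement demands rather than a general $B(K)$.
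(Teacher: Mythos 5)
Your proof is correct. The paper itself does not prove this lemma --- it states it as a well-known fact and cites \cite[Lem. 4.1]{kptt} --- and your argument (represent $\cl S \subseteq B(H)$ completely order isomorphically, reduce positivity in $B(H^{(n)})$ to the quadratic-form inequality, and witness each inequality by compressing to the finite dimensional span of the vectors involved) is essentially the standard proof of that cited lemma, so it matches the intended argument.
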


\noindent This lemma indicates that the matricial ranges of an element $x$ in an operator system 
carry all the information of the operator subsystem $\cl S_x = span\{e,x,x^*\}$ as $A \in M_n$
belongs to $w_n(x)$ if and only if there is a ucp map $\varphi: \cl
S_x\rightarrow M_n$ such that $\varphi(x) = A$. Since $\varphi$ is ucp the image
of any element in $\cl S_x$ can be determined by the value $\varphi(x)$. We can
also state this as follows:
\begin{proposition}\label{pro numrange x y}
Let $\cl S = span\{e,x,x^*\}$ and $\cl T = span\{e, y , y^*\}$ be two operator systems. Then the linear map 
$\varphi: \cl S \rightarrow \cl T$ given by $\varphi(e)= e$, $\varphi(x) = y$ and $\varphi(x^*) =y^*$, provided it is well-defined, is ucp if and only
if $w_n(y) \subseteq w_n(x)$ for every $n$. Consequently, $\varphi$ is a complete order isomorphism
if and only if $w_n(x) = w_n(y)$ for every $n$.
\end{proposition}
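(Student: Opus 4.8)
The plan is to reduce the statement to two facts already at hand: the preceding lemma, which characterises positivity of an element of $M_n(\cl S)$ through ucp maps into matrix algebras, together with the rigidity observation that a ucp map out of a singly generated system $\text{span}\{e,x,x^*\}$ is completely determined by its value on $x$ (being unital and, as a positive map, self-adjoint). Throughout, $\varphi$ is unital by construction, so in every step only complete positivity is at issue.

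First I would dispose of the easy implication. Assuming $\varphi$ is ucp, take $A\in w_n(y)$, witnessed by a ucp map $\psi\colon\cl T\to M_n$ with $\psi(y)=A$. Then $\psi\circ\varphi\colon\cl S\to M_n$ is a composition of ucp maps, hence ucp, and $(\psi\circ\varphi)(x)=\psi(y)=A$, so $A\in w_n(x)$. Thus $w_n(y)\subseteq w_n(x)$ for every $n$.

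For the substantive direction I would assume $w_n(y)\subseteq w_n(x)$ for all $n$ and prove $\varphi$ is completely positive. Fix $n$ and a positive $P\in M_n(\cl S)^+$; by the preceding lemma it suffices to show $\psi^{(n)}(\varphi^{(n)}(P))\geq 0$ for every $k$ and every ucp $\psi\colon\cl T\to M_k$. The crux is the claim that $\psi\circ\varphi\colon\cl S\to M_k$ is itself ucp. To see this, set $A:=\psi(y)=(\psi\circ\varphi)(x)$; then $A\in w_k(y)\subseteq w_k(x)$, so there is a genuine ucp map $\theta\colon\cl S\to M_k$ with $\theta(x)=A$. Since $\theta$ and $\psi\circ\varphi$ are both unital and self-adjoint and agree on $x$ (hence on $x^*$), they coincide on $\cl S=\text{span}\{e,x,x^*\}$, so $\psi\circ\varphi=\theta$ is ucp. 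Consequently $\psi^{(n)}(\varphi^{(n)}(P))=(\psi\circ\varphi)^{(n)}(P)=\theta^{(n)}(P)\geq 0$, and as $k,\psi$ are arbitrary the lemma yields $\varphi^{(n)}(P)\geq 0$ in $M_n(\cl T)$, whence $\varphi$ is cp. I expect this claim to be the one real obstacle: the hypothesis $w_k(y)\subseteq w_k(x)$ supplies only the \emph{existence} of some ucp $\theta$ with the right value at $x$, and the whole argument hinges on the rigidity that a unital self-adjoint map on $\text{span}\{e,x,x^*\}$ is pinned down by its value on $x$.

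Finally, for the consequently clause I would run the equivalence in both directions using the candidate inverse $\psi_0\colon\cl T\to\cl S$, $e\mapsto e$, $y\mapsto x$, $y^*\mapsto x^*$. If $\varphi$ is a complete order isomorphism, its inverse is necessarily $\psi_0$, so both $\varphi$ and $\psi_0$ are ucp, and the first part applied to each gives $w_n(y)\subseteq w_n(x)$ and $w_n(x)\subseteq w_n(y)$, i.e. equality. Conversely, $w_n(x)=w_n(y)$ for all $n$ forces identical linear relations among $\{e,x,x^*\}$ and $\{e,y,y^*\}$: a relation $\alpha e+\beta x+\gamma x^*=0$ holds iff $\alpha I+\beta A+\gamma A^*=0$ for every $A$ in the corresponding numerical ranges, since ucp maps into matrix algebras separate points of an operator system. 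Hence $\varphi$ and $\psi_0$ are both well defined; they are ucp by the two inclusions and are mutually inverse, so $\varphi$ is a unital complete order isomorphism.
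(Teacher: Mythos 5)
Your proof is correct and follows essentially the same route as the paper: both rest on the preceding lemma together with the rigidity observation that a unital self-adjoint map on $span\{e,x,x^*\}$ is pinned down by its value at $x$, which is exactly how the paper identifies ucp maps $\phi:\cl S\rightarrow M_k$ with points of $w_k(x)$. The differences are organizational rather than mathematical: you phrase the key step as ``$\psi\circ\varphi$ coincides with a genuine ucp map $\theta$,'' whereas the paper writes elements of $M_n(\cl S)$ in coordinates $e\otimes A+x\otimes B+x^*\otimes C$ and re-expresses their positivity as a condition over $w_k(x)$; and in the final clause you additionally verify well-definedness of the candidate inverse via separation by ucp maps into matrix algebras, a point the paper leaves implicit when it invokes $\varphi^{-1}$.
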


\begin{proof}
First suppose that $\varphi$ is ucp and let $A \in w_n(y)$. So there is ucp map $\psi: \cl T \rightarrow M_n$
such that $\psi(y) = A$. Clearly $\psi \circ \varphi$ is a ucp map from $\cl S$ into $M_n$ which maps $x$ to $A$.
Thus, $A$ belongs to $w_n(x)$. Since $n$ was arbitrary this completes the proof of one direction. Now suppose
that $w_n(y) \subseteq w_n(x)$ for every $n$. We will show that $\varphi$ is a cp map.
The above lemma states that if $u$ is in $M_n(\cl R)$, where $\cl R$
is any operator system, then $u$ is positive if and only if for every $k$ and for 
every ucp map $\phi: \cl R \rightarrow M_k$ one has $\phi^{(n)}(u)$ is positive.
From this we deduce that an element of the form $ u = e\otimes A + x \otimes B + x^* \otimes C$ in $M_n(\cl S)$ is
positive if and only if for every ucp map  map $\phi: \cl R \rightarrow M_k$ one has $\phi^{(n)}(u) = I_k \otimes A + \varphi(x) \otimes B + \varphi(x)^* \otimes C$
is positive in $M_k \otimes M_n$ for every $k$, equivalently, $I_k \otimes A + X \otimes B + X^* \otimes C$ is positive in $M_k \otimes M_n$ for every
$k$ and for every $X$ in $w_k(x)$. Of course, same property holds $M_n(\cl T)$ in when $x$ is replaced by $y$.
Now, by using the assumption $w_k(y) \subseteq w_k(x)$ for every $k$, it is easy to see that $\varphi$ is a cp map.
The final part follows from the fact that $\varphi^{-1}$ is ucp if and only if $w_n(x) \subseteq w_n(y)$ for every $n$.
\end{proof}

$ $

In this section we again use the notations $\mathbb{B}$ for $B(l^2)$ and
$\mathbb{K}$ for the ideal of compact operators. A dot over an element
will represents its image under the quotient map. We start with the following
result given in \cite{SW}.

\begin{theorem}[Smith, Ward]
Let $ \dot{T} \in \mathbb{B}/\mathbb{K}$ and $n$ be an integer. Then there is a
compact operator $K$ such that $w_n(T+K) = w_n(\dot{T})$.
\end{theorem}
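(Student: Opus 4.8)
The plan is to realize the desired compact perturbation as the image of a single $n$-positive lift, and then to upgrade $n$-positivity to complete positivity at the decisive moment via Choi's theorem. First I would fix $n$ and set $\cl S = \mathrm{span}\{\dot e, \dot T, \dot T^*\}$, an operator system of dimension at most three sitting inside the Calkin algebra $\mathbb{B}/\mathbb{K}$. I would observe that for \emph{every} compact $K$ the containment $w_n(\dot T) \subseteq w_n(T+K)$ holds for free: the restriction of the quotient map $q$ to $\mathrm{span}\{e, T+K, (T+K)^*\}$ is a ucp map sending $T+K$ to $\dot T$, so Proposition \ref{pro numrange x y} yields $w_m(\dot T) \subseteq w_m(T+K)$ for all $m$. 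Hence the whole content of the theorem is to produce one compact $K$ for which the reverse containment $w_n(T+K) \subseteq w_n(\dot T)$ holds at the single level $n$.

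To manufacture this $K$, I would invoke the $n$-lifting property. The inclusion $\iota : \cl S \hookrightarrow \mathbb{B}/\mathbb{K}$ is a ucp map from a finite dimensional operator system into the quotient of the C*-algebra $\mathbb{B}$ by the ideal $\mathbb{K}$, so Theorem \ref{klproperty} (with $k=n$) furnishes a unital $n$-positive map $\tilde\varphi : \cl S \to \mathbb{B}$ with $q\circ\tilde\varphi = \iota$. Put $S = \tilde\varphi(\dot T)$. Since $\tilde\varphi$ is positive it is $*$-linear, so $\tilde\varphi(\dot T^*) = S^*$ and $\tilde\varphi(\dot e) = e$; moreover $q(S) = \iota(\dot T) = \dot T = q(T)$, whence $K := S - T$ lies in $\ker q = \mathbb{K}$ and $S = T+K$ is the sought perturbation.

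It remains to verify $w_n(S) \subseteq w_n(\dot T)$, and this is the key step. Let $A \in w_n(S)$, realized by a ucp map $\psi : \mathrm{span}\{e,S,S^*\} \to M_n$ with $\psi(S) = A$. Because $\tilde\varphi$ carries $\cl S$ into $\mathrm{span}\{e,S,S^*\}$, the composition $\psi\circ\tilde\varphi : \cl S \to M_n$ is defined, is unital, and sends $\dot T$ to $A$. Its $n$-th amplification factors as $\psi^{(n)}\circ\tilde\varphi^{(n)}$, a composite of two positive maps (positivity of $\psi^{(n)}$ from complete positivity of $\psi$, positivity of $\tilde\varphi^{(n)}$ from $n$-positivity of $\tilde\varphi$), so $\psi\circ\tilde\varphi$ is $n$-positive. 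Since its range is $M_n$, Choi's theorem (the lemma preceding the $k$-minimal construction, i.e. \cite[Thm.~6.1]{Pa}) promotes it to a completely positive, hence ucp, map. Therefore $A = (\psi\circ\tilde\varphi)(\dot T) \in w_n(\dot T)$, and combining the two inclusions gives $w_n(T+K) = w_n(\dot T)$.

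I expect the only genuine subtlety to be the calibration of the level: the lift $\tilde\varphi$ is merely $n$-positive and need not be completely positive (a genuinely weaker condition, since a global cp lift may fail to exist, cf. Corollary \ref{cor M4hasnonlp}), and the argument succeeds precisely because $n$-positivity is exactly enough to transport the $n$-th matricial range, while post-composition with a ucp map landing in $M_n$ recovers full complete positivity through Choi's theorem. All the hard work is absorbed into Theorem \ref{klproperty}; once that is in hand, the remaining obstacle is simply to match the lifting level to the index of the matricial range under consideration.
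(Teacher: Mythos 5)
Your proposal is correct and follows essentially the same route as the paper: the paper's remark after the theorem derives it precisely from the $k$-lifting property (Theorem \ref{klproperty}) applied to $\cl S_{\dot T}=\mathrm{span}\{\dot e,\dot T,\dot T^*\}$, together with Choi's theorem that an $n$-positive map into $M_n$ is completely positive. Your write-up simply fills in the details the paper leaves implicit (the free inclusion $w_n(\dot T)\subseteq w_n(T+K)$, the $*$-linearity of the lift, and the factorization $(\psi\circ\tilde\varphi)^{(n)}=\psi^{(n)}\circ\tilde\varphi^{(n)}$), all of which are accurate.
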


\noindent {\bf Remark:} In fact this theorem follows by using the $k$-lifting property of a finite dimensional operator system
(Theorem \ref{klproperty}). Moreover, we can deduce a more general form of this result: If
$\cl A$ is a unital C*-algebra and $I \subset \cl A$ is an ideal then for any
$\dot{a}$ in $\cl A/I$, and for any $k$ there is an element $x$ in $I$ such that
$w_k(a+x) = w_k(\dot{a})$. This directly follows from the $k$-lifting property of
the operator system $\cl S_{\dot{a}} = \{\dot{e},\dot{a}, \dot{a}^*\}$ and the fact that every
$k$-positive map defined from an operator system into $M_k$ is completely
positive.

$ $

\noindent Turning back to the above result, we see that for a fixed $n$, an
operator $T \in \mathbb{B}$ can be compactly perturbed such that the resulting
operator and its its residue under the quotient map have the same $n^{th}$
matricial range. Then the authors stated the following conjecture which is
currently still open. 

$ $

\noindent \textbf{Smith Ward Problem (SWP):} For every $T$ in $B(H)$ there is a
compact operator $K$ such that $ w_n(T+K) = w_n(\dot{T}) \mbox{ for every } n. $

$ $

This question is also considered in \cite{Pa4} and several equivalent
formulations have been given. In particular it was shown that it is enough to
consider block diagonal operators, and for this case,  the problem reduces to a
certain distance question \cite[Thm. 3.16]{Pa4}. However, the following
remark which depends on an observation in \cite{Ar2} will be more relevant
to us. We include the proof of this for the completeness of the paper.
\begin{proposition}[Paulsen]
The following are equivalent:
\begin{enumerate}
 \item SWP has an affirmative answer.
 \item For every operator subsystem of the form $\cl S_{\dot{T}} = \{ \dot{I},
\dot{T},
\dot{T}^*\} $ in the Calkin algebra $ \mathbb{B} /  \mathbb{K} $, the inclusion
$ \cl S_{\dot{T}}  \hookrightarrow \mathbb{B} /  \mathbb{K}  $ has a ucp lift on
$\mathbb{B}$.
\end{enumerate}
\end{proposition}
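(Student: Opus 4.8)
The plan is to reduce everything to Proposition \ref{pro numrange x y}, which identifies equality of all matricial numerical ranges with the existence of a unital complete order isomorphism between the three-dimensional operator systems they generate. The only extra ingredient needed is the elementary observation that the quotient map $q:\mathbb{B}\rightarrow\mathbb{B}/\mathbb{K}$, being a unital $*$-homomorphism, is ucp, so that its restriction to an operator subsystem supplies one of the two numerical-range inclusions for free. Throughout, for $S\in\mathbb{B}$ I write $\cl S_S=\mathrm{span}\{I,S,S^*\}$ and use that $q$ carries $\cl S_S$ onto $\cl S_{\dot S}$ via $I\mapsto\dot I$, $S\mapsto\dot S$, $S^*\mapsto\dot S^*$.

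For $(1)\Rightarrow(2)$ I would fix $T$ and apply SWP to obtain a compact $K$ with $w_n(T+K)=w_n(\dot T)$ for every $n$. Setting $S=T+K$, so that $\dot S=\dot T$ and $w_n(S)=w_n(\dot T)$, Proposition \ref{pro numrange x y} shows that the map $\tilde\varphi:\cl S_{\dot T}\rightarrow\cl S_S\subseteq\mathbb{B}$ determined by $\dot I\mapsto I$, $\dot T\mapsto S$, $\dot T^*\mapsto S^*$ is a unital complete order isomorphism, in particular ucp. Since $q(\tilde\varphi(\dot I))=\dot I$ and $q(\tilde\varphi(\dot T))=q(S)=\dot S=\dot T$, the composite $q\circ\tilde\varphi$ agrees with the inclusion $\cl S_{\dot T}\hookrightarrow\mathbb{B}/\mathbb{K}$ on the spanning set and hence equals it. Thus $\tilde\varphi$ is the required ucp lift.

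For $(2)\Rightarrow(1)$ I would fix $T$ and let $\tilde\varphi:\cl S_{\dot T}\rightarrow\mathbb{B}$ be a ucp lift of the inclusion, so $q\circ\tilde\varphi$ is the inclusion. Unitality gives $\tilde\varphi(\dot I)=I$, and writing $S=\tilde\varphi(\dot T)$ we get $q(S)=\dot T$, i.e. $S=T+K$ for some compact $K$. Now $\tilde\varphi:\cl S_{\dot T}\rightarrow\cl S_S$ is ucp with $\dot T\mapsto S$, so Proposition \ref{pro numrange x y} yields $w_n(S)\subseteq w_n(\dot T)$ for all $n$; conversely $q|_{\cl S_S}:\cl S_S\rightarrow\cl S_{\dot T}$ is ucp with $S\mapsto\dot T$, giving the reverse inclusion $w_n(\dot T)\subseteq w_n(S)$. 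Hence $w_n(T+K)=w_n(S)=w_n(\dot T)$ for every $n$, which is precisely the assertion of SWP.

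The step requiring care is the well-definedness clause in Proposition \ref{pro numrange x y} that legitimizes the isomorphism $\tilde\varphi$ in the first implication; this is where equality of numerical ranges, rather than a single inclusion, is essential. Concretely, if $\alpha\dot I+\beta\dot T+\gamma\dot T^*=0$ in the Calkin algebra, then applying an arbitrary ucp map $\cl S_{\dot T}\rightarrow M_k$ shows $\alpha I_k+\beta X+\gamma X^*=0$ for every $X\in w_k(\dot T)=w_k(S)$ and every $k$; equivalently, every ucp map $\cl S_S\rightarrow M_k$ annihilates $v=\alpha I+\beta S+\gamma S^*$. Splitting $v$ into its selfadjoint real and imaginary parts and invoking the lemma characterizing positivity through ucp maps into matrix algebras, together with the strictness $C_n\cap(-C_n)=\{0\}$ of the cones, forces $v=0$. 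Thus the linear relations of the two systems coincide and $\tilde\varphi$ is genuinely well-defined; the symmetric argument shows its inverse is well-defined, so $\tilde\varphi$ is indeed a unital complete order isomorphism. All other steps are formal consequences of Proposition \ref{pro numrange x y} and the fact that $q$ is ucp.
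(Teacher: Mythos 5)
Your proof is correct and follows essentially the same route as the paper: SWP together with Proposition \ref{pro numrange x y} produces the unital complete order isomorphism $\cl S_{\dot{T}}\cong\cl S_{T+K}$ that serves as the ucp lift, and conversely a ucp lift forces $w_n(T+K)=w_n(\dot{T})$ by composing with ucp maps into $M_n$ (the paper does this composition inline, which is precisely the easy direction of Proposition \ref{pro numrange x y} that you invoke twice). Your closing paragraph verifying well-definedness of the map $\dot{T}\mapsto T+K$ makes explicit a point the paper's proof leaves implicit, which is a refinement of, not a departure from, the same argument.
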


\begin{proof}
First suppose that $ \cl S_{\dot{T}}$ has a ucp lift $\varphi$ on $\mathbb{B}$. Since $q(\varphi(\dot{T})) = T$, where $q$ is the quotient map
from $ \mathbb{B}$ into $ \mathbb{B} /  \mathbb{K}$, $\varphi(\dot{T}) = T + K$ for some compact operator $K$. It is not hard to show that $w_n(T+K) = w_n(\dot{T})$ for every $n$. 
In fact, if $A \in w_n(\dot{T})$, say $A$ is the image $\phi(\dot{T})$ of some ucp map $\phi: \mathbb{B} /  \mathbb{K} \rightarrow M_n$, then the
composition $\phi\circ q$ is a ucp map from $ \mathbb{B}$ into $M_n$ which maps $T$ to $A$. Conversely if $B$ is in $w_n(T)$,
say $\psi(T) = B$ where $\psi:  \mathbb{B} \rightarrow M_n$ is ucp, then $\psi \circ \varphi : \cl S_{\dot{T}} \rightarrow M_n$ is ucp that maps $\dot{T}$ to $B$.
Since $T$ was arbitrary it follows that (1) is true. Conversely suppose that $(1)$ holds. So for $\dot{T}$ in $ \mathbb{B} /  \mathbb{K}$
we can find $K$ in $\mathbb{K}$ such that $w_n(\dot{T}) = w_n(T+K)$ for every $n$. Now, by using Proposition \ref{pro numrange x y},
$\cl S_{\dot{T}}$ and $\cl S_{T+K} \subset \mathbb{B}$ are unitally completely order isomorphic via $\dot{T} \mapsto T+ K$.
This map is ucp and a lift of the inclusion $ \cl S_{\dot{T}}  \hookrightarrow \mathbb{B} /  \mathbb{K} $. So proof is done.
\end{proof}

Depending on Proposition \ref{lpB(H)} and Theorem \ref{exactdualLP} we obtain
the following formulations of the SWP:

\begin{theorem}
The following are equivalent:
\begin{enumerate}
 \item SWP has an affirmative answer.
 \item Every three dimensional operator system has the lifting property.
 \item Every three dimensional operator system is exact.
\end{enumerate}
\end{theorem}
\begin{proof}
Equivalence of (2) and (3) follows from Theorem \ref{exactdualLP}. If every
three dimensional operator system is exact then then their duals, which covers
all three dimensional operator systems, must have the lifting property and vice
versa. Now suppose (2). This in particular implies that every operator subsystem
of the form $\cl S_{\dot{T}} = \{ \dot{I}, \dot{T}, \dot{T}^*\} $ in the Calkin
algebra $ \mathbb{B} /  \mathbb{K} $, the inclusion $ \cl S_{\dot{T}} 
\hookrightarrow \mathbb{B} /  \mathbb{K}  $ has a ucp lift on $\mathbb{B}$.
Hence by the above result of Paulsen, we conclude that SWP has an affirmative answer. Now suppose
(1) holds. Let $\cl S$ be a three dimensional operator system. We will show that
$\cl S$ has the lifting property. Let $\varphi:\cl S \rightarrow \mathbb{B} / 
\mathbb{K} $ be a ucp map. Clearly the image $\varphi(\cl S)$ is of the form
$\cl S_{\dot{T}} = \{ \dot{I}, \dot{T}, \dot{T}^*\} $ for some $T$ in
$\mathbb{B}$. Since we assumed SWP, the above result of Paulsen ensures that $\cl
S_{\dot{T}}$ has a ucp lift on $\mathbb{B}$, say $\psi$. Now $\psi \circ
\varphi$ is a ucp lift of $\varphi$ on $\mathbb{B}$. Finally by using
Proposition \ref{lpB(H)} we conclude that $\cl S$ has the lifting property.
\end{proof}

Recall from Proposition \ref{prop two dim} that every two dimensional operator
system is C*-nuclear and consequently they are all exact and have the lifting
property. On the other hand there is a five dimensional operator system, namely $\cl S_2$,  which
is not exact and, by Theorem \ref{exactdualLP}, its dual $\cl S_2^d$, which embeds in $M_2 \oplus M_2$, does not posses the lifting property.

\begin{remark}
There is a four dimensional operator system which is not exact and consequently, by Theorem  \ref{exactdualLP},
its dual does not have the lifting property.
\end{remark}
\begin{proof}
It is well known that $\mathbb{F}_2$ embeds in $\mathbb{Z}_2 * \mathbb{Z}_3$ (see \cite[pg. 24]{PDLH} ,e.g.). So by using
Proposition 8.8 of \cite{pi}, $C^*(\mathbb{F}_2)$ embeds in $C^*(\mathbb{Z}_2 * \mathbb{Z}_3)$ with a ucp
inverse. Thus, the identity on $C^*(\mathbb{F}_2)$ decomposes via ucp maps through $C^*(\mathbb{Z}_2 * \mathbb{Z}_3)$.
This means that, by Lemma \ref{lem iden. decom.}, any nuclearity property of $C^*(\mathbb{Z}_2 * \mathbb{Z}_3)$ passes to $C^*(\mathbb{F}_2)$.
Since $C^*(\mathbb{F}_2)$ is not exact we obtain that $C^*(\mathbb{Z}_2 * \mathbb{Z}_3)$ cannot be exact. 
Note that  $\mathbb{Z}_2 * \mathbb{Z}_3$ can be described by $\langle a,b: a^2 = b^3 = e\rangle$ so, necessarily,
$a$ must be a self adjoint unitary in $C^*(\mathbb{Z}_2 * \mathbb{Z}_3)$. Let $\cl S = span\{e,a,b,b^*\}$.
$\cl S$ is a four dimensional operator subsystem of $C^*(\mathbb{Z}_3 * \mathbb{Z}_2)$ that
contains enough unitaries. By Corollary 9.6 of \cite{kptt2}  $\cl S$ cannot be exact.  The remaining part follows from Theorem  \ref{exactdualLP}.
\end{proof}

Now we turn back to the Kirchberg Conjecture (KC). Before we establish a connection
between SWP and KC we recall that an operator system is (min,c)-nuclear if and
only if it is C*-nuclear. We refer back to Section \ref{sec nuclearity} for
related discussion. We also remind the reader that  KC is equivalent to the statement that every finite dimensional
operator system that has the lifting property has the double commutant expectation
property (DCEP). Now if we assume that both SWP and KC have affirmative answers
then it follows that every operator system with dimension three is exact and has the
lifting property, equivalently, they are  all (min,el)-nuclear and
(min,er)-nuclear. Since we assumed KC it follows that all  three dimensional
operator systems must have DCEP, equivalently (el,c)-nuclearity. Finally,
(min,el)-nuclearity and (el,c)-nuclearity implies (min,c)-nuclearity,
that is, C*-nuclearity. Conversely if every operator system of dimension three
is C*-nuclear this in particular implies they are all exact, (or have lifting
property). Hence we obtain that
$$
\mbox{ KC } + \mbox{ SWP } 
\Longrightarrow
\begin{array}{c}
                           \mbox{ every three dimensional }\\
			    \mbox{ operator system is C*-nuclear }
                           \end{array}
\Longrightarrow
\mbox{ SWP }
$$
Consequently forming an example of a three dimensional operator system
which is not C*-nuclear shows that both KC and SWP cannot be true.
Showing indeed that they are all C*-nuclear provides an affirmative answer to
SWP.

\begin{question}
We repeat a question we considered before: If $X$ is a compact subset of $\{z:\;|z| \leq 1\}$ then
is $\cl S = \{1,z,z^*\}$, where $z$ is the coordinate function, C*-nuclear? When $X$ is the unit circle
$\mathbb{T}$ then $\cl S$ coincides with $\cl S_1$ and for this case we know that $\cl S$ is C*-nuclear. 
\end{question}

\noindent \textbf{Acknowledgment: } This paper is based on my Ph.D. Dissertation ``Tensor Products of Operator
Systems and Applications'' under the supervision of Dr. Vern I. Paulsen. I would like to express my sincere gratitude to him
for his guidance at every stage of this work.

\end{document}